\documentclass[10pt, a4paper, reqno, oneside]{amsart}

\usepackage{amsmath, amsfonts, amsthm, amssymb, mathtools, amscd, enumerate, multicol, scalefnt, relsize}
\usepackage[mathscr]{euscript}
\usepackage{mathbbol}
\usepackage[latin1]{inputenc}
\usepackage{graphicx}
\usepackage[all]{xy}
\usepackage{enumitem}
\usepackage{autobreak,lipsum}
\setlist[itemize]{noitemsep, topsep=1pt, leftmargin=20pt}
\usepackage{xfrac}
\usepackage{todonotes}

\usepackage{fullpage}

\usetikzlibrary{arrows.meta,positioning}
\usetikzlibrary{backgrounds}

\usepackage[hypertexnames=false,
backref=page,
    pdfpagemode=UseNone,
    breaklinks=true,
    extension=pdf,
    colorlinks=true,
    linkcolor=blue,
    citecolor=blue,
    urlcolor=blue,
]{hyperref}

\newcommand\bcdot{\ensuremath{
  \mathchoice
   {\mskip\thinmuskip\lower0.2ex\hbox{\scalebox{1.6}{$\cdot$}}\mskip\thinmuskip}}
   {\mskip\thinmuskip\lower0.2ex\hbox{\scalebox{1.6}{$\cdot$}}\mskip\thinmuskip}
   {\lower0.3ex\hbox{\scalebox{1.2}{$\cdot$}}}
   {\lower0.3ex\hbox{\scalebox{1.2}{$\cdot$}}}
}
\theoremstyle{plain}
\newtheorem{theo}{Theorem}[section]

\theoremstyle{definition}

\newtheorem{example}[theo]{Example}
\newtheorem{definition}[theo]{Definition}

\theoremstyle{plain}
\newtheorem{lemma}[theo]{Lemma}
\newtheorem{theorem}[theo]{Theorem}
\newtheorem{corollary}[theo]{Corollary}
\newtheorem{proposition}[theo]{Proposition}

\theoremstyle{definition}

\newtheorem{remark}[theo]{Remark}

\theoremstyle{plain}
\newtheorem{thmint}{Theorem}

\theoremstyle{definition}
\newtheorem*{definition*}{Definition}

\DeclareSymbolFontAlphabet{\mathbb}{AMSb}
\DeclareSymbolFontAlphabet{\mathbbl}{bbold}

\makeatletter
\@namedef{subjclassname@2020}{\textup{2020} Mathematics Subject Classification}
\makeatother

\allowdisplaybreaks[4]

\title[]{On Bismut--Ambrose--Singer manifolds}

\author{Giuseppe Barbaro}
\address[Giuseppe Barbaro]{Department of Mathematics, Aarhus University, Ny Munkegade 118, 8000 Aarhus C, Denmark}
\email{g.barbaro@math.au.dk}

\author{Francesco Pediconi}
\address[Francesco Pediconi]{Dipartimento di Scienze Matematiche ``Giuseppe Luigi Lagrange'' \\ Politecnico di Torino, corso Duca degli Abruzzi 24, 10129 Torino, Italy}
\email{francesco.pediconi@polito.it}

\subjclass[2020]{53C55, 53C05, 53C30, 53C25, 53C21}
\keywords{Bismut connection; Ambrose--Singer; naturally reductive; Hermitian symmetric spaces; parallel torsion; balanced; pluriclosed.}
\thanks{All authors are members of GNSAGA of INdAM. The first-named author is supported by the DFF Sapere Aude grant ``Conformal geometry: metrics and cohomology".}

\begin{document}

\begin{abstract}
We investigate Bismut--Ambrose--Singer (BAS) manifolds, namely Hermitian manifolds whose Bismut connection has parallel torsion and parallel curvature. We first establish a canonical reduction theorem for complete, simply-connected BAS manifolds. We then classify simply-connected BAS manifolds in the three fundamental homogeneous settings: the compact case, the non-compact semisimple case, and the nilpotent case. Building on this, we construct BAS manifolds in which these three geometries are combined, generalizing all previously known examples. Finally we classify complete, simply-connected, pluriclosed BAS manifolds.
\end{abstract}

\maketitle

\section{Introduction}
\setcounter{equation} 0

The aim of this paper is to investigate the geometry of a distinguished class of Hermitian manifolds, called \emph{Bismut--Ambrose--Singer manifolds} (BAS, for short).

\smallskip

The Bismut connection is a fundamental tool in non-K{\"a}hler geometry. Indeed, it provides a bridge between Riemannian and complex geometry, as it is the unique Hermitian connection with totally skew-symmetric torsion \cite{MR1006380, MR1456265}. Metric connections with totally skew-symmetric torsion had already appeared implicitly in the last century, prior to the introduction of the Bismut connection (see, \emph{e.g.}, \cite{MR107275,36,37}), and have since been extensively used in the study of manifolds with non-integrable $\mathsf{G}$-structures (see, e.g., \cite{MR1928632, MR1958088, MR2047649, MR2038309, MR2067465, MR2114426, MR2265474, MR2322400, ivanov2023riemannian}).

A metric connection on a Riemannian manifold is called \emph{Ambrose--Singer} if it has parallel torsion and parallel curvature. By the Ambrose--Singer theorem \cite{MR0102842}, a Riemannian manifold is locally homogeneous if and only if it admits an Ambrose--Singer connection. In general, such connections are far from being unique, and there is no canonical choice among them. Remarkably, requiring the Levi-Civita connection to be Ambrose--Singer defines the class of \emph{locally symmetric spaces}, which were classified by {\'E}.\ Cartan.

The Ambrose--Singer theorem admits a natural extension to the Hermitian setting, proved by Sekigawa \cite{MR0509406}, which states that a Hermitian manifold is locally Hermitian homogeneous if and only if it admits a Hermitian Ambrose--Singer connection. Among the possible Hermitian connections, the natural choice is given by the Bismut connection, leading to the class of BAS manifolds introduced in \cite{MR4630788, NiZh23}, namely, those Hermitian manifolds whose Bismut connection is Ambrose--Singer. The BAS condition can be regarded as the natural non-K\"ahler generalization of Hermitian locally symmetric spaces, and provides a rich and flexible class of examples. In contrast, the alternative canonical choice of connection, namely the Chern connection, leads to the class of \emph{Chern--Ambrose--Singer manifolds}, which in the non-K\"ahler case reduce precisely to quotients of complex Lie groups \cite[Theorem 1.2]{NiZh23}.

For the Bimsut connection, the condition of parallel torsion is already relevant on itself. Indeed, manifolds with Bismut parallel torsion provide a rich class of special Hermitian manifolds that includes all {\em Bismut K\"ahler-like} \cite[Theorem 1]{MR4554474} and all {\em Vaisman} manifolds \cite[Corollary 3.8]{MR4480223}. More generally, metric connections with parallel totally skew-symmetric torsion play an important role in Riemannian geometry and have been widely studied (see, e.g., \cite{MR2047649, MR3319112, MR4184296}). Following \cite{MR4184296}, the existence of such a connection gives rise locally to a Riemannian submersion, whose fibres are naturally endowed with an Ambrose--Singer connection with totally skew-symmetric torsion. This provides further motivation for the study of BAS manifolds, as a natural step toward understanding Hermitian manifolds with Bismut parallel torsion.

\smallskip

The first main result of the paper provides a \emph{canonical reduction} for complete, simply-connected BAS manifolds. More concretely, we show that the Lie algebra of holomorphic Killing vector fields that are invariant under the identity component of the holomorphic isometry group gives rise to an abelian principal bundle. Accordingly, we say that a BAS manifold is \emph{reduced} if it admits no nontrivial invariant holomorphic Killing vector fields (see Definition \ref{def:reduced}).

\begin{thmint} \label{thm:MAIN-general}
Let $(M,J,g)$ be a complete, simply-connected Bismut--Ambrose--Singer manifold.
\begin{itemize}
\item[i)] The manifold $(M,J,g)$ is the total space of an abelian principal bundle over a complete, simply-connected, reduced Bismut--Ambrose--Singer manifold $(B,\check{J},\check{g})$, and the bundle projection map is a holomorphic Riemannian submersion with totally geodesic fibres.
\item[ii)] The complex structure $\check{J}$ induces a complex structure $\check{J}_i$ on each de Rham factor $(B_i,\check{g}_i)$ of $(B,\check{g})$ such that $(B_i,\check{J}_i,\check{g}_i)$ is a complete, simply-connected, reduced Bismut--Ambrose--Singer manifold, for all $1 \le i \le k$, and the isometry $(B,\check{J},\check{g}) \simeq (B_1,\check{J}_1,\check{g}_1) \times \dots \times (B_k,\check{J}_k,\check{g}_k)$ is holomorphic.
\end{itemize}
\end{thmint}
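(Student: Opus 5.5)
Since $(M,J,g)$ is complete and simply-connected and the Bismut connection $\nabla^B$ is Ambrose--Singer, I would first apply the Ambrose--Singer/Sekigawa construction to write $M=G/H$. Here $G$ is the simply-connected Lie group with Lie algebra $\mathfrak g=\mathfrak h\oplus\mathfrak m$, where $\mathfrak m=T_oM$ carries the bracket
\[
[X,Y]=-T^B(X,Y)-R^B(X,Y),
\]
and $\mathfrak h$ is the holonomy algebra. The group $G$ acts transitively by holomorphic isometries, and $J$ and $g$ are $\mathrm{Ad}(H)$-invariant on $\mathfrak m$. Since the Bismut torsion is totally skew, $g$ is naturally reductive with respect to this decomposition. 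I would then pass to the full identity component $\mathsf{Isom}^0_J(M)$ of the holomorphic isometry group and consider $\mathfrak z$, the Lie algebra of holomorphic Killing fields commuting with $\mathsf{Isom}^0_J(M)$. These are exactly the Killing fields in the centre of $\mathfrak{isom}_J$ (it suffices to impose invariance under the identity component). Such fields are parallel for $\nabla^B$, since $\nabla^B X$ is computed from the canonical connection of the reductive pair and central elements act trivially. Consequently they have constant length, commute pairwise, and generate a free, proper action of a connected abelian group $A$. The group $A$ is a closed vector-group factor of the centre, and its action is free because $\mathfrak z\cap\mathfrak h=0$: a central element vanishing at $o$ vanishes everywhere.

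For (i), I would set $B=M/A=G/(HA)$. Because $\mathfrak z$ is $J$-invariant (I would check $J\mathfrak z\subset\mathfrak z$ using $\nabla^B J=0$ and centrality, and fall back to the maximal $J$-invariant subspace if needed), $g$ and $J$ descend along the orthogonal complement $\mathfrak m\ominus\mathfrak z$. The projection is then a holomorphic Riemannian submersion. Parallel Killing fields for a connection with skew torsion have geodesic integral curves, and the orbits are flat totally geodesic, so the fibres are totally geodesic. The torsion and curvature of $B$ are obtained by projecting $T^B$ and $R^B$; they are parallel for the induced Bismut connection because everything is $G$-invariant and the decomposition is $\nabla^B$-parallel. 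Since $M$ is simply connected and the fibres are connected contractible orbits, $B$ is simply connected. Completeness descends through the Riemannian submersion.

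The main obstacle is reducedness of $B$. An invariant holomorphic Killing field on $B$ must lift to one on $M$, so one has to show that $\mathsf{Isom}^0_J(B)$ is essentially the image of the normalizer of $A$ and that horizontal lifts of central fields remain central. I would argue at the Lie-algebra level: the central fields on $B$ correspond to elements of $\mathfrak m$ whose $\mathfrak z$-component of the bracket is controlled by a cocycle. I would then use $T^B$-parallelism to show that such a cocycle can be absorbed by a shift into $\mathfrak z$, so that the lift is central. If this fails directly, I would iterate the reduction, which terminates by dimension, and then argue that the iterated quotient is again an abelian principal bundle over $B$.

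For (ii), I would apply the de Rham decomposition to $B$. On a reduced BAS manifold the curvature of $\nabla^B$ has no flat Euclidean factor, since such a factor would produce invariant parallel fields. Because $\check J$ is $\check\nabla^B$-parallel, it commutes with the holonomy of the Bismut connection. I would show that the Levi-Civita holonomy decomposition coincides with the $\check\nabla^B$-holonomy decomposition: the torsion 3-form splits as a sum over factors, because a mixed torsion component would, by parallelism, give a $G$-invariant vector field contradicting reducedness. Hence $\check J$ preserves each $TB_i$, and each factor is BAS, complete, simply connected and reduced, with the product isometry holomorphic.
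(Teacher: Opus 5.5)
\textbf{Part (i).} Your fibre group $A=Z(\mathsf{Aut}^0)^0$, generated by the flows of the $\mathsf{Aut}^0$-invariant holomorphic Killing fields, is not the paper's $N_{\mathsf L}(\mathsf U)^0/\mathsf U$. Most of your construction is sound:
\begin{itemize}
\item[$\bcdot$] invariant fields are $\nabla$-parallel;
\item[$\bcdot$] $J\mathfrak z=\mathfrak z$ does hold, because $T$ has no $(3,0)+(0,3)$ part, so if $V\lrcorner T$ is of type $(1,1)$ then so is $JV\lrcorner T$;
\item[$\bcdot$] commuting parallel fields satisfy $T(V,W)=-[V,W]=0$, hence $D_VW=0$, which gives totally geodesic fibres;
\item[$\bcdot$] $B$ is BAS. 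Argue this by natural reductivity for the induced $\mathsf{Aut}^0(M)$-action, with complement the elements of $\mathfrak m_p$ whose value at $p$ is horizontal, together with Theorem \ref{thm:BASnatrad}. Do not argue it by ``projecting $R$'': the base curvature is not the projection of $R$.
\end{itemize}
Two small slips: $A$ can be a torus (e.g.\ for Calabi--Eckmann), and only connectedness of the fibres is needed for $\pi_1(B)=0$.

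The genuine gap is reducedness of $B$, which you leave open, and your plan for it cannot work. Any two $\mathsf{Aut}^0(M)$-invariant lifts of a field on $B$ differ by an element of $\mathfrak z$, and adding such an element does not change $\mathcal L J$. So no ``cocycle'' can be absorbed by a shift into $\mathfrak z$. Iterating does not help either: it only yields a tower of abelian bundles, which need not be an abelian principal bundle. You also do not need to identify $\mathsf{Aut}^0(B)$.

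The missing argument is short. Let $W$ be an $\mathsf{Aut}^0(B)$-invariant holomorphic Killing field. Then $W$ is invariant under the image of $\mathsf{Aut}^0(M)$. Hence its horizontal lift $\tilde W$ is $\mathsf{Aut}^0(M)$-invariant, so it is $\nabla$-parallel, Killing, and commutes with $\mathfrak z$. The tensor $(\mathcal L_{\tilde W}J)Y=JT(\tilde W,Y)-T(\tilde W,JY)$ has zero horizontal part because $W$ is holomorphic. For $V\in\mathfrak z$,
\[
g\big((\mathcal L_{\tilde W}J)Y,V\big)=-g\big(T(V,\tilde W),JY\big)-g\big(T(JV,\tilde W),Y\big)\,.
\]
Write $V=V_0+V_1$ with $V_0\in\mathfrak{aut}_{0,p}$ and $V_1\in\mathfrak m_p$. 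Then $\nabla V=0$ gives $V_p\lrcorner T_p=DV_0|_p$, an infinitesimal isotropy element, which kills the isotropy-fixed vector $\tilde W_p$. The same holds for $JV\in\mathfrak z$, so both terms vanish. Hence $\tilde W\in\mathfrak z$; being horizontal, $\tilde W=0$.

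In the paper this step is immediate by design, since $\mathfrak b$ has no $\mathfrak u$-fixed vectors. In your version, what comes for free instead is that the fibre group is abelian and acts holomorphically. The two constructions really differ. For $\mathbb R\times\mathsf{SU}(2)$ with bi-invariant metric and left-invariant Hopf structure, one finds $\mathfrak u=0$ and $\mathfrak f=\mathfrak m\cong\mathfrak{u}(2)$, which is not abelian. By contrast, $\mathfrak z(\mathfrak{aut})=\mathrm{span}\{\partial_t,J\partial_t\}$ gives the expected fibration over $\mathbb{CP}^1$. So your choice is the more robust one.

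\textbf{Part (ii).} Here the argument does not hold up.
\begin{itemize}
\item[$\bcdot$] The claim that a reduced BAS manifold has no flat factor is false. $\mathbb C^m$ is reduced in the sense of Definition \ref{def:reduced}, since $\mathsf U(m)\subset\mathsf{Aut}^0$ leaves no nonzero invariant vector field. It is precisely the base occurring in Theorem \ref{thm:MAIN-nil}.
\item[$\bcdot$] More importantly, knowing that $\check J$ commutes with the holonomy of $\check\nabla$ and that the torsion splits along the de Rham factors does not give $\check J(TB_i)=TB_i$. On $\mathbb R\times S^3$ with the Hopf structure, the Bismut connection is flat and its torsion lives on $S^3$, yet $J\partial_t$ is tangent to $S^3$. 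That manifold is not reduced, so it only shows that reducedness must be used in this very step.
\end{itemize}
The splitting of the naturally reductive structure along de Rham factors is automatic by \cite[Ch.\ X, Theorem 5.2]{MR1393941}, with no appeal to reducedness. What is missing is the Schur step. The paper argues that the modules $\mathfrak b_i$ contain no trivial isotropy submodules. Since $\check{\mathsf U}_j$ acts trivially on $\mathfrak b_i$ but without fixed vectors on $\mathfrak b_j$, the modules $\mathfrak b_i$ and $\mathfrak b_j$ ($i\neq j$) share no equivalent submodules. Hence the equivariant $\check J$ must preserve each $\mathfrak b_i$.
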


To prove Theorem \ref{thm:MAIN-general}, we first provide a \emph{canonical presentation} $M = \mathsf{L}/\mathsf{U}$ of $(M,J,g)$ as a Lie group quotient by exploiting the \emph{Nomizu construction} for the Bismut connection (see Section \ref{sect:Nomcons} and Section \ref{sect:canpres}). In particular, this presentation realizes $M$ as a \emph{naturally reductive manifold} (see Definition \ref{def:natred} and Theorem \ref{thm:BASnatrad}). We then show that the Lie algebra of $N_{\mathsf{L}}(\mathsf{U})^0/\mathsf{U}$ is abelian, where $N_{\mathsf{L}}(\mathsf{U})^0$ denotes the identity component of the normalizer of $\mathsf{U}$ in $\mathsf{L}$, and that it coincides with the space of all $\mathsf{L}$-invariant holomorphic Killing vector fields on $(M,J,g)$. We also prove that the group $N_{\mathsf{L}}(\mathsf{U})^0/\mathsf{U}$ acts freely on $(M,J,g)$ by holomorphic isometries on the right. Therefore, we consider the projection
$$
\pi: \mathsf{L}/\mathsf{U} \to \mathsf{L}/N_{\mathsf{L}}(\mathsf{U})^0 \,\, ,
$$
endowed with the induced Hermitian structure on the base, and prove that $\pi$ is a holomorphic Riemannian submersion with totally geodesic fibres over a reduced BAS manifold (see Theorem \ref{thm:generalstructure}). The second claim then follows by combining the de Rham theorem for naturally reductive manifolds (see \cite[Ch.\ X, Theorem 5.2]{MR1393941}) with an application of Schur's lemma. We finally remark that, since the Lee vector field $\theta^{\sharp}$ of a BAS manifold is invariant, holomorphic and Killing (see Proposition \ref{prop:thetaholkill}), every complete, simply-connected, reduced BAS manifold is \emph{balanced}. \smallskip

Among Riemannian homogeneous metrics, the naturally reductive ones form one of the most well-behaved classes. They have particularly nice geometric properties, while still being sufficiently rich to be of interest. For this reason, they have been extensively studied in the literature (see, \emph{e.g.}, \cite{MR0474145, MR0519928, MR0740188, MR0787113, MR1952476, MR1922121, MR3319112}). By \cite[Theorem 3.1]{MR0787113} and Theorem \ref{thm:BASnatrad}, if $M=\mathsf{L}/\mathsf{U}$ is the canonical presentation of a BAS manifold and $\mathsf{N}$ denotes the nilradical of $\mathsf{L}$, then, for every semisimple Levi factor $\mathsf{G} = \mathsf{G}_{\mathrm{c}} \mathsf{G}_{\mathrm{nc}} \subset \mathsf{L}$, with $\mathsf{G}_{\mathrm{c}}$ compact, the subgroup $\mathsf{G}_{\mathrm{c}}  \mathsf{G}_{\mathrm{nc}} \mathsf{N}$ acts transitively on $M$. Thus, the study of BAS manifolds is partially reduced to the following three cases: those admitting a transitive action by a compact semisimple group $\mathsf{G}_{\mathrm{c}}$, by a semisimple group of non-compact type $\mathsf{G}_{\mathrm{nc}}$, or by a nilpotent group $\mathsf{N}$. Following this strategy, we study complete, simply-connected BAS manifolds in each of these three cases, combining the results of \cite{MR0787113} with the additional rigidity imposed by the complex structure. \smallskip

First, the classification of compact, simply-connected BAS manifolds is as follows.

\begin{thmint} \label{thm:MAIN-cpt}
Let $(M, J, g)$ be a compact, simply-connected Hermitian manifold. Then $(M,J,g)$ is Bismut--Ambrose--Singer if and only if it splits as a product of irreducible Hermitian manifolds, each of which is one of the following:
\begin{itemize}
\item[$\bcdot$] a compact irreducible Hermitian symmetric space;
\item[$\bcdot$] the total space of a homogeneous holomorphic principal torus bundle over a flag manifold, endowed with a homogeneous Hermitian metric invariant under the right torus action, which induces a normal metric on the base.
\end{itemize}
\end{thmint}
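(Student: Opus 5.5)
We prove both directions, relying on Theorem \ref{thm:MAIN-general}, the canonical presentation $M=\mathsf{L}/\mathsf{U}$ and its naturally reductive structure (Theorem \ref{thm:BASnatrad}).

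\emph{Necessity.} Let $(M,J,g)$ be compact, simply-connected and BAS, with canonical presentation $M=\mathsf{L}/\mathsf{U}$. Since $M$ is compact and simply connected, we may take $\mathsf{L}$ compact; its semisimple part $\mathsf{G}_{\mathrm{c}}$ still acts transitively by \cite[Theorem 3.1]{MR0787113}. We then apply Theorem \ref{thm:MAIN-general}.

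\begin{itemize}
\item[1)] The reduction $\pi\colon M\to B=\mathsf{L}/N_{\mathsf{L}}(\mathsf{U})^0$ is a holomorphic Riemannian submersion with totally geodesic fibres. Its structure group $N_{\mathsf{L}}(\mathsf{U})^0/\mathsf{U}$ is abelian and, being compact here, is a torus $T^{2r}$. Hence $\pi$ is a homogeneous holomorphic principal torus bundle.
\item[2)] The base $B$ is a compact, simply-connected, reduced BAS manifold, and it is balanced. I would show that $B$ is a generalized flag manifold $\mathsf{G}/\mathsf{H}$ with $\mathsf{H}$ a centralizer of a torus. The reduced condition says $N_{\mathsf{L}}(\mathsf{H})^0=\mathsf{H}$. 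In the compact case this forces $\mathrm{rk}\,\mathsf{H}=\mathrm{rk}\,\mathsf{G}$. Indeed, any torus in a maximal torus commuting with $\mathsf{H}$ but outside it would normalize $\mathsf{H}$ and give invariant Killing fields; these fields are holomorphic because $J$ is $\mathsf{L}$-invariant and determined by an $\mathrm{Ad}(\mathsf{H})$-invariant endomorphism. An equal-rank homogeneous complex manifold $\mathsf{G}/\mathsf{H}$ carries an invariant complex structure only when $\mathsf{H}$ is the centralizer of a torus. Hence $B$ is a flag manifold.
\item[3)] Naturally reductive invariant metrics on a flag manifold with respect to $\mathsf{G}$ are exactly the normal metrics. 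This follows from the classification of naturally reductive metrics on compact homogeneous spaces, D'Atri--Ziller (\cite{MR0519928}). So the base metric is normal.
\item[4)] The BAS condition on a flag manifold $\mathsf{G}/\mathsf{H}$ with normal metric $-B_{\mathfrak g}$ restricted to $\mathfrak m$, and with an invariant complex structure, should force an additional constraint. The Bismut torsion is $T(X,Y)=-[X,Y]_{\mathfrak m}$ (up to sign), by uniqueness of the Hermitian connection with skew torsion. For this to hold, the canonical connection of the naturally reductive structure must be Hermitian and coincide with the Bismut connection. This requires $[\mathfrak m^{1,0},\mathfrak m^{1,0}]_{\mathfrak m}\subset \mathfrak m^{0,1}$-type compatibility, meaning the torsion $T$ has type $(2,1)+(1,2)$ appropriately. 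That type condition is the expected main obstacle.
\end{itemize}

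The irreducible factors then come from part ii) of Theorem \ref{thm:MAIN-general} together with de Rham. Factors with trivial torus bundle are BAS flag manifolds where the torsion must vanish by the type argument. These are Kähler and hence Hermitian symmetric. Checking this carefully, I expect, is the hardest step. The natural approach is to decompose $\mathfrak m$ into $T$-root spaces and compute $T^{1,0}$ components.

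\emph{Sufficiency.} Irreducible Hermitian symmetric spaces are clearly BAS, with vanishing torsion and the Levi-Civita connection equal to the Bismut connection. For the torus bundle case, write $M=\mathsf{G}\times T/\mathsf{U}$ with a normal-type metric. Show that the canonical connection of the reductive decomposition is Hermitian and has totally skew torsion, so it equals the Bismut connection. Its torsion and curvature are parallel by the standard Nomizu argument. Finally, products of BAS manifolds are BAS.
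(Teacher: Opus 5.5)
Your outline has the same architecture as the paper: canonical reduction, a flag base obtained from reducedness, normality of the base metric via Kostant/D'Atri--Ziller, and a naturally reductive construction for the converse. However, step~4 is false, step~2 is argued incorrectly, and the sufficiency direction is missing its key construction.

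\emph{Necessity.} In step~2, reducedness alone does not force $\mathrm{rk}\,\mathsf{H}=\mathrm{rk}\,\mathsf{G}$, because nothing guarantees a torus centralizing $\mathsf H$ outside $\mathsf H$. For example, for $\mathsf{SO}(3)\subset\mathsf{SU}(3)$ the isotropy representation has no trivial summand and $N(\mathsf H)^0=\mathsf H$, yet the rank drops. What forces a flag manifold is the complex structure. By Wang's theorem, $B$ is a torus bundle over a flag manifold. The torus directions would be trivial isotropy submodules, and these do not exist: no nonzero vector of $\mathfrak b$ commutes with $\mathfrak u+\mathfrak f$, as in the proof of Theorem~\ref{thm:generalstructure}. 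Before applying Wang one must also check, as the paper does, that $\mathfrak b\subset\mathfrak g$ and that the compact semisimple factor $\mathsf G$ of $\mathsf L=\mathsf Z(\mathsf L)\times\mathsf G$ acts almost effectively on $B$; the canonical $\mathsf L$ is not compact in general.

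Step~4 is genuinely wrong, because there is no additional type constraint. If $(B,\check g)$ is $(\mathsf G,\pi(p),\mathfrak b)$-naturally reductive and $\mathsf G$ preserves $\check J$, then:
\begin{itemize}
\item[$\bcdot$] the canonical connection of $\mathfrak g=\mathfrak k+\mathfrak b$ parallelizes every $\mathsf G$-invariant tensor, in particular $\check J$;
\item[$\bcdot$] its torsion $-[X,Y]_{\mathfrak b}$ is totally skew by \eqref{eq:natrad};
\item[$\bcdot$] by uniqueness it is therefore the Bismut connection. This is the converse half of Theorem~\ref{thm:BASnatrad}; the vanishing of the $(3,0)+(0,3)$-part of the torsion is equivalent to integrability and comes for free.
\end{itemize}
Hence every flag manifold with a normal metric and an invariant complex structure is BAS. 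An example is $\mathsf{SU}(3)/\mathsf{T}^2$ with the normal metric, which is irreducible and non-K\"ahler: its torsion $-[X,Y]_{\mathfrak b}$ is nonzero since it is not a symmetric space. This is why the second class in the statement includes trivial tori, as the paper points out. Your conclusion that trivial-torus factors are K\"ahler, hence Hermitian symmetric, contradicts the very equivalence you are proving. Simply drop step~4: split $M$ into Hermitian-irreducible factors, each again compact, simply connected and BAS, and steps~1--3 already place each one in the second class.

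\emph{Sufficiency.} The statement allows an arbitrary right-torus-invariant Hermitian metric inducing a normal base metric, so the fibre metric $g|_{\mathfrak t}$ is unconstrained. Your sketch treats only a ``normal-type'' metric and asserts that the canonical connection has totally skew torsion, which is exactly what must be proved. Take the $Q$-orthogonal splitting $\mathfrak g=\mathfrak h+\mathfrak t+\mathfrak b$ with $Q|_{\mathfrak b}=g|_{\mathfrak b}$. For $X,Y\in\mathfrak b$ and $U\in\mathfrak t$, the $\mathsf G$-complement $\mathfrak t+\mathfrak b$ satisfies \eqref{eq:natrad} iff $g([X,Y]_{\mathfrak t},U)=Q([X,Y]_{\mathfrak t},U)$, that is, only when $g|_{\mathfrak t}$ agrees with $Q|_{\mathfrak t}$. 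Moreover, a metric normal for $\mathsf G\times\mathsf T$ with a positive definite bi-invariant form has fibre metric bounded above by $Q|_{\mathfrak t}$. So the obvious constructions miss every metric whose fibres are larger.

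The missing idea is that of Proposition~\ref{prop:cmpctBAS}:
\begin{itemize}
\item[$\bcdot$] encode the fibre metric by $S$ with $g(U,V)=Q(SU,V)$;
\item[$\bcdot$] enlarge to $\hat{\mathfrak g}=\mathfrak g\oplus\mathfrak t_1^{\perp}$, where $\mathfrak t_1$ is the $1$-eigenspace of $S$ and the extra factor acts on the right;
\item[$\bcdot$] take the complement $\{(U,-(S-\mathrm{Id})U)\}+(\mathfrak t_1+\mathfrak b)$;
\item[$\bcdot$] use the $\mathrm{ad}$-invariant form $Q\oplus Q((S-\mathrm{Id})^{-1}\cdot,\cdot)$.
\end{itemize}
This form is in general \emph{indefinite}, but it is non-degenerate on the isotropy, orthogonal to the complement, and restricts to $g$ there. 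That yields natural reductivity, and then BAS by Theorem~\ref{thm:BASnatrad}.
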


In the statement above, the factors in the first case are precisely the K{\"a}hler de Rham factors. Notice that we allow the torus in the second case to be trivial, so that this includes flag manifolds endowed with normal metrics.

Secondly, we classify complete, simply-connected BAS manifolds admitting a transitive action by a semisimple Lie group of non-compact type.

\begin{thmint} \label{thm:MAIN-noncpt}
Let $(M, J, g)$ be a complete, simply-connected Hermitian manifold and assume that there exists a semisimple Lie group of non-compact type acting transitively on $M$ by holomorphic isometries. Then $(M,J,g)$ is Bismut--Ambrose--Singer if and only if it splits as a product of irreducible Hermitian manifolds, each of which is one of the following:
\begin{itemize}
\item[$\bcdot$] a non-compact irreducible Hermitian symmetric space;
\item[$\bcdot$] a complex simple Lie group endowed with a multiple of its canonical metric;
\item[$\bcdot$] the total space of a homogeneous holomorphic principal torus bundle over a Hermitian symmetric space of non-compact type, endowed with a homogeneous Hermitian metric invariant under the right torus action, which induces the symmetric metric on the base.
\end{itemize}
\end{thmint}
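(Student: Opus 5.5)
We argue in both directions, starting with the ``if'' direction, which is the easier one.

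\emph{Sufficiency.} A product of BAS manifolds is BAS, since the Bismut connection, torsion and curvature of a Hermitian product split. So it suffices to check each type of factor.
\begin{itemize}
\item For Hermitian symmetric spaces the Bismut connection is the Levi-Civita connection, the torsion vanishes, and $\nabla R = 0$.
\item For a complex simple Lie group $\mathsf{G}$ with a multiple of its canonical metric (the Killing form of $\mathfrak{g}$ regarded as a real Lie algebra, with the left-invariant complex structure), the Bismut connection is the flat connection for which right-invariant (or left-invariant) fields are parallel. Its torsion is $\pm[\cdot,\cdot]$, which is parallel by bi-invariance.
\item For the torus bundles over non-compact Hermitian symmetric spaces, we write $M = \mathsf{L}/\mathsf{U}$ with $\mathfrak{l} = \mathfrak{g}_{\mathrm{nc}} \oplus \mathfrak{a}$, where $\mathfrak{a}$ is abelian. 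We check that the canonical connection of the naturally reductive presentation is Hermitian and has skew torsion, so by uniqueness it equals the Bismut connection. The canonical connection of a reductive homogeneous space has parallel torsion and curvature, so the factor is BAS.
\end{itemize}

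\emph{Necessity.} Let $\mathsf{G}_{\mathrm{nc}}$ act transitively. By Theorem \ref{thm:BASnatrad} we take the canonical presentation $M=\mathsf{L}/\mathsf{U}$ and, via \cite[Theorem 3.1]{MR0787113}, arrange that a Levi factor containing (the image of) $\mathsf{G}_{\mathrm{nc}}$ acts transitively. By Theorem \ref{thm:MAIN-general}, $M$ is an abelian principal bundle over a reduced BAS manifold $B$ splitting into irreducible reduced BAS factors $B_i$, and $\mathsf{G}_{\mathrm{nc}}$ still acts transitively on each $B_i$. The key step is to classify the reduced irreducible factors admitting a transitive non-compact semisimple action. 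Naturally reductive, transitive $\mathsf{G}_{\mathrm{nc}}$-spaces are described in \cite{MR0787113}: the naturally reductive metric comes from an $\operatorname{Ad}$-invariant form $Q$ on $\mathfrak{l}=\mathfrak{g}\oplus\mathfrak{u}'$, with $\mathfrak{m}$ complementary to $\mathfrak{u}$. We analyse $J$ on $\mathfrak{m}$. The point is that $J$ is $\operatorname{ad}(\mathfrak{u})$-invariant and its Nijenhuis tensor vanishes, and that $Q$ must be positive on $\mathfrak{m}$ while the Killing form is indefinite on non-compact simple ideals. We expect this to force one of two outcomes. Either $\mathfrak{u}$ is the maximal compact $\mathfrak{k}$ with a central $\mathfrak{u}(1)$ and $J=\operatorname{ad}(z)$, giving a Hermitian symmetric space. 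Or $\mathfrak{u}$ is a diagonal copy in $\mathfrak{g}\oplus\mathfrak{g}$-type data, which forces $\mathfrak{g}=\mathfrak{s}^{\mathbb{C}}$ with $M$ the group itself, $J$ coming from the complex structure on $\mathfrak{g}$, and the metric the canonical one. Reducedness excludes $\mathfrak{u}$ being non-maximal with a nontrivial normalizer quotient.

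Finally we reassemble. Let $\mathsf{T}$ denote the abelian fibre group $N_{\mathsf{L}}(\mathsf{U})^0/\mathsf{U}$. The total space $M$ is a $\mathsf{T}$-bundle over $\prod B_i$, where $\mathsf{T}$ is a torus times a vector group; since $M$ is simply connected and the fibres are totally geodesic, $\mathsf{T}$ is in fact $\mathbb{R}^m$ with a complex structure. We then show that the bundle splits according to the factors. Over a complex group factor the bundle must be trivial, because $H^2$-type (Chern class) data vanish: the $\mathsf{G}_{\mathrm{nc}}$-invariant closed $2$-forms come from $\mathfrak{z}(\mathfrak{u})^*$, which is zero in that case. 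Over Hermitian symmetric factors the curvature forms lie in the span of the Kähler forms of the irreducible factors. Using Schur's lemma and the $J$-invariance of the connection form, we group each fibre direction with the symmetric factors it pairs with, which produces irreducible factors of the third type, plus flat $\mathbb{C}^k$ factors. A flat $\mathbb{C}^k$ factor cannot occur, since it would obstruct transitivity of $\mathsf{G}_{\mathrm{nc}}$ (it would be an abelian quotient with no transitive semisimple action).

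The main obstacle is the classification step: ruling out exotic $(\mathfrak{u},J)$ on $\mathfrak{m}$ for non-compact $\mathfrak{g}$ by combining integrability with positivity of $Q|_{\mathfrak{m}}$. The first thing to try is to decompose $\mathfrak{m}$ into $\mathfrak{k}$- and $\mathfrak{p}$-parts with respect to a Cartan decomposition adapted to $\mathfrak{u}$.
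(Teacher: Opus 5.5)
Your proposal has two genuine gaps: one in the sufficiency direction and one in the necessity direction.

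\textbf{Complex simple Lie group factors (sufficiency).} The canonical metric $g=-\mathscr{B}|_{\mathfrak{k}\otimes\mathfrak{k}}\oplus\mathscr{B}|_{\mathtt{i}\mathfrak{k}\otimes\mathtt{i}\mathfrak{k}}$ is not the Killing form of $\mathfrak{s}^{\mathbb{R}}$. That form is indefinite, of signature $(\dim\mathfrak{k},\dim\mathfrak{k})$. Nor is $g$ bi-invariant: a non-compact simple group has no bi-invariant Riemannian metric, and for $X\in\mathtt{i}\mathfrak{k}$ the operator $\mathrm{ad}(X)$ is $g$-\emph{symmetric}.

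This breaks your argument in two ways. The connection making left-invariant fields parallel is Hermitian, but its torsion $-[X,Y]$ is not totally skew-symmetric. The connection making right-invariant fields parallel is not even metric. So neither is the Bismut connection, which is in fact not flat (its holonomy algebra is $\mathfrak{k}$), and there is no bi-invariance available to make its torsion parallel.

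The missing idea is to enlarge the symmetry group, as in Proposition~\ref{prop:complexss}. Use left $\mathsf{S}$ and right $\mathsf{K}$ translations, so the symmetry algebra is $\mathfrak{s}^{\mathbb{R}}\oplus\mathfrak{k}$ with isotropy $\{(U,U)\}$. Take the complement $\{(U,2U)\}+\{(\mathtt{i}U,0)\}$ and the invariant form $\mathscr{B}\oplus(-\tfrac12\mathscr{B}|_{\mathfrak{k}\otimes\mathfrak{k}})$; these exhibit natural reductivity, and Theorem~\ref{thm:BASnatrad} then applies. Your torus-bundle sketch with $\mathfrak{l}=\mathfrak{g}\oplus\mathfrak{a}$ has the right shape. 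There you still have to handle an arbitrary fibre metric $S$, which the paper does via $\mathfrak{m}_1=\{(U,(S+\mathrm{Id})U)\}$.

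\textbf{Classification step (necessity).} You correctly name this as the main obstacle, but you only state the expected outcome. Positivity of $Q|_{\mathfrak{m}}$ plus integrability alone do not exclude two possibilities: isotropy algebras that are not ideals of a maximal compact subalgebra, and complex structures mixing $\mathfrak{k}$- and $\mathfrak{p}$-directions.

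The missing input is \cite[Theorem 5.2]{MR0787113}. When $(M,g)$ is naturally reductive under a transitive semisimple $\mathsf{G}$ of non-compact type, it gives:
\begin{itemize}
\item the isotropy $\mathfrak{h}$ is an ideal of a maximal compact $\mathfrak{k}$;
\item the metric is also invariant under the right action of $\mathsf{K}$;
\item $g(\mathfrak{u},\mathfrak{p})=0$ for $\mathfrak{u}=\mathfrak{h}^{\perp}\cap\mathfrak{k}$.
\end{itemize}
Combined with Theorem~\ref{thm:BASnatrad}, these isometries are holomorphic, so $J$ is $\mathrm{ad}(\mathfrak{k})$-invariant on $\mathfrak{u}+\mathfrak{p}$. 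Schur's lemma then works directly on $M$:
\begin{itemize}
\item On a complex simple ideal $\tilde{\mathfrak{k}}+\mathtt{i}\tilde{\mathfrak{k}}$ the isotropy must vanish. Otherwise $J$ would be an invariant complex structure on the real-type irreducible module $\mathtt{i}\tilde{\mathfrak{k}}$.
\item These ideals therefore split off holomorphically, with $J=\lambda\mathtt{i}$ on $\tilde{\mathfrak{k}}$. The identity $N_J(X,Y)=(1-\lambda^2)[X,Y]$ forces $\lambda=\pm1$ and equal scalings on $\tilde{\mathfrak{k}}$ and $\mathtt{i}\tilde{\mathfrak{k}}$. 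This gives a multiple of the canonical metric, not necessarily the canonical one itself.
\item On the absolutely simple part, $\mathfrak{t}_i=\mathfrak{u}\cap\mathfrak{g}_i$ contains no module equivalent to $\mathfrak{p}_i$. Hence $J$ preserves $\mathfrak{t}$ and each $\mathfrak{p}_i$, and $\mathfrak{t}$ is abelian.
\item Computing $Q$ on $\mathfrak{aut}=\mathfrak{g}'\oplus\mathfrak{t}$ shows that the trivial module is $(\Delta\mathfrak{t})^{\perp}$. So the base of the canonical reduction is $\mathsf{G}'/\mathsf{K}'$ with the symmetric metric.
\end{itemize}
This route splits off the complex group factors before any reduction. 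Your reassembly step (Chern-class triviality over the group factors, regrouping fibre directions) then becomes unnecessary.
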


Finally, we classify simply-connected, nilpotent Lie groups that admit a left-invariant BAS structure.

\begin{thmint} \label{thm:MAIN-nil}
Let $(\mathsf{N},J,g)$ be a simply-connected nilpotent Lie group, with Lie algebra $\mathfrak{n}$, endowed with a left-invariant Hermitian structure. Then $(\mathsf{N},J,g)$ is Bismut--Ambrose--Singer if and only if $\mathfrak{n}$ admits a $(J,g)$-unitary basis $\{z_1,Jz_1,\dots,z_{k},Jz_{k},e_1,Je_1,\dots,e_{m},Je_{m}\}$ such that $\{z_1,Jz_1,\dots,z_{k},Jz_{k}\}$ spans the centre $\mathfrak{z}(\mathfrak{n})$ of $\mathfrak{n}$ and the only non-zero Lie brackets are
$$
[e_{j},Je_{j}] = \sum_{i=1}^{k}\alpha_j^i z_i+\beta_j^i Jz_i, \qquad j=1,\dots,m,
$$
for some coefficients $\alpha_j^i, \beta_j^i \in \mathbb{R}$. Moreover, the canonical reduction of $(\mathsf{N},J,g)$ is given by the projection
$$
\pi:\mathsf{N}\to \mathsf{N}/\mathsf{Z}(\mathsf{N})
$$
with respect to the centre $\mathsf{Z}(\mathsf{N})$ of $\mathsf{N}$, and the induced Hermitian structure on the base is K\"ahler flat.
\end{thmint}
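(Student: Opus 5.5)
We prove both directions by working on the Lie algebra $\mathfrak{n}$ with left-invariant tensors. Recall that the Bismut connection of a left-invariant Hermitian structure is $\nabla^B_XY = \nabla^{LC}_XY + \tfrac12 g^{-1}H(X,Y,\cdot)$, with $H = dJ\omega$ (up to sign convention). Since $\nabla^B$ is left-invariant, it is determined by a linear map $\Lambda:\mathfrak{n}\to\mathfrak{so}(\mathfrak{n},g)\cap\mathfrak{gl}(\mathfrak{n},J)$. By the Nomizu construction (Section \ref{sect:Nomcons}), the BAS condition becomes an algebraic condition: the torsion $T$ and curvature $R$ must be annihilated by the derivations $\Lambda(X)$ for all $X\in\mathfrak{n}$.

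\emph{Sufficiency.} Assume $\mathfrak{n}$ has the stated bracket structure. Write $\mathfrak{z}=\mathrm{span}\{z_i,Jz_i\}$ and $\mathfrak{v}=\mathrm{span}\{e_j,Je_j\}$. Then $[\mathfrak{v},\mathfrak{v}]\subset\mathfrak{z}$, $\mathfrak{z}$ is central, and $J$ preserves both $\mathfrak{z}$ and $\mathfrak{v}$.
\begin{itemize}
\item[$\bcdot$] We first compute $H(X,Y,Z)=-g([JX,JY],Z)-\text{cyclic}$, or directly $\nabla^B$ via the Koszul-type formula $g(\nabla^B_XY,Z)=\tfrac12\big(g([X,Y],Z)-g([Y,Z],X)+g([Z,X],Y)\big)+\tfrac12 H(X,Y,Z)$.
\item[$\bcdot$] The only brackets are $[e_j,Je_j]=\zeta_j\in\mathfrak{z}$. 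From this one checks that $\nabla^B_{\mathfrak{z}}$ acts on $\mathfrak{v}$ by an explicit skew $J$-linear map, namely $e_j\mapsto g(\zeta_j,\cdot)\,Je_j$-type terms, and acts trivially on $\mathfrak{z}$. Likewise $\nabla^B_{\mathfrak{v}}$ maps $\mathfrak{v}\to\mathfrak{z}$ and $\mathfrak{z}\to\mathfrak{v}$ by components of $\zeta_j$.
\item[$\bcdot$] We then compute $T$ and $R$, and verify that they are $\Lambda(X)$-invariant.
\end{itemize}
The key simplification is that each $2$-plane $\langle e_j,Je_j\rangle$ is $\Lambda$-stable up to $\mathfrak{z}$-components, and $J$ acts on it as rotation. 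The check therefore reduces to $2\times2$ rotation blocks, which commute, plus $\mathfrak{z}$-valued terms. This is a routine, if lengthy, computation.

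\emph{Necessity.} This is the main step. By Theorem \ref{thm:MAIN-general} and its proof, the canonical presentation is $\mathsf{L}/\mathsf{U}$. Since $\mathsf{N}$ acts simply transitively, we expect $\mathsf{L}=\mathsf{N}\rtimes\mathsf{U}$, naturally reductive with respect to $\mathfrak{n}$ as a reductive complement (Theorem \ref{thm:BASnatrad}). Naturally reductive nilpotent Lie groups are known to be at most $2$-step nilpotent (Gordon's theorem \cite{MR0787113}). In addition, $\mathfrak{z}\perp[\mathfrak{n},\mathfrak{n}]^\perp$ in the sense that $\mathfrak{n}=\mathfrak{z}\oplus\mathfrak{v}$ orthogonally, the maps $j(Z)=\mathrm{ad}^*$ restricted to $\mathfrak{v}$ are skew, and $H$ is $\mathfrak{u}$-invariant.

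Next we use the complex structure. From Proposition \ref{prop:thetaholkill}-type arguments, $\nabla^B$-parallel tensors are $\mathsf{U}$-invariant, and $J$ is one of them. We show that $J\mathfrak{z}=\mathfrak{z}$; I would try to prove this by showing that the $\mathsf{N}$-invariant holomorphic Killing fields found in the proof of Theorem \ref{thm:MAIN-general} (the normalizer part $N_{\mathsf{L}}(\mathsf{U})^0/\mathsf{U}$) coincide with the centre. It follows that $J\mathfrak{v}=\mathfrak{v}$.

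The crucial claim is that $[X,Y]=0$ whenever $X,Y\in\mathfrak{v}$ are such that $Y$ is orthogonal to both $X$ and $JX$, which forces the block structure. I would derive it from the integrability of $J$, namely $N_J=0$ restricted to $\mathfrak{v}$ with values in $\mathfrak{z}$, combined with the parallelism of $H$. On a $2$-step algebra, $\nabla^BH=0$ means that $H|_{\mathfrak{v}\times\mathfrak{v}\times\mathfrak{z}}$ is invariant under $j(\mathfrak{z})$ together with the $J$-twisted terms. Expressing $H$ through $\omega$ via $H(X,Y,Z)=d\omega(JX,JY,JZ)$, these conditions say that the maps $j(Z)$ and $j(Z)J$ are simultaneously skew and commute with $J$ and with one another. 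A commuting family of skew $J$-linear maps can be simultaneously block-diagonalized into complex lines $\langle e_j,Je_j\rangle$. The vanishing of brackets between different lines then follows from $N_J=0$, giving exactly the stated brackets. I expect this step — extracting commutativity of the $j(Z)$ from parallel torsion plus curvature — to be the main obstacle, and I would use $\nabla^BR=0$ to supply commutativity if $H$ alone does not suffice.

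\emph{Canonical reduction.} The invariant holomorphic Killing fields correspond to $N_{\mathfrak{l}}(\mathfrak{u})/\mathfrak{u}$. We check that this is exactly $\mathfrak{z}$: central elements give right translations commuting with $\mathsf{U}$ and preserving $J$ and $g$, and conversely the explicit form forces any such field into $\mathfrak{z}$. Theorem \ref{thm:generalstructure} then yields the projection $\mathsf{N}\to\mathsf{N}/\mathsf{Z}(\mathsf{N})$. The quotient is the abelian group $\mathfrak{v}$ with the induced constant Hermitian structure, which is Kähler flat.
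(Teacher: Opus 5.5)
\textbf{There is a genuine gap, and it is in the necessity direction.} The step that carries all of the content of that direction is only asserted or explicitly deferred. That step is: $J\mathfrak z=\mathfrak z$, each $j(z)$ commutes with $J$, and the $j(z)$ commute with one another. The mechanisms you propose for it do not work.
\begin{itemize}
\item \emph{Route to $J\mathfrak z=\mathfrak z$ via $\mathfrak f$.} This is circular. $\mathfrak f$ can only be computed once $\mathfrak u$ is known. If the compact Lie algebra structure $\mathfrak k$ that Gordon's Theorem 4.8 puts on $\mathfrak z$ were non-abelian, $\mathfrak f$ would only detect $\mathfrak z(\mathfrak k)$.
\item \emph{Integrability as a substitute.} It cannot replace the metric input. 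On $\mathfrak h_3(\mathbb C)^{\mathbb R}$ with its bi-invariant complex structure and any Hermitian metric, $J$ preserves the centre and $N_J=0$. Yet $Jj(z)=j(Jz)=-j(z)J$, so the $j(z)$ are $J$-antilinear.
\item \emph{Your ``crucial claim''.} It is false on BAS examples. In $\mathfrak h_3(\mathbb R)\oplus\mathfrak h_3(\mathbb R)$ with $[e_1,Je_1]=z_1$ and $[e_2,Je_2]=Jz_1$, take $X=e_1+e_2$ and $Y=Je_1-Je_2$. Then $Y\perp X,JX$, but $[X,Y]=z_1-Jz_1\neq0$.
\item \emph{Brackets between different lines.} Their vanishing comes from the simultaneous diagonalization itself, since $g(z,[e_j,e_l])=g(j(z)e_j,e_l)=0$, not from $N_J=0$.
\item \emph{Smaller points.} $\mathsf{Aut}^0=\mathsf N_L\rtimes\mathsf H$ is not something to ``expect''; it is Wilson's theorem \cite{MR0661539}. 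The naturally reductive complement is the graph $\{(X,\rho(X))\}$, not $\mathfrak n$. Taking $\mathfrak n$ itself would force every $\mathrm{ad}(X)$ to be skew, hence $\mathfrak n$ abelian.
\end{itemize}

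\textbf{The missing ingredient is that $J$ is abelian, $[JX,JY]=[X,Y]$.} The paper gets this from the literature, after Gordon's $2$-step result: $J$ is nilpotent by \cite{MR2533671}, hence abelian by \cite{ZZpt}. Abelianness gives $J\mathfrak z=\mathfrak z$ and $[j(z),J]=0$. The paper then uses the full Gordon Theorem 4.8: the centre is a compact Lie algebra $\mathfrak k$, $j=\varphi$ is a representation, and the isotropy part is $\rho(K)=\varphi(K)\oplus\mathrm{ad}(K)$ with $\rho(\mathfrak v)=0$. From there:
\begin{itemize}
\item $\nabla J=0$ gives $\rho(\mathfrak n)\subset\mathfrak u(\mathfrak n,J,g)$.
\item So $J|_{\mathfrak k}$ is an $\mathrm{ad}(\mathfrak k)$-invariant complex structure on a compact Lie algebra. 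This forces $\mathfrak k$ to be abelian and the $\varphi(K)$ to commute.
\item Proposition \ref{prop:BASnilalg} then diagonalizes.
\item The same explicit $\mathfrak u=\rho(\mathfrak n)$ gives $\mathfrak f\simeq\mathfrak z(\mathfrak n)$ for the canonical reduction. Your reduction step inherits the gap for this reason.
\end{itemize}

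Once $J$ is known to be abelian, your own left-invariant framework would also finish directly. The torsion is \eqref{eq:torsionN}, so $g(\nabla_XY,Z)=-g([Y,Z],X)$. Hence $\nabla_{\mathfrak v}=0$, and $\nabla_z$ acts as $-j(z)$ on $\mathfrak v$ and as $0$ on $\mathfrak z$. Then $\nabla T=0$ is exactly $[j(z),j(z')]=0$.

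This computation also corrects your sufficiency sketch. There $\nabla_{\mathfrak v}$ vanishes rather than mixing $\mathfrak v$ and $\mathfrak z$. $\nabla T=\nabla R=0$ follow from the commutation of the $j(z)$, which act on each $\langle e_j,Je_j\rangle$ as multiples of $J$. With that fix, your direct verification is a fine alternative to the paper's appeal to the converse of Gordon's theorem together with Theorem \ref{thm:BASnatrad}.
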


Using the classification in Theorem \ref{thm:MAIN-nil}, we provide an explicit list of nilpotent Lie algebras that admits a BAS structure in low dimensions (see Corollary \ref{cor:BASnil}). We emphasize that, by virtue of \cite[Proposition 1.8]{MR4920800}, Theorem \ref{thm:MAIN-nil} classifies all simply-connected nilpotent Lie groups admitting a left-invariant Hermitian structure with parallel Bismut torsion.
\smallskip

In the general case, none of the three subgroups $\mathsf{G}_{\mathrm{c}}, \mathsf{G}_{\mathrm{nc}}, \mathsf{N} \subset \mathsf{L}$ needs to act transitively on $M$. Their orbits are nevertheless naturally reductive and totally geodesic with respect to the induced metric (see \cite[Proposition 2.8]{MR0787113}). Since the complex structure need not preserve the tangent spaces of these orbits, they do not necessarily inherit a BAS structure from $M$. This phenomenon occurs in the examples constructed in Proposition \ref{prop:BAS-NGH}, where BAS manifolds are obtained by combining a nilpotent factor with compact and non-compact homogeneous factors. This construction generalizes all previously known examples in the literature \cite{MR4920800, NiZh23, ZZbal, BM26}. Another instance in which the three geometries appear simultaneously is provided by Proposition \ref{prop:BAS-SKT}, where we classify complete, simply-connected Hermitian manifolds that are both BAS and \emph{pluriclosed} by exploiting the results of \cite{BPT24}.
\smallskip

The paper is organized as follows. Section \ref{sect:prel} collects background material on naturally reductive manifolds and on the Bismut connection. In Section \ref{sect:AS}, we prove that the BAS condition is equivalent to the Hermitian naturally reductive condition (see Theorem \ref{thm:BASnatrad}). In Section \ref{sect:reductive}, we prove Theorem \ref{thm:MAIN-general}. In Section \ref{sect:compact}, we prove Theorem \ref{thm:MAIN-cpt}. In Section \ref{sect:noncompact}, we prove Theorem \ref{thm:MAIN-noncpt}. In Section \ref{sect:nil}, we prove Theorem \ref{thm:MAIN-nil}. Finally, in Section \ref{sect:final} we focus on the construction of BAS manifolds, and we classify complete, simply-connected, pluriclosed BAS manifolds.


\medskip
\section{Preliminaries}
\label{sect:prel} \setcounter{equation} 0

\subsection{The Bismut connection} \hfill \par

Let $(M,J,g)$ be a complete Hermitian manifold of complex dimension $n$ and denote by $\omega \coloneqq g(J\cdot,\cdot)$ the associated fundamental $2$-form. Given a vector bundle $E \to M$ over $M$, we denote by $\Gamma(E)$ the space of its smooth sections. We further denote by
$$
\mathsf{Aut} = \mathsf{Aut}(M,J,g) \coloneqq \big\{f \in {\rm Diff}(M) : f^*g = g \, , \,\, f^*J = J \big\}
$$
the group of holomorphic isometries of $(M,J,g)$, by $\mathsf{Aut}^0 = \mathsf{Aut}^0(M,J,g)$ its identity component and by
$$
\mathfrak{aut} = \mathfrak{aut}(M,J,g) \coloneqq \big\{V \in \Gamma(TM) : \mathcal{L}_Vg = 0 \, , \,\, \mathcal{L}_VJ = 0 \big\}
$$
the Lie algebra of real holomorphic Killing vector fields of $(M,J,g)$. Let $D$ be the Levi-Civita connection of $(M,g)$ and let $\nabla$ be the \emph{Bismut connection} of $(M,J,g)$, defined by
\begin{equation} \label{eq:Bismutconn}
\nabla_XY \coloneqq D_XY +\tfrac12T(X,Y) \,\, , \quad g(T(X,Y),Z) \coloneqq {\rm d}\omega(JX,JY,JZ) \,\, .
\end{equation}
The tensor $T$ coincides with the torsion of $\nabla$, namely,
\begin{equation} \label{eq:T}
T(X,Y) = \nabla_XY -\nabla_YX -[X,Y] \,\, .
\end{equation}
We recall that, by \cite{MR1006380, MR1456265}, the Bismut connection is characterized as the unique linear connection with \emph{totally skew-symmetric torsion} which preserves the Hermitian structure, \emph{i.e.}, $\nabla J = \nabla g=0$. We also denote by $\mathrm{Rm}$ the Riemannian curvature tensor and by $R$ the \emph{Bismut curvature tensor}, \emph{i.e.},
$$\begin{aligned}
{\rm Rm}(X,Y).Z &\coloneqq D_{[X,Y]}Z -D_XD_YZ +D_YD_XZ \,\, , \\
R(X,Y).Z &\coloneqq \nabla_{[X,Y]}Z -\nabla_X\nabla_YZ +\nabla_Y\nabla_XZ \,\, .
\end{aligned}$$
We recall the following identities.

\begin{lemma}
Let $(M,J,g)$ be a Hermitian manifold. Then
\begin{equation} \label{eq:TDJ}
g(T(X,Y),Z) = -g((D_{JX}J)Y,Z) -g((D_{JY}J)Z,X) -g((D_{JZ}J)X,Y) \,\, .
\end{equation}
Moreover, if $\nabla T = 0$, then
\begin{equation} \label{eq:R-Rm}
R(X,Y).Z -{\rm Rm}(X,Y).Z = -\tfrac12T(T(X,Y),Z) -\tfrac14T(T(Y,Z),X) -\tfrac14T(T(Z,X),Y) \,\, .
\end{equation}
\end{lemma}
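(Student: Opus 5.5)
For \eqref{eq:TDJ}, I will expand ${\rm d}\omega$ through the Levi-Civita connection. Since $D$ is torsion-free, we have ${\rm d}\omega(X,Y,Z) = (D_X\omega)(Y,Z) + (D_Y\omega)(Z,X) + (D_Z\omega)(X,Y)$. Moreover $(D_X\omega)(Y,Z) = g((D_XJ)Y,Z)$, which follows because $Dg=0$ and $\omega = g(J\cdot,\cdot)$. Substituting $JX,JY,JZ$ gives
$$
g(T(X,Y),Z) = g((D_{JX}J)JY,JZ) + g((D_{JY}J)JZ,JX) + g((D_{JZ}J)JX,JY) \,\, .
$$
Next I will use two identities. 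Differentiating $J^2=-1$ gives $(D_WJ)J = -J(D_WJ)$. Since $J$ is $g$-orthogonal, $D_WJ$ is $g$-skew. Together these give
$$
g((D_WJ)JY,JZ) = -g(J(D_WJ)Y,JZ) = -g((D_WJ)Y,Z) \,\, .
$$
This yields the three terms of \eqref{eq:TDJ}, up to relabelling the cyclic positions as written.

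For \eqref{eq:R-Rm}, I will compute directly from $\nabla = D + \tfrac12 T$. Write
$$
R(X,Y)Z = \nabla_{[X,Y]}Z - \nabla_X\nabla_YZ + \nabla_Y\nabla_XZ \,\, .
$$
Expanding it gives ${\rm Rm}(X,Y)Z$ plus the following terms:
\begin{itemize}
\item a first-order part $-\tfrac12\big((D_XT)(Y,Z) - (D_YT)(X,Z)\big)$;
\item the term $\tfrac12 T([X,Y],Z) - \tfrac12 T(D_XY - D_YX, Z)$, which vanishes by torsion-freeness of $D$;
\item a quadratic part $-\tfrac14\big(T(X,T(Y,Z)) - T(Y,T(X,Z))\big)$.
\end{itemize}
To use $\nabla T=0$, I rewrite $D_XT = \nabla_XT - \tfrac12 T_X\cdot T$, where $T_X\cdot T$ is the derivation action of $T_X = T(X,\cdot)$ on $T$. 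This gives
$$
(D_XT)(Y,Z) = -\tfrac12\big(T(X,T(Y,Z)) - T(T(X,Y),Z) - T(Y,T(X,Z))\big) \,\, ,
$$
and similarly with $X$ and $Y$ swapped.

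Substituting, all remaining terms are quadratic in $T$. I will simplify them using the skew-symmetry $T(A,B) = -T(B,A)$. The first-order part contributes $\tfrac14$ times the cyclic-type sum. After collecting, the $T(T(X,Y),Z)$ terms combine to coefficient $-\tfrac12$. The terms $T(T(Y,Z),X)$ and $T(T(Z,X),Y)$ each end up with coefficient $-\tfrac14$.

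The main obstacle is this careful bookkeeping of signs and of the $\tfrac12$ versus $\tfrac14$ coefficients, together with the sign convention for curvature used in the paper. I will check the final identity against a quick sanity case: a bi-invariant Lie group with $T(X,Y) = -[X,Y]$, where the Jacobi identity constrains the outcome. Only skew-symmetry of $T$ in its arguments is needed, not total skew-symmetry of $g(T\cdot,\cdot)$.
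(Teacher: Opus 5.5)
Your proposal is correct and essentially matches the paper's route. The first identity is exactly the ``straightforward computation based on \eqref{eq:Bismutconn}'' the paper alludes to: you write $\mathrm{d}\omega$ as a cyclic sum of $D\omega$ and use $(D_WJ)J=-J(D_WJ)$ together with the $g$-orthogonality of $J$. For \eqref{eq:R-Rm}, where the paper only cites the literature, your expansion of $\nabla=D+\tfrac12T$ combined with $D_XT=-\tfrac12\,(X\lrcorner T)\cdot T$ is the standard derivation behind the cited formula. The terms collect to $\tfrac14T(X,T(Y,Z))-\tfrac14T(Y,T(X,Z))-\tfrac12T(T(X,Y),Z)$, which is the stated right-hand side after using skew-symmetry. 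Two minor remarks: the $g$-skewness of $D_WJ$ is never actually used in the first part, and no cyclic relabelling is needed, since your three terms already match \eqref{eq:TDJ} verbatim.
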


\begin{proof}
We first notice that \eqref{eq:TDJ} follows by a straightforward computation based on \eqref{eq:Bismutconn}. Moreover, \eqref{eq:R-Rm} follows, \emph{e.g.}, by \cite{MR1822270}.
\end{proof}

\smallskip

For later use, we recall that the \emph{Chern connection} of $(M,J,g)$ is defined by
\begin{equation*} \label{eq:Chernconn}
g(\nabla^{\mathrm{Ch}}_XY,Z) \coloneqq g(D_XY,Z) +\tfrac12\mathrm{d}\omega(JX,Y,Z)
\end{equation*}
and it is the unique Hermitian connection whose torsion 
\begin{equation} \label{eq:defTCh}
-2g(T^{\rm Ch}(X,Y),Z) = \mathrm{d}\omega(JX,Y,Z) +\mathrm{d}\omega(X,JY,Z)
\end{equation}
is of type $(1,1)$, \emph{i.e.},
\begin{equation} \label{eq:TCh11}
JT^{\rm Ch}(X,Y) = T^{\rm Ch}(JX,Y) = T^{\rm Ch}(X,JY) \,\, .
\end{equation}
We denote by $\theta \in \Gamma(T^*M)$ the \emph{Lee form} of $(M,J,g)$, defined locally as
\begin{equation} \label{def:theta}
\theta \coloneqq \sum_{\alpha=1}^{2n} g(T^{\mathrm{Ch}}(\,\cdot\,,e_{\alpha}),e_{\alpha}) \,\, ,
\end{equation}
where $\{e_{\alpha}\}$ is a local unitary frame for $TM$. We also denote by $\theta^{\sharp} \in \Gamma(TM)$ its dual vector field, called \emph{Lee vector field}.

\subsection{Naturally reductive manifolds} \hfill \par

We recall that a (connected) Riemannian manifold $(M,g)$ is said to be \emph{locally homogeneous} if its pseudogroup of local isometries acts transitively on it, \emph{i.e.}, if for any choice of $x,y \in M$ there exist two open sets $\mathscr{U}, \mathscr{V} \subset M$ and a local isometry $f: \mathscr{U} \to \mathscr{V}$ such that $x \in \mathscr{U}$, $y \in \mathscr{V}$ and $f(x) = y$. We recall the following characterization for local homogeneity.

\begin{theorem}[\cite{MR1261452}, Theorem 2.1] \label{thm:AS}
A Riemannian manifold $(M,g)$ is locally homogeneous if and only if it admits a metric connection with parallel torsion and curvature. Any such connection is called an \emph{Ambrose--Singer connection}.
\end{theorem}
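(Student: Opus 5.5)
We argue the two implications separately. The statement is local, so we work on a neighbourhood of an arbitrary point.

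\emph{Easy direction.} Assume $(M,g)$ is locally homogeneous. The isometry pseudogroup acts transitively. Fix $o\in M$ and take $\mathfrak{g}$, the Lie algebra of germs of Killing fields at $o$ (finite dimensional, at most $\tfrac12 m(m+1)$ with $m=\dim M$). Let $\mathfrak{k}\subset\mathfrak{g}$ be the isotropy subalgebra of fields vanishing at $o$. Transitivity makes the evaluation map $\mathfrak{g}\to T_oM$ surjective. Since $\mathfrak{k}$ is compactly embedded (it acts faithfully on $T_oM$ by skew endomorphisms), we can choose an $\mathrm{ad}(\mathfrak{k})$-invariant complement $\mathfrak{m}$, giving a reductive splitting $\mathfrak{g}=\mathfrak{k}\oplus\mathfrak{m}$.

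On the local quotient $M\simeq \mathsf{G}/\mathsf{K}$ near $o$ we take the canonical connection of this decomposition. Concretely, we declare the $\mathfrak{m}$-components of the Killing fields parallel along the corresponding one-parameter orbits. Equivalently, we use the Nomizu formula $\Lambda(X)Y=\tfrac12[X,Y]_{\mathfrak{m}}$ with the first term dropped, i.e.\ $\Lambda=0$ on $\mathfrak{m}$. This connection is invariant under the local isometries, so it is metric. Its torsion $-[X,Y]_{\mathfrak{m}}$ and curvature $-\mathrm{ad}([X,Y]_{\mathfrak{k}})$ are invariant tensors, and for the canonical connection invariant tensors are parallel. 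Hence the connection is Ambrose--Singer.

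\emph{Hard direction.} Assume $\widetilde\nabla$ is metric with $\widetilde\nabla\widetilde T=0$ and $\widetilde\nabla\widetilde R=0$. We follow Ambrose--Singer and Tricerri--Vanhecke.

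1. Fix $o$ and set $V=T_oM$. Let $\mathfrak{h}\subset\mathfrak{so}(V)$ be the holonomy algebra of $\widetilde\nabla$. Because $\widetilde T$ and $\widetilde R$ are parallel, they are $\mathfrak{h}$-invariant, and by Ambrose--Singer $\mathfrak{h}$ is spanned by the values $\widetilde R(X,Y)$ at $o$.

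2. On $\mathfrak{g}=\mathfrak{h}\oplus V$ define the Nomizu bracket:
\[
[A,B]=AB-BA,\qquad [A,X]=AX,\qquad [X,Y]=-\widetilde T(X,Y)+\widetilde R(X,Y).
\]
Here $\widetilde R(X,Y)$ is taken with the paper's sign convention. The Jacobi identity follows from the first and second Bianchi identities for connections with torsion, once the terms $\widetilde\nabla\widetilde T$ and $\widetilde\nabla\widetilde R$ vanish, together with $\mathfrak{h}$-invariance of $\widetilde T$ and $\widetilde R$.

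3. Let $\mathsf{G}$ be the simply connected group with Lie algebra $\mathfrak{g}$, and $\mathsf{H}$ the connected subgroup for $\mathfrak{h}$. The model $\mathsf{G}/\mathsf{H}$ exists locally and carries an invariant metric and an invariant connection with the same torsion and curvature at the origin. Both connections have parallel torsion and curvature, so the Cartan--Ambrose--Hicks-type theorem for connections with parallel torsion and curvature applies: the $\widetilde\nabla$-exponential map intertwines the two structures, yielding local affine isometries. Equivalently, we can use Kiričenko's theorem, or check directly that the map $\exp_o\circ\tau\circ\exp_p^{-1}$ is an isometry via the Jacobi equation, which has constant coefficients after parallel transport.

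4. These local isometries move $o$ to every nearby point. Connectedness then gives transitivity of the pseudogroup.

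The \textbf{main obstacle} is step 3. One must show that the map built from parallel transport is an isometry when $\widetilde\nabla$ has torsion. The plan is to write the Jacobi-field (variation) equation for $\widetilde\nabla$-geodesics in a parallel frame; with parallel torsion and curvature its coefficients are constant and identical at both points. The differentials of the exponential maps therefore correspond under linear isometries, so the composite map preserves $g$.
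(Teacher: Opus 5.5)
The paper does not prove this statement; it quotes it from \cite{MR1261452}, so your proposal has to stand on its own.

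Your implication from an Ambrose--Singer connection to local homogeneity is correct, and simpler than you make it. Apply Kobayashi--Nomizu's rigidity theorem for connections with parallel torsion and curvature \cite[Ch.\ VI, \S 7]{MR1393940} on $M$ itself, to the $\widetilde\nabla$-parallel transport $F\colon T_pM\to T_oM$ along any curve from $p$ to $o$. Since $F$ preserves $g$, $\widetilde T$ and $\widetilde R$, this gives a local affine map $f$ with $f(p)=o$ and $\mathrm{d}f_p=F$. This $f$ is an isometry because $g$ is $\widetilde\nabla$-parallel. Your constant-coefficient Jacobi-equation argument proves exactly this rigidity statement. So the Nomizu algebra and the model $\mathsf{G}/\mathsf{H}$ are superfluous, and Step 3 is not where the difficulty lies.

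The genuine gaps are in the implication you call easy, which is where the content of the theorem sits.

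First, ``transitivity makes $\mathfrak{g}\to T_oM$ surjective'' does not follow as stated. Local homogeneity only provides individual local isometries $f$ with $f(o)=p$, not flows, and there is no Lie group whose orbit map you could differentiate. Showing that germs of Killing fields span $T_oM$ needs a Singer-type argument \cite{MR0131248}.

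Second, the theorem asks for a connection on all of $M$, so it is not a local statement. The canonical connection of an arbitrary $\mathrm{ad}(\mathfrak{k})$-invariant complement near $o$ need not globalize. Transporting $\mathfrak{m}$ to other points by local isometries is consistent only if $\mathfrak{m}$ is invariant under the full, possibly disconnected, isotropy of the pseudogroup, including the monodromy around loops.

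Here is a concrete failure. Take the flat non-orientable manifold $K\times S^1$, with $K$ the flat Klein bottle. Every $\mathfrak{m}_c=\{(c\,v\times(\cdot),\,v) : v\in\mathbb{R}^3\}$ is $\mathrm{ad}(\mathfrak{so}(3))$-invariant, and yields near $o$ the Ambrose--Singer connection $D_XY+c\,X\times Y$ (up to normalising $c$). For $c\neq 0$, no Ambrose--Singer connection on $K\times S^1$ restricts to it. Otherwise $g(\widetilde T(\cdot,\cdot),\cdot)$ would be a parallel $3$-form that is nonzero near $o$, hence nowhere zero, and it would orient $K\times S^1$.

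The standard proof avoids both problems by making the choice canonical at every point:
\begin{enumerate}
\item[(1)] local homogeneity gives infinitesimal homogeneity;
\item[(2)] at each $p$, take the algebra of infinitesimal Killing generators in $\mathfrak{so}(T_pM)\oplus T_pM$ (the Riemannian analogue of $\mathfrak{H}(p)$);
\item[(3)] let $\mathfrak{m}_p$ be the orthogonal complement of its isotropy part for an $\mathrm{O}(T_pM)$-invariant inner product;
\item[(4)] write $\mathfrak{m}_p$ as the graph of a linear map $v\mapsto S_v$, and verify that $D+S$ is a smooth, globally defined Ambrose--Singer connection.
\end{enumerate}
Being defined pointwise from curvature data, $S$ is automatically respected by every local isometry. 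This removes both the need for local Killing fields and the gluing problem.
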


By \cite[Main Theorem]{MR0131248}, if $(M,g)$ is locally homogeneous and complete, then its universal Riemannian cover $(\widetilde{M},g)$ is homogeneous, and hence it is equivariantly diffeomorphic to a (not necessarily unique) quotient $\mathsf{G}/\mathsf{H}$, where $\mathsf{G}$ is a connected, closed group of isometries acting transitively on $\widetilde{M}$ and $\mathsf{H}$ is the compact subgroup of $\mathsf{G}$ that fixes a distinguished point $p \in \widetilde{M}$. The Riemannian homogeneous space $(\widetilde{M} = \mathsf{G}/\mathsf{H}, g)$ is necessarily \emph{reductive}, \emph{i.e.}, the Lie algebra $\mathfrak{h}$ of $\mathsf{H}$ admits an ${\rm Ad}(\mathsf{H})$-invariant complement $\mathfrak{m}$ inside the Lie algebra $\mathfrak{g}$ of $\mathsf{G}$ (see \cite[paragraph 7.22]{MR2371700}). We call $\mathfrak{m}$ a \emph{reductive complement at $p$ for $\mathsf{G}$}. The evaluation map
\begin{equation} \label{eq:evmap}
\mathfrak{m} \to T_{p}\widetilde{M} \,\, , \quad X \mapsto X^*_{p} \coloneqq \tfrac{\rm d}{{\rm d}t} \exp(tX) \cdot p \,\big|_{t=0}
\end{equation}
induces a canonical identification $\mathfrak{m} \simeq T_{p}\widetilde{M}$ and, more generally, a canonical bijection between the space of $\mathsf{G}$-invariant tensor fields on $\widetilde{M}$ and the space of $\mathrm{Ad}(\mathsf{H})$-invariant tensors on $\mathfrak{m}$. For notational convenience, we use the same symbol for a $\mathsf{G}$-invariant tensor field and for its corresponding $\mathrm{Ad}(\mathsf{H})$-invariant tensor on $\mathfrak{m}$. We also recall that
\begin{equation} \label{eq:antiiso}
[X^*,Y^*] = -[X,Y]_{\mathfrak{m}}^* \quad \text{for every $X, Y \in \mathfrak{m}$} \,\, ,
\end{equation}
where the subscript denotes the projection onto $\mathfrak{m}$ with respect to the direct sum decomposition of vector spaces $\mathfrak{g} = \mathfrak{h} +\mathfrak{m}$.

\begin{definition} \label{def:natred}
Let $(M,g)$ be a complete, locally homogeneous Riemannian space and denote by $(\widetilde{M},g)$ its universal Riemannian cover. Let also $\mathsf{G}$ be a transitive group of isometries of $(\widetilde{M},g)$, $p \in \widetilde{M}$ a point and $\mathfrak{m}$ a reductive complement at $p$ for $\mathsf{G}$. Then, $(M,g)$ is called \emph{$(\mathsf{G}, p, \mathfrak{m})$-naturally reductive} if
\begin{equation} \label{eq:natrad} \tag{NR}
g([X,Y]_{\mathfrak{m}},Z) +g(Y,[X,Z]_{\mathfrak{m}}) = 0 \quad \text{for every $X, Y, Z \in \mathfrak{m}$} \,\, .
\end{equation}
We say that $(M,g)$ is \emph{$\mathsf{G}$-naturally reductive} if it is $(\mathsf{G}, p, \mathfrak{m})$-naturally reductive for some choice of $p$ and $\mathfrak{m}$ as above.
\end{definition}

As a corollary of Theorem \ref{thm:AS}, it follows that a complete Riemannian manifold is naturally reductive if and only if it admits an Ambrose--Singer connection with totally skew-symmetric torsion. Indeed, the existence of such connection follows from the naturally reductive condition \eqref{eq:natrad} by \cite[Ch.\ X, Theorem 2.1]{MR1393941}. Conversely, the the naturally reductive condition \eqref{eq:natrad} follows from the existence of an Ambrose--Singer connection with totally skew-symmetric torsion via the so called \emph{Nomizu construction} \cite{MR0059050} (see also \cite{MR3921240}), which we shall discuss in the next section for the Hermitian case.

\medskip
\section{Hermitian naturally reductive manifolds} 
\label{sect:AS} \setcounter{equation} 0

We fix some notation. Les us denote by $\mathcal{T}(\mathbb{R}^{2n})$ the space of tensors on $\mathbb{R}^{2n}$. If $v \in \mathbb{R}^{2n}$ is a vector and $\tau \in \mathcal{T}(\mathbb{R}^{2n})$ is of type $(r,s)$ with $s \geq 1$, we define $v\,\lrcorner\,\tau$ to be the $(r,s-1)$-tensor given by
$$
(v\,\lrcorner\,\tau)(w_1,{\dots},w_{s-1}) \coloneqq \tau(v,w_1,{\dots},w_{s-1}) \,\, .
$$
Moreover, we denote by
$$
\mathfrak{gl}(2n,\mathbb{R}) \times \mathcal{T}(\mathbb{R}^{2n}) \to \mathcal{T}(\mathbb{R}^{2n}) \,\, , \quad (A, \tau) \mapsto A \cdot \tau
$$
the Lie algebra action defined as the differential of the natural action of $\mathsf{GL}(2n,\mathbb{R})$ on $\mathcal{T}(\mathbb{R}^{2n})$ induced by change of basis. A direct computation shows that
\begin{equation} \label{eq:commact}
[A,B]\cdot \tau = A \cdot B \cdot \tau - B \cdot A \cdot \tau  \quad \text{for every $A,B \in \mathfrak{gl}(2n,\mathbb{R})$ and $\tau \in \mathcal{T}(\mathbb{R}^{2n})$}\,\, ,
\end{equation}
where $[A,B] = A \circ B - B \circ A$ denotes the commutator in $\mathfrak{gl}(2n,\mathbb{R})$. These linear-algebraic notions can be naturally formulated on smooth manifolds.

\subsection{The Bismut--Ambrose--Singer condition} \hfill \par

Let $(M,J,g)$ be a Hermitian manifold. Both the Levi-Civita connection $D$ and the Bismut connection $\nabla$ act on tensor fields on $M$ in a natural way, and they are related by the following identity.

\begin{lemma}
Let $(M,J,g)$ be a Hermitian manifold. If $\tau$ is a $(r,s)$-tensor field and $V$ is a vector field on $M$, then
\begin{equation} \label{eq:derivations}
\nabla_V \tau = D_V \tau + \big(\tfrac12 V \lrcorner T\big) \cdot \tau \,\, .
\end{equation}
\end{lemma}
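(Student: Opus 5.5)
The identity \eqref{eq:derivations} is tensorial and local, and both sides are derivations of the tensor algebra that commute with contractions. Hence it suffices to check it on functions, on vector fields and on $1$-forms, and then extend by the Leibniz rule.

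Recall the convention for the Lie algebra action: for $A \in \mathfrak{gl}$ acting on a $(r,s)$-tensor (by differentiating the change-of-basis action), $A\cdot f = 0$ on functions, $A\cdot Y = AY$ on vectors, and $(A\cdot\alpha)(W) = -\alpha(AW)$ on covectors. For a general tensor one gets the usual derivation formula, with $+A$ on each contravariant slot and $-A$ composed on each covariant slot. Here $A = \tfrac12 V\lrcorner T$ is the endomorphism $Y \mapsto \tfrac12 T(V,Y)$.

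\textbf{Step 1 (functions).} Both $\nabla_V f$ and $D_V f$ equal $V(f)$, and $A\cdot f = 0$, so the identity holds.

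\textbf{Step 2 (vector fields).} By \eqref{eq:Bismutconn}, $\nabla_V Y = D_V Y + \tfrac12 T(V,Y) = D_V Y + (\tfrac12 V\lrcorner T)\cdot Y$.

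\textbf{Step 3 ($1$-forms).} For a $1$-form $\alpha$ and a vector field $W$,
$$
(\nabla_V\alpha)(W) = V(\alpha(W)) - \alpha(\nabla_V W) = (D_V\alpha)(W) - \tfrac12\alpha(T(V,W)) = (D_V\alpha)(W) + \big((\tfrac12 V\lrcorner T)\cdot\alpha\big)(W) \,\, .
$$

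\textbf{Step 4 (general tensors).} Any $(r,s)$-tensor is locally a sum of tensor products of vector fields and $1$-forms. Both $\nabla_V$ and $D_V$ satisfy the Leibniz rule on tensor products, and so does the action $A\cdot$, since it is the differential of a group action on tensor products. Therefore the difference $\nabla_V - D_V$ and the operator $A\cdot$ are both derivations of the tensor algebra. They agree on functions, vectors and covectors by Steps 1--3, so they agree on every tensor, which proves \eqref{eq:derivations}.

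\textbf{Main obstacle.} The only delicate point is the sign convention for the action on covariant slots. I will fix it as the infinitesimal version of the pullback by $\exp(-tA)$, which gives the minus sign used in Step 3 and makes the identity consistent on $1$-forms. Note that neither the skew-symmetry of $T$ nor any Hermitian property is needed here; the identity uses only that $\nabla - D = \tfrac12 T$.
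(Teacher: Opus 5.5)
Your proof is correct and follows essentially the paper's route: the paper also notes that $\nabla_V$ and $D_V + \big(\tfrac12 V\lrcorner T\big)\cdot$ are derivations agreeing on functions and vector fields, and concludes with the cited Kobayashi--Nomizu lemma (p.~30), where commutation with contractions handles $1$-forms; you verify the $1$-form case directly in Step~3 instead. Your sign convention for covariant slots matches the paper's definition of the action as the differential of the change-of-basis action.
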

\begin{proof}
Notice that both $\nabla_V$ and $D_V + \big(\tfrac12 V \lrcorner T\big) \cdot$ are derivations and that they coincide on functions and vector fields. Therefore, \eqref{eq:derivations} follows from \cite[Lemma, p.\ 30]{MR1393940}.
\end{proof}

We prove now the following technical lemma on Hermitian manifold with parallel Bismut torsion.

\begin{lemma} \label{lem:nablaDk}
Let $(M,J,g)$ be a Hermitian manifold with parallel Bismut torsion, namely $\nabla T = 0$, and let $\tau $ be a $\nabla$-parallel $(r,s)$-tensor field on $M$. Then, it holds
\begin{equation} \label{eq:Dktau}
D^k_{V_1,{\dots},V_k}\tau = \big(-\tfrac12\big)^k (V_1 \lrcorner T) \cdot {\dots} \cdot (V_k \lrcorner T) \cdot \tau \quad \text{for all $V_1,{\dots},V_k \in \Gamma(TM)$, for all $k \geq 1$} \,\, .
\end{equation}
In particular, $\nabla(D^k\tau) = 0$ for all $k \geq 0$.
\end{lemma}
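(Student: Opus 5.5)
Proof by induction on $k$, using \eqref{eq:derivations} together with the $\nabla$-parallelism of $T$. Throughout, I read $D^k_{V_1,\dots,V_k}\tau$ as the $k$-th covariant derivative tensor, $(D^k\tau)(V_1,\dots,V_k,\dots)$, with $V_1$ acting outermost. The action $A\cdot\tau$ is then composed in the corresponding order, so $(V_1\lrcorner T)$ acts last.

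\emph{Base case $k=1$.} Since $\nabla\tau=0$, identity \eqref{eq:derivations} gives directly
$$D_V\tau=-\tfrac12 (V\lrcorner T)\cdot\tau .$$

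\emph{Inductive step.} Assume \eqref{eq:Dktau} holds for $k$. Consider the tensor field
$$\sigma_k := \big(-\tfrac12\big)^k\, T\otimes\dots\otimes T \otimes \tau$$
contracted suitably, so that $\sigma_k(V_1,\dots,V_k)=D^k_{V_1,\dots,V_k}\tau$. More precisely, $D^k\tau$ is obtained from the tensor product $T^{\otimes k}\otimes\tau$ by contractions: $(V_i\lrcorner T)$ is an endomorphism, and the action of an endomorphism on $\tau$ is built from contractions with $\tau$ and tensor products. Since $\nabla T=0$, $\nabla\tau=0$, and $\nabla$ commutes with contractions and is a derivation on tensor products, $D^k\tau$ is $\nabla$-parallel. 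Applying \eqref{eq:derivations} to $D^k\tau$ then gives
$$D^{k+1}_{V_0,V_1,\dots,V_k}\tau=(D_{V_0}D^k\tau)(V_1,\dots,V_k)=\big(-\tfrac12(V_0\lrcorner T)\cdot D^k\tau\big)(V_1,\dots,V_k).$$

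\emph{Main obstacle.} The delicate step is the last one: the action of $(V_0\lrcorner T)$ on $D^k\tau$ also hits the $k$ slots reserved for $V_1,\dots,V_k$. So
$$\big((V_0\lrcorner T)\cdot D^k\tau\big)(V_1,\dots,V_k)=(V_0\lrcorner T)\cdot\big(D^k_{V_1,\dots,V_k}\tau\big)+\sum_{i}(\text{terms } D^k\tau\ \text{evaluated with } T(V_0,V_i) \text{ in place of } V_i).$$
These extra terms must be accounted for. The cleanest route is to prove the formula pointwise for fixed vector fields with a $\nabla$-covariant meaning, i.e. to define $D^k$ iteratively so that it matches this bookkeeping. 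With the standard tensorial $D^{k+1}$, the slot terms have exactly the form $-\sum_i D^k_{\dots,\nabla_{V_0}V_i-D_{V_0}V_i,\dots}$-type corrections, and these cancel against the difference between differentiating the tensor $D^k\tau$ and differentiating the expression $D^k_{V_1,\dots,V_k}\tau$ along $V_0$.

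To be safe, I would state and verify the formula for the tensor $D^{k}\tau$ by checking that both sides are $\nabla$-parallel and agree as tensors. Concretely, I would show that the tensor
$$(V_1,\dots,V_k)\mapsto\big(-\tfrac12\big)^k (V_1\lrcorner T)\cdot\ldots\cdot(V_k\lrcorner T)\cdot\tau$$
satisfies the same recursion, using \eqref{eq:commact} and the invariance $(V\lrcorner T)\cdot T=0$ where needed. The latter holds because $D_V T=-\frac12 (V\lrcorner T)\cdot T$ by the base case applied to $T$, and $D_V T$ is related to $dT$ by skew-symmetry.

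The final claim, $\nabla(D^k\tau)=0$, then follows from the contraction argument above, independently of the exact form of the formula.
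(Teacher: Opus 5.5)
\textbf{Verdict: genuine gap.} Your base case is right. You also located the crux exactly: when you apply \eqref{eq:derivations} to the $\nabla$-parallel tensor $D^k\tau$, the endomorphism $V_0\lrcorner T$ also acts on the $k$ argument slots. What one really obtains is the recursion
\[
D^{k+1}_{V_0,V_1,\dots,V_k}\tau=-\tfrac12 (V_0\lrcorner T)\cdot\big(D^k_{V_1,\dots,V_k}\tau\big)+\tfrac12\sum_{i=1}^k D^k_{V_1,\dots,T(V_0,V_i),\dots,V_k}\tau \,\, .
\]
The gap is your assertion that the slot terms cancel. They do not, and no bookkeeping can remove them. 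For the tensorial $D^k$ (your convention, and the one used in the proof of Proposition \ref{prop:Nomcons2}) the closed formula \eqref{eq:Dktau} already fails at $k=2$, where the recursion gives
\[
D^2_{X,Y}\tau=\tfrac14 (X\lrcorner T)\cdot(Y\lrcorner T)\cdot\tau-\tfrac14\big(T(X,Y)\lrcorner T\big)\cdot\tau \,\, .
\]
This value is consistent with the Ricci identity $D^2_{X,Y}\tau-D^2_{Y,X}\tau=-{\rm Rm}(X,Y)\cdot\tau$ combined with \eqref{eq:R-Rm} and $R(X,Y)\cdot\tau=0$. By contrast, \eqref{eq:Dktau} would miss the term $-\tfrac12(T(X,Y)\lrcorner T)\cdot\tau$ in the antisymmetrization.

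The extra term is genuinely non-zero. Take $\mathbb{R}\times\mathsf{SU}(2)$ with a bi-invariant metric, an orthonormal basis with $e_0$ spanning $\mathbb{R}$ and $[e_1,e_2]=e_3$ cyclically, and the Hopf structure $Je_0=e_1$, $Je_2=e_3$. Then $\nabla$ is the connection making left-invariant fields parallel, $T(X,Y)=-[X,Y]$, and
\[
D^2_{X,Y}J-\tfrac14(X\lrcorner T)\cdot(Y\lrcorner T)\cdot J=-\tfrac14[\mathrm{ad}[X,Y],J] \,\, ,
\]
which is non-zero for $X=e_1$, $Y=e_2$.

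Your proposed repair fails as well. The quantity $(V\lrcorner T)\cdot T=-2D_VT$ vanishes for all $V$ only when $T$ satisfies the Jacobi identity; for parallel torsion this is equivalent to ${\rm d}T=0$. Even then, the recursion yields $(-\tfrac12)^k(V_k\lrcorner T)\cdot\ldots\cdot(V_1\lrcorner T)\cdot\tau$, with the factors in the reverse order. The example above is exactly of this kind.

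The paper's own proof has the same slip. It takes $\nabla V_i|_p=0$, which kills $\nabla_{V_1}$ of the inner expression. It then writes $D^{k+1}_{V_1,\dots,V_{k+1}}\tau=D_{V_1}\big(D^k_{V_2,\dots,V_{k+1}}\tau\big)$, which would need $DV_i|_p=0$. Since $D_{V_1}V_i|_p=-\tfrac12T(V_1,V_i)$, the terms it drops are precisely your slot terms.

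What does hold is the recursion above, and it suffices for everything the lemma is used for.
\begin{itemize}
\item[$i)$] The claim $\nabla(D^k\tau)=0$ follows by induction on this statement alone. If $\nabla(D^k\tau)=0$, the recursion exhibits $D^{k+1}\tau$ as a contraction of $T\otimes D^k\tau$, hence it is parallel. This is your contraction argument, run on the recursion rather than on \eqref{eq:Dktau}, as your last sentence anticipates; it is all Proposition \ref{prop:BASJRm} needs.
\item[$ii)$] The operation $\Phi(\sigma)\coloneqq-\tfrac12(\,\cdot\,\lrcorner T_p)\cdot\sigma$ is built naturally from $T_p$, so any $A$ with $A\cdot T_p=0$ commutes with it. Hence $A\cdot(D^k\tau)_p=\Phi^k(A\cdot\tau_p)$, and $A\cdot\tau_p=0$ forces $A\cdot(D^k\tau)_p=0$. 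This is what the proof of Proposition \ref{prop:Nomcons2} actually requires.
\end{itemize}
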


\begin{proof}
We recall the following well-known fact: for any point $p \in M$ and any vector $v \in T_pM$, there exists a vector field $V$ defined on a neighborhood of $p$ such that $V|_p=v$ and $\nabla V|_p = 0$. Therefore, in proving \eqref{eq:Dktau} by induction, we may restrict to a point $p$ and assume that $\nabla V_i |_p = 0$. Then, by the inductive hypothesis, we get
$$
\nabla_{V_1}\big(D^k_{V_2,{\dots},V_{k+1}}\tau\big) = \big(-\tfrac12\big)^k \nabla_{V_1} \big( (V_2 \lrcorner T) \cdot {\dots} \cdot (V_{k+1} \lrcorner T) \cdot \tau \big) = 0
$$
and so, by \eqref{eq:derivations},
$$\begin{aligned}
D^{k+1}_{V_1,{\dots},V_{k+1}}\tau &= D_{V_1}\big(D^k_{V_2,{\dots},V_{k+1}}\tau\big) \\
&= \big(-\tfrac12\big)^k D_{V_1}\big((V_2 \lrcorner T) \cdot {\dots} \cdot (V_{k+1} \lrcorner T) \cdot \tau\big) \\
&= \big(-\tfrac12\big)^{k+1} (V_1 \lrcorner T) \cdot (V_2 \lrcorner T) \cdot {\dots} \cdot (V_{k+1} \lrcorner T) \cdot \tau \,\, ,
\end{aligned}$$
which concludes the proof.
\end{proof}

We now introduce the main object of study of this paper.

\begin{definition}
A Hermitian manifold $(M,J,g)$ is called \emph{Bismut--Ambrose--Singer} (\emph{BAS} for short) if its Bismut connection $\nabla$ is Ambrose--Singer, namely, $\nabla R = \nabla T = 0$.
\end{definition}

By using Lemma \ref{lem:nablaDk}, the BAS condition can be characterized in terms of the tensor fields $J$ and ${\rm Rm}$ as follows.

\begin{proposition} \label{prop:BASJRm}
A Hermitian manifold $(M,J,g)$ is BAS if and only if $\nabla (DJ) = \nabla{\rm Rm} = 0$. In particular, in this case, it holds
\begin{equation} \label{eq:nJnRm}
\nabla(D^kJ) = \nabla(D^k{\rm Rm}) = 0 \quad \text{ for all $k \geq 0$} \,\, .
\end{equation}
\end{proposition}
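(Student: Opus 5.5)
We prove the two implications separately; the key tools are the identity \eqref{eq:TDJ}, which expresses $T$ linearly in terms of $DJ$ and $J$, and its converse, which expresses $DJ$ in terms of $T$ and $J$.

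\emph{BAS implies $\nabla(DJ) = \nabla{\rm Rm} = 0$.} Assume $\nabla T = \nabla R = 0$. Since $\nabla J = 0$ by definition of the Bismut connection, Lemma \ref{lem:nablaDk} applied to $\tau = J$ gives $\nabla(D^kJ) = 0$ for all $k \geq 0$. Next, by \eqref{eq:R-Rm}, which holds because $\nabla T = 0$, we have ${\rm Rm} = R + Q(T)$, where $Q(T)$ is a universal quadratic contraction of $T$ with itself (using $g$). Since $\nabla g = \nabla T = \nabla R = 0$, we get $\nabla {\rm Rm} = 0$. Applying Lemma \ref{lem:nablaDk} to $\tau = {\rm Rm}$ then yields $\nabla(D^k{\rm Rm}) = 0$ for all $k$, which is \eqref{eq:nJnRm}.

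\emph{$\nabla(DJ) = \nabla{\rm Rm} = 0$ implies BAS.} First we show $\nabla T = 0$. By \eqref{eq:TDJ}, $g(T(X,Y),Z)$ is obtained from the tensor $DJ$ by composing with $J$ in some slots, contracting with $g$, and cyclically symmetrising. This is a tensorial operation built from $DJ$, $J$ and $g$. Since $\nabla$ is a derivation commuting with contractions, and $\nabla J = \nabla g = \nabla(DJ) = 0$, it follows that $\nabla T = 0$. Now \eqref{eq:R-Rm} applies and gives $R = {\rm Rm} - Q(T)$. Since $\nabla{\rm Rm} = 0$ and $\nabla T = 0$, we conclude $\nabla R = 0$, so the manifold is BAS.

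The only delicate point is the first step of the converse: one must check that \eqref{eq:TDJ} genuinely writes $T$ as a $\nabla$-parallel tensorial expression in $DJ$ alone, with no derivatives of the vector fields entering. This holds because $(D_{JX}J)Y = (DJ)(JX,Y)$ is tensorial in both $X$ and $Y$. Once that is verified, everything else follows from Lemma \ref{lem:nablaDk} and \eqref{eq:R-Rm}.
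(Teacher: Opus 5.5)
Your proof is correct and follows essentially the same route as the paper. You use Lemma \ref{lem:nablaDk} together with \eqref{eq:R-Rm} for the forward direction and for \eqref{eq:nJnRm}, and \eqref{eq:TDJ} followed by \eqref{eq:R-Rm} for the converse; your remark that \eqref{eq:TDJ} is a $\nabla$-parallel tensorial expression in $DJ$, $J$, $g$ just spells out the step the paper leaves implicit.
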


\begin{proof}
Assume that $(M,J,g)$ is BAS. Then, $\nabla {\rm Rm} = 0$ by \eqref{eq:R-Rm}, and $\nabla(DJ)=0$ by Lemma \ref{lem:nablaDk}. On the other hand, assume that $\nabla(DJ) = \nabla{\rm Rm} = 0$. Then, $\nabla T =0$ by \eqref{eq:TDJ}, and so $\nabla R =0$ by \eqref{eq:R-Rm}. Finally, if $(M,J,g)$ is BAS, then \eqref{eq:nJnRm} follows by Lemma \ref{lem:nablaDk}.
\end{proof}

\subsection{The Nomizu construction for Bismut--Ambrose--Singer manifolds} \label{sect:Nomcons} \hfill \par

Let $(M,J,g)$ be a complete, simply-connected BAS manifold.
Following \cite{MR0059050} (see also \cite{MR3921240}), the Nomizu construction associates a distinguished Lie algebra $\mathfrak{N}$ to the Bismut connection $\nabla$, so that $(M,g)$ is naturally reductive with respect to the corresponding simply-connected Lie group. Here, we adapt this construction in a natural way to the Hermitian context. Specifically, we require that the isotropy subalgebra $\mathfrak{N}_0$ of $\mathfrak{N}$ preserves $J$. Consequently, we prove that the Lie algebra $\mathfrak{N}$ is isomorphic to $\mathfrak{aut}$, and so $M$ is $\mathsf{Aut}^0(M,J,g)$-naturally reductive. As a byproduct, we obtain a canonical reductive complement, induced by the Bismut connection, for the action of $\mathsf{Aut}^0(M,J,g)$ on $M$.

We begin with the following characterization for the Lie algebra $\mathfrak{aut}$, which is a straightforward consequence of \cite[Proposition 3.6]{MR4579179}.

\begin{proposition} \label{prop:Nomcons1}
Let $(M,J,g)$ be a complete, simply-connected Hermitian manifold and let $p \in M$ be a point. The set
\begin{multline*}
\mathfrak{H}(p) \coloneqq \big\{(\tilde{A},v) : v \in T_pM \, , \,\, \tilde{A} \in \mathfrak{so}(T_pM,g_p) \, , \\ \text{$v \lrcorner (D^{k+1}J)_p + \tilde{A} \cdot (D^kJ)_p = 0$\, and \,$v \lrcorner (D^{k+1}{\rm Rm})_p + \tilde{A} \cdot (D^k{\rm Rm})_p = 0$\, for all $k \geq 0$} \big\}
\end{multline*}
endowed with the brackets
$$
\big[(\tilde{A},v), (\tilde{B},w)\big] \coloneqq \big([\tilde{A},\tilde{B}] +{\rm Rm}_p(v,w), \tilde{A}.w -\tilde{B}.v\big)
$$
is a Lie algebra. Moreover, the map
\begin{equation} \label{eq:psi}
\psi: \mathfrak{aut} \to \mathfrak{H}(p) \,\, , \quad \psi(V) \coloneqq \big({-}DV|_p,V_p\big)
\end{equation}
is a Lie algebra isomorphism.
\end{proposition}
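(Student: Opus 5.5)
We deduce the statement from the Riemannian analogue in \cite[Proposition 3.6]{MR4579179}. For a complete, simply-connected Riemannian manifold, the set
$$
\mathfrak{K}(p) \coloneqq \big\{(\tilde{A},v) : v \lrcorner (D^{k+1}{\rm Rm})_p + \tilde{A} \cdot (D^k{\rm Rm})_p = 0 \ \text{for all } k \geq 0\big\}
$$
with the bracket above is a Lie algebra, and $V \mapsto (-DV|_p,V_p)$ is an isomorphism from the algebra of Killing fields onto $\mathfrak{K}(p)$. The intended argument has three steps: show that $\psi$ takes values in $\mathfrak{H}(p)\subset\mathfrak{K}(p)$, show that it hits all of $\mathfrak{H}(p)$, and show that $\mathfrak{H}(p)$ is a subalgebra.

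\emph{Well-definedness.} Let $V\in\mathfrak{aut}$. Since $\mathcal{L}_V$ commutes with $D$ for Killing fields, $\mathcal{L}_VJ=0$ gives $\mathcal{L}_V(D^kJ)=0$ for all $k$. For a Killing field, $\mathcal{L}_V\tau = D_V\tau - (DV)\cdot\tau$ for any tensor $\tau$, with the sign fixed by the convention for the action of $\mathfrak{gl}$. Evaluating at $p$ yields $V_p\lrcorner (D^{k+1}J)_p + \tilde A\cdot (D^kJ)_p=0$ with $\tilde A=-DV|_p$. The ${\rm Rm}$ conditions hold because $V$ is Killing. Hence $\psi(\mathfrak{aut})\subset\mathfrak{H}(p)$, and $\psi$ is an injective Lie algebra homomorphism by the Riemannian result.

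\emph{Surjectivity.} This is the main step. Take $(\tilde A,v)\in\mathfrak{H}(p)$. It lies in $\mathfrak{K}(p)$, so the Riemannian result gives a unique Killing field $V$ with $V_p=v$ and $-DV|_p=\tilde A$. We must show $\mathcal{L}_VJ=0$. Put $\sigma_k\coloneqq\mathcal{L}_V(D^kJ)=D^k(\mathcal{L}_VJ)$. By hypothesis $\sigma_k(p)=0$ for all $k$, so all covariant derivatives of $\mathcal{L}_VJ$ vanish at $p$. Since $g$ is not assumed analytic, we cannot conclude directly. Instead, we use the flow of $V$, which is defined for all times by completeness. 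The isometries $\phi_t$ preserve every $D^k{\rm Rm}$, and we compare $\phi_t^*J$ with $J$.

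In \cite{MR4579179} the integration step is an Ambrose--Singer / Singer-type argument: the infinitesimal data at $p$ determine a local isometry via parallel transport along geodesics. We redo this with $J$ added to the list of curvature-type tensors. The isometry $\phi_t$ and its candidate, built from the frame data $\exp(t\tilde A)$ at the moving point, both preserve the whole family $\{D^k{\rm Rm}, D^kJ\}$. Alternatively, one can run the statement of \cite[Proposition 3.6]{MR4579179} directly, since it is formulated for an arbitrary family of tensors including $J$; this is what ``straightforward consequence'' indicates. Concretely, the set of points where $\mathcal{L}_VJ$ vanishes to infinite order is closed. It is also open: along geodesics from such a point, the quantity $\mathcal{L}_VJ$ satisfies a linear ODE in terms of $D^kJ$ and $D^k{\rm Rm}$, using that the relations defining $\mathfrak{H}$ propagate by parallel transport. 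Connectedness then concludes.

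\emph{Subalgebra.} Once $\psi$ is a bijective homomorphism onto $\mathfrak{H}(p)$, the set $\mathfrak{H}(p)$ is closed under the bracket, being the image of the Lie algebra $\mathfrak{aut}$. So $\mathfrak{H}(p)$ is a Lie subalgebra of $\mathfrak{K}(p)$ and $\psi$ is a Lie algebra isomorphism. The expected obstacle is entirely the surjectivity/integration step. We plan to handle it by citing \cite[Proposition 3.6]{MR4579179} in its version for tensor families, applied to the pair $(J,{\rm Rm})$.
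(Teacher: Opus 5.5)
Your well-definedness and injectivity steps are correct. At the top level you do what the paper does: its proof is only a citation of \cite[Proposition 3.6]{MR4579179}, where holomorphic Killing generators were introduced. The gap is in the surjectivity step that you try to supply yourself.

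The Riemannian fact you quote, that every element of $\mathfrak{K}(p)$ integrates to a global Killing field on a complete simply-connected manifold, is Nomizu's theorem, and it requires real analyticity. Your openness argument also fails. The tensor $\sigma=\mathcal{L}_VJ$ satisfies no closed linear ODE along geodesics: $D_{\gamma'}\sigma=\gamma'\lrcorner\,\mathcal{L}_V(DJ)$ involves the next term of the tower $D^k\sigma=\mathcal{L}_V(D^kJ)$, and this tower never closes. For smooth tensors, vanishing to infinite order at a point does not propagate.

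In fact no argument that avoids analyticity can work, because the statement is false for smooth data. Take $M=\mathbb{C}$ with its standard $J$ and a complete metric $g=e^{2f}|dz|^2$, where $f\geq 0$, $f\equiv 0$ near $p$, and $f$ is generic elsewhere so that $g$ has no non-zero Killing fields.
\begin{itemize}
\item $DJ=0$ (the surface is K\"ahler) and $(D^k{\rm Rm})_p=0$ for all $k$ (the metric is flat near $p$).
\item Every $\tilde A\in\mathfrak{so}(T_pM,g_p)$ commutes with $J_p$.
\end{itemize}
Hence $\mathfrak{H}(p)$ is all of $\mathfrak{so}(T_pM,g_p)\oplus T_pM$, the $3$-dimensional Euclidean algebra, while $\mathfrak{aut}=\{0\}$. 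The same example shows that the Riemannian statement you quote fails without analyticity. Appealing to ``the version of the cited proposition for tensor families'' only restates what has to be proved.

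The missing ingredient is real analyticity. The paper's one-line citation implicitly relies on it, and it holds wherever the proposition is used: BAS manifolds are locally Hermitian homogeneous, so $g$ and $J$ are real analytic. With analyticity, surjectivity is short:
\begin{itemize}
\item Given $(\tilde A,v)\in\mathfrak{H}(p)\subset\mathfrak{K}(p)$, analyticity turns the Killing generator into a Killing field defined near $p$ with $1$-jet $(\tilde A,v)$.
\item Since $M$ is simply-connected, Nomizu's theorem extends this to a global Killing field $V$.
\item Then $\sigma=\mathcal{L}_VJ$ is real analytic. By the defining relations of $\mathfrak{H}(p)$ and the identity $D^k\sigma=\mathcal{L}_V(D^kJ)$, all its covariant derivatives vanish at $p$.
\item Hence $\sigma\equiv 0$ near $p$, and by analytic continuation on the connected manifold $M$, everywhere.
\end{itemize}
Your subalgebra step then goes through unchanged.
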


Every element of the Lie algebra $\mathfrak{H}(p)$ is called \emph{holomorphic Killing generators at $p$}. This definition was introduced in \cite{MR4579179} as a refinement of the notion of Killing generators introduced in \cite{MR0119172}. Notice that, as a consequence of Proposition \ref{prop:Nomcons1}, the isotropy subalgebra of real holomorphic Killing vector fields that vanish at $p$, \emph{i.e.},
$$
\mathfrak{aut}_{0,p} \coloneqq \big\{ V \in \mathfrak{aut} : V_p = 0 \big\} \,\, ,
$$
is isomorphic via the map $\psi$ defined in \eqref{eq:psi} to the subalgebra
$$
\mathfrak{H}_0(p) \coloneqq \big\{(\tilde{A},v) \in \mathfrak{H}(p) : v = 0 \big\} \subset \mathfrak{H}(p) \,\, .
$$

\begin{proposition} \label{prop:Nomcons2}
Let $(M,J,g)$ be a complete, simply-connected BAS manifold and let $p \in M$ be a point. The set
\begin{equation*} \label{eq:defN0}
\mathfrak{N}_0(p) \coloneqq \big\{A \in \mathfrak{gl}(T_pM) : A \cdot g_p = A \cdot J_p = A \cdot T_p = A \cdot R_p = 0\big\}
\end{equation*}
is a Lie subalgebra of $\mathfrak{gl}(T_pM)$. Moreover, the formula
\begin{equation} \label{eq:Nomalg}
\mathfrak{N}(p) \coloneqq \mathfrak{N}_0(p) \oplus T_pM \,\, , \quad [(A,v), (B,w)] \coloneqq \big([A,B] + R_p(v,w), A.w -B.v +T_p(v,w)\big)
\end{equation}
defines a Lie algebra. Furthermore, the map
\begin{equation} \label{eq:phi}
\phi: \mathfrak{N}(p) \to \mathfrak{H}(p) \,\, , \quad \phi(A,v) \coloneqq \big(A +\tfrac12v\lrcorner T_p, v\big)
\end{equation}
is a Lie algebra isomorphism that sends $\mathfrak{N}_0(p)$ to $\mathfrak{H}_0(p)$.
\end{proposition}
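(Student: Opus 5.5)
The plan is to avoid checking the Jacobi identity for $\mathfrak{N}(p)$ directly. Instead I will show that $\phi$ is a well-defined linear bijection $\mathfrak{N}(p) \to \mathfrak{H}(p)$ intertwining the brackets, and then pull back the Lie algebra structure of $\mathfrak{H}(p)$, which is a Lie algebra by Proposition \ref{prop:Nomcons1}. The first claim is immediate. $\mathfrak{N}_0(p)$ is the common annihilator of the tensors $g_p, J_p, T_p, R_p$, so by \eqref{eq:commact} it is closed under commutators.

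\emph{Step 1 (key algebraic fact).} Let $A \in \mathfrak{N}_0(p)$. Since $A \cdot T_p = 0$, \eqref{eq:commact} gives $[A, w \lrcorner T_p] = (A.w) \lrcorner T_p$. Applying this repeatedly to the formula \eqref{eq:Dktau} of Lemma \ref{lem:nablaDk} shows that $A \cdot (D^kJ)_p = 0$ for all $k \geq 0$. Indeed, $(D^kJ)_p$ is obtained from $J_p$ by successive actions of $v_i \lrcorner T_p$ in its arguments $v_i$, and $A$ annihilates $J_p$. Likewise, \eqref{eq:R-Rm} expresses ${\rm Rm}$ through $R$ and $T$, so $A \cdot {\rm Rm}_p = 0$, and the same argument gives $A \cdot (D^k{\rm Rm})_p = 0$.

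\emph{Step 2 ($\phi$ is well defined and bijective).} Set $\tilde A = A + \tfrac12 v\lrcorner T_p$. It is skew, since $A \cdot g_p = 0$ and $T$ is totally skew. Because $D^kJ$ is $\nabla$-parallel (Lemma \ref{lem:nablaDk}), \eqref{eq:derivations} gives
$$
v \lrcorner (D^{k+1}J)_p = -\tfrac12 (v\lrcorner T_p)\cdot (D^kJ)_p .
$$
Hence the defining condition of $\mathfrak{H}(p)$ reduces to $A\cdot (D^kJ)_p = 0$, which holds by Step 1; the same argument applies to ${\rm Rm}$ via Proposition \ref{prop:BASJRm}. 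Injectivity of $\phi$ is obvious, and $\phi(\mathfrak{N}_0(p)) \subset \mathfrak{H}_0(p)$.

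For surjectivity, take $(\tilde A, v) \in \mathfrak{H}(p)$ and set $A \coloneqq \tilde A - \tfrac12 v\lrcorner T_p$. By the identity above, the conditions at $k=0$ and $k=1$ give $A\cdot J_p = 0$, $A \cdot (DJ)_p = 0$ and $A\cdot {\rm Rm}_p = 0$, while $A \cdot g_p = 0$ by skewness. Then \eqref{eq:TDJ} shows $A \cdot T_p = 0$, and \eqref{eq:R-Rm} shows $A\cdot R_p = 0$. So $A \in \mathfrak{N}_0(p)$, and this also shows that $\phi$ maps $\mathfrak{N}_0(p)$ onto $\mathfrak{H}_0(p)$.

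\emph{Step 3 (brackets).} The vector components agree immediately:
$$
(A + \tfrac12 v\lrcorner T).w - (B + \tfrac12 w\lrcorner T).v = A.w - B.v + T(v,w).
$$
For the endomorphism components, expand $[A + \tfrac12 v \lrcorner T, B + \tfrac12 w\lrcorner T] + {\rm Rm}(v,w)$ using $[A, w\lrcorner T] = (A.w)\lrcorner T$. Comparing with the first component of $\phi$ applied to \eqref{eq:Nomalg}, one is reduced to the identity
$$
R(v,w) = {\rm Rm}(v,w) + \tfrac14 [v\lrcorner T, w\lrcorner T] - \tfrac12 T(v,w)\lrcorner T .
$$
This must be derived from \eqref{eq:R-Rm}, and I expect it to be the main obstacle. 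It requires rewriting $-\tfrac14 T(T(w,\cdot),v) - \tfrac14 T(T(\cdot,v),w)$ as a commutator term, which should use the Jacobi-type identity for $T$ that follows from $\nabla T = 0$ (equivalently, $\sigma_T = \tfrac12\, {\rm d}T$ is $\nabla$-parallel and cyclic), together with careful sign bookkeeping for the action convention. Once this holds, $\phi$ is a bracket-preserving linear bijection onto the Lie algebra $\mathfrak{H}(p)$, so \eqref{eq:Nomalg} satisfies the Jacobi identity and $\phi$ is a Lie algebra isomorphism.
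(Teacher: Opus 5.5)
Your proposal is correct. For the subalgebra claim, the inclusion $\phi(\mathfrak{N}(p))\subset\mathfrak{H}(p)$ and the bracket check, you follow the paper; your Step 1 is exactly the paper's computation via \eqref{eq:Dktau}. The real difference is in surjectivity.

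\textbf{Surjectivity.} The paper takes the holomorphic Killing field $V=\psi^{-1}(\tilde A,v)$ and expresses $\tilde A\cdot\tau_p$ through $\mathcal{L}_V\tau$, $\nabla_V\tau$ and $(V\lrcorner T)\cdot\tau$. It then concludes from $\mathcal{L}_VT=\mathcal{L}_VR=0$. This step relies on the isomorphism $\psi$ of Proposition \ref{prop:Nomcons1} and on the fact that holomorphic isometries preserve the Bismut connection.

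You instead observe that, since each $D^k\tau$ is $\nabla$-parallel, $v\lrcorner(D^{k+1}\tau)_p+\tilde A\cdot(D^k\tau)_p=A\cdot(D^k\tau)_p$. So $(\tilde A,v)\in\mathfrak{H}(p)$ exactly when $A$ is skew and annihilates every $(D^kJ)_p$ and $(D^k\mathrm{Rm})_p$. Only the cases $k=0,1$, together with \eqref{eq:TDJ} and \eqref{eq:R-Rm}, are then needed to get $A\in\mathfrak{N}_0(p)$.

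Your argument is pointwise and purely algebraic. It shows that, on a BAS manifold, the infinitely many conditions defining $\mathfrak{H}(p)$ reduce to finitely many. The paper's route is shorter once $\psi$ is available and makes the geometric reason visible.

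\textbf{The bracket identity.} The step you flag as the main obstacle is not one; no Jacobi- or Bianchi-type identity for $T$ is needed. Using only the antisymmetry of $T$,
\[
\tfrac14[v\lrcorner T,w\lrcorner T](z)=\tfrac14T(v,T(w,z))-\tfrac14T(w,T(v,z))=-\tfrac14T(T(w,z),v)-\tfrac14T(T(z,v),w)\,.
\]
Hence $\tfrac14[v\lrcorner T,w\lrcorner T]-\tfrac12T(v,w)\lrcorner T$ is exactly the right-hand side of \eqref{eq:R-Rm} with $(X,Y,Z)=(v,w,z)$. Your identity is therefore \eqref{eq:R-Rm} itself; the hypothesis $\nabla T=0$ has already been used in that formula.

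\textbf{Closure of the bracket.} Your pull-back argument also gives closure of \eqref{eq:Nomalg} on $\mathfrak{N}(p)$ without extra work. The map $(A,v)\mapsto(A+\tfrac12v\lrcorner T_p,v)$ is injective on all of $\mathfrak{gl}(T_pM)\oplus T_pM$. Your intertwining computation only uses $A,B\in\mathfrak{N}_0(p)$, and $\mathfrak{H}(p)$ is closed under its bracket. So the image of $[(A,v),(B,w)]$ lies in $\mathfrak{H}(p)=\phi(\mathfrak{N}(p))$, and hence $[(A,v),(B,w)]\in\mathfrak{N}(p)$.
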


\begin{proof}
The first claim is a direct consequence of \eqref{eq:commact}. We prove below that the map $\phi$ defined in \eqref{eq:phi} is an isomorphism of vector spaces. Then, one can verify via a straightforward computation that it is, in fact, a Lie algebra isomorphism with respect to the bracket defined in \eqref{eq:Nomalg}.

First, we prove that the image of $\phi$ lies in $\mathfrak{H}(p)$. Let $A \in \mathfrak{N}_0(p)$ and observe that $A \cdot J = 0$ by hypothesis and $A \cdot \mathrm{Rm} = 0$ by \eqref{eq:R-Rm}. Notice that, since $A \cdot T = 0$, it holds
\begin{equation} \label{eq:AT=0}
[A,v \lrcorner T] = (Av) \lrcorner T \quad \text{for all $v \in T_pM$}\,\, .
\end{equation}
Therefore, for every $\nabla$-parallel tensor field $\tau$, it follows that
$$\begin{aligned}
(A \cdot D^k\tau)_{V_1,{\dots},V_k} &= (A \cdot D^k_{V_1,{\dots},V_k}\tau) - \sum_{i=1}^k (D^k_{V_1,{\dots},AV_i{\dots},V_k}\tau) \\
&\overset{\eqref{eq:Dktau}}{=} \big(-\tfrac12\big)^k A \cdot (V_1 \lrcorner T) \cdot {\dots} \cdot (V_k \lrcorner T) \cdot \tau - \big(-\tfrac12\big)^k \sum_{i=1}^k (V_1 \lrcorner T) \cdot {\dots} (AV_i \lrcorner T) \cdot {\dots} \cdot (V_k \lrcorner T) \cdot \tau \\
&\overset{\eqref{eq:AT=0}}{=} \big(-\tfrac12\big)^k A \cdot (V_1 \lrcorner T) \cdot {\dots} \cdot (V_k \lrcorner T) \cdot \tau - \big(-\tfrac12\big)^k \sum_{i=1}^k (V_1 \lrcorner T) \cdot {\dots} [A,V_i \lrcorner T] \cdot {\dots} \cdot (V_k \lrcorner T) \cdot \tau \\
&\overset{\eqref{eq:commact}}{=} \big(-\tfrac12\big)^k (V_1 \lrcorner T) \cdot {\dots} \cdot (V_k \lrcorner T) \cdot A \cdot \tau \,\, .
\end{aligned}$$
This proves that $(A,0) \in \mathfrak{H}(p)$, and so $(A,0) \in \mathfrak{H}_0(p)$. Take now $v \in T_pM$ and notice that
$$
v \lrcorner (D^{k+1}\tau)_p + \big(\tfrac12v\lrcorner T_p\big)\cdot (D^k\tau)_p \overset{\eqref{eq:derivations}}{=} \nabla_v(D^k\tau) \,\, .
$$
Therefore, by \eqref{eq:nJnRm}, it follows that $\big(\tfrac12v\lrcorner T_p, v\big) \in \mathfrak{H}(p)$.

The map $\phi$ is clearly injective, we thus prove that it is also surjective. Fix $(\tilde{A},v) \in \mathfrak{H}(p)$, set $V \coloneqq \psi^{-1}(\tilde{A},v) \in \mathfrak{aut}$ and define $A \coloneqq \tilde{A} -\tfrac12v\lrcorner T_p$. Then, a straightforward computation at the point $p$ shows that, for every tensor field $\tau$,
$$
\tilde{A} \cdot \tau_p \overset{\eqref{eq:derivations}}{=} \big(-\nabla_V\tau +\tfrac12(V \lrcorner T) \cdot \tau + \mathcal{L}_V\tau\big)\big|_p \,\, .
$$
Since $\mathcal{L}_VT = \mathcal{L}_VR = 0$, this implies that $A \cdot T_p = A \cdot R_p = 0$, hence $A \in \mathfrak{N}_0(p)$.
\end{proof}

Using the previous proposition, we deduce the main result of this section.

\begin{theorem} \label{thm:BASnatrad}
Let $(M,J,g)$ be a complete, simply-connected Hermitian manifold, denote by $\mathsf{Aut}^0$ the identity component of its group of holomorphic isometries and let $p \in M$ be a point. If $(M,J,g)$ is BAS, then $(M,g)$ is $\big(\mathsf{Aut}^0, p, \mathfrak{m}_p\big)$-naturally reductive, where
\begin{equation} \label{eq:redcompl}
\mathfrak{m}_p \coloneqq \big\{V \in \mathfrak{aut} : -\nabla V|_p = V_p \lrcorner T_p \big\} \,\, .
\end{equation}
Conversely, if $(M,g)$ is $\mathsf{G}$-naturally reductive for some connected, transitive subgroup $\mathsf{G} \subset \mathsf{Aut}^0$, then $(M,J,g)$ is BAS.
\end{theorem}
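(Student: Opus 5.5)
For the direct implication, I will use the isomorphisms $\phi:\mathfrak{N}(p)\to\mathfrak{H}(p)$ and $\psi:\mathfrak{aut}\to\mathfrak{H}(p)$ from Propositions \ref{prop:Nomcons1} and \ref{prop:Nomcons2}. Set $\Phi \coloneqq \psi^{-1}\circ\phi : \mathfrak{N}(p) \to \mathfrak{aut}$.

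First I check that $\Phi(T_pM) = \mathfrak{m}_p$. For $v\in T_pM$ we have $\phi(0,v)=(\tfrac12 v\lrcorner T_p, v)$. Hence $V=\Phi(0,v)$ satisfies $V_p=v$ and $-DV|_p=\tfrac12 v\lrcorner T_p$. By \eqref{eq:Bismutconn}, $\nabla V = DV + \tfrac12 T(\cdot,V)$, so
$$-\nabla V|_p=\tfrac12 v\lrcorner T_p+\tfrac12 v\lrcorner T_p=v\lrcorner T_p,$$
up to sign conventions that I will fix carefully. Conversely, the defining condition of $\mathfrak{m}_p$ forces $-DV|_p=\tfrac12 V_p\lrcorner T_p$, so $\psi(V)=\phi(0,V_p)$. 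Since $\Phi(\mathfrak{N}_0(p))=\mathfrak{aut}_{0,p}$, we get $\mathfrak{aut}=\mathfrak{aut}_{0,p}\oplus\mathfrak{m}_p$.

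Next, $\mathfrak{m}_p$ is $\mathrm{Ad}$-invariant under the isotropy. Indeed, $[(A,0),(0,v)]=(0,Av)$ in $\mathfrak{N}(p)$, so $[\mathfrak{aut}_{0,p},\mathfrak{m}_p]\subset\mathfrak{m}_p$. This gives $\mathrm{Ad}$-invariance under the identity component of the isotropy group. For the full isotropy group I use the naturality of the condition defining $\mathfrak{m}_p$: any $f\in\mathsf{Aut}^0$ fixing $p$ preserves $\nabla$ and $T$, so $f_*$ preserves $\mathfrak{m}_p$.

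Transitivity of $\mathsf{Aut}^0$ follows because the evaluation $\mathfrak{m}_p\to T_pM$ is onto. The $\mathsf{Aut}^0$-orbit of $p$ is therefore open, and since this holds at every point, all orbits are open; connectedness of $M$ then gives a single orbit. The main care point is identifying the evaluation map \eqref{eq:evmap} with $V\mapsto V_p$, with the sign in \eqref{eq:antiiso}. For $v,w\in T_pM$ the bracket is
$$[(0,v),(0,w)]=(R_p(v,w),T_p(v,w)),$$
so $[X,Y]_{\mathfrak{m}}$ corresponds to $\pm T_p(v,w)$. Total skew-symmetry of $T$ then yields \eqref{eq:natrad}. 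The obstacle is tracking these signs consistently, including the fact that Killing fields give an anti-homomorphism with respect to the left action. Skew-symmetry makes the sign irrelevant for \eqref{eq:natrad}, however.

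For the converse, suppose $(M,g)$ is $(\mathsf{G},p,\mathfrak{m})$-naturally reductive with $\mathsf{G}\subset\mathsf{Aut}^0$. By \cite[Ch.\ X, Theorem 2.1]{MR1393941}, the canonical connection $\tilde\nabla$ of the reductive pair has torsion $\tilde T(X,Y)=-[X,Y]_{\mathfrak{m}}$. This torsion is totally skew by \eqref{eq:natrad}, and $\tilde\nabla\tilde T=\tilde\nabla\tilde R=0$. Moreover $\tilde\nabla$ parallelizes every $\mathsf{G}$-invariant tensor; since $\mathsf{G}$ acts by holomorphic isometries, $J$ is $\mathsf{G}$-invariant, so $\tilde\nabla J=0$. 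Thus $\tilde\nabla$ is a Hermitian connection with totally skew torsion, and by the uniqueness of the Bismut connection \cite{MR1006380, MR1456265}, $\tilde\nabla=\nabla$. Hence $(M,J,g)$ is BAS.
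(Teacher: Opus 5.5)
Your proposal is correct and follows essentially the same route as the paper. You identify $\mathfrak{m}_p=(\psi^{-1}\circ\phi)(T_pM)$ to obtain the reductive decomposition and transitivity, and you derive \eqref{eq:natrad} by identifying $[X,Y]_{\mathfrak{m}}$ with $\pm T_p$ and using total skew-symmetry; the paper phrases this step as $T(V,W)|_p=[V,W]|_p$ via \eqref{eq:T}, while you use the bracket $[(0,v),(0,w)]$ in $\mathfrak{N}(p)$. For the converse, both you and the paper use the canonical connection of the reductive pair together with uniqueness of the Bismut connection, and your explicit handling of $\mathrm{Ad}$-invariance under the possibly disconnected isotropy group (via naturality) and of transitivity (via open orbits) fills in details the paper leaves implicit.
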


\begin{proof}
Assume that $(M,J,g)$ is BAS. Then, one can check that the set $\mathfrak{m}_p$ defined in \eqref{eq:redcompl} is a vector subspace of $\mathfrak{aut}$ and that
$$
\mathfrak{m}_p = (\psi^{-1} \circ \phi)(T_pM) \,\, ,
$$
where $\phi$ has been defined in \eqref{eq:phi} and $\psi$ has been defined in \eqref{eq:psi}. As a consequence, $\mathfrak{m}_p$ is a reductive complement at $p$ for $\mathsf{Aut}^0$, and $\psi^{-1} \circ \phi|_{T_pM}$ coincides with the inverse of the evaluation map at $p$ defined in \eqref{eq:evmap}. Therefore, it follows that the action of $\mathsf{Aut}^0$ is transitive. Moreover, \eqref{eq:T} and \eqref{eq:redcompl} imply that
$$
T(V,W)\big|_p = [V,W]\big|_p \quad \text{ for every $V, W \in \mathfrak{m}_p$} \,\, .
$$
Therefore, \eqref{eq:natrad} follows from \eqref{eq:antiiso} and the fact that $T$ is totally skew-symmetric.

Conversely, let $\mathsf{G} \subset \mathsf{Aut}^0$ be a connected subgroup acting transitively on $M$, and assume that $(M,g)$ is $\mathsf{G}$-naturally reductive. Then, there exists a reductive complement $\mathfrak{m}$ at $p$ for $\mathsf{G}$. By \cite[Ch.\ X, Theorem 2.1 and Theorem 2.6]{MR1393941}, the reductive complement $\mathfrak{m}$ induces an Ambrose--Singer connection $\nabla'$, which has totally skew-symmetric torsion by \cite[Ch.\ X, Theorem 2.6 (1)]{MR1393941} and \eqref{eq:natrad}. Moreover, by \cite[Ch.\ X, Proposition 2.7]{MR1393941}, since $\mathsf{G}$ preserves $J$, we have $\nabla'J=0$. By uniqueness, $\nabla'$ therefore coincides with the Bismut connection $\nabla$ of $(M,J,g)$, which concludes the proof.
\end{proof}

\medskip
\section{The canonical reduction of Bismut--Ambrose--Singer manifolds} 
\label{sect:reductive} \setcounter{equation} 0

In this section, we combine Theorem \ref{thm:BASnatrad} with \cite[Theorem, p.\ 4]{MR0519928} to identify a canonical subgroup of the automorphism group of a BAS manifold $(M,J,g)$ with respect to which $(M,g)$ is naturally reductive and the metric $g$ is induced by a non-degenerate, bi-invariant scalar product. We then use this presentation of $M$ as a Lie group quotient to prove Theorem \ref{thm:MAIN-general}.

\subsection{The canonical presentation of Bismut--Ambrose--Singer manifolds} \label{sect:canpres} \hfill \par

Let $(M,J,g)$ be a complete, simply-connected BAS manifold and fix a point $p \in M$. Then, the Bismut connection $\nabla$ induces a canonical reductive decomposition
$$
\mathfrak{aut} = \mathfrak{aut}_{0,p} + \mathfrak{m}_p
$$
as in Theorem \ref{thm:BASnatrad}. Consider the subspace
\begin{equation} \label{eq:def-l}
\mathfrak{l} \coloneqq [\mathfrak{m}_p, \mathfrak{m}_p] + \mathfrak{m}_p \subset \mathfrak{aut}
\end{equation}
and notice that, since $[\mathfrak{aut}_{0,p}, \mathfrak{m}_p] \subset \mathfrak{m}_p$, $\mathfrak{l}$ is an ideal of $\mathfrak{aut}$. Let us denote by $\mathsf{L}'$ the connected subgrup of $\mathsf{Aut}^0$ with Lie algebra $\mathfrak{l}$ and by $\mathsf{L}$ the universal cover of $\mathsf{L}'$. By construction, the group $\mathsf{L}$ acts transitively and almost-effectively on $M$ by holomorphic isometries. Let us also define
\begin{equation} \label{eq:def-u}
\mathfrak{u} \coloneqq \mathfrak{l} \cap \mathfrak{aut}_{0,p} \,\, , \quad \mathfrak{m} \coloneqq \mathfrak{m}_p
\end{equation}
and let $\mathsf{U}$ be the connected subgroup of $\mathsf{L}$ with Lie algebra $\mathfrak{u}$. Since both $\mathsf{L}$ and $M$ are simply-connected, $\mathsf{U}$ coincides with the isotropy of $\mathsf{L}$ at $p$. We emphasize that $\mathsf{L}'$ may be non-closed in $\mathsf{Aut}^0$, and so the action of $\mathsf{L}$ may be non-proper. Consequently, $\mathsf{U}$ is not necessarily compact. From now on, we identify $\mathfrak{m} \simeq T_pM$ by means of the evalutation map \eqref{eq:evmap} induced by the infinitesimal action of $\mathsf{L}$.

We call $M= \mathsf{L}/\mathsf{U}$ as above the \emph{canonical presentation of $(M,J,g)$ at $p \in M$}. Notice that the canonical presentation is unique up to conjugation, in the sense that choosing a different base point leaves $\mathfrak{l}$ unchanged, while the corresponding isotropy subgroup changes by conjugation. For this reason, we will sometimes omit the base point from the notation. The importance of the canonical presentation is given by the following result (c.f.\ \cite[Theorem, p.\ 4]{MR0519928}).

\begin{proposition} \label{prop:Kostant}
Let $(M,J,g)$ be a complete, simply-connected BAS manifold and let $M = \mathsf{L}/\mathsf{U}$ be its canonical presentation at a point $p$. Then, $(M,g)$ is $\big(\mathsf{L}, p, \mathfrak{m}\big)$-naturally reductive. Moreover, there exists a unique $\mathrm{Ad}(\mathsf{L})$-invariant, symmetric, bilinear form $Q$ on $\mathfrak{l}$ such that
$$
\text{$Q|_{\mathfrak{u} \otimes \mathfrak{u}}$ is non-degenerate} \,\, , \quad
Q(\mathfrak{u}, \mathfrak{m}) = 0 \,\, , \quad
Q|_{\mathfrak{m} \otimes \mathfrak{m}} = g \,\, .
$$
\end{proposition}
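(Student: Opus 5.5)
The plan has two parts: the naturally reductive property for $\mathsf{L}$, which comes straight from Theorem \ref{thm:BASnatrad}, and the bilinear form $Q$, which we build by hand following Kostant's argument \cite{MR0519928} and then check is well defined, invariant, non-degenerate on $\mathfrak{u}$ and unique.

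\emph{Naturally reductive property.} By Theorem \ref{thm:BASnatrad}, $(M,g)$ is $(\mathsf{Aut}^0,p,\mathfrak{m}_p)$-naturally reductive. Since $\mathfrak{m}\subset\mathfrak{l}\subset\mathfrak{aut}=\mathfrak{aut}_{0,p}\oplus\mathfrak{m}$, we get $\mathfrak{l}=\mathfrak{u}\oplus\mathfrak{m}$. Moreover $[\mathfrak{u},\mathfrak{m}]\subset[\mathfrak{aut}_{0,p},\mathfrak{m}_p]\subset\mathfrak{m}$, so $\mathfrak{m}$ is $\mathrm{Ad}(\mathsf{U})$-invariant, using that $\mathsf{U}$ is connected. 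For $X,Y\in\mathfrak{m}$, the projection of $[X,Y]$ onto $\mathfrak{m}$ along $\mathfrak{u}$ equals its projection along $\mathfrak{aut}_{0,p}$, because both decompositions are restrictions of the same one. Hence condition \eqref{eq:natrad} for $(\mathsf{L},p,\mathfrak{m})$ is literally the same as for $(\mathsf{Aut}^0,p,\mathfrak{m}_p)$.

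\emph{Construction of $Q$.} Since $\mathfrak{l}=[\mathfrak{m},\mathfrak{m}]+\mathfrak{m}$, the algebra $\mathfrak{u}$ is spanned by the elements $[X,Y]_{\mathfrak{u}}$ with $X,Y\in\mathfrak{m}$. We set $Q|_{\mathfrak{m}\otimes\mathfrak{m}}=g$ and $Q(\mathfrak{u},\mathfrak{m})=0$. For $A\in\mathfrak{u}$ and $B=\sum_i[X_i,Y_i]_{\mathfrak{u}}$ we define
$$
Q(A,B)\coloneqq\sum_i g([A,X_i],Y_i) \,\, .
$$
This definition is forced, which gives uniqueness at once: invariance and $Q(\mathfrak{u},\mathfrak{m})=0$ imply $Q(A,[X,Y]_{\mathfrak{u}})=Q(A,[X,Y])=Q([A,X],Y)=g([A,X],Y)$.

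The key identity is the pair symmetry
$$
g\big([[Z,W]_{\mathfrak{u}},X],Y\big)=g\big([[X,Y]_{\mathfrak{u}},Z],W\big) \quad \text{for all $X,Y,Z,W\in\mathfrak{m}$} \,\, .
$$
This is the analogue of pair symmetry for a curvature tensor. We prove it by expanding the Jacobi identity for the triples of elements of $\mathfrak{m}$, splitting brackets into $\mathfrak{u}$- and $\mathfrak{m}$-parts, and using \eqref{eq:natrad} together with the skew-symmetry of $\mathrm{ad}(\mathfrak{u})|_{\mathfrak{m}}$. The latter holds because $\mathfrak{u}$ acts by $\mathfrak{so}(T_pM,g_p)$. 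This is the first-Bianchi-type manipulation behind pair symmetry of curvature. Given the identity, $Q(A,B)$ computed from a representation of $B$ agrees with the expression computed from a representation of $A$. Hence $Q$ on $\mathfrak{u}$ is well defined, independently of how $B$ is represented, and symmetric. We expect this step to be the main obstacle, since all the bookkeeping of the $\mathfrak{u}$/$\mathfrak{m}$ components is concentrated here.

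\emph{Invariance.} We check $Q([U,a],b)+Q(a,[U,b])=0$ for $U\in\mathfrak{l}$ case by case.
\begin{itemize}
\item For $U\in\mathfrak{u}$ and $a,b\in\mathfrak{m}$, this is the skew-symmetry of $\mathrm{ad}U|_{\mathfrak{m}}$.
\item For $U\in\mathfrak{u}$ and $a,b\in\mathfrak{u}$, it follows from the Jacobi identity applied to $[U,[X,Y]]_{\mathfrak{u}}$.
\item For $U=X\in\mathfrak{m}$, $a=A\in\mathfrak{u}$ and $b=Y\in\mathfrak{m}$, it is exactly the defining formula for $Q$.
\item For $U,a,b$ all in $\mathfrak{m}$, the $\mathfrak{m}$-components give \eqref{eq:natrad}, and the $\mathfrak{u}$-components vanish because $Q(\mathfrak{u},\mathfrak{m})=0$.
\end{itemize}
Invariance under the Lie algebra then gives $\mathrm{Ad}(\mathsf{L})$-invariance, since $\mathsf{L}$ is connected.

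\emph{Non-degeneracy on $\mathfrak{u}$.} If $Q(A,\mathfrak{u})=0$, then $g([A,X],Y)=0$ for all $X,Y\in\mathfrak{m}$, so $\mathrm{ad}A|_{\mathfrak{m}}=0$. Through $\psi$ and $\phi$, the element $A\in\mathfrak{u}\subset\mathfrak{aut}_{0,p}$ corresponds to a holomorphic Killing field $V$ with $V_p=0$ and $DV|_p=0$. By Proposition \ref{prop:Nomcons1} such a field vanishes identically, so $A=0$.
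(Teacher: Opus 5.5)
Your proposal is correct and takes the same route as the paper. Natural reductivity with respect to $\mathsf{L}$ is inherited from Theorem~\ref{thm:BASnatrad}, because $\mathfrak{l}=\mathfrak{u}\oplus\mathfrak{m}$ is the restriction of the canonical decomposition of $\mathfrak{aut}$, and the form $Q$ is Kostant's. The paper simply invokes \cite[Theorem, p.~4]{MR0519928} for $Q$, whereas you reconstruct Kostant's argument: uniqueness is forced by invariance, well-definedness and symmetry come from the pair symmetry of $g([[\cdot,\cdot]_{\mathfrak{u}},\cdot],\cdot)$ (valid since the Bianchi defect is a $4$-form), and non-degeneracy on $\mathfrak{u}$ comes from effectiveness; each of these steps checks out.
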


For later use, we also recall the following classical result.

\begin{proposition} \label{prop:KoNo}
Let $(M,J,g)$ be a complete, simply-connected BAS manifold and let $M = \mathsf{L}/\mathsf{U}$ be its canonical presentation. Then, the Bismut connection $\nabla$ coincides with the canonical Ambrose--Singer connection determined by the reductive decomposition
$$
\mathfrak{l} = \mathfrak{u} +\mathfrak{m} \,\, .
$$
In particular, every $\mathsf{L}$-invariant tensor field on $M$ is $\nabla$-parallel.
\end{proposition}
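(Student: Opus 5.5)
The claim is that the Bismut connection $\nabla$ equals the canonical connection $\nabla^{\mathrm{c}}$ of the reductive decomposition $\mathfrak{l} = \mathfrak{u} + \mathfrak{m}$. I plan to show this by checking that $\nabla^{\mathrm{c}}$ satisfies the two properties that characterize the Bismut connection, and then invoking uniqueness, in the same way as in the converse part of Theorem \ref{thm:BASnatrad}.

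\textbf{Step 1: the canonical connection.} By \cite[Ch.\ X, Theorem 2.1 and Theorem 2.6]{MR1393941}, the $\mathrm{Ad}(\mathsf{U})$-invariant complement $\mathfrak{m}$ determines an $\mathsf{L}$-invariant connection $\nabla^{\mathrm{c}}$ on $M = \mathsf{L}/\mathsf{U}$. At $p$, its torsion is
$$
T^{\mathrm{c}}(X,Y) = -[X,Y]_{\mathfrak{m}} \quad \text{for } X, Y \in \mathfrak{m} .
$$
Moreover, $\nabla^{\mathrm{c}}$ parallelizes every $\mathsf{L}$-invariant tensor field (\cite[Ch.\ X, Proposition 2.7]{MR1393941}). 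One point needs care: $\mathsf{U}$ may be non-compact and the action of $\mathsf{L}$ may be non-proper. I expect this to be harmless, because the construction of $\nabla^{\mathrm{c}}$ only uses reductivity and the identification $\mathfrak{m} \simeq T_pM$.

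\textbf{Step 2: $\nabla^{\mathrm{c}}$ is Hermitian.} Since $\mathsf{L}$ acts by holomorphic isometries, $g$ and $J$ are $\mathsf{L}$-invariant. Hence $\nabla^{\mathrm{c}} g = \nabla^{\mathrm{c}} J = 0$.

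\textbf{Step 3: the torsion is totally skew-symmetric.} Proposition \ref{prop:Kostant} says that $(M,g)$ is $(\mathsf{L},p,\mathfrak{m})$-naturally reductive. Condition \eqref{eq:natrad} states that $Y \mapsto [X,Y]_{\mathfrak{m}}$ is $g$-skew for each $X \in \mathfrak{m}$. Combined with the antisymmetry of the bracket, this makes $g([X,Y]_{\mathfrak{m}},Z)$ a $3$-form on $\mathfrak{m}$, so $g(T^{\mathrm{c}}(X,Y),Z)$ is totally skew at $p$. By $\mathsf{L}$-invariance it is totally skew everywhere.

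\textbf{Step 4: uniqueness.} The Bismut connection is the unique Hermitian connection with totally skew-symmetric torsion \cite{MR1006380, MR1456265}. Steps 2 and 3 therefore give $\nabla^{\mathrm{c}} = \nabla$. The last assertion of the proposition then follows directly from Step 1.

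As a consistency check, one can also argue directly from Theorem \ref{thm:BASnatrad}. For $V, W \in \mathfrak{m}_p$ we have $T(V,W)_p = [V,W]_p$, and by \eqref{eq:antiiso} this equals $-[V,W]_{\mathfrak{m}}$ under the evaluation map, which matches $T^{\mathrm{c}}$.

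The main obstacle I anticipate is sign and identification conventions: the evaluation map is an anti-homomorphism \eqref{eq:antiiso}, and the canonical connection has to be taken with respect to the fundamental vector fields of the $\mathsf{L}$-action. The uniqueness argument only needs total skew-symmetry, not the exact sign, so the conclusion is robust to these conventions.
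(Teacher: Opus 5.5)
Your argument is correct, but for the first claim it takes a different route from the paper's.

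\textbf{The paper's route.} The paper reads $\nabla=\nabla^{\mathrm{c}}$ off the Nomizu construction of Section~\ref{sect:Nomcons}. The complement $\mathfrak{m}=\mathfrak{m}_p$ is built from $\nabla$ itself: its elements are exactly the holomorphic Killing fields $V$ with $\mathcal{L}_V=\nabla_V$ at $p$, i.e.\ the infinitesimal transvections of $\nabla$. Under $\psi^{-1}\circ\phi$, the bracket on $\mathfrak{m}$ is encoded by $T_p$ and $R_p$. So the canonical connection of $\mathfrak{l}=\mathfrak{u}+\mathfrak{m}$ simply gives back $\nabla$.

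\textbf{Your route.} You transplant the converse half of the proof of Theorem~\ref{thm:BASnatrad} to the group $\mathsf{L}$:
\begin{itemize}
\item $\mathsf{L}$-invariance of $g$ and $J$ makes $\nabla^{\mathrm{c}}$ Hermitian;
\item the natural reductivity in Proposition~\ref{prop:Kostant} makes $-[X,Y]_{\mathfrak{m}}$ totally skew;
\item uniqueness of the Bismut connection then forces $\nabla^{\mathrm{c}}=\nabla$.
\end{itemize}

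\textbf{Comparison.} Your version is insensitive to the sign conventions of the evaluation map. It also proves something slightly more general: the canonical connection of any naturally reductive complement, for any transitive group of holomorphic isometries, is the Bismut connection. The paper's version needs neither natural reductivity nor the uniqueness characterization of the Bismut connection, since $\mathfrak{m}_p$ is constructed from $\nabla$ in the first place.

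The two routes are not really independent, though. The natural reductivity of $(\mathsf{L},p,\mathfrak{m})$ you quote comes from Theorem~\ref{thm:BASnatrad}, by restricting from $\mathfrak{aut}$ to the ideal $\mathfrak{l}$. This uses $\mathfrak{u}=\mathfrak{l}\cap\mathfrak{aut}_{0,p}$, so that the $\mathfrak{m}$-projections agree. It therefore rests ultimately on the same Nomizu construction.

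Your caveat about non-compact $\mathsf{U}$ is handled correctly. The Kobayashi--Nomizu construction only requires $\mathsf{U}$ to be closed, which it is as an isotropy group. It also requires $\mathfrak{m}$ to be $\mathrm{Ad}(\mathsf{U})$-invariant, which follows from $[\mathfrak{u},\mathfrak{m}]\subset\mathfrak{m}$ and the connectedness of $\mathsf{U}$.

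The second claim is obtained exactly as in the paper, via \cite[Ch.\ X, Proposition 2.7]{MR1393941}.
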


\begin{proof}
The first claim follows directly from the Nomizu construction carried out in Section \ref{sect:Nomcons}. The second claim then follows from \cite[Proposition 2.7, Ch.\ X]{MR1393941}.
\end{proof}

We establish some useful properties of BAS manifolds with respect to their canonical presentation.

\begin{lemma} \label{lem:LUprop}
Let $(M,J,g)$ be a complete, simply-connected BAS manifold and let $M = \mathsf{L}/\mathsf{U}$ be its canonical presentation at a point $p \in M$.
\begin{itemize}
\item[$i)$] The torsion $T$ and the curvature $R$ of the Bismut connection $\nabla$ are given by
\begin{equation} \label{eq:TR}
T(X,Y) = -[X,Y]_{\mathfrak{m}} \quad \text{and} \quad R(X,Y).Z = [[X,Y]_{\mathfrak{u}},Z] \quad \text{for all $X, Y, Z \in \mathfrak{m}$} \,\, .
\end{equation}
\item[$ii)$] The torsion $T^{\mathrm{Ch}}$ of the Chern connection $\nabla^{\mathrm{Ch}}$ is given by
\begin{equation} \label{eq:TCh}
T^{\rm Ch}(X,Y) = -\tfrac12\big([JX,JY]_{\mathfrak{m}} -[X,Y]_{\mathfrak{m}}\big) \quad \text{for all $X, Y \in \mathfrak{m}$} \,\, .
\end{equation}
\item[$iii)$] The value at $p$ of the Lee vector field $\theta^{\sharp}$ is given by
\begin{equation} \label{eq:thetasharp}
\theta^{\sharp}\big|_p = -\sum_{i=1}^n J[e_i,Je_i]_{\mathfrak{m}} \,\, ,
\end{equation}
where $\{e_i,Je_i\}_{i=1,{\dots},n}$ is a $(J,g)$-unitary basis of $\mathfrak{m}$.
\end{itemize}
\end{lemma}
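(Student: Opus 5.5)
Part (i) follows from Proposition \ref{prop:KoNo}: the Bismut connection $\nabla$ is the canonical Ambrose--Singer connection of the reductive decomposition $\mathfrak{l} = \mathfrak{u}+\mathfrak{m}$. For this connection, \cite[Ch.\ X, Theorem 2.6]{MR1393941} gives the torsion and curvature at $p$, under the identification $\mathfrak{m}\simeq T_pM$, as
\[
T(X,Y) = -[X,Y]_{\mathfrak{m}} \quad\text{and}\quad R(X,Y) = -\mathrm{ad}([X,Y]_{\mathfrak{u}})|_{\mathfrak{m}} .
\]
Translating to the sign convention for $R$ fixed in Section \ref{sect:prel} then yields \eqref{eq:TR}. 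I expect the main obstacle to be sign bookkeeping. The evaluation map \eqref{eq:evmap} is an anti-homomorphism by \eqref{eq:antiiso}, and the curvature sign convention differs from that of Kobayashi--Nomizu. To cross-check the torsion sign I would use the proof of Theorem \ref{thm:BASnatrad}. There, $\mathfrak{m} = \mathfrak{m}_p$ consists of the Killing fields with $-\nabla V|_p = V_p\lrcorner T_p$, and one gets $T(V,W)|_p = [V,W]|_p$. Combined with \eqref{eq:antiiso} this gives $T(X,Y) = -[X,Y]_{\mathfrak{m}}$ directly. For $R$ I would compute similarly with $\mathsf{L}$-invariant fields, or use the Nomizu bracket \eqref{eq:Nomalg}. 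Under $\psi^{-1}\circ\phi$, the bracket of two elements $(0,v),(0,w)$ has $\mathfrak{N}_0$-component $R_p(v,w)$. The anti-isomorphism flips the overall sign, and the $\mathfrak{u}$-component of $[X,Y]$ then acts on $\mathfrak{m}$ through the isotropy representation, which gives the stated formula.

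For part (ii), I would use \eqref{eq:defTCh} together with $g(T(X,Y),Z) = \mathrm{d}\omega(JX,JY,JZ)$. This rewrites $\mathrm{d}\omega(JX,Y,Z)$ in terms of $T$ using $J^2=-1$:
\[
\mathrm{d}\omega(JX,Y,Z) = \mathrm{d}\omega(J(JX),J(-JY),J(-JZ)) = g\bigl(T(-X,-JY),-JZ\bigr)\cdot(\text{sign}).
\]
After simplifying, $-2g(T^{\rm Ch}(X,Y),Z)$ becomes a combination of $g(JT(JX,JY),Z)$-type terms. The standard identity is
\[
T^{\rm Ch}(X,Y) = \tfrac12\bigl(T(X,Y) - T(JX,JY)\bigr),
\]
up to a $J$-conjugation that I would fix by the computation. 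It can be checked using that $\mathrm{d}\omega$ is real and that $T^{\rm Ch}$ is of type $(1,1)$ by \eqref{eq:TCh11}. Substituting $T(X,Y) = -[X,Y]_{\mathfrak{m}}$ from (i) then gives \eqref{eq:TCh}.

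For part (iii), I would insert \eqref{eq:TCh} into the definition \eqref{def:theta} with the unitary basis $\{e_i, Je_i\}$. This gives
\[
\theta(X) = \sum_\alpha g\bigl(T^{\rm Ch}(X,e_\alpha),e_\alpha\bigr) = -\tfrac12\sum_\alpha g\bigl([JX,Je_\alpha]_{\mathfrak{m}} - [X,e_\alpha]_{\mathfrak{m}},e_\alpha\bigr).
\]
By \eqref{eq:natrad}, $g([X,e_\alpha]_{\mathfrak{m}},e_\alpha)=0$. For the other term, \eqref{eq:natrad} lets me move the bracket:
\[
g([JX,Je_\alpha]_{\mathfrak{m}},e_\alpha) = -g(Je_\alpha,[JX,e_\alpha]_{\mathfrak{m}}) = g(JX,[Je_\alpha,e_\alpha]_{\mathfrak{m}}),
\]
using total skew-symmetry of $(X,Y,Z)\mapsto g([X,Y]_{\mathfrak{m}},Z)$. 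Summing over the pairs $e_\alpha\in\{e_i,Je_i\}$ doubles the contributions $[Je_i,e_i]$, since $[J(Je_i),Je_i] = -[e_i,Je_i] = [Je_i,e_i]$. This gives $\theta(X) = g(X, c\,J\sum_i[e_i,Je_i]_{\mathfrak{m}})$ for an explicit constant $c$, which I expect to come out to $-1$, matching \eqref{eq:thetasharp}.
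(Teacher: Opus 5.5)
Parts (i) and (iii) follow the paper's argument. Part (i) is Proposition \ref{prop:KoNo} together with the Kobayashi--Nomizu formulas, with the curvature sign flipped to match Section \ref{sect:prel}. Your trace computation in (iii) is correct and does give $c=-1$.

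For (ii) you take a different route from the paper. The paper expands $\mathrm{d}\omega$ on $\mathfrak{m}$ in brackets and simplifies with \eqref{eq:natrad}. You instead first prove a pointwise identity between $T^{\rm Ch}$ and $T$, valid on any Hermitian manifold, and then substitute (i). This is a clean route, since the naturally reductive condition then enters only through (i). As written, however, it contains two sign errors.

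First, your rewriting of $\mathrm{d}\omega(JX,Y,Z)$ is wrong, since $J(JX)=-X$. The correct rewritings are
\[
\mathrm{d}\omega(JX,Y,Z)=\mathrm{d}\omega\bigl(JX,J(-JY),J(-JZ)\bigr)=g(T(X,JY),JZ)
\]
and similarly $\mathrm{d}\omega(X,JY,Z)=g(T(JX,Y),JZ)$. Then \eqref{eq:defTCh} gives
\[
T^{\rm Ch}(X,Y)=\tfrac12 J\bigl(T(JX,Y)+T(X,JY)\bigr).
\]

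Second, the identity you call standard has the wrong overall sign in these conventions. Impose \eqref{eq:TCh11}, i.e.\ $T^{\rm Ch}(JX,Y)=JT^{\rm Ch}(X,Y)$, on the formula above; equivalently, use $N_J=0$ written through the $J$-parallel connection $\nabla$. This yields $J\bigl(T(JX,Y)+T(X,JY)\bigr)=T(JX,JY)-T(X,Y)$, hence
\[
T^{\rm Ch}(X,Y)=\tfrac12\bigl(T(JX,JY)-T(X,Y)\bigr).
\]
Substituting $T=-[\cdot,\cdot]_{\mathfrak{m}}$ into this gives exactly \eqref{eq:TCh}. Your version $\tfrac12\bigl(T(X,Y)-T(JX,JY)\bigr)$ gives the negative of the right-hand side of \eqref{eq:TCh}. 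If carried into (iii), that would also flip the sign in \eqref{eq:thetasharp}.

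The input that fixes the sign is integrability, or equivalently the type-$(1,1)$ property of $T^{\rm Ch}$. The reality of $\mathrm{d}\omega$ plays no role, and no ``$J$-conjugation'' is involved. The paper's direct computation needs the same input implicitly: \eqref{eq:natrad} alone only yields $T^{\rm Ch}(X,Y)=\tfrac12J\bigl([JX,Y]_{\mathfrak{m}}+[X,JY]_{\mathfrak{m}}\bigr)-[JX,JY]_{\mathfrak{m}}+[X,Y]_{\mathfrak{m}}$, which reduces to \eqref{eq:TCh} only after using $N_J=0$.

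With the corrected identity, your argument is complete.
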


\begin{proof}
Equation \eqref{eq:TR} follows directly from Proposition \ref{prop:KoNo} and \cite[Theorem 2.6, Ch.\ X]{MR1393941}. Equation \eqref{eq:TCh} follows from \eqref{eq:defTCh}, from the identity
$$
\mathrm{d}\omega(X,Y,Z) = g([X,Y]_{\mathfrak{m}},JZ) +g([Y,Z]_{\mathfrak{m}},JX) +g([Z,X]_{\mathfrak{m}},JY) \quad \text{for all $X, Y, Z \in \mathfrak{m}$} \,\, ,
$$
and from the naturally reductive condition \eqref{eq:natrad}. Finally, equation \eqref{eq:thetasharp} follows from \eqref{def:theta} and \eqref{eq:TCh}.
\end{proof}

From Lemma \ref{lem:LUprop}, one can deduce the following facts.

\begin{lemma}
Let $(M,J,g)$ be a complete, simply-connected BAS manifold.
\begin{itemize}
\item[$i)$] The curvature $R$ of the Bismut connection is of type $(1,1)$, i.e.,
\begin{equation} \label{eq:R11}
R(JV,JW) = R(V,W) \quad \text{for all $V, W \in \Gamma(TM)$} \,\, .
\end{equation}
\item[$ii)$] The torsion $T^{\rm Ch}$ of the Chern connection verifies the Jacobi identity, i.e.,
\begin{multline} \label{eq:Tch-Jac}
T^{\rm Ch}(T^{\rm Ch}(U, V), W) +T^{\rm Ch}(T^{\rm Ch}(W, U), V) \\
+T^{\rm Ch}(T^{\rm Ch}(V, W), U) = 0 \quad \text{for all $U, V, W \in \Gamma(TM)$} \,\, .
\end{multline}
\item[$iii)$] The Lee vector field $\theta^{\sharp}$ verifies the following properties:
\begin{gather}
g(T^{\rm Ch}(V,W),\theta^{\sharp}) = g(T^{\rm Ch}(V,W),J\theta^{\sharp}) = 0 \quad \text{for all $V, W \in \Gamma(TM)$} \,\, , \label{eq:Ttheta1} \\
JT(\theta^{\sharp},V) = T(\theta^{\sharp},JV) \quad \text{and} \quad JT(J\theta^{\sharp},V) = T(J\theta^{\sharp},JV) \quad \text{for all $V \in \Gamma(TM)$} \,\, . \label{eq:Ttheta2}
\end{gather}
\end{itemize}
\end{lemma}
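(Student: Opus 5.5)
All three claims will be checked pointwise at $p$ in the canonical presentation $M=\mathsf{L}/\mathsf{U}$. By Proposition \ref{prop:KoNo} the tensors $R$, $T$, $T^{\rm Ch}$, $J$ and $\theta^\sharp$ are all $\nabla$-parallel and $\mathsf{L}$-invariant, and $\mathsf{L}$ acts transitively. So it suffices to prove each identity on $\mathfrak{m}\simeq T_pM$ using the formulas of Lemma \ref{lem:LUprop}.

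For $i)$, recall from Proposition \ref{prop:Nomcons2} that the curvature endomorphism $R_p(v,w)$ is the $\mathfrak{N}_0(p)$-component of the bracket $[(0,v),(0,w)]$. Hence $R_p(v,w)\in\mathfrak{N}_0(p)$, and in particular it commutes with $J$. Equivalently, via \eqref{eq:TR}, we have $\mathrm{ad}([X,Y]_{\mathfrak u})|_{\mathfrak m}=R(X,Y)$ with $\mathfrak u\subset\mathfrak{aut}_{0,p}$ acting $J$-linearly. Viewing $R$ as a $(0,4)$-tensor $g(R(X,Y)Z,W)$, it is skew in $(Z,W)$ and $J$-invariant in $(Z,W)$. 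I will use the pair symmetry $g(R(X,Y)Z,W)=g(R(Z,W)X,Y)$, which holds for a metric connection with parallel skew torsion. I plan to derive it from \eqref{eq:R-Rm}: the right-hand side is pair-symmetric because $T$ is a $3$-form satisfying the Bianchi-type identity forced by $\nabla T=0$ (the first Bianchi identity for parallel skew torsion gives $\sigma_T$ terms that are symmetric). Alternatively, pair symmetry follows from $g(R(X,Y)Z,W)=Q([X,Y]_{\mathfrak u},[Z,W]_{\mathfrak u})$ via the $\mathrm{Ad}$-invariance of $Q$ in Proposition \ref{prop:Kostant}, and this is the route I would actually use. With pair symmetry in hand, $J$-invariance in the last pair transfers to the first pair, which gives \eqref{eq:R11}.

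For $ii)$, write $T^{\rm Ch}$ from \eqref{eq:TCh}. By \eqref{eq:TCh11} it is of type $(1,1)$, so it is natural to complexify $\mathfrak m$. Then $T^{\rm Ch}$ vanishes on $\mathfrak m^{1,0}\times\mathfrak m^{0,1}$, and on $\mathfrak m^{1,0}$ it equals, up to a constant, the $\mathfrak m$-projection of the bracket, since $[JX,JY]-[X,Y]=-2[X,Y]$ for $X,Y\in\mathfrak m^{1,0}$. So the Jacobi identity \eqref{eq:Tch-Jac} reduces to a Jacobi identity for $(X,Y)\mapsto[X,Y]_{\mathfrak m}$ on $\mathfrak m^{1,0}$, and similarly on $\mathfrak m^{0,1}$, with mixed triples vanishing by type. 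To obtain it, I will expand the Jacobi identity of $\mathfrak l$ for $X,Y,Z\in\mathfrak m^{1,0}$. The extra terms are $\sum_{\rm cyc}[[X,Y]_{\mathfrak u},Z]=\sum_{\rm cyc}R(X,Y)Z$. By $i)$, $R(X,Y)=0$ for $X,Y$ both of type $(1,0)$, since $R(JX,JY)=R(X,Y)$ while $JX=iX$ and $JY=iY$ give $R(JX,JY)=-R(X,Y)$. Hence the remaining $\mathfrak m$-part satisfies Jacobi. This is the step I expect to be most delicate: I must make sure that $[X,Y]_{\mathfrak m}$ stays in $\mathfrak m^{1,0}$, which follows since $T^{\rm Ch}$ is $(1,1)$.

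For $iii)$, use \eqref{eq:thetasharp}. Since $J\theta^\sharp$ and $\theta^\sharp$ are parallel, I will check \eqref{eq:Ttheta1} using \eqref{def:theta}. We have $\theta(U)=\mathrm{tr}(T^{\rm Ch}(U,\cdot))$, which in the complex picture is, up to a constant, $\mathrm{tr}_{\mathbb C}\,\mathrm{ad}_{\mathfrak m}$ on the Lie-type algebra $(\mathfrak m^{1,0},T^{\rm Ch})$ given by $ii)$. The trace of $\mathrm{ad}$ vanishes on derived elements, because $\mathrm{ad}([A,B])=[\mathrm{ad}A,\mathrm{ad}B]$ by Jacobi. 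Hence $\theta^{1,0}$ kills $T^{\rm Ch}(V,W)$, which gives \eqref{eq:Ttheta1} for both $\theta^\sharp$ and $J\theta^\sharp$. For \eqref{eq:Ttheta2}, write $T$ in terms of $T^{\rm Ch}$ via \eqref{eq:defTCh} and \eqref{eq:Bismutconn}: $T(X,Y)$ equals the $(1,1)$-part $T^{\rm Ch}$ plus the $(2,0)+(0,2)$ parts determined by $g(T^{\rm Ch}(\cdot,\cdot),\cdot)$ with one slot moved. The $J$-antilinear part of $T(\theta^\sharp,\cdot)$ is then expressible through $g(T^{\rm Ch}(V,W),\theta^\sharp)$ and $g(T^{\rm Ch}(V,W),J\theta^\sharp)$, which vanish by \eqref{eq:Ttheta1}. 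Therefore $T(\theta^\sharp,\cdot)$ and $T(J\theta^\sharp,\cdot)$ commute with $J$.
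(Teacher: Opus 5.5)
Your proposal is correct and follows essentially the paper's proof. Both arguments work at $p$ in the canonical presentation with the ${\rm Ad}$-invariant form $Q$, \eqref{eq:TR} and \eqref{eq:TCh}, combine the Jacobi identity of $\mathfrak{l}$ with \eqref{eq:R11}, and then trace the resulting Jacobi identity for $T^{\rm Ch}$ (the vanishing of $\mathrm{tr}\,\mathrm{ad}$ on derived elements). The differences are only in packaging: you get \eqref{eq:R11} from the pair symmetry $g(R(X,Y)Z,W)=Q([X,Y]_{\mathfrak u},[Z,W]_{\mathfrak u})$ instead of the paper's identity $[JX,JY]_{\mathfrak u}=[X,Y]_{\mathfrak u}$, and your splitting $\mathfrak m^{\mathbb C}=\mathfrak m^{1,0}\oplus\mathfrak m^{0,1}$ (on $\mathfrak m^{1,0}$, $T^{\rm Ch}=[\cdot,\cdot]_{\mathfrak m}$ and $R$ vanishes) is a tidier version of the paper's real cyclic-sum computation; it also spells out the type argument behind \eqref{eq:Ttheta2} that the paper leaves implicit.
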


\begin{proof}
Let $M=\mathsf{L}/\mathsf{U}$ be the canonical presentation of $(M,J,g)$. Since $Q|_{\mathfrak{u} \otimes \mathfrak{u}}$ is non-degenerate, a straightforward computation shows that
$$
[JX,JY]_{\mathfrak{u}} = [X,Y]_{\mathfrak{u}} \quad \text{for all $X,Y \in \mathfrak{m}$}
$$
and so equation \eqref{eq:R11} follows from \eqref{eq:TR}. To prove the second claim, we observe that by \eqref{eq:TCh} and by the integrability of $J$, it follows that
\begin{multline} \label{eq:TcTc}
4\,T^{\rm Ch}(T^{\rm Ch}(X, Y), Z) = 
-[[JX,Y]_{\mathfrak{m}},JZ]_{\mathfrak{m}}
-[[X,JY]_{\mathfrak{m}},JZ]_{\mathfrak{m}} \\
-[[JX,JY]_{\mathfrak{m}},Z]_{\mathfrak{m}}
+[[X,Y]_{\mathfrak{m}},Z]_{\mathfrak{m}}
\quad \text{for all $X, Y, Z \in \mathfrak{m}$} \,\, .
\end{multline}
Therefore, by \eqref{eq:TR}, \eqref{eq:R11}, \eqref{eq:TcTc} and the Jacobi identity, we obtain for every $X, Y, Z \in \mathfrak{m}$
$$\begin{aligned}
4\underset{X, Y, Z}{\mathfrak{S}} & T^{\rm Ch}(T^{\rm Ch}(X, Y), Z) = \\ 
& =- \underset{X, Y, Z}{\mathfrak{S}} \Big(
[[JX,Y]_{\mathfrak{m}},JZ]_{\mathfrak{m}}
+[[X,JY]_{\mathfrak{m}},JZ]_{\mathfrak{m}}
+[[JX,JY]_{\mathfrak{m}},Z]_{\mathfrak{m}}
-[[X,Y]_{\mathfrak{m}},Z]_{\mathfrak{m}} \Big) \\
& = \underset{X, Y, Z}{\mathfrak{S}} \Big(
[[JX,Y]_{\mathfrak{u}},JZ]
+[[X,JY]_{\mathfrak{u}},JZ]
+[[JX,JY]_{\mathfrak{u}},Z]
-[[X,Y]_{\mathfrak{u}},Z] \Big) \\
& = \underset{X, Y, Z}{\mathfrak{S}} \Big(
R(JX,Y).JZ
+R(X,JY).JZ
+R(JX,JY).Z
-R(X,Y).Z \Big) \\
& = 0 \,\, ,
\end{aligned}$$
and hence equation \eqref{eq:Tch-Jac} follows. To prove the third claim, we observe that equation \eqref{eq:Ttheta1} follows by tracing the identity
$$
g(T^{\rm Ch}(T^{\rm Ch}(X, Y), Z_1),Z_2) +g(T^{\rm Ch}(T^{\rm Ch}(Y, Z_1), X),Z_2) +g(T^{\rm Ch}(T^{\rm Ch}(Z_1, X), Y),Z_2) = 0
$$
with respect to $Z_1$ and $Z_2$ and by \eqref{def:theta}. Finally, equation \eqref{eq:Ttheta2} follows by \eqref{eq:TR}, \eqref{eq:TCh} and \eqref{eq:Ttheta1}.
\end{proof}

Finally, we infer the following properties of the Lee vector field.

\begin{proposition} \label{prop:thetaholkill}
Let $(M,J,g)$ be a complete, simply-connected BAS manifold and let $\theta^{\sharp}$ be its Lee vector field. Then, both $\theta^{\sharp}$ and $J\theta^{\sharp}$ are $\mathsf{Aut}^0$-invariant, holomorphic Killing vector fields.
\end{proposition}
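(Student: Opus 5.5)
We prove the statement by working with the canonical presentation $M=\mathsf{L}/\mathsf{U}$ at a point $p$, using the Nomizu-type description of $\mathfrak{aut}$ from Proposition \ref{prop:Nomcons1} and Proposition \ref{prop:Nomcons2}. The \emph{invariance} claim comes first. Every $f\in\mathsf{Aut}^0$ preserves $g$ and $J$. Hence it preserves the Bismut connection, $T$, $T^{\rm Ch}$ and the Lee form $\theta$, since by \eqref{def:theta} $\theta$ is built from $T^{\rm Ch}$ by a metric trace. Therefore $f_*\theta^\sharp=\theta^\sharp$ and $f_*(J\theta^\sharp)=J\theta^\sharp$.

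Next we show that $\theta^\sharp$ and $J\theta^\sharp$ are $\nabla$-parallel. They are $\mathsf{L}$-invariant, since $\mathsf{L}$ acts through $\mathsf{Aut}^0$ up to covering. By Proposition \ref{prop:KoNo}, every $\mathsf{L}$-invariant tensor field is $\nabla$-parallel, so $\nabla\theta^\sharp=0$, and $\nabla(J\theta^\sharp)=0$ because $\nabla J=0$.

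The key step is a general criterion. Let $V$ be a $\nabla$-parallel vector field satisfying $V\lrcorner T\in\mathfrak{N}_0$ pointwise, i.e.
$$
(V\lrcorner T)\cdot J=(V\lrcorner T)\cdot T=(V\lrcorner T)\cdot R=0 .
$$
We claim that $V\in\mathfrak{aut}$. Indeed, $DV=\nabla V-\tfrac12 V\lrcorner T=-\tfrac12 V\lrcorner T$, which is skew. So $V$ is Killing, and $\psi(V)$ would be $(\tfrac12 V_p\lrcorner T_p,V_p)=\phi(0,V_p)\in\mathfrak{H}(p)$. To make this rigorous we argue as follows. The element $\phi(0,V_p)$ lies in $\mathfrak{H}(p)$, so by Proposition \ref{prop:Nomcons1} there is $W\in\mathfrak{aut}$ with $W_p=V_p$ and $\nabla W|_p=0$, namely the element of $\mathfrak{m}_p$ from Theorem \ref{thm:BASnatrad} (with $-\nabla W|_p=W_p\lrcorner T_p$ replaced appropriately). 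Alternatively, and more directly, we compute $\mathcal{L}_VJ=\nabla_VJ-[\nabla V-T(V,\cdot),J]$. Here $\nabla V=0$, so $\mathcal{L}_VJ=[V\lrcorner T,J]$. Similarly $\mathcal{L}_Vg=0$ because $V\lrcorner T$ is skew. So the direct formula shows that $V$ is holomorphic Killing exactly when $T(V,\cdot)$ commutes with $J$. We only need $J$-commutation; the conditions on $T$ and $R$ are not required for membership in $\mathfrak{aut}$.

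It remains to verify $T(\theta^\sharp,J\,\cdot)=JT(\theta^\sharp,\cdot)$ and the analogous identity for $J\theta^\sharp$. This is exactly \eqref{eq:Ttheta2}, which has already been established. Hence both fields are holomorphic Killing.

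The main obstacle is the derivation of $\mathcal{L}_V=\nabla_V-(\nabla V)+T(V,\cdot)$ with the correct sign conventions for \eqref{eq:T}. We expect to handle it by writing
$$
\mathcal{L}_VX=[V,X]=\nabla_VX-\nabla_XV-T(V,X),
$$
so that on tensors $\mathcal{L}_V=\nabla_V-(\nabla V+T(V,\cdot))\cdot$. With $\nabla V=0$, we get $\mathcal{L}_V\tau=-(V\lrcorner T)\cdot\tau$ for any $\nabla$-parallel $\tau$. Applying this to $\tau=g$ and $\tau=J$, both of which are $\nabla$-parallel, gives $\mathcal{L}_Vg=0$ by skew-symmetry of $T$, and $\mathcal{L}_VJ=0$ by \eqref{eq:Ttheta2}.
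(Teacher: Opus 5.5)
Your proposal is correct and, once the detour through $\mathfrak{N}_0$ and $\mathfrak{m}_p$ is discarded, it is essentially the paper's proof. The steps are the same: $\mathsf{Aut}^0$-invariance of $\theta$, then $\nabla\theta^\sharp=\nabla(J\theta^\sharp)=0$ by Proposition \ref{prop:KoNo}, and then the identity $\mathcal{L}_V=\nabla_V-(\nabla V+V\lrcorner T)\cdot$, which reduces the Killing property to total skew-symmetry of $T$ and holomorphicity to \eqref{eq:Ttheta2}.

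That detour should simply be deleted. Nothing in your final argument depends on it, and it is inaccurate as written in two places. First, elements $W\in\mathfrak{m}_p$ satisfy $-\nabla W|_p=W_p\lrcorner T_p$, not $\nabla W|_p=0$. Second, with the paper's conventions one has $DV=\nabla V+\tfrac12 V\lrcorner T$, not $DV=\nabla V-\tfrac12 V\lrcorner T$.
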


\begin{proof}
The first claim follows from the fact that the Lee form $\theta$ is $\mathsf{Aut}^0$-invariant, and so its metric dual $\theta^{\sharp}$ is $\mathsf{Aut}^0$-invariant as well. By Proposition \ref{prop:KoNo}, it follows that $\nabla \theta^{\sharp} = 0$ . Since the Bismut connection $\nabla$ is metric and its torsion $T$ is totally skew-symmetric, it follows that
$$
(\mathcal{L}_{\theta^{\sharp}}g)(V,W) = g(\nabla_{V}\theta^{\sharp},W) +g(\nabla_{W}\theta^{\sharp},V) \quad \text{for all $V, W \in \Gamma(TM)$}
$$
and so $\theta^{\sharp}$ is a Killing vector field. To show that $\theta^{\sharp}$ is holomorphic, we observe that for every $V \in \Gamma(TM)$ 
\begin{align*}
(\mathcal{L}_{\theta^{\sharp}}J)V &= [\theta^{\sharp},JV] - J[\theta^{\sharp},V] \\
&= T(JV,\theta^{\sharp}) + \nabla_{\theta^{\sharp}}JV - \nabla_{JV}\theta^{\sharp} - JT(V,\theta^{\sharp}) -J\nabla_{\theta^{\sharp}}V +J\nabla_V\theta^{\sharp} \\
&= T(JV,\theta^{\sharp}) - JT(V,\theta^{\sharp})
\end{align*}
and so $\mathcal{L}_{\theta^{\sharp}}J = 0$ by \eqref{eq:Ttheta2}. The same argument applies to $J\theta^{\sharp}$, and so this concludes the proof.
\end{proof}

\subsection{The canonical reduction theorem} \label{sect:proofA} \hfill \par

The aim of this section is to prove Theorem \ref{thm:MAIN-general}. We first introduce a distinguished class of BAS manifolds that arises naturally in this setting.

\begin{definition} \label{def:reduced}
A complete, simply-connected BAS manifold $(M,J,g)$ is said to be \emph{reduced} if it admits no non-trivial $\mathsf{Aut}^0$-invariant holomorphic Killing vector fields. 
\end{definition}

As a straightforward consequence of Proposition \ref{prop:thetaholkill}, we obtain the following observation.

\begin{corollary}
Every complete, simply-connected, reduced BAS manifold is balanced.
\end{corollary}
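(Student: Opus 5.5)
The plan is to apply Proposition \ref{prop:thetaholkill} directly and then convert the vanishing of the Lee vector field into the balanced condition.

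\emph{Step 1.} Let $(M,J,g)$ be complete, simply-connected, reduced and BAS. By Proposition \ref{prop:thetaholkill}, the Lee vector field $\theta^{\sharp}$ is an $\mathsf{Aut}^0$-invariant holomorphic Killing vector field. Definition \ref{def:reduced} says that a reduced BAS manifold admits no non-trivial such fields. Hence $\theta^{\sharp}=0$, and since $g$ is non-degenerate, $\theta=0$. The same argument applied to $J\theta^{\sharp}$ is redundant but consistent.

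\emph{Step 2.} It remains to check that $\theta=0$ means balanced, i.e.\ $\mathrm{d}\omega^{n-1}=0$, equivalently $\mathrm{d}^*\omega=0$. For this I will use the standard identity $\mathrm{d}\omega^{n-1} = (n-1)\,\omega^{n-2}\wedge \mathrm{d}\omega$ together with the decomposition of $\mathrm{d}\omega$ into its primitive part and its trace part. Only the trace part $\theta\wedge\omega$ survives after wedging with $\omega^{n-2}$, up to a nonzero constant depending on normalization. Concretely, the Lee form defined in \eqref{def:theta} as the trace of the Chern torsion agrees, up to a nonzero multiple, with the usual Lee form satisfying $\mathrm{d}\omega^{n-1} = c\,\theta\wedge\omega^{n-1}$. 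So $\theta=0$ gives $\mathrm{d}\omega^{n-1}=0$.

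The only point requiring care is the normalization check in Step 2, namely that the trace-of-Chern-torsion definition \eqref{def:theta} coincides with the classical Lee form up to a nonzero constant. This is a standard local computation in a unitary frame. Alternatively, it can be done algebraically at $p$ via \eqref{eq:thetasharp}: $\theta^\sharp|_p=0$ says $\sum_i J[e_i,Je_i]_{\mathfrak m}=0$, which is precisely the vanishing of the contraction of $\mathrm{d}\omega$ with $\omega$, i.e.\ $\mathrm{d}^*\omega=0$.
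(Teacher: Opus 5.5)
Your proof is correct and follows the paper's argument: the paper states the corollary as an immediate consequence of Proposition \ref{prop:thetaholkill}, since reducedness forces the $\mathsf{Aut}^0$-invariant holomorphic Killing field $\theta^{\sharp}$ to vanish. Your Step 2 makes explicit a standard fact the paper leaves implicit. It checks that the trace-of-Chern-torsion Lee form \eqref{def:theta} is a nonzero multiple of the contraction of $\mathrm{d}\omega$ with $\omega$, so that $\theta=0$ is equivalent to $\mathrm{d}\omega^{n-1}=0$.
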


Let $(M,J,g)$ be a complete, simply-connected BAS manifold and let $M = \mathsf{L}/\mathsf{U}$ be its canonical presentation. Consider the trivial module
\begin{equation} \label{eq:def-f}
\mathfrak{f} \coloneqq \{\text{$X \in \mathfrak{m} : {\rm Ad}(u).X = X$ for every $u \in \mathsf{U}$}\} = \{\text{$X \in \mathfrak{m} : [U,X] = 0$ for every $U \in \mathfrak{u}$}\} \,\, .
\end{equation}
We provide now some useful properties of the trivial module $\mathfrak{f}$.

\begin{lemma} \label{lem:f}
Let $(M,J,g)$ be a complete, simply-connected BAS manifold, let $M = \mathsf{L}/\mathsf{U}$ be its canonical presentation and let $\mathfrak{f}$ be the trivial module defined in \eqref{eq:def-f}.
\begin{itemize}
\item[$i)$] The normalizer $N_{\mathfrak{l}}(\mathfrak{u})$ of $\mathfrak{u}$ in $\mathfrak{l}$ verifies $N_{\mathfrak{l}}(\mathfrak{u}) = \mathfrak{u} +\mathfrak{f}$.
\item[$ii)$] For every $X \in \mathfrak{f}$, we have $\nabla X^* = 0$, ${\rm ad}(X)(\mathfrak{m}) \subset \mathfrak{m}$ and $[{\rm ad}(X)|_{\mathfrak{m}},J] = 0$.
\item[$iii)$] The subspace $\mathfrak{f}$ is a $J$-invariant, abelian subalgebra of $\mathfrak{l}$.
\item[$iv)$] The vector fields $\theta^{\sharp}$ and $J\theta^{\sharp}$ verify $\theta^{\sharp}, J\theta^{\sharp} \in N_{\mathfrak{l}}(\mathfrak{u})$.
\item[$v)$] The centre $\mathfrak{z}(\mathfrak{l})$ of $\mathfrak{l}$ verifies
$$
\mathfrak{z}(\mathfrak{l}) \subset \mathfrak{z}(\mathfrak{u}) +\mathfrak{f} \quad \text{ and } \quad \mathfrak{z}(\mathfrak{l}) \cap \mathfrak{u} = \{0\} \,\, ,
$$
where $\mathfrak{z}(\mathfrak{u})$ denotes the centre of $\mathfrak{u}$.
\end{itemize}
\end{lemma}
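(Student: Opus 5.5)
We take the five items in turn, working throughout in the reductive decomposition $\mathfrak{l} = \mathfrak{u} + \mathfrak{m}$ with the invariant form $Q$ of Proposition \ref{prop:Kostant}, and using Lemma \ref{lem:LUprop} to turn brackets into torsion and curvature.

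\emph{Item i).} Write $Y = U_0 + X \in N_{\mathfrak{l}}(\mathfrak{u})$ with $U_0 \in \mathfrak{u}$ and $X \in \mathfrak{m}$. Then $[U,X] \in \mathfrak{u}$ for all $U \in \mathfrak{u}$. Since also $[\mathfrak{u},\mathfrak{m}] \subset \mathfrak{m}$, we get $[U,X]=0$, so $X \in \mathfrak{f}$. The reverse inclusion is immediate.

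\emph{Item ii).} Let $X \in \mathfrak{f}$.
\begin{itemize}
\item Since $X$ is $\mathrm{Ad}(\mathsf{U})$-fixed, it defines an $\mathsf{L}$-invariant vector field on $M$, obtained by translating $X \in \mathfrak{m} \simeq T_pM$. By Proposition \ref{prop:KoNo}, this field is $\nabla$-parallel.
\item To show ${\rm ad}(X)(\mathfrak{m}) \subset \mathfrak{m}$, compute $Q([X,Y],U) = -Q(Y,[X,U]) = 0$ for $Y \in \mathfrak{m}$ and $U \in \mathfrak{u}$. Hence $[X,Y]_{\mathfrak{u}}$ is $Q$-orthogonal to $\mathfrak{u}$. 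As $Q|_{\mathfrak{u}\otimes\mathfrak{u}}$ is non-degenerate, $[X,Y]_{\mathfrak{u}} = 0$.
\item It follows that ${\rm ad}(X)|_{\mathfrak{m}} = -T(X,\cdot)$ by \eqref{eq:TR}.
\end{itemize}

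\emph{The main obstacle} is commuting with $J$. My plan is to redo the computation of Proposition \ref{prop:thetaholkill} with the invariant field $X^{\mathrm{inv}}$ in place of $\theta^\sharp$. Because $X^{\mathrm{inv}}$ is $\nabla$-parallel, the same computation gives
$$(\mathcal{L}_{X^{\mathrm{inv}}}J)V = T(JV,X^{\mathrm{inv}}) - JT(V,X^{\mathrm{inv}}).$$
So it suffices to show that $X^{\mathrm{inv}}$ is holomorphic.

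I would argue this via $\mathsf{L}$-invariant fields: the right action of $\exp(tX)$ on $\mathsf{L}/\mathsf{U}$ is well defined, since $X$ normalizes $\mathsf{U}$. It commutes with $\mathsf{L}$, and its generator is (up to sign) $X^{\mathrm{inv}}$. Its differential at $p$ is $\mathrm{Ad}(\exp tX)|_{\mathfrak{m}}$, so one needs $[{\rm ad}(X)|_{\mathfrak{m}},J]=0$ directly. I would instead test the flatness obstruction through the curvature: since $R(X,\cdot)=[[X,\cdot]_{\mathfrak{u}},\cdot]=0$, apply \eqref{eq:R11} to get $R(JX,\cdot)=0$ as well. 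Then $\mathrm{ad}(X)|_{\mathfrak{m}}$ preserves $R$ and $T$ by the Jacobi identity. Using that $\nabla J=0$ and $J$ is integrable, I would show that $T(X,\cdot)$ commutes with $J$ via the Nijenhuis-type identity
$$T(JX,JY) - T(X,Y) = J T(JX,Y) + JT(X,JY),$$
which holds for the Bismut torsion, being the $(3,0)+(0,3)$-free condition. Combined with the previous step, this identity should suffice once we also know $J\mathfrak{f} \subset \mathfrak{f}$.

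\emph{Item iii).} $J$-invariance of $\mathfrak{f}$ follows because $J$ is $\mathrm{Ad}(\mathsf{U})$-invariant. For abelianity, take $X,Y \in \mathfrak{f}$. Then $[X,Y] \in N_{\mathfrak{l}}(\mathfrak{u})$ and $[X,Y]_{\mathfrak{u}}=0$, so $[X,Y] = -T(X,Y) \in \mathfrak{f}$. Now
$$g(T(X,Y),Z) = g(T(Y,Z),X) = 0$$
for every $Z$: choose $Z \in \mathfrak{m}$ and project, noting that $T(Y,\cdot)$ maps $\mathfrak{m}$ into $\mathfrak{m}$ equivariantly. Since $\mathfrak{f}$ is the trivial isotypic component and $T(Y,\cdot)$ is $\mathsf{U}$-equivariant and skew, it preserves both $\mathfrak{f}$ and $\mathfrak{f}^\perp$. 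On $\mathfrak{f}$ itself, the symmetry $g(T(X,Y),Z)$ with all three arguments in $\mathfrak{f}$, together with Jacobi in $\mathfrak{l}$, should force vanishing. As a fallback, use that $Q$ restricted to $\mathfrak{f}$ is $g$, positive definite and ad-invariant on the Lie algebra $N(\mathfrak{u})/\mathfrak{u}$. This makes $N(\mathfrak{u})/\mathfrak{u}$ compact, and hence abelian modulo a semisimple part. I would then exclude the semisimple part using the definition of $\mathfrak{l}$ in \eqref{eq:def-l}.

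\emph{Item iv).} $\theta^\sharp|_p$ and $J\theta^\sharp|_p$ lie in $\mathfrak{m}$ and are $\mathrm{Ad}(\mathsf{U})$-fixed by Proposition \ref{prop:thetaholkill}. So they lie in $\mathfrak{f}$, and by item i) in $N_{\mathfrak{l}}(\mathfrak{u})$.

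\emph{Item v).} Let $Z \in \mathfrak{z}(\mathfrak{l})$ and decompose $Z = Z_{\mathfrak{u}} + Z_{\mathfrak{m}}$. Bracketing with $\mathfrak{u}$ gives $[U,Z_{\mathfrak{u}}]+[U,Z_{\mathfrak{m}}]=0$; splitting into components yields $Z_{\mathfrak{u}} \in \mathfrak{z}(\mathfrak{u})$ and $Z_{\mathfrak{m}} \in \mathfrak{f}$. If moreover $Z \in \mathfrak{u}$, then $Z$ acts trivially on $M$ because it is central and vanishes at $p$. By almost-effectiveness, $Z = 0$.
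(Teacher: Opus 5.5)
Your treatment of items $i)$, $iv)$, $v)$, of the first two assertions in $ii)$, and of the $J$-invariance of $\mathfrak{f}$ is fine and matches the paper. The genuine gap is the commutation $[\mathrm{ad}(X)|_{\mathfrak{m}},J]=0$ in $ii)$, and with it the abelianness in $iii)$, which depends on it. You correctly note that holomorphicity of the invariant extension of $X$ is equivalent to the claim, but none of your routes closes it. The integrability identity you quote only gives $[T(JX,\cdot),J]=J\,[T(X,\cdot),J]$. This relates the $J$-antilinear parts of $T(X,\cdot)$ and $T(JX,\cdot)$ without forcing them to vanish. The asserted vanishing of $g(T(X,Y),Z)$ and the ``exclude the semisimple part'' fallback have no justification.

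In fact these steps cannot be completed, because $ii)$ and $iii)$ are false as stated. Take $M=\mathsf{SU}(2)\times\mathbb{R}$ with a bi-invariant metric. Let $e_1,e_2,e_3$ be an orthonormal basis of $\mathfrak{su}(2)$ with $[e_1,e_2]=e_3$ and cyclic permutations, let $e_4$ span $\mathbb{R}$, and take the left-invariant Hopf structure $Je_1=e_2$, $Je_3=e_4$. The Bismut connection is the flat connection for which left-invariant fields are parallel, with $T(X,Y)=-[X,Y]$ on left-invariant fields, so $M$ is BAS. For a right-invariant field $Z^R$ one finds $-\nabla_wZ^R|_e=[w,Z]=T(Z,w)$. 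Hence, by \eqref{eq:redcompl}, $\mathfrak{m}_p$ is exactly the space of right-invariant fields. It follows that $\mathfrak{l}=\mathfrak{m}$ and $\mathfrak{u}=\{0\}$, and $\mathfrak{f}=\mathfrak{m}\simeq\mathfrak{su}(2)\oplus\mathbb{R}$ is not abelian. Moreover $\mathrm{ad}(e_1)$ annihilates $e_1$ but sends $Je_1=e_2$ to a non-zero multiple of $e_3$, so it does not commute with $J$.

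The paper's own proof passes over this point by simply asserting that the $\mathsf{L}$-invariant field $X^*$ is holomorphic. That is precisely the circular step you identified. In the example, only $e_3,e_4$ have holomorphic invariant extensions; these are $\theta^{\sharp}$ and $J\theta^{\sharp}$, consistent with \eqref{eq:Ttheta2}.

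What can be proved is the statement for $\mathfrak{f}_{\mathrm{hol}}\coloneqq\{X\in\mathfrak{f}:[\mathrm{ad}(X)|_{\mathfrak{m}},J]=0\}$, the space of $\mathsf{L}$-invariant holomorphic Killing fields. Your integrability identity shows that $J\mathfrak{f}_{\mathrm{hol}}\subset\mathfrak{f}_{\mathrm{hol}}$. It is a subalgebra, since $\mathrm{ad}([X,Y])|_{\mathfrak{m}}=[\mathrm{ad}(X)|_{\mathfrak{m}},\mathrm{ad}(Y)|_{\mathfrak{m}}]$. On it one has $\mathrm{ad}(JX)=J\circ\mathrm{ad}(X)$, so its Killing form $\mathscr{B}$ satisfies $\mathscr{B}(JX,JX)=-\mathscr{B}(X,X)$. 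Since $Q|_{\mathfrak{f}}=g$ makes it a compact Lie algebra, $\mathscr{B}\le 0$. Combined with the previous identity this forces $\mathscr{B}=0$, so $\mathfrak{f}_{\mathrm{hol}}$ is abelian.
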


\begin{proof}
Let $Q$ be as in Proposition \ref{prop:Kostant}. By definition of normalizer, it follows that $\mathfrak{u} +\mathfrak{f} \subset N_{\mathfrak{l}}(\mathfrak{u})$. Let now $Z \in N_{\mathfrak{l}}(\mathfrak{u})$ and consider its splitting $Z = Z_{\mathfrak{u}} +Z_{\mathfrak{m}}$ according to the canonical presentation $\mathfrak{l} = \mathfrak{u} + \mathfrak{m}$. Then, it follows that
$$
[U,Z] \in \mathfrak{u} \,\, , \quad [U,Z_{\mathfrak{u}}] \in \mathfrak{u} \,\, , \quad [U,Z_{\mathfrak{m}}] \in \mathfrak{m} \quad \text{for every $U \in \mathfrak{u}$} \,\, ,
$$
and so $[U,Z_{\mathfrak{m}}] \in \mathfrak{u} \cap \mathfrak{m} = \{0\}$. This implies that $Z_{\mathfrak{m}} \in \mathfrak{f}$, hence claim $i)$ holds. By the $\mathrm{Ad}(\mathsf{L})$-invariance of $Q$, it follows that
$$
Q([X,Y],U) = 0 \quad \text{for every $X, Y \in \mathfrak{f}$ and $U \in \mathfrak{u}$} \,\, ,
$$
and so $\mathfrak{f}$ is a subalgebra of $\mathfrak{l}$. Moreover,
$$
[U,JX] = J[U,X] = 0 \quad \text{for any $X \in \mathfrak{f}$ and $U \in \mathfrak{u}$} \,\, ,
$$
and so $\mathfrak{f}$ is $J$-invariant. Furthermore, the restriction $Q|_{\mathfrak{f} \otimes \mathfrak{f}}$ of $Q$ on $\mathfrak{f}$ is $\mathrm{ad}(\mathfrak{f})$-invariant and positive definite, hence $\mathfrak{f}$ is of compact type.

Notice that, by \eqref{eq:def-f}, it holds that
\begin{equation} \label{eq:Xinf}
X \in \mathfrak{f} \quad \text{if and only if} \quad \text{$X^*$ is $\mathsf{L}$-invariant} \,\, , \quad \text{for every $X \in \mathfrak{m}$} \,\, .
\end{equation}
Then, by Proposition \ref{prop:KoNo}, it follows that $\nabla X^* = 0$ for every $X \in \mathfrak{f}$. Let now $X \in \mathfrak{f}$ and $Y \in \mathfrak{m}$. Then $[X,Y] \in \mathfrak{m}$ because
$$
Q([X,Y], U) = -Q([X,U], Y) = 0 \quad \text{for all $U \in \mathfrak{u}$}
$$
and $Q|_{\mathfrak{u} \otimes \mathfrak{u}}$ is non-degenerate. Moreover, since $X^*$ is $\nabla$-parallel and holomorphic, it follows that
$$
0 = (\mathcal{L}_{X^*}J)Y^* = -T(X^*,JY^*) +JT(X^*,Y^*) \,\, .
$$
From \eqref{eq:TR} we obtain
$$
[X,JY] =J[X,Y]
$$
and so $[{\rm ad}(X)|_{\mathfrak{m}},J] = 0$. Since $\mathfrak{f}$ is $J$-invariant and $J|_{\mathfrak{f}}$ is ${\rm ad}(\mathfrak{f})$-invariant and $(Q|_{\mathfrak{f} \otimes \mathfrak{f}})$-orthogonal, it follows that $\mathfrak{f}$ is abelian. Hence, claim $ii)$ and claim $iii)$ hold. Claim $iv)$ follows from Proposition \ref{prop:thetaholkill} and \eqref{eq:Xinf}.

For the last claim, let $Z \in \mathfrak{z}(\mathfrak{l})$ and consider its splitting $Z = Z_{\mathfrak{u}} +Z_{\mathfrak{m}}$ according to the decomposition $\mathfrak{l} = \mathfrak{u} +\mathfrak{m}$. Then, it follows that
$$
[U,Z_{\mathfrak{u}}] = [U,Z_{\mathfrak{m}}] = 0 \quad \text{for every $U \in \mathfrak{u}$} \,\, ,
$$
which in turns imply that $Z_{\mathfrak{u}} \in \mathfrak{z}(\mathfrak{u})$ and $Z_{\mathfrak{m}} \in \mathfrak{f}$. Finally, $\mathfrak{z}(\mathfrak{l}) \cap \mathfrak{u} = \{0\}$ follows from the fact that the action of $\mathsf{L}$ on $M$ is almost-effective. Therefore, claim $v)$ holds.
\end{proof}

By using the canonical presentation and the properties of the trivial submodule, we provide the following structural result for BAS manifolds.

\begin{theorem} \label{thm:generalstructure}
Let $(M,J,g)$ be a complete, simply-connected BAS manifold, let $M = \mathsf{L}/\mathsf{U}$ be its canonical presentation and let $N_{\mathsf{L}}(\mathsf{U})^0$ denote the identity component of the normalizer $N_{\mathsf{L}}(\mathsf{U})$ of $\mathsf{U}$ in $\mathsf{L}$. Then, the projection map
\begin{equation} \label{eq:Tits}
\pi: \mathsf{L}/\mathsf{U} \to \mathsf{L}\big/N_{\mathsf{L}}(\mathsf{U})^0
\end{equation}
is an abelian principal bundle and the base $B \coloneqq \mathsf{L}/N_{\mathsf{L}}(\mathsf{U})^0$ is simply-connected. Moreover, there exists a uniquely determined Hermitian structure $(\check{J}, \check{g})$ on $B$ such that $(B,\check{J},\check{g})$ is a reduced BAS manifold and $\pi$ is a holomorphic Riemannian submersion with totally geodesic fibres.
\end{theorem}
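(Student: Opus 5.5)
We work with the Lie algebra of the normalizer. By Lemma \ref{lem:f} $i)$, the Lie algebra of $N_{\mathsf{L}}(\mathsf{U})^0$ is $\mathfrak{u}+\mathfrak{f}$, and $\mathfrak{f}$ is an abelian, $J$-invariant subalgebra commuting with $\mathfrak{u}$, by Lemma \ref{lem:f} $iii)$ and \eqref{eq:def-f}. Hence $\mathfrak{u}+\mathfrak{f}$ is a direct sum of Lie algebras, and $\mathsf{F} \coloneqq N_{\mathsf{L}}(\mathsf{U})^0/\mathsf{U}$ is a connected abelian Lie group with Lie algebra $\mathfrak{f}$. It acts on $\mathsf{L}/\mathsf{U}$ on the right by $h\mathsf{U}\cdot n\mathsf{U} \coloneqq hn\mathsf{U}$. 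This action is well defined because $n$ normalizes $\mathsf{U}$, and it is free: if $hn\mathsf{U}=h\mathsf{U}$ then $n\in\mathsf{U}$. It commutes with the left $\mathsf{L}$-action. Its orbits are the fibres of $\pi$, so $\pi$ is a principal $\mathsf{F}$-bundle.

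To see that this bundle is locally trivial, I would check that $N_{\mathsf{L}}(\mathsf{U})^0$ is closed in $\mathsf{L}$: it is the identity component of the closed subgroup $N_{\mathsf{L}}(\mathsf{U})$, and $\mathsf{U}$ is closed as the isotropy. Simple connectivity of $B$ follows from the long exact homotopy sequence of the fibration $\mathsf{F}\to M\to B$, using $\pi_1(M)=0$ and $\pi_0(\mathsf{F})=0$. The infinitesimal generators of the right action are exactly the $\mathsf{L}$-invariant fields $X^*$ with $X\in\mathfrak{f}$, by \eqref{eq:Xinf}. By Lemma \ref{lem:f} $ii)$ these are $\nabla$-parallel, and they are Killing and holomorphic by the argument of Proposition \ref{prop:thetaholkill}. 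So $\mathsf{F}$ acts by holomorphic isometries, and its orbits are totally geodesic. Indeed, for $X,Y\in\mathfrak{f}$ we have $D_{X^*}Y^* = \nabla_{X^*}Y^*-\tfrac12T(X^*,Y^*) = -\tfrac12T(X^*,Y^*)$, and $T(X,Y)=-[X,Y]_{\mathfrak{m}}=0$ by \eqref{eq:TR} and abelianity.

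Next we define the structure on the base. Set $\check{\mathfrak{m}} \coloneqq \mathfrak{f}^{\perp}\cap\mathfrak{m}$, which is $J$-invariant since $\mathfrak{f}$ is. It is also $\mathrm{ad}(\mathfrak{u}+\mathfrak{f})$-invariant: $\mathfrak{u}$ preserves $g$, and $\mathrm{ad}(\mathfrak{f})|_{\mathfrak{m}}$ is $Q$-skew, so both preserve $\mathfrak{f}^{\perp}$. Thus $\mathfrak{l}=(\mathfrak{u}+\mathfrak{f})+\check{\mathfrak{m}}$ is a reductive decomposition for $B$. We define $\check{g}\coloneqq g|_{\check{\mathfrak{m}}}$ and $\check J\coloneqq J|_{\check{\mathfrak{m}}}$. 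These are $\mathrm{Ad}(N_{\mathsf{L}}(\mathsf{U})^0)$-invariant by Lemma \ref{lem:f} $ii)$, which gives the commutation $[\mathrm{ad}(X)|_{\mathfrak{m}},J]=0$. Uniqueness is forced: $d\pi$ restricted to the horizontal space $\check{\mathfrak{m}}$ must be an isometry, and a holomorphic submersion determines $\check J$.

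The condition \eqref{eq:natrad} for $(\check{\mathfrak{m}}, \check g)$ follows from Proposition \ref{prop:Kostant}, since $Q$ restricts with $(\mathfrak{u}+\mathfrak{f})\perp\check{\mathfrak{m}}$ and $Q|_{\check{\mathfrak{m}}}=\check g$. So $B$ is naturally reductive with respect to a group acting by automorphisms of $(\check J,\check g)$, and it is BAS by Theorem \ref{thm:BASnatrad}. Completeness holds by homogeneity.

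I expect the main obstacle to be the integrability of $\check J$. I would compute the Nijenhuis tensor at the base point. Using $[X,Y]_{\check{\mathfrak{m}}} = [X,Y]_{\mathfrak{m}} - [X,Y]_{\mathfrak{f}}$, integrability of $J$ on $M$ gives the vanishing of the $\mathfrak{m}$-part of the expression. The $\mathfrak{u}$-part is harmless, since $[JX,JY]_{\mathfrak{u}}=[X,Y]_{\mathfrak{u}}$. I would then use that $\mathfrak{f}$ is $J$-invariant, so the $\mathfrak{f}$-projection commutes with $J$, and this should make the $\mathfrak{f}$-contributions cancel. Once integrability is settled, holomorphicity of $\pi$ follows because $d\pi$ intertwines $J$ on $\check{\mathfrak{m}}$.

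Finally, reducedness. Let $\check{\mathfrak{f}}$ be the trivial $(\mathfrak{u}+\mathfrak{f})$-submodule of $\check{\mathfrak{m}}$. Any $Y \in \check{\mathfrak{f}}$ commutes with $\mathfrak{u}$, so $Y\in\mathfrak{f}\cap\check{\mathfrak{m}}=0$. Therefore $N_{\mathfrak{l}}(\mathfrak{u}+\mathfrak{f})=\mathfrak{u}+\mathfrak{f}$. I would also have to check that the canonical presentation of $B$ is the one induced by $\mathfrak{l}$, or at least that $\mathsf{Aut}^0(B)$-invariant holomorphic Killing fields come from this trivial module. This last compatibility is a second, subtler point that I would address via the Nomizu isomorphism of Proposition \ref{prop:Nomcons2}.
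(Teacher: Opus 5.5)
\textbf{Verdict.} Your proposal follows the paper's proof step by step, and the two points you flag as obstacles are easy. The real gap is in the step you pass over quickly: that $\mathsf{F}$ is abelian and acts by holomorphic isometries. This step rests on Lemma~\ref{lem:f}~$ii)$--$iii)$, which is not valid in general, so the claim fails in a concrete example.

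\textbf{Where you agree with the paper.} Lemma~\ref{lem:f} identifies the Lie algebra of $N_{\mathsf{L}}(\mathsf{U})^0$ with $\mathfrak{u}+\mathfrak{f}$ and is used to make $\mathsf{F}$ abelian. The Hermitian structure $(J,g)$ is pushed down to $\mathfrak{b}=\mathfrak{f}^{\perp}\cap\mathfrak{m}$ (your $\check{\mathfrak{m}}$). You verify \eqref{eq:natrad} on $B$ through $Q$ and conclude with Theorem~\ref{thm:BASnatrad}. Your total-geodesy argument, $D_{\tilde X}\tilde Y=-\tfrac12T(\tilde X,\tilde Y)=0$, is equivalent to the paper's use of \eqref{eq:natrad}.

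\textbf{Your two flagged obstacles.} Both are immediate.
\begin{itemize}
\item Integrability: $J\mathfrak{f}=\mathfrak{f}$ and $J\mathfrak{b}=\mathfrak{b}$, so $\mathrm{pr}_{\mathfrak{b}}\colon\mathfrak{m}\to\mathfrak{b}$ along $\mathfrak{f}$ commutes with $J$. Hence $N_{\check J}(X,Y)=\mathrm{pr}_{\mathfrak{b}}N_J(X,Y)=0$ for $X,Y\in\mathfrak{b}$.
\item Reducedness: $\mathsf{L}$ maps into $\mathsf{Aut}^0(B,\check J,\check g)$, so an $\mathsf{Aut}^0$-invariant field on $B$ is $\mathsf{L}$-invariant. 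Its value at $\pi(p)$ is annihilated by $\mathrm{ad}(\mathfrak{u}+\mathfrak{f})$, hence lies in $\mathfrak{f}\cap\mathfrak{b}=\{0\}$. The field then vanishes by transitivity. This is exactly the paper's last step; no comparison with the canonical presentation of $B$ and no Nomizu argument is needed.
\end{itemize}

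\textbf{The genuine gap: holomorphy and abelianity of $\mathsf{F}$.} The argument of Proposition~\ref{prop:thetaholkill} does not give holomorphy for a general $X\in\mathfrak{f}$, because it rests on \eqref{eq:Ttheta2}, which is special to $\theta^{\sharp}$ and $J\theta^{\sharp}$. For the parallel $\mathsf{L}$-invariant field $\tilde X$ one has $(\mathcal{L}_{\tilde X}J)Y=T(JY,\tilde X)-JT(Y,\tilde X)$. This vanishes iff $\mathrm{ad}(X)|_{\mathfrak{m}}$ commutes with $J$, i.e.\ iff Lemma~\ref{lem:f}~$ii)$ holds. So you must cite Lemma~\ref{lem:f}~$ii)$--$iii)$, as the paper does, and that lemma is where things break. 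Its proof treats the $\mathsf{L}$-invariant extension of $X$ as if it were the (holomorphic) fundamental field of $X\in\mathfrak{l}\subset\mathfrak{aut}$. These two fields differ unless $X$ is central in $\mathfrak{l}$.

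\textbf{Counterexample.} Take $M=\mathsf{SU}(2)\times\mathbb{R}$ with a bi-invariant metric and the standard left-invariant Hopf complex structure. Here $J$ sends the $\mathbb{R}$-generator to some $e_1\in\mathfrak{su}(2)$ and preserves $e_1^{\perp}\cap\mathfrak{su}(2)$.
\begin{itemize}
\item The Bismut connection is the flat connection making left-invariant fields parallel, with $T(X,Y)=-[X,Y]$.
\item Every right-invariant field lies in $\mathfrak{m}_p$. Hence $\mathfrak{l}=\mathfrak{m}_p\cong\mathfrak{su}(2)\oplus\mathbb{R}$, $\mathfrak{u}=\{0\}$ and $\mathfrak{f}=\mathfrak{l}$.
\item Therefore $\mathsf{F}=\mathsf{L}$ is non-abelian, and it acts by right translations.
\item Right translation by $\exp(te_2)$, with $e_2\perp e_1$, does not preserve $J$, because $\mathrm{ad}(e_2)$ does not commute with $J$.
\end{itemize}

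\textbf{Consequence.} Since the abelianity conclusion itself fails here, no better argument closes this step; the construction of the fibration has to be modified. Everything else in your outline goes through whenever $\mathrm{ad}(\mathfrak{f})|_{\mathfrak{m}}$ commutes with $J$.
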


\begin{proof}
The map $\pi$ defined in \eqref{eq:Tits} is a smooth $\mathsf{L}$-equivariant principal bundle, with structure group $\mathsf{F} \coloneqq N_{\mathsf{L}}(\mathsf{U})^0/\mathsf{U}$ acting on the right on $M$. Notice that the Lie algebra of $\mathsf{F}$ can be identified with the trivial submodule $\mathfrak{f}$, hence $\mathsf{F}$ is abelian by Lemma \ref{lem:f}. Moreover, $B$ is simply-connected because $\mathsf{L}$ is simply-connected and $N_{\mathsf{L}}(\mathsf{U})^0$ is connected.

By Lemma \ref{lem:f} and the naturally-reductive condition \eqref{eq:natrad}, the right action of $\mathsf{F}$ is by $\mathsf{L}$-equivariant holomorphic isometries. Since the Hermitian structure $(J,g)$ on $M$ is $\mathsf{F}$-invariant and the fibres of $\pi$ are $J$-invariant, it follows that $(J,g)$ descends to a Hermitian structure $(\check{J},\check{g})$ on $B$, with respect to which $\pi$ is a holomorphic Riemannian submersion. Let $\mathfrak{b}$ be the $Q$-orthogonal complement of $\mathfrak{f}$ in $\mathfrak{m}$ and consider the splitting
$$
\mathfrak{l} = \mathfrak{u} + \mathfrak{f} + \mathfrak{b} \,\, .
$$
It is straighforward to check that $[\mathfrak{u} + \mathfrak{f},\mathfrak{b}] \subset \mathfrak{b}$, and so we can identify $T_{\pi(p)}B \simeq \mathfrak{b}$ via the evaluation map, where $p$ denotes the distinguished point in $M$ with respect to which the canonical presentation is considered. By the naturally-reductive condition \eqref{eq:natrad} and the fact that $\mathfrak{f}$ is abelian, it follows that
$$
g([V_1,V_2],X) +g([V_1,X],V_2) +g([V_2,X],V_1) = 0 \quad \text{for every $V_1, V_2 \in \mathfrak{f}$ and $X \in \mathfrak{b}$} \,\, ,
$$
and so the fibres of $\pi$ are totally geodesic.

Since $\pi$ is an $\mathsf{L}$-equivariant holomorphic Riemannian submersion, the base $(B,\check{J},\check{g})$ is BAS by Theorem \ref{thm:BASnatrad}. Finally, let $X \in \mathfrak{b}$ be such that the corresponding fundamental vector field $X^* \in \Gamma(TB)$ is invariant under the action of $\mathsf{Aut}^0(B,\check{J},\check{g})$. Since $\mathsf{L}$ acts on $B$ by holomorphic isometries, it follows that $[\mathfrak{u} +\mathfrak{f}, X] = \{0\}$, which implies that $X \in \mathfrak{f}$. Hence $X^*|_{\pi(p)} = 0$, and so $X^* = 0$. Therefore, $(B,\check{J},\check{g})$ is reduced.
\end{proof}

We are now ready to prove Theorem \ref{thm:MAIN-general}.

\begin{proof}[Proof of Theorem \ref{thm:MAIN-general}]
Claim $i)$ follows directly by Theorem \ref{thm:generalstructure}. To prove claim $ii)$, we consider the canonical presentation $B = \check{\mathsf{L}}/\check{\mathsf{U}}$ of $(B,\check{J},\check{g})$ at a point $p \in B$ and the corresponding reductive decomposition
$$
\check{\mathfrak{l}} = \check{\mathfrak{u}} +\mathfrak{b}
$$
at the Lie algebra level. Let also
\begin{equation} \label{eq:deR-B}
(B,\check{g}) = (B_1,\check{g}_1) \times \dots \times (B_k,\check{g}_k)
\end{equation}
be the de Rham decomposition of $(B,\check{g})$. By \cite[Ch.\ X, Theorem 5.2]{MR1393941}, the Lie algebra $\check{\mathfrak{l}}$ splits as a sum of ideals
$$
\check{\mathfrak{l}} = \check{\mathfrak{l}}_1 \oplus {\dots} \oplus \check{\mathfrak{l}}_k
$$
and
$$
B_i = \check{\mathsf{L}}_i/\check{\mathsf{U}}_i \quad \text{for every $1 \leq i \leq k$}\,\, ,
$$
where $\check{\mathsf{L}}_i$ is the connected subgroup of $\check{\mathsf{L}}$ with Lie algebra $\check{\mathfrak{l}}_i$ and $\check{\mathsf{U}}_i$ is the connected subgroup of $\check{\mathsf{L}}_i$ with Lie algebra $\check{\mathfrak{u}}_i \coloneqq \check{\mathfrak{u}} \cap \check{\mathfrak{l}}_i$. Write $p = (p_1,{\dots},p_k)$ according to \eqref{eq:deR-B} and observe that $(B_i,\check{g}_i)$ is $(\check{\mathsf{L}}_i,p_i,\mathfrak{b}_i)$-naturally reductive, with $\check{\mathfrak{b}}_i \coloneqq \check{\mathfrak{b}} \cap \check{\mathfrak{l}}_i$, for every $1 \leq i \leq k$.

Since $(B,\check{J},\check{g})$ is reduced, it follows that $\mathfrak{b}$ does not contain any trivial $\mathrm{Ad}(\check{\mathsf{U}})$-submodule. Consequently, $\mathfrak{b}_i$ does not contain any trivial $\mathrm{Ad}(\check{\mathsf{U}}_i)$-submodule. Hence, for all $1 \leq i < j \leq k$, $\mathfrak{b}_i$ does not contain an $\mathrm{Ad}(\check{\mathsf{U}})$-submodule that is equivalent to an $\mathrm{Ad}(\check{\mathsf{U}})$-submodule of $\mathfrak{b}_j$. Since $\check{J}$ is $\mathrm{Ad}(\check{\mathsf{U}})$-invariant, it follows that $\check{J}\mathfrak{b}_i = \mathfrak{b}_i$ for every $1 \leq i \leq k$. Therefore, it follows that $\check{J}_i \coloneqq \check{J}|_{\mathfrak{b}_i}$ is a $\check{g}_i$-orthogonal complex structure on $B_i$ and that the isometry $(B,\check{J},\check{g}) \simeq (B_1,\check{J}_1,\check{g}_1) \times \dots \times (B_k,\check{J}_k,\check{g}_k)$ is holomorphic, which concludes the proof.
\end{proof}

\medskip
\section{Compact Bismut--Ambrose--Singer manifolds with finite fundamental group} 
\label{sect:compact} \setcounter{equation} 0

The aim of this section is to prove Theorem \ref{thm:MAIN-cpt}, namely the classification of compact BAS manifolds with finite fundamental group. Up to passing to a finite cover, this amounts to considering the simply-connected case. 

We begin by establishing some fundamental concepts regarding \emph{Wang C-spaces} \cite{MR0066011}, as they provide the underlying geometric framework for simply-connected, compact BAS manifolds.
We first recall that a \emph{flag manifold} is an adjoint orbit $\mathsf{G}/\mathsf{K}$ of a compact semisimple Lie group $\mathsf{G}$. Equivalently, the isotropy subgroup $\mathsf{K}$ is the centralizer of a torus in $\mathsf{G}$. 
A Wang C-space is the total space of a $\mathsf{G}$-equivariant principal torus bundle $\mathsf{T}=\mathsf{K}/\mathsf{H} \rightarrow \mathsf{G/H} \rightarrow \mathsf{G}/\mathsf{K}$ over a flag manifold, endowed with a left-$\mathsf{G}$ and right-$\mathsf{T}$ invariant complex structure. The projection map $\mathsf{G/H} \rightarrow \mathsf{G}/\mathsf{K}$ is called \emph{Tits fibration}. A Riemannian metric on $\mathsf{G}/\mathsf{H}$ (resp.\ $\mathsf{G}/\mathsf{K}$) is called \emph{normal} if it is induced by a positive definite, $\mathrm{Ad}(\mathsf{G})$-invariant scalar product on the Lie algebra $\mathfrak{g}$ of $\mathsf{G}$. We provide here a proof of the following splitting result, which is well-known to the experts (see, \emph{e.g.}, \cite[p.\ 194]{MR1922121}).

\begin{lemma} \label{lem:dRflags}
Let $(\mathsf{G}/\mathsf{K},\check{g})$ be a flag manifold endowed with an invariant Riemannian metric, where $\mathsf{G}$ is a compact, semisimple, simply-connected Lie group. Denote by $\mathsf{G} = \mathsf{G}_1 \times {\dots} \times \mathsf{G}_s$ the decomposition of $\mathsf{G}$ into simple factors and by $\mathrm{pr}_i : \mathsf{G} \to \mathsf{G}_i$ the canonical projection onto the $i$-th factor. Then
$$
\mathsf{K} = \mathsf{K}_1 \times {\dots} \times \mathsf{K}_s \,\, , \quad
\text{where} \quad \mathsf{K}_i \coloneqq \mathsf{G}_i \cap \mathsf{K} \quad \text{for $1 \leq i \leq s$} \,\, .
$$
Moreover, for every $1 \leq i \leq s$, the quotient $\mathsf{G}_i/\mathsf{K}_i$ is a flag manifold and there exists a uniquely determined $\mathsf{G}_i$-invariant metric $\check{g}_i$ on $\mathsf{G}_i/\mathsf{K}_i$ such that the map
$$
(\mathsf{G}/\mathsf{K}, \check{g}) \to (\mathsf{G}_1/\mathsf{K}_1, \check{g}_1) \times {\dots} \times (\mathsf{G}_s/\mathsf{K}_s, \check{g}_s) \,\, , \quad
a\mathsf{K} \mapsto \big(\mathrm{pr}_1(a)\mathsf{K}_1,{\dots},\mathrm{pr}_s(a)\mathsf{K}_s\big)
$$
is an isometry.
\end{lemma}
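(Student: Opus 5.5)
We first prove that $\mathsf{K}$ splits, and then that the metric splits by a Schur-type argument on the isotropy representation. Since $\mathsf{G}/\mathsf{K}$ is a flag manifold, $\mathsf{K}$ is the centralizer $C_{\mathsf{G}}(S)$ of a torus $S \subset \mathsf{G}$; in particular $\mathsf{K}$ is connected, because centralizers of tori in compact connected Lie groups are connected. Write $\mathfrak{g} = \mathfrak{g}_1 \oplus \dots \oplus \mathfrak{g}_s$ and let $\mathfrak{s}$ be the Lie algebra of $S$. An element $a = (a_1,\dots,a_s)$ commutes with $\exp X$ for all $X \in \mathfrak{s}$ if and only if each $a_i$ commutes with $\exp(\mathrm{pr}_i X)$, because the factors commute. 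Hence
$$
\mathsf{K} = C_{\mathsf{G}_1}(S_1) \times \dots \times C_{\mathsf{G}_s}(S_s) \,\, ,
$$
where $S_i$ is the torus $\overline{\mathrm{pr}_i(S)}$. Each factor $C_{\mathsf{G}_i}(S_i)$ equals $\mathsf{G}_i \cap \mathsf{K}$, so $\mathsf{K}_i = \mathsf{G}_i \cap \mathsf{K}$ and $\mathsf{G}_i/\mathsf{K}_i$ is again a flag manifold. Consequently the displayed map $a\mathsf{K} \mapsto (\mathrm{pr}_i(a)\mathsf{K}_i)_i$ is a well-defined $\mathsf{G}$-equivariant diffeomorphism.

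Next we decompose the tangent space. Let $\mathfrak{k} = \bigoplus_i \mathfrak{k}_i$, and let $\mathfrak{m}_i$ be the $B$-orthogonal complement of $\mathfrak{k}_i$ in $\mathfrak{g}_i$, with $B$ minus the Killing form. Then $\mathfrak{m} = \bigoplus_i \mathfrak{m}_i$ is an $\mathrm{Ad}(\mathsf{K})$-invariant complement of $\mathfrak{k}$, and $\mathfrak{m} \simeq T_{o}(\mathsf{G}/\mathsf{K})$. The metric $\check{g}$ corresponds to an $\mathrm{Ad}(\mathsf{K})$-invariant inner product on $\mathfrak{m}$. It remains to show that $\check{g}(\mathfrak{m}_i,\mathfrak{m}_j) = 0$ for $i \neq j$. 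Once this holds, we set $\check{g}_i \coloneqq \check{g}|_{\mathfrak{m}_i \otimes \mathfrak{m}_i}$. This defines a $\mathsf{G}_i$-invariant metric on $\mathsf{G}_i/\mathsf{K}_i$, and the map above becomes an isometry at $o$, hence everywhere by equivariance. Uniqueness of $\check{g}_i$ is immediate, since $\check{g}_i$ must be the pullback of $\check{g}$ along the inclusion of the $i$-th factor through $o$.

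The main step is the orthogonality $\check{g}(\mathfrak{m}_i,\mathfrak{m}_j)=0$ for $i\neq j$. By Schur's lemma, it suffices to show that no irreducible $\mathrm{Ad}(\mathsf{K})$-submodule of $\mathfrak{m}_i$ is equivalent to one of $\mathfrak{m}_j$. The component $\mathrm{Hom}_{\mathsf{K}}(\mathfrak{m}_i,\mathfrak{m}_j)$ of the invariant symmetric form then vanishes. For this we use that $\mathsf{K}_j$ acts trivially on $\mathfrak{m}_i$ for $i\neq j$. It is therefore enough to show that $\mathfrak{m}_j$ contains no $\mathsf{K}_j$-trivial vectors. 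This holds because $\mathfrak{k}_j$ contains a maximal torus of $\mathsf{G}_j$, so the centralizer of $\mathfrak{k}_j$ in $\mathfrak{g}_j$ lies in $\mathfrak{k}_j$. In other words, $\mathfrak{m}_j$ is a sum of nonzero root spaces and has no zero weight. The one degenerate case is $\mathsf{K}_j = \mathsf{G}_j$. There $\mathfrak{m}_j = 0$ and the claim is vacuous, with the factor a point.

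So any equivalence between a $\mathsf{K}$-submodule $V\subset\mathfrak{m}_i$ and $W\subset\mathfrak{m}_j$ forces $\mathsf{K}_j$ to act trivially on $W$ as well, contradicting the absence of zero weights. This concludes the plan.
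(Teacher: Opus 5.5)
Your proposal is correct and follows essentially the same route as the paper: split the centralizer factorwise to get $\mathsf{K}=\mathsf{K}_1\times\dots\times\mathsf{K}_s$, then use that each $\mathsf{K}_i$ has full rank in $\mathsf{G}_i$ together with Schur's lemma to obtain $\check{g}(\mathfrak{m}_i,\mathfrak{m}_j)=0$ for $i\neq j$. You also spell out the Schur step that the paper states in a single line: $\mathsf{K}_j$ acts trivially on $\mathfrak{m}_i$ for $i\neq j$, while $\mathfrak{m}_j$ has no nonzero $\mathsf{K}_j$-fixed vectors.
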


\begin{proof}
Being $\mathsf{G}/\mathsf{K}$ a flag manifold, there exists an element $k \in \mathsf{G}$ such that $\mathsf{K}$ is the centralizer of $k$, \emph{i.e.}, $\mathsf{K} = C_{\mathsf{G}}(k)$. Moreover
$$
C_{\mathsf{G}}(k) = C_{\mathsf{G}_1 \times {\dots} \times \mathsf{G}_s}\big(\mathrm{pr}_1(k),{\dots},\mathrm{pr}_s(k)\big) = C_{\mathsf{G}_1}\big(\mathrm{pr}_1(k)\big) \times {\dots} \times C_{\mathsf{G}_s}\big(\mathrm{pr}_s(k)\big) \,\, ,
$$
and
$$
C_{\mathsf{G}_i}\big(\mathrm{pr}_i(k)\big) = C_{\mathsf{G}}(k) \cap \mathsf{G}_i \,\, .
$$
Therefore, letting $\mathsf{K}_i \coloneqq C_{\mathsf{G}_i}(k_i)$, we obtain a well-defined diffeomorphism
\begin{equation} \label{eq:decG/K}
\mathsf{G}/\mathsf{K} \to \mathsf{G}_1/\mathsf{K}_1 \times {\dots} \times \mathsf{G}_s/\mathsf{K}_s \,\, , \quad
a\mathsf{K} \mapsto \big(\mathrm{pr}_1(a)\mathsf{K}_1,{\dots},\mathrm{pr}_s(a)\mathsf{K}_s\big) \,\, .
\end{equation}
By construction, it follows that $\mathrm{rank}(\mathsf{K}_i) = \mathrm{rank}(\mathsf{G}_i)$ for every $1 \leq i \leq s$. Hence, by Schur's Lemma, the metric $g$ splits according to the decomposition \eqref{eq:decG/K}, and defines a $\mathsf{G}_i$-invariant Riemannian metric $\check{g}_i$ on $\mathsf{G}_i/\mathsf{K}_i$. This concludes the proof.
\end{proof}

We now prove the following two results, namely Theorem \ref{thm:compact-structure} and Proposition \ref{prop:cmpctBAS}, from which Theorem \ref{thm:MAIN-cpt} follows directly.

\begin{theorem} \label{thm:compact-structure}
Let $(M,J,g)$ be a compact, simply-connected BAS manifold. Then, $(M,J,g)$ is a Wang C-space and its canonical reduction coincides with the Tits fibration over the corresponding flag manifold endowed with a normal metric.
\end{theorem}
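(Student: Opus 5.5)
We plan to work with the canonical presentation $M=\mathsf{L}/\mathsf{U}$ of Section \ref{sect:canpres} and its canonical reduction $\pi: M \to B=\mathsf{L}/N_{\mathsf{L}}(\mathsf{U})^0$ from Theorem \ref{thm:generalstructure}. The argument has three steps: show that $\mathsf{L}$ is compact semisimple, show that $N_{\mathsf{L}}(\mathsf{U})^0$ is the centralizer of a torus (so that $B$ is a flag manifold and $\pi$ is the Tits fibration), and check that the induced metric on $B$ is normal.

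\emph{Step 1: $\mathsf{L}$ is compact semisimple.} Since $M$ is compact, $\mathsf{Aut}^0$ is compact. Hence the ideal $\mathfrak{l}\subset\mathfrak{aut}$ is a compact Lie algebra, $\mathfrak{l}=\mathfrak{z}(\mathfrak{l})\oplus[\mathfrak{l},\mathfrak{l}]$.

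Let $\mathsf{L}_{ss}$ be the simply-connected group with Lie algebra $[\mathfrak{l},\mathfrak{l}]$, which is compact. By \cite[Theorem 3.1]{MR0787113}, already quoted in the introduction, $\mathsf{L}_{ss}$ acts transitively on $M$. Because $M$ is simply connected, the isotropy $\mathsf{H}$ of this action is connected, and $\mathrm{rank}\,\mathsf{H}$ is determined by $\pi_1$ and $\pi_2$ considerations.

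The simplest route is to show $\mathfrak{z}(\mathfrak{l})=0$. By Lemma \ref{lem:f} v), $\mathfrak{z}(\mathfrak{l})$ embeds into $\mathfrak{z}(\mathfrak{u})+\mathfrak{f}$. If it were nonzero, the corresponding torus factor of $\mathsf{L}'$ would act freely. Since $\mathfrak{l}=[\mathfrak{m},\mathfrak{m}]+\mathfrak{m}$, central elements lying in $\mathfrak{f}$ should be controlled via $Q$: for $Z$ central, $Q(Z,[X,Y])=Q([Z,X],Y)=0$, so $Z$ is $Q$-orthogonal to $[\mathfrak{m},\mathfrak{m}]$.

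In any case we may replace $\mathsf{L}$ by $\mathsf{G}:=\mathsf{L}_{ss}$. Then $M=\mathsf{G}/\mathsf{H}$ is naturally reductive with respect to $\mathsf{G}$, and the metric is given by Kostant's form $Q$ restricted to $[\mathfrak{l},\mathfrak{l}]$.

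\emph{Step 2: the Wang C-space structure.} We then use Wang's theorem \cite{MR0066011}. A compact simply-connected complex manifold that is homogeneous under a compact semisimple group $\mathsf{G}$ is a C-space: the normalizer $\mathsf{K}=N_{\mathsf{G}}(\mathsf{H})^0$ is the centralizer of a torus, $\mathsf{K}/\mathsf{H}$ is a torus, and $\mathsf{G}/\mathsf{H}\to\mathsf{G}/\mathsf{K}$ is the Tits fibration.

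It remains to identify this fibration with the canonical reduction. By Lemma \ref{lem:f} i), the Lie algebra of $N_{\mathsf{L}}(\mathsf{U})$ is $\mathfrak{u}+\mathfrak{f}$, and the same computation applies verbatim for $\mathsf{G}$. So $\mathfrak{f}$ is exactly the tangent space of the Tits fibre. The complex structure is $\mathsf{G}$-invariant, and by Lemma \ref{lem:f} ii)--iii) it is also invariant under the right action of $\mathsf{F}$. This is precisely the Wang C-space condition.

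The main obstacle is the identification in Step 2, together with showing in Step 1 that passing to $\mathsf{G}$ does not change $N(\mathsf{U})^0$. Concretely, we must check that $\mathfrak{u}_{\mathsf{G}}+\mathfrak{f}_{\mathsf{G}}$ has full rank in $\mathfrak{g}$. We would attack this with Wang's rank argument: the semisimple part of the isotropy of a compact homogeneous complex manifold has a centralizer of maximal rank.

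\emph{Step 3: the metric on the base is normal.} On $\mathfrak{b}$ the induced metric $\check{g}$ equals $Q|_{\mathfrak{b}}$, and $Q$ is $\mathrm{Ad}$-invariant on $\mathfrak{g}$. To get normality we need $Q$ positive definite on $\mathfrak{k}=\mathfrak{u}+\mathfrak{f}$.

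On $\mathfrak{f}$ this holds because $Q|_{\mathfrak{f}}=g$. On $\mathfrak{u}$ we argue as follows. $Q$ is invariant on a compact semisimple algebra, so on each simple ideal $\mathfrak{g}_i$ it is a multiple $c_i$ of the negative Killing form. Each $\mathfrak{g}_i$ meets $\mathfrak{m}$ nontrivially, since otherwise $\mathfrak{g}_i\subset\mathfrak{u}$, contradicting almost-effectiveness. Because $g>0$ on $\mathfrak{m}$, this forces $c_i>0$. Hence $Q$ is positive definite, and $\check{g}$ is normal. Lemma \ref{lem:dRflags} then shows that this is compatible with the de Rham splitting of $B$.
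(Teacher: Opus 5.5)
Your proposal has genuine gaps in all three steps.

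\textbf{Step 1 rests on a false reduction, and Step 3 inherits it.} In general $\mathfrak{z}(\mathfrak{l})\neq\{0\}$, and $M$ is \emph{not} $\mathsf{L}_{ss}$-naturally reductive with metric induced by $Q|_{[\mathfrak{l},\mathfrak{l}]}$.
\begin{itemize}
\item Counterexample: take the Calabi--Eckmann manifold $\mathsf{SU}(2)\times\mathsf{SU}(2)$ with a Hermitian metric that is normal on $\mathfrak{b}$ but rescaled along the fibre torus. It is BAS by Proposition \ref{prop:cmpctBAS}. Here $\mathsf{L}_{ss}=\mathsf{SU}(2)\times\mathsf{SU}(2)$ acts simply transitively, so your claim would force $g$ to be bi-invariant.
\item This is why the theorem only asserts normality of the \emph{base}, and why trying to prove $\mathfrak{z}(\mathfrak{l})=\{0\}$ is a dead end.
\item Also, \cite[Theorem 3.1]{MR0787113} does not give transitivity of $\mathsf{L}_{ss}$ on $M$, since for compact $\mathfrak{l}$ the nilradical is $\mathfrak{z}(\mathfrak{l})$. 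Montgomery's theorem would give it, but it is not needed.
\item The target of Step 3, positivity of $Q$ on $\mathfrak{u}$, is false in the true canonical presentation. In the proof of Proposition \ref{prop:cmpctBAS}, $\widehat{Q}$ on the isotropy algebra involves $(S-\mathrm{Id})^{-1}+\mathrm{Id}$, which is negative on eigenvalues of $S$ in $(0,1)$.
\item The inference ``$\mathfrak{g}_i\cap\mathfrak{m}=\{0\}$ implies $\mathfrak{g}_i\subset\mathfrak{u}$'' is invalid, because a simple ideal can be transverse to both. For $(\mathsf{K}\times\mathsf{K})/\Delta\mathsf{K}$ with $\mathfrak{m}=\{(2X,X):X\in\mathfrak{k}\}$, the Kostant form is $a\mathscr{B}\oplus(-2a)\mathscr{B}$ for some $a<0$, which is negative definite on $\{0\}\oplus\mathfrak{k}$.
\end{itemize}
The paper instead keeps $\mathfrak{l}=\mathfrak{z}(\mathfrak{l})\oplus\mathfrak{g}$. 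It uses $\mathfrak{z}(\mathfrak{l})\subset\mathfrak{z}(\mathfrak{u})+\mathfrak{f}$ and $Q$-orthogonality to get $\mathfrak{b}\subset\mathfrak{g}$, so $\mathsf{G}$ acts transitively on $B$, and a separate argument shows the action is almost effective. Positivity is then proved only on $\mathfrak{g}$, one simple factor $\mathsf{G}_i/\mathsf{K}_i$ of the flag manifold at a time. On each factor $\mathfrak{b}_i\neq\{0\}$ and $\mathfrak{g}_i=\mathfrak{b}_i+[\mathfrak{b}_i,\mathfrak{b}_i]$, so Kostant's form is a nonzero multiple of the Killing form that is positive on $\mathfrak{b}_i$, hence positive definite.

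\textbf{Step 2 misquotes \cite{MR0066011} and leaves the key identification open.} For a compact homogeneous complex manifold $\mathsf{G}/\mathsf{H}$, the quotient $N_{\mathsf{G}}(\mathsf{H})^0/\mathsf{H}$ need not be a torus. Nor need $\mathsf{G}/N_{\mathsf{G}}(\mathsf{H})^0$ be the base of the Tits fibration, which depends on $J$ and not only on $\mathsf{H}$. For instance, $\mathsf{SU}(3)$ with a Samelson structure has $\mathsf{H}=\{e\}$ and $N_{\mathsf{G}}(\mathsf{H})=\mathsf{SU}(3)$, while its Tits fibre is a maximal torus. So the identification you flag as the main obstacle cannot be obtained by comparing normalizers.

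The paper sidesteps this by applying Wang's theorem to the base. $B=\mathsf{G}/\mathsf{K}$ is reduced (Theorem \ref{thm:generalstructure}), so its isotropy representation has no trivial submodule. This forces the Tits fibre of $B$ to be trivial, so $\mathsf{K}$ is a torus centralizer and $\pi$ is itself a torus bundle over a flag manifold.

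\textbf{A caution that also affects the paper.} The same $\mathsf{SU}(3)$ example, with bi-invariant metric, is a compact simply-connected BAS manifold: its Bismut connection is the flat connection parallelizing left-invariant fields. Its canonical presentation has trivial $\mathsf{U}$ and $\mathfrak{f}=\mathfrak{m}\cong\mathfrak{su}(3)$. However, $\mathrm{ad}(X)|_{\mathfrak{m}}$ commutes with $J$ only for $X$ in the maximal torus defining $J$. Hence the last assertion of Lemma \ref{lem:f} ii), and assertion iii), do not hold for all $X\in\mathfrak{f}$. You invoke these to make the right $\mathsf{F}$-action holomorphic, and the paper's proof relies on them for the abelianness of $\mathsf{F}$ and for almost-effectiveness on $B$. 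In that example it is the subspace of $X\in\mathfrak{f}$ with $[\mathrm{ad}(X)|_{\mathfrak{m}},J]=0$ (the invariant holomorphic Killing fields) that gives the Tits fibre, not $\mathfrak{f}$ itself.
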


\begin{proof}
Let $(M,J,g)$ be a compact, simply-connected BAS manifold and let $M = \mathsf{L}/\mathsf{U}$ be its canonical presentation at a point $p \in M$. Let $Q$ be as in Proposition \ref{prop:Kostant}. Since $M$ is compact and simply-connected and the group $\mathsf{L}$ is simply-connected, it follows that $\mathsf{L} = \mathsf{Z}(\mathsf{L}) \times \mathsf{G}$, where the centre $\mathsf{Z}(\mathsf{L})$ is contractible and $\mathsf{G}$ is a compact, connected, semisimple subgroup. Consider the canonical reduction
$$
\pi: (M,J,g) \to (B,\check{J},\check{g})
$$
as described in Theorem \ref{thm:generalstructure}, with the associated Lie algebra decomposition
$$
\mathfrak{l} = \mathfrak{u} + \mathfrak{f} + \mathfrak{b} \,\, .
$$
By Lemma \ref{lem:f}, it holds $\mathfrak{z}(\mathfrak{l}) \subset \mathfrak{z}(\mathfrak{u}) +\mathfrak{f}$, and so we get $\mathfrak{b} \subset \mathfrak{g}$, where $\mathfrak{g}$ denotes the Lie algebra of $\mathsf{G}$. In particular, the $\mathsf{L}$-action on $M$ descends via $\pi$ to a transitive $\mathsf{G}$-action on $(B,\check{J},\check{g})$ by holomorphic isometries.

The action of $\mathsf{G}$ on $B$ is almost-effective. Indeed, let $\mathfrak{i}$ be an ideal of $\mathfrak{g}$ contained in $\mathfrak{u} + \mathfrak{f}$. Then, it is straightforward to check that $\mathfrak{i} \cap \mathfrak{u}$ is an ideal of $\mathfrak{l}$ contained in $\mathfrak{u}$, and so $\mathfrak{i} \cap \mathfrak{u} = \{0\}$ since the action of $\mathsf{L}$ is almost-effective on $M$. Since $[\mathfrak{i}, \mathfrak{b}] = \mathfrak{i} \cap \mathfrak{b} = \{0\}$, it follows that $[\mathfrak{i}, \mathfrak{l}] \subset \mathfrak{i} \cap \mathfrak{u} = \{0\}$, hence $\mathfrak{i} \subset \mathfrak{z}(\mathfrak{l})$. Since $\mathfrak{z}(\mathfrak{l}) \cap \mathfrak{g} = \{0\}$, we obtain $\mathfrak{i} = \{0\}$.

We denote by $\mathfrak{k}$ the $Q$-orthogonal complement of $\mathfrak{z}(\mathfrak{l})$ in $\mathfrak{u} + \mathfrak{f}$ and by $\mathsf{K}$ the corresponding connected subgroup of $\mathsf{G}$, so that
\begin{equation} \label{eq:B-G}
B = \mathsf{G}/\mathsf{K} \,\, , \quad  \mathfrak{g} = \mathfrak{k} +\mathfrak{b} \,\, .
\end{equation}
By Theorem \ref{thm:generalstructure}, it follows that $(B,\check{J},\check{g})$ is $(\mathsf{G},\pi(p),\mathfrak{b})$-naturally reductive and that it is reduced, \emph{i.e.}, the isotropy representation of \eqref{eq:B-G} has no trivial submodules. Therefore, by \cite{MR0066011}, it follows that $B = \mathsf{G}/\mathsf{K}$ is a flag manifold. It remains to show that the metric $\check{g}$ is normal.

By \cite[Ch.\ X, Theorem 5.2]{MR1393941} and Theorem \ref{thm:MAIN-general}, $(B,\check{J},\check{g})$ splits as an Hermitian product
$$
(B,\check{J},\check{g}) = (\mathsf{G}_1/\mathsf{K}_1, \check{J}_1, \check{g}_1) \times {\dots} \times (\mathsf{G}_s/\mathsf{K}_s, \check{J}_s, \check{g}_s) \,\, ,
$$
where $(\mathsf{G}_i/\mathsf{K}_i, \check{J}_i, \check{g}_i)$ is a de Rham irreducible BAS manifolds for all $1 \leq i \leq s$, with $\mathsf{G} = \mathsf{G}_1 \times {\dots} \times \mathsf{G}_s$ and $\mathsf{K}_i = \mathsf{G}_i \cap \mathsf{K}$. Moreover, it is routine to check that $(\mathsf{G}_i/\mathsf{K}_i, \check{J}_i, \check{g}_i)$ is a flag manifold for all $1 \leq i \leq s$, and so $\mathsf{G}_i$ is simple by Lemma \ref{lem:dRflags}. Consider the corresponding splitting at the Lie algebra level
$$
\mathfrak{g} = \mathfrak{g}_1 + \ldots + \mathfrak{g}_s \,\, , \quad
\mathfrak{k} = \mathfrak{k}_1 + \ldots + \mathfrak{k}_s \,\, , \quad
\mathfrak{b} = \mathfrak{b}_1 + \ldots + \mathfrak{b}_s \,\, ,
$$
with $\mathfrak{g}_i = \mathfrak{k}_i + \mathfrak{b}_i$ for all $1 \leq i \leq s$. Since $\mathfrak{b}_i +[\mathfrak{b}_i,\mathfrak{b}_i]$ is an ideal of $\mathfrak{g}_i$ and $\mathfrak{g}_i$ is simple, it follows that $\mathfrak{g}_i = \mathfrak{b}_i +[\mathfrak{b}_i,\mathfrak{b}_i]$. Therefore, by Kostant's Theorem (see \cite[p.\ 4]{MR0519928}), there exists a unique ${\rm Ad}(\mathsf{G}_i)$-invariant, symmetric, nondegenerate, bilinear form $Q_i$ on $\mathfrak{g}_i$ such that $Q_i(\mathfrak{k}_i,\mathfrak{b}_i) = 0$ and $Q_i|_{\mathfrak{b}_i \otimes \mathfrak{b}_i} = \check{g}_i$. Since $\mathsf{G}_i$ is simple, it follows that $Q_i$ is positive definite. Therefore, $\check{g}_i$ is normal for all $1 \leq i \leq s$, hence $\check{g}$ is normal.
\end{proof}

The proof of Theorem \ref{thm:compact-structure} shows in particular that, if $\mathsf{L}$ is semisimple, \emph{i.e.}, $\mathsf{L} = \mathsf{G}$, then $(M,J,g)$ is itself a Wang C-space with normal metric. However, in general, the metric $g$ is not necessarily normal. Consider for example the Calabi-Eckman threefold $M = S^3 \times S^3$ with its standard complex structure $J$. Then, $M$ is the total space of a holomorphic principal $\mathsf{T}^2$-bundle $\pi: M \to \mathbb{CP}^1 \times \mathbb{CP}^1$, and every homogeneous Hermitian metric $g$ that projects onto the standard metric on $\mathbb{CP}^1 \times \mathbb{CP}^1$ turns $(M,J,g)$ into a BAS manifold. More generally, the following result holds.

\begin{proposition} \label{prop:cmpctBAS}
Let $\pi: \mathsf{G}/\mathsf{H} \to \mathsf{G}/\mathsf{H}\mathsf{T}$ be a homogeneous torus bundle over a flag manifold, where $\mathsf{G}$ is a compact semisimple Lie group, and let $\mathsf{T}'$ be a torus. Assume that $\mathsf{T}' \times \mathsf{G}/\mathsf{H}$ admits a left-$(\mathsf{T}' \times \mathsf{G})$ and right-$(\mathsf{T}' \times \mathsf{T})$-invariant Hermitian structure $(J,g)$ and that $g$ induces on $\mathsf{G}/\mathsf{H}\mathsf{T}$ a normal metric. Then, $(\mathsf{T}' \times\mathsf{G}/\mathsf{H},J,g)$ is BAS.
\end{proposition}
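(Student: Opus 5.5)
The plan is to use the converse direction of Theorem \ref{thm:BASnatrad}. It suffices to exhibit a connected group acting transitively on $M \coloneqq \mathsf{T}' \times \mathsf{G}/\mathsf{H}$ by holomorphic isometries, together with a reductive complement satisfying \eqref{eq:natrad}.

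\emph{Choice of group.} Let $\mathsf{T}$ act on the right of $\mathsf{G}/\mathsf{H}$; this is possible since $\mathsf{T}$ normalizes $\mathsf{H}$, because $\mathsf{H}\mathsf{T}$ is the centralizer of a torus and $\mathsf{T}$ is central in it. The combined left and right actions give an action of
$$
\mathsf{L} \coloneqq \mathsf{T}' \times \mathsf{T} \times \mathsf{G}
$$
on $M$, with $\mathsf{T}'$ acting on the first factor. By hypothesis this action is by holomorphic isometries. It is transitive, and the isotropy at $p = (e,e\mathsf{H})$ is $\{(1,t,g) : g t^{-1} \in \mathsf{H}\} \cong \mathsf{H}\mathsf{T}$, embedded as $k \mapsto (1,\bar k^{-1},k)$, where $\bar k$ denotes the $\mathsf{T}$-component. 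On the Lie algebra level, write $\mathfrak{g} = \mathfrak{h} + \mathfrak{t} + \mathfrak{b}$, where $\mathfrak{b}$ is the $B$-orthogonal complement of $\mathfrak{k} = \mathfrak{h}+\mathfrak{t}$ for the normal scalar product $B$ on $\mathfrak{g}$. The isotropy algebra is then $\hat{\mathfrak{k}} = \{(0,-X_{\mathfrak{t}},X) : X \in \mathfrak{k}\}$. As reductive complement I would take
$$
\mathfrak{m} \coloneqq \mathfrak{t}' \oplus \mathfrak{t} \oplus \mathfrak{b},
$$
with $\mathfrak{t}$ sitting in the $\mathsf{T}$-factor. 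This space is $\mathrm{Ad}(\mathsf{H}\mathsf{T})$-invariant, since $\mathsf{T}'\times\mathsf{T}$ is abelian and $\mathrm{Ad}(\mathsf{K})\mathfrak{b} = \mathfrak{b}$.

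\emph{Checking \eqref{eq:natrad}.} Brackets inside $\mathfrak{m}$ are as follows. The abelian part $\mathfrak{t}' \oplus \mathfrak{t}$ commutes with everything in $\mathfrak{m}$, since $\mathfrak{t}$ in the $\mathsf{T}$-factor commutes with $\mathfrak{g}$. For $X,Y\in\mathfrak{b}$, the bracket is $[X,Y] = [X,Y]_{\mathfrak{k}} + [X,Y]_{\mathfrak{b}}$ in the $\mathsf{G}$-factor. Projecting to $\mathfrak{m}$ along $\hat{\mathfrak{k}}$, the term $[X,Y]_{\mathfrak{b}}$ stays. The term $[X,Y]_{\mathfrak{k}}$, viewed as $(0,0,Z)$ with $Z\in\mathfrak{k}$, equals $(0,-Z_{\mathfrak{t}},Z) + (0,Z_{\mathfrak{t}},0)$, so its $\mathfrak{m}$-component is $Z_{\mathfrak{t}}$, now sitting in the $\mathsf{T}$-factor. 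Hence
$$
[X,Y]_{\mathfrak{m}} = [X,Y]_{\mathfrak{t}} + [X,Y]_{\mathfrak{b}}, \qquad X,Y\in\mathfrak{b}.
$$
The invariance of $g$ under the right $\mathsf{T}'\times\mathsf{T}$-action makes the $\mathfrak{t}'\oplus\mathfrak{t}$-directions Killing and invariant. Combined with the normality of the base metric, this should give $g|_{\mathfrak{b}\otimes\mathfrak{b}} = B|_{\mathfrak{b}}$ and $g(\mathfrak{t}'\oplus\mathfrak{t},\mathfrak{b}) = 0$; the latter holds because $\mathfrak{b}$ has no trivial $\mathrm{Ad}(\mathsf{H}\mathsf{T})$-submodule, the base being a flag manifold.

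With these facts, \eqref{eq:natrad} splits into cases.
\begin{itemize}
\item If some argument lies in $\mathfrak{t}'\oplus\mathfrak{t}$ and is the bracketing element $X$, both terms vanish.
\item If $X\in\mathfrak{b}$, $Y\in\mathfrak{b}$ and $Z \in \mathfrak{t}'\oplus\mathfrak{t}$, the condition reads $g([X,Y]_{\mathfrak{t}},Z) + g(Y,[X,Z]_{\mathfrak{m}}) = g([X,Y]_{\mathfrak{t}},Z)$, which need not vanish.
\end{itemize}

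\emph{Main obstacle.} The failure in the mixed case is the main obstacle: the naive complement does not work, and $\mathfrak{m}$ must be modified. I would instead take a complement $\mathfrak{m}_\Phi$ of the form $\{(a, \Phi(X_{\mathfrak{t}}) , X)\}$, i.e. tilt the $\mathfrak{t}$-directions between the $\mathsf{T}$-factor and $\mathfrak{t}\subset\mathfrak{g}$. The linear map $\Phi$ is chosen so that the induced metric on the fibre directions matches $g|_{\mathfrak{t}'\oplus\mathfrak{t}}$, in the spirit of Kostant's bilinear form (Proposition \ref{prop:Kostant}). Concretely, I would look for an $\mathrm{Ad}(\mathsf{L})$-invariant symmetric form $Q$ on $\mathfrak{l}$ of the shape
$$
Q = q \oplus B,
$$
with $q$ an arbitrary symmetric form on the abelian part $\mathfrak{t}'\oplus\mathfrak{t}$. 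Such a $Q$ is automatically invariant. One chooses $q$ so that the $Q$-orthogonal complement of $\hat{\mathfrak{k}}$ restricts to $g$. This requires solving for $q$ given the prescribed metric on the fibres, which is a positive definite form on $\mathfrak{t}'\oplus\mathfrak{t}$, possibly with off-diagonal terms. Some degeneracy of $Q$ on $\hat{\mathfrak{k}}$ has to be avoided, which I expect to hold after possibly enlarging $\mathsf{L}$ by an extra copy of $\mathsf{T}$. Once $g = Q|_{\mathfrak{m}}$ with $\mathfrak{m} = \hat{\mathfrak{k}}^{\perp_Q}$ and $Q$ bi-invariant, \eqref{eq:natrad} follows immediately from $\mathrm{ad}$-invariance. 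Theorem \ref{thm:BASnatrad} then yields that $M$ is BAS.
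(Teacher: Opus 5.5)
\textbf{There is a genuine gap at the decisive step.} Your overall strategy is the paper's. You enlarge the transitive group by a right torus action and look for an $\mathrm{ad}$-invariant form: the normal form on $\mathfrak g$ plus a free symmetric form on a central abelian factor, nondegenerate on the isotropy and equal to $g$ on its orthogonal complement. You then read off \eqref{eq:natrad} from invariance and invoke Theorem~\ref{thm:BASnatrad}.

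The problem is that the step carrying all the content, the existence of $q$, is only ``expected''. For the group you actually chose, it is false in general. Carry out the computation in $\mathfrak l=\mathfrak t'\oplus\mathfrak t\oplus\mathfrak g$ with $Q=q\oplus B$, and write $q(c,c')=B(\gamma c,c')$ on the $\mathsf T$-factor.
\begin{itemize}
\item On the $\mathfrak t$-part of $\hat{\mathfrak k}$ you get $Q=B((\mathrm{Id}+\gamma)\,\cdot,\cdot)$.
\item Once this is nondegenerate, the requirement $Q|_{\mathfrak m}=g$ forces $g|_{\mathfrak t\otimes\mathfrak t}=B\big((\mathrm{Id}-(\mathrm{Id}+\gamma)^{-1})\,\cdot,\cdot\big)$ on the vertical $\mathfrak t$-directions. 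This holds whatever $q$ does on $\mathfrak t'$ or on the mixed $\mathfrak t'$--$\mathfrak t$ block.
\item Hence your method cannot work whenever $g|_{\mathfrak t\otimes\mathfrak t}$ has eigenvalue $1$ relative to $B$.
\end{itemize}
The simplest instance is $\mathsf T'$ trivial and $g$ itself normal on $\mathsf G/\mathsf H$. There the equations force $q=-B|_{\mathfrak t}$, and then $Q$ vanishes on the $\mathfrak t$-part of $\hat{\mathfrak k}$.

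\textbf{How the paper avoids this.} It writes $g|_{\tilde{\mathfrak t}}=Q(S\,\cdot,\cdot)$ on the whole fibre algebra $\tilde{\mathfrak t}=\mathfrak t'\oplus\mathfrak t$, where $Q$ is an invariant inner product on $\mathfrak t'\oplus\mathfrak g$ extending the normal one. It splits off $\mathfrak t_1=\ker(S-\mathrm{Id})$ and adjoins a right copy only of $\mathfrak t_1^{\perp}$. Here $\mathfrak t'$ and $\mathfrak t$ are treated on the same footing, so this copy may contain $\mathfrak t'$-directions, and the enlarged action need not be effective.
\begin{itemize}
\item The new central factor carries a form built from $(S-\mathrm{Id})^{-1}$.
\item The complement $\mathfrak m_1$ is an explicit graph tilting $\mathfrak t_1^{\perp}$ between the two copies; this plays the role of your $\Phi$.
\item The directions in $\mathfrak t_1+\mathfrak b$ stay in the left factor.
\item Nondegeneracy on $\hat{\mathfrak h}$ is then immediate, because the operator that appears there is invertible.
\end{itemize}

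\textbf{Your fallback.} Adjoining a second right copy of $\mathsf T$ can also be made to work, but only with an indefinite $q$. For example, with $\mathsf T'$ trivial and $g$ normal, $q=\mathrm{diag}(\lambda,-\lambda)\,B|_{\mathfrak t}$ on the two copies, with $\lambda\neq0$, does the job. Proving this in general is precisely the linear algebra your proposal leaves out.
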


\begin{proof}
We denote by $\mathfrak{g}$, $\mathfrak{h}$, $\mathfrak{t}$, $\mathfrak{t}'$ the Lie algebras of $\mathsf{G}$, $\mathsf{H}$, $\mathsf{T}$ and $\mathsf{T}'$, respectively. For the sake of notation, we set $\tilde{\mathfrak{t}} \coloneqq \mathfrak{t}' \oplus \mathfrak{t}$ and $\tilde{\mathfrak{g}} \coloneqq \mathfrak{t}' \oplus \mathfrak{g}$. Let $Q$ be an $\mathrm{Ad}(\mathsf{T}' \times \mathsf{G})$-invariant metric on $\tilde{\mathfrak{g}}$ such that, for the corresponding $Q$-orthogonal Lie algebra decomposition
$$
\tilde{\mathfrak{g}} = \mathfrak{h} + \tilde{\mathfrak{t}} + \mathfrak{b} \,\, ,
$$
one has $Q|_{\mathfrak{b} \otimes \mathfrak{b}} = g|_{\mathfrak{b} \otimes \mathfrak{b}}$. Notice that the restriction $Q|_{\mathfrak{g} \otimes \mathfrak{g}}$ is uniquely determined by this condition. By Schur's Lemma, $g(\tilde{\mathfrak{t}},\mathfrak{b}) = 0$ and $J\tilde{\mathfrak{t}} = \tilde{\mathfrak{t}}$. Let us consider the endomorphism $S \in \mathrm{Sym}_+(\tilde{\mathfrak{t}},Q)$ defined by
$$
g(U,V) = Q(SU,V) \quad \text{for all $U,V \in \tilde{\mathfrak{t}}$} \,\, .
$$
To show that $(\mathsf{T}' \times \mathsf{G}/\mathsf{H},J,g)$ is BAS, we enlarge the group of holomorphic isometries and find a suitable reductive complement satisfying the naturally reductive condition. To this end, let $\mathfrak{t}_1$ denote the (possibly trivial) eigenspace of $S$ corresponding to the eigenvalue $1$, and consider the orthogonal decomposition $\tilde{\mathfrak{t}} = \mathfrak{t}_1 + \mathfrak{t}_1^{\perp}$. Define then the Lie algebras
$$
\hat{\mathfrak{g}} \coloneqq \tilde{\mathfrak{g}} \oplus \mathfrak{t}_1^{\perp} \,\, , \quad \hat{\mathfrak{h}} \coloneqq \big\{(H+U,U) : H \in \mathfrak{h} \,\, , U \in \mathfrak{t}_1^{\perp}\big\}
$$
and
$$
\mathfrak{m} \coloneqq \mathfrak{m}_1 + \mathfrak{m}_2 \,\, , \quad
\mathfrak{m}_1 \coloneqq \big\{(U,-(S-\mathrm{Id})U) : U \in \mathfrak{t}_1^{\perp} \} \,\, , \quad
\mathfrak{m}_2 \coloneqq \{(X,0) : X \in \mathfrak{t}_1+\mathfrak{b}\} \,\, .
$$
Then, it is routine to check that
$$
\hat{\mathfrak{g}} = \hat{\mathfrak{h}} +\mathfrak{m} \,\, , \quad
\hat{\mathfrak{h}} \cap \mathfrak{m}_1 = \hat{\mathfrak{h}} \cap \mathfrak{m}_2 = \mathfrak{m}_1 \cap \mathfrak{m}_2 = \{0\} \,\, , \quad
[\hat{\mathfrak{h}},\mathfrak{m}_1] = \{0\} \,\, , \quad
[\hat{\mathfrak{h}},\mathfrak{m}_2] \subset \mathfrak{m}_2 \,\, .
$$
We denote by $\widehat{\mathsf{G}}$ the simply-connected Lie group with Lie algebra $\hat{\mathfrak{g}}$ and we observe that, by construction, $\widehat{\mathsf{G}}$ acts transitively on $(\mathsf{T}' \times \mathsf{G}/\mathsf{H},J,g)$ by holomorphic isometries. 
We also introduce the $\mathrm{ad}(\hat{\mathfrak{g}})$-invariant bilinear form $\widehat{Q}$ on $\hat{\mathfrak{g}}$ defined by
$$
\widehat{Q}\big((E_1,U_1),(E_2,U_2)\big) \coloneqq Q(E_1,E_2) +Q((S-\mathrm{Id})^{-1}U_1,U_2) \,\, .
$$
Then, it is straightforward to check that
$$\begin{gathered}
\widehat{Q}(\hat{\mathfrak{h}},\mathfrak{m}_1) = \widehat{Q}(\hat{\mathfrak{h}},\mathfrak{m}_2) = \widehat{Q}(\mathfrak{m}_1,\mathfrak{m}_2) = 0 \,\, , \\
\widehat{Q}((X_1,0),(X_2,0)) = g(X_1,X_2) \,\, , \quad
\widehat{Q}((U_1,-(S-\mathrm{Id})U_1),(U_2,-(S-\mathrm{Id})U_2)) = g(U_1,U_2)
\end{gathered}$$
for all $U_1,U_2 \in \mathfrak{t}_1^{\perp}$ and $X_1, X_2 \in \mathfrak{t}_1+\mathfrak{b}$. Moreover
$$
\widehat{Q}((H_1+U_1,U_1),(H_2+U_2,U_2)) = Q(H_1,H_2) +Q\big(((S-\mathrm{Id})^{-1}+\mathrm{Id})U_1,U_2\big)
$$
for all $H_1, H_2 \in \mathfrak{h}$, and so $\widehat{Q}|_{\hat{\mathfrak{h}} \otimes \hat{\mathfrak{h}}}$ is non-degenerate. This shows that $(\mathsf{T}' \times \mathsf{G}/\mathsf{H},g)$ is $\widehat{\mathsf{G}}$-naturally reductive, and so $(\mathsf{T}' \times \mathsf{G}/\mathsf{H},J,g)$ is BAS by Theorem \ref{thm:BASnatrad}.
\end{proof}

\medskip
\section{Bismut--Ambrose--Singer manifolds of noncompact, semisimple Lie groups} 
\label{sect:noncompact} \setcounter{equation} 0

The aim of this section is to prove Theorem \ref{thm:MAIN-noncpt}, namely the classification of complete, simply-connected BAS manifolds admitting a transitive holomorphic isometric action of a semisimple Lie group of \emph{non-compact type}. Here, a semisimple Lie group is of non-compact type if each simple ideal of its Lie algebra is the Lie algebra of a non-compact simple Lie group.

We recall that every real simple Lie algebra $\mathfrak{g}$ admits a \emph{Cartan decomposition}
$$
\mathfrak{g} = \mathfrak{k} +\mathfrak{p} \,\, , \quad
\text{with} \quad
[\mathfrak{k},\mathfrak{k}] \subset \mathfrak{k} \,\, , \quad
[\mathfrak{k},\mathfrak{p}] \subset \mathfrak{p} \,\, , \quad
[\mathfrak{p},\mathfrak{p}] \subset \mathfrak{k} \,\, .
$$
Here, $\mathfrak{k}$ denotes a maximal compact Lie subalgebra of $\mathfrak{g}$ and $\mathfrak{p}$ is its orthogonal complement with respect to the Cartan-Killing form $\mathscr{B}$ of $\mathfrak{g}$. We also recall that $\mathscr{B}|_{\mathfrak{k} \otimes \mathfrak{k}} < 0$ and $\mathscr{B}|_{\mathfrak{p} \otimes \mathfrak{p}} > 0$, hence
\begin{equation} \label{eq:canmet}
g \coloneqq -\mathscr{B}|_{\mathfrak{k} \otimes \mathfrak{k}} \oplus \mathscr{B}|_{\mathfrak{p} \otimes \mathfrak{p}}
\end{equation}
is a positive definite, $\mathrm{ad}(\mathfrak{k})$-invariant scalar product on $\mathfrak{g}$, called \emph{canonical metric}.

A real simple Lie algebra $\mathfrak{g}$ is called \emph{absolutely simple} if its complexification $\mathfrak{g}^{\mathbb{C}} = \mathfrak{g} \otimes_{\mathbb{R}} \mathbb{C}$ is a complex simple Lie algebra. It is well-known that every real simple Lie algebra is either absolutely simple or the realification $\mathfrak{s}^{\mathbb{R}}$ of a simple complex Lie algebra $\mathfrak{s}$. In the latter case, the Cartan decomposition reads as $\mathfrak{s}^{\mathbb{R}} = \mathfrak{k} +\mathtt{i}\mathfrak{k}$ for some compact real form $\mathfrak{k}$ of $\mathfrak{s}$. The \emph{canonical Hermitian structure} of a complex semisimple Lie group $\mathsf{S}$, with Lie algebra $\mathfrak{s}$, is the pair $(J,g)$, where the complex structure $J$ is given by 
$$
JX \coloneqq \mathtt{i}X \quad \text{for all $X \in \mathfrak{s}^{\mathbb{R}}$}
$$
and $g$ is the canonical metric \eqref{eq:canmet}. Notice that the canonical Hermitian structure does not depend on the choice of the compact real form $\mathfrak{k} \subset \mathfrak{s}$, in the sense that choosing a different compact real form produces a Hermitian structure which is holomorphically isometric to $(J,g)$ by an automorphism of $\mathsf{S}$.

\begin{proposition} \label{prop:complexss}
The canonical Hermitian structure of a complex semisimple Lie group is BAS reduced.
\end{proposition}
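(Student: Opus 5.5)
We prove the statement by exhibiting a transitive group for which the metric is naturally reductive, and then use Theorem \ref{thm:BASnatrad}. Let $\mathsf{S}$ be a simply-connected complex semisimple Lie group with Lie algebra $\mathfrak{s}$, fix a compact real form $\mathfrak{k}$, and write $\mathfrak{s}^{\mathbb{R}} = \mathfrak{k} + \mathtt{i}\mathfrak{k}$. The canonical metric \eqref{eq:canmet} is only $\mathrm{ad}(\mathfrak{k})$-invariant, so it is not bi-invariant and left translations alone do not give a naturally reductive presentation. Instead we use the group $\mathsf{L} \coloneqq \mathsf{S} \times \mathsf{K}$, where $\mathsf{K}$ is the connected subgroup with Lie algebra $\mathfrak{k}$. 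It acts by $(a,k)\cdot x = a x k^{-1}$.

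This action is by holomorphic isometries. Left translations preserve the left-invariant structure $(J,g)$. Right translation by $k \in \mathsf{K}$ acts at the identity through $\mathrm{Ad}(k)$, which is $\mathbb{C}$-linear and preserves $\mathscr{B}|_{\mathfrak{k}}$ and $\mathscr{B}|_{\mathtt{i}\mathfrak{k}}$. The isotropy algebra at $e$ is $\mathfrak{u} = \{(U,U) : U \in \mathfrak{k}\}$.

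For the reductive complement, following the recipe of Proposition \ref{prop:cmpctBAS}, we take
$$
\mathfrak{m} \coloneqq \{(X + \mathtt{i}Y, \lambda X) : X,Y \in \mathfrak{k}\}
$$
for a suitable constant $\lambda$. This space is $\mathrm{ad}(\mathfrak{u})$-invariant and complementary to $\mathfrak{u}$ as long as $\lambda \neq 1$. The evaluation map sends $(X+\mathtt{i}Y,\lambda X)$ to $(1-\lambda)X + \mathtt{i}Y$.

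The key step is to choose $\lambda$ and an $\mathrm{ad}$-invariant form $\widehat Q$ on $\mathfrak{s}^{\mathbb{R}} \oplus \mathfrak{k}$ such that $\widehat Q(\mathfrak{u},\mathfrak{m}) = 0$ and $\widehat Q|_{\mathfrak{m}} = g$ under this identification. Kostant's criterion (the converse part of Proposition \ref{prop:Kostant}, or equivalently \eqref{eq:natrad}) then gives natural reductivity. Invariant forms on $\mathfrak{s}^{\mathbb{R}}$ are combinations of $\mathrm{Re}\,\mathscr{B}_{\mathbb{C}}$ and $\mathrm{Im}\,\mathscr{B}_{\mathbb{C}}$, and on $\mathfrak{k}$ they are multiples of $\mathscr{B}_{\mathfrak{k}}$. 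We therefore make the ansatz $\widehat Q = a\,\mathscr{B}_{\mathfrak{s}^{\mathbb{R}}} + c\,\mathscr{B}_{\mathfrak{k}}$ and solve for $a,c,\lambda$:
\begin{itemize}
\item the $\mathtt{i}\mathfrak{k}$-block of $\widehat Q|_{\mathfrak{m}}$ must equal $\mathscr{B}$ there, which fixes $a$ (with $\mathscr{B}_{\mathfrak{s}^{\mathbb{R}}}$ positive on $\mathtt{i}\mathfrak{k}$, $a$ is a positive constant);
\item orthogonality $\widehat Q(\mathfrak{u},\mathfrak{m}) = 0$ gives the relation $a + c\lambda = 0$ (after normalising the two Killing forms);
\item matching the $\mathfrak{k}$-block, $-\mathscr{B}$ on $(1-\lambda)X$, gives a quadratic equation in $\lambda$.
\end{itemize}
Note that $\mathscr{B}_{\mathfrak{s}^{\mathbb{R}}}$ is negative on $\mathfrak{k}$. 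Solving the quadratic, I expect $\lambda = 2$ to work, so the resulting $\widehat Q$ is indefinite.

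The main obstacle is exactly this sign bookkeeping, because the $\mathfrak{k}$-part of $g$ is $-\mathscr{B}$ while the $\mathfrak{p}$-part is $+\mathscr{B}$. We must find a real solution with $\lambda \neq 1$ and check that $\widehat Q|_{\mathfrak{u}}$ is non-degenerate. The latter is automatic once $a + c \neq 0$. If this ansatz fails, the fallback is to verify \eqref{eq:natrad} directly on $\mathfrak{m}$ using $[\mathfrak{k},\mathtt{i}\mathfrak{k}] \subset \mathtt{i}\mathfrak{k}$ and $[\mathtt{i}\mathfrak{k},\mathtt{i}\mathfrak{k}] \subset \mathfrak{k}$. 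With natural reductivity in hand, Theorem \ref{thm:BASnatrad} gives BAS.

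For reducedness, we use that $\mathsf{L} \subset \mathsf{Aut}^0$ acts transitively. An $\mathsf{Aut}^0$-invariant vector field is then determined by a vector $v \in T_eS \simeq \mathfrak{s}^{\mathbb{R}}$ fixed by the isotropy, which contains $\mathrm{Ad}(\mathsf{K})$ acting on $\mathfrak{k} + \mathtt{i}\mathfrak{k}$. Since $\mathfrak{s}$ is semisimple, $\mathfrak{k}$ has trivial centre, so $\mathrm{Ad}(\mathsf{K})$ has no nonzero fixed vectors on $\mathfrak{k} \oplus \mathtt{i}\mathfrak{k}$. Hence $v = 0$, and the manifold is reduced.
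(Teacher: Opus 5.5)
Your proposal is correct and is essentially the paper's proof. The paper uses the same enlarged algebra $\mathfrak{s}^{\mathbb{R}} \oplus \mathfrak{k}$ with diagonal isotropy $\{(U,U)\}$, and takes exactly your complement with $\lambda = 2$ (written there as $\{(U,2U)\} + \{(\mathtt{i}U,0)\}$) together with the form $\mathscr{B} \oplus \big(-\tfrac12\mathscr{B}\big)$; this is precisely what your system yields, since with $a=1$ it reduces to $(1-\lambda)(2-\lambda)=0$, giving $\lambda=2$, $c=-\tfrac12$ and $a+c=\tfrac12\neq 0$. Reducedness is obtained there in the same way, from the absence of nonzero $\mathrm{ad}(\mathfrak{h})$-fixed vectors in $\mathfrak{m}$, i.e.\ from $\mathfrak{k}$ having trivial centre.
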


\begin{proof}
The argument is similar to the proof of Proposition \ref{prop:cmpctBAS}. Let $(\mathsf{S},J,g)$ be a complex semisimple Lie group, with Lie algebra $\mathfrak{s}$, endowed with its canonical Hermitian structure. Without loss of generality, we assume that $\mathsf{S}$ is simple. Consider the realification $\mathfrak{s}^{\mathbb{R}} = \mathfrak{k}+\mathtt{i}\mathfrak{k}$, where $\mathfrak{k}$ is a compact real form of $\mathfrak{s}$.

To show that $(\mathsf{S},J,g)$ is BAS, we enlarge the group of holomorphic isometries and find a suitable reductive complement satisfying the naturally reductive condition. To this end, consider the real Lie algebras
$$
\mathfrak{g} \coloneqq \mathfrak{s}^{\mathbb{R}} \oplus \mathfrak{k} \,\, , \quad \mathfrak{h} \coloneqq \{(U,U) : U \in \mathfrak{k}\} \subset \mathfrak{g}
$$
and define
$$
\mathfrak{m} \coloneqq \mathfrak{m}_1 +\mathfrak{m}_2 \,\, , \quad \mathfrak{m}_1 \coloneqq \{(U,2U) : U \in \mathfrak{k}\} \,\, , \quad \mathfrak{m}_2 \coloneqq \{(\mathtt{i}U,0) : U \in \mathfrak{k}\} \,\, .
$$
Then, it is routine to check that
$$
\mathfrak{g} = \mathfrak{h} +\mathfrak{m} \,\, , \quad
\mathfrak{h} \cap \mathfrak{m}_1 = \mathfrak{h} \cap \mathfrak{m}_2 = \mathfrak{m}_1 \cap \mathfrak{m}_2 = \{0\} \,\, , \quad
[\mathfrak{h},\mathfrak{m}_1] \subset \mathfrak{m}_1 \,\, , \quad
[\mathfrak{h},\mathfrak{m}_2] \subset \mathfrak{m}_2 \,\, .
$$
Moreover, since $\mathfrak{k}$ is simple, it follows that $\mathfrak{m}_1$ and $\mathfrak{m}_2$ are irreducible ${\rm ad}(\mathfrak{h})$-modules, each of which is equivalent to the adjoint action of $\mathfrak{k}$ on itself. In particular, it follows that
\begin{equation} \label{eq:m0Gss}
\{X \in \mathfrak{m} : [\mathfrak{h},X] = 0 \} = \{0\} \,\, .
\end{equation}
We denote by $\mathsf{G}$ the simply-connected Lie group with Lie algebra $\mathfrak{g}$ and we observe that, by construction, $\mathsf{G}$ acts transitively on $(\mathsf{S},J,g)$ by holomorphic isometries. We also introduce the ${\rm ad}(\mathfrak{g})$-invariant bilinear form $Q$ on $\mathfrak{g}$ defined by
$$
Q\big((U_1+\mathtt{i}V_1,W_1),(U_2+\mathtt{i}V_2,W_2)\big) \coloneqq \mathscr{B}(U_1+\mathtt{i}V_1,U_2+\mathtt{i}V_2) -\tfrac12\mathscr{B}(W_1,W_2) \,\, .
$$
Then, it is straightforward to check that
$$\begin{gathered}
Q(\mathfrak{h},\mathfrak{m}_1) = Q(\mathfrak{h},\mathfrak{m}_2) = Q(\mathfrak{m}_1,\mathfrak{m}_2) = 0 \,\, , \\
Q((U_1,2U_1),(U_2,2U_2)) = - \mathscr{B}(U_1,U_2) \,\, , \quad
Q((\mathtt{i}U_1,0),(\mathtt{i}U_2,0) = \mathscr{B}(\mathtt{i}U_1,\mathtt{i}U_2)
\end{gathered}$$
for all $U_1, U_2 \in \mathfrak{k}$. Moreover
$$
Q((U_1,U_1),(U_2,U_2)) = \tfrac{1}{2}\mathscr{B}(U_1,U_2) \,\, ,
$$
and so $Q|_{\mathfrak{h} \otimes \mathfrak{h}}$ is non-degenerate. This shows that $(\mathsf{S},g)$ is $\mathsf{G}$-naturally reductive, and so $(\mathsf{S},J,g)$ is BAS by Theorem \ref{thm:BASnatrad}. Moreover, $(\mathsf{S},J,g)$ is reduced by \eqref{eq:m0Gss}.
\end{proof}

\begin{remark}
By \cite[Theorem C]{MR0146301}, every simply-connected, complex semisimple Lie group $\mathsf{S}$ has a torsion-free cocompact lattice $\Lambda \subset \mathsf{S}$. Therefore, each quotient $\Lambda \backslash\mathsf{S}$ provides an example of a compact manifold with infinite fundamental group that admits a balanced BAS structure. Moreover, if $\mathfrak{k}$ is a compact real form of the Lie algebra $\mathfrak{s}$ of $\mathsf{S}$, and $\mathsf{K}$ denotes the corresponding connected subgroup, then the canonical Hermitian structure of $\mathsf{S}$ induces a Riemannian metric on the quotient $\mathsf{S}/\mathsf{K}$ that turns the principal $\mathsf{K}$-bundle $\pi: \mathsf{S} \to \mathsf{S}/\mathsf{K}$ into a Riemannian submersion over a noncompact Riemannian symmetric space, as predicted by \cite[Theorem 5.5]{MR0787113}. However, we emphasize that the base $\mathsf{S}/\mathsf{K}$ is never Hermitian, and the projection $\pi$ is not holomorphic.
\end{remark}

We now prove Theorem \ref{thm:noncompact-structure-3} and Proposition \ref{prop:noncmpctBAS}. Together with Proposition \ref{prop:complexss}, these results imply Theorem \ref{thm:MAIN-noncpt}.

\begin{theorem} \label{thm:noncompact-structure-3}
Let $(M,J,g)$ be a complete, simply-connected BAS manifold and assume that there exists a semisimple group $\mathsf{G}$ of non-compact type acting transitively on $M$ by holomorphic isometries. Then, $(M,J,g)$ splits as the Hermitian product of complex simple Lie groups, endowed with a multiple of their canonical metric, and another BAS manifold $(M',J',g')$. Moreover, the fibres of the canonical reduction of $(M',J',g')$ are compact tori and its base is a non-compact Hermitian symmetric space.
\end{theorem}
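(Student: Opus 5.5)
We work with the canonical presentation $M=\mathsf{L}/\mathsf{U}$, the form $Q$ of Proposition \ref{prop:Kostant}, and the canonical reduction $\pi:M\to B$ with $\mathfrak{l}=\mathfrak{u}+\mathfrak{f}+\mathfrak{b}$ from Theorem \ref{thm:generalstructure}. First we show that $\mathsf{G}$ may be taken inside $\mathsf{L}$ and that $\mathfrak{l}$ is essentially $\mathfrak{g}$ plus an abelian part. By \cite[Theorem 3.1]{MR0787113}, a Levi factor of $\mathsf{L}$ times its nilradical acts transitively. The transitivity of a semisimple group of non-compact type then forces two things: the nilradical contributes only central directions, lying in $\mathfrak{z}(\mathfrak{u})+\mathfrak{f}$ by Lemma \ref{lem:f} $v)$, and any compact simple ideal of $\mathfrak{l}$ must lie in the isotropy. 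The latter conflicts with almost-effectiveness, unless that ideal is glued to $\mathfrak{u}$ through a diagonal, as in Proposition \ref{prop:complexss}. Concretely, we argue that $\mathfrak{l}=\mathfrak{g}\oplus\mathfrak{a}$, where $\mathfrak{a}$ is a compact reductive algebra acting only through the isotropy. Since $\mathfrak{l}=[\mathfrak{m},\mathfrak{m}]+\mathfrak{m}$, Kostant's form $Q$ restricted to each simple ideal $\mathfrak{g}_i$ of $\mathfrak{g}$ is a multiple of the Killing form $\mathscr{B}_i$.

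Next we decompose into de Rham factors, using \cite[Ch.\ X, Theorem 5.2]{MR1393941} together with the holomorphic splitting argument of Theorem \ref{thm:MAIN-general}. This reduces the problem to the case where the relevant simple ideal $\mathfrak{g}_i$ is a single one, and we distinguish two cases.

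In the first case, $\mathfrak{g}_i=\mathfrak{s}^{\mathbb{R}}$ is the realification of a complex simple Lie algebra. Here $\mathfrak{g}_i=\mathfrak{k}+\mathtt{i}\mathfrak{k}$, and the isotropy is a diagonal copy of (a subalgebra of) $\mathfrak{k}$. We plan to show, using $Q=c\mathscr{B}$ on $\mathfrak{g}_i$, the naturally reductive condition \eqref{eq:natrad}, and the $\mathrm{ad}(\mathfrak{u})$-invariance and integrability of $J$, that the orbit is the group $\mathsf{S}$ itself with $J$ given by multiplication by $\mathtt{i}$ and $g$ a multiple of \eqref{eq:canmet}. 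This matches the model in Proposition \ref{prop:complexss}. Moreover, $\mathfrak{m}$ decomposes into two copies of the adjoint module of $\mathfrak{k}$, and Schur's lemma pins down $J$ up to sign.

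In the second case, $\mathfrak{g}_i$ is absolutely simple. We then pass to the base $B$, which is reduced and homogeneous under $\mathsf{G}$, exactly as in the proof of Theorem \ref{thm:compact-structure}. There, $Q$ restricted to $\mathfrak{b}$ is positive definite, while $Q|_{\mathfrak{g}_i}=c\,\mathscr{B}$ has positive part exactly $\mathfrak{p}$. Hence $\mathfrak{b}\subset\mathfrak{p}$, and $\mathfrak{u}+\mathfrak{f}$ contains the $\mathfrak{g}_i$-part of $\mathfrak{k}$; comparing dimensions we get $\mathfrak{b}=\mathfrak{p}$. So $B=\mathsf{G}/\mathsf{K}$ is a Riemannian symmetric space of non-compact type. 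Since $\check{J}$ is $\mathrm{Ad}(\mathsf{K})$-invariant and integrable, $B$ is Hermitian symmetric. The fibres are then $\mathsf{K}/\mathsf{U}$, an abelian quotient by Lemma \ref{lem:f} $iii)$, so they are quotients of the centre of $\mathsf{K}$. Because $\mathsf{K}$ is compact, these are compact tori, at least once we show that $\mathsf{U}$ contains $[\mathsf{K},\mathsf{K}]$.

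The main obstacle is the first step: proving that $\mathfrak{l}$ has no extra compact simple or solvable non-abelian pieces. The difficulty is that $\mathsf{L}$ may be strictly larger than $\mathsf{G}$, as Proposition \ref{prop:complexss} already shows. I would attack it by analysing how a compact ideal $\mathfrak{a}\subset\mathfrak{l}$ projects onto $\mathfrak{m}$. Its image consists of $\nabla$-parallel, $Q$-positive directions that commute with the $\mathfrak{g}$-action. This should force it either to be a diagonal $\mathfrak{k}$ (the complex-group case) or to lie in $\mathfrak{f}$, where it contributes only to the torus fibres.
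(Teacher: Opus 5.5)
Your proposal has a genuine gap exactly where you locate the ``main obstacle''. The first step is not a preliminary but the core of the theorem, and you leave it as a heuristic. Nothing you write proves that the nilradical of $\mathfrak{l}$ is central, that $\mathfrak{l}=\mathfrak{g}\oplus\mathfrak{a}$, or that a compact ideal is either glued diagonally or contained in $\mathfrak{f}$. Yet the position of the isotropy inside a maximal compact subalgebra $\mathfrak{k}$, the $Q$-orthogonality of $\mathfrak{g}$ and $\mathfrak{a}$, and your identification of $\mathfrak{b}$ all rest on it.

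The missing ingredient is Gordon's structure theorem \cite[Theorem 5.2]{MR0787113}, which the paper uses instead of analysing the canonical presentation. Starting from $\mathsf{G}$ with isotropy $\mathfrak{h}\subset\mathfrak{k}$, it yields the following. First, $\mathfrak{h}$ is an ideal of $\mathfrak{k}$. Second, $g$ is invariant under the right action of $\mathsf{K}$, and the complement of $\mathfrak{h}$ in $\mathfrak{k}$ is $g$-orthogonal to $\mathfrak{p}$. Third, together with Theorem \ref{thm:BASnatrad}, every isometry isotopic to the identity is holomorphic, so $J$ is right-$\mathsf{K}$-invariant too. Fourth, on the factor $M'$ one has $\mathfrak{aut}=\mathfrak{g}'\oplus\mathfrak{t}$ with isotropy $(\mathfrak{h}\oplus\{0\})+\Delta\mathfrak{t}$ and $[\mathfrak{m},\mathfrak{m}]+\mathfrak{m}=\mathfrak{aut}$, so $\mathfrak{l}=\mathfrak{aut}$. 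Once this is available, the rest is Schur-lemma bookkeeping plus one Nijenhuis computation. Without it, or an equivalent argument, your proof does not get off the ground.

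Several later steps also fail as written.

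\emph{Reduction to one simple ideal.} You cannot reduce to a single simple ideal via de Rham and Theorem \ref{thm:MAIN-general}. That holomorphic splitting uses reducedness, which holds for $B$ but not for $M$, and on $M$ the complex structure may mix the torus directions of different absolutely simple ideals. For example, take the universal cover of $\mathsf{SL}(2,\mathbb{R})\times\mathsf{SL}(2,\mathbb{R})$ with a suitable product metric and $J$ rotating one $\mathfrak{so}(2)$-direction into the other. It is BAS by Proposition \ref{prop:noncmpctBAS}, although its two Riemannian factors are three-dimensional. This is why the paper splits off only the complex factors $\tilde{\mathfrak{g}}_j$, after proving $\mathfrak{h}\cap\tilde{\mathfrak{k}}_j=\{0\}$ (otherwise $\mathtt{i}\tilde{\mathfrak{k}}_j$ would be a $J$-invariant isotypic component of real type). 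It keeps all absolutely simple factors together in $M'$.

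\emph{Complex case.} Here $Q|_{\mathfrak{s}^{\mathbb{R}}}$ need not be a multiple of the Killing form, since invariant symmetric forms on $\mathfrak{s}^{\mathbb{R}}$ are spanned by $\mathrm{Re}\,\mathscr{B}_{\mathfrak{s}}$ and $\mathrm{Im}\,\mathscr{B}_{\mathfrak{s}}$. Also, Schur's lemma does not fix $J$ up to sign. It only gives $JX=\lambda\mathtt{i}X$ on $\tilde{\mathfrak{k}}_j$ with $\lambda\in\mathbb{R}\setminus\{0\}$. It is the identity $N_J(X,Y)=(1-\lambda^2)[X,Y]$ that forces $\lambda=\pm1$, and hence $g$ to be a multiple of the canonical metric.

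\emph{Absolutely simple case.} The step ``$Q|_{\mathfrak{b}}$ positive definite, hence $\mathfrak{b}\subset\mathfrak{p}$'' is not a valid inference, because a positive definite subspace need not lie in $\mathfrak{p}$. The paper argues the other way round. From $Q(\mathfrak{k}',\mathfrak{p}')=0$ and $\mathfrak{aut}_0\subset\mathfrak{k}'\oplus\mathfrak{t}$ it gets $\mathfrak{p}'\subset\mathfrak{m}$. Hence the coefficients in $Q|_{\mathfrak{g}'}=\sum_i\gamma_i\mathscr{B}_{\mathfrak{g}_i}$ are positive, $\mathfrak{m}=(\Delta\mathfrak{t})^{\perp}+\mathfrak{p}'$, and $\mathfrak{f}=(\Delta\mathfrak{t})^{\perp}$.

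\emph{Compactness of the fibres.} The subgroup $\mathsf{K}$ need not be compact inside a simply-connected $\mathsf{G}$; this already fails for $\widetilde{\mathsf{SL}}(2,\mathbb{R})$. In fact, since $B$ is contractible and $M'$ is simply-connected, the fibres of $\pi$ are simply-connected abelian groups. So compactness cannot be obtained unless $\mathfrak{f}=\{0\}$. What the paper's argument actually establishes is $\mathfrak{f}=(\Delta\mathfrak{t})^{\perp}$, i.e.\ that the fibres are the abelian orbits of the torus directions $\mathfrak{t}$.
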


\begin{proof}
Let $\mathsf{H}$ be the isotropy of $\mathsf{G}$ at a point $p \in M$, with Lie algebra $\mathfrak{h}$. Let
$$
\mathfrak{g} = \mathfrak{g}_1 \oplus {\dots} \oplus \mathfrak{g}_r \oplus \tilde{\mathfrak{g}}_1 \oplus {\dots} \oplus \tilde{\mathfrak{g}}_s
$$
be the decomposition of the Lie algebra $\mathfrak{g}$ of $\mathsf{G}$ into simple ideal, with
$\mathfrak{g}_i$ absolutely simple of non-compact type and $\tilde{\mathfrak{g}}_j = (\mathfrak{s}_j)^{\mathbb{R}}$ realification of complex simple Lie algebras. Let us consider a choice of Cartan decompositions
$$
\mathfrak{g}_i = \mathfrak{k}_i + \mathfrak{p}_i \,\, , \quad \tilde{\mathfrak{g}}_j = \tilde{\mathfrak{k}}_j + \mathtt{i}\tilde{\mathfrak{k}}_j
$$
such that
$$
\mathfrak{h} \subset \mathfrak{k} \coloneqq \mathfrak{k}_1 \oplus {\dots} \oplus \mathfrak{k}_r \oplus \tilde{\mathfrak{k}}_1 \oplus {\dots} \oplus \tilde{\mathfrak{k}}_s \,\, .
$$
We divide the proof into three steps.
\smallskip

\noindent \emph{Step 1: The manifold $(M,J,g)$ splits as a Hermitian product.}

By \cite[Theorem 5.2]{MR0787113}, it follows that $\mathfrak{h}$ is an ideal of $\mathfrak{k}$, and so
$$
\mathfrak{h} = \mathfrak{h}_1 \oplus {\dots} \oplus \mathfrak{h}_r \oplus \tilde{\mathfrak{h}}_1 \oplus {\dots} \oplus \tilde{\mathfrak{h}}_s  \,\, , \quad \text{with} \quad
\mathfrak{h}_i \coloneqq \mathfrak{h} \cap \mathfrak{k}_i \quad \text{and} \quad \tilde{\mathfrak{h}}_j \coloneqq \mathfrak{h} \cap \tilde{\mathfrak{k}}_j \,\, .
$$
By \cite[Theorem 5.2]{MR0787113} and Theorem \ref{thm:BASnatrad}, it follows that any isometry of $(M,J,g)$ isotopic to the identity is holomorphic. Therefore, by \cite[Theorem 5.2]{MR0787113}, the Hermitian structure $(J,g)$ is invariant under the right action of the connected Lie subgroup $\mathsf{K} \subset \mathsf{G}$ with Lie algebra $\mathfrak{k}$. We write
$$
\mathfrak{g} = \mathfrak{h} + \mathfrak{u} +\mathfrak{p} \,\, ,
$$
where $\mathfrak{u}$ is the orthogonal complement of $\mathfrak{h}$ in $\mathfrak{k}$ with respect to the Cartan-Killing form of $\mathfrak{g}$ and
$$
\mathfrak{p} \coloneqq \mathfrak{p}_1 \oplus {\dots} \oplus \mathfrak{p}_r \oplus \mathtt{i}\tilde{\mathfrak{k}}_1 \oplus {\dots} \oplus \mathtt{i}\tilde{\mathfrak{k}}_s \,\, .
$$
Notice that
$$
\mathfrak{q} \coloneqq \mathfrak{u} +\mathfrak{p}
$$
is a reductive complement for $\mathfrak{h}$ in $\mathfrak{g}$. By \cite[Theorem 5.2]{MR0787113}, it follows that $g(\mathfrak{u},\mathfrak{p})=0$.
 
We observe that, for every $1 \leq j \leq s$, the Lie algebra $\tilde{\mathfrak{k}}_j$ is simple, and so either $\tilde{\mathfrak{h}}_j = \{0\}$ or $\tilde{\mathfrak{h}}_j = \tilde{\mathfrak{k}}_j$. We claim that $\tilde{\mathfrak{h}}_j = \{0\}$ for all $1 \leq j \leq s$. Indeed, assume by contradiction that, up to reordering, $\tilde{\mathfrak{h}}_1= \tilde{\mathfrak{k}}_1$. Then, $\mathtt{i}\tilde{\mathfrak{k}}_1 \subset \mathfrak{q}$ is an irreducible ${\rm ad}(\tilde{\mathfrak{k}})$-module that is inequivalent to any other irreducible ${\rm ad}(\tilde{\mathfrak{k}})$-submodule of $\mathfrak{q}$. Therefore, it follows by the Schur Lemma that $J(\mathtt{i}\tilde{\mathfrak{k}}_1) = \mathtt{i}\tilde{\mathfrak{k}}_1$, which contradicts the fact that $\tilde{\mathfrak{k}}_1$ is a compact simple Lie algebra.

We write then 
$$
\mathfrak{q} = \mathfrak{q}_1 + {\dots} + \mathfrak{q}_r + \tilde{\mathfrak{g}}_1 \oplus {\dots} \oplus \tilde{\mathfrak{g}}_s \,\, , \quad \text{with} \quad \mathfrak{q}_i \coloneqq \mathfrak{q} \cap \mathfrak{g}_i \,\, .
$$
Since the adjoint representation of a compact simple Lie algebra is irreducible and non-trivial, no $\mathfrak{q}_i$ contains an irreducible ${\rm ad}(\tilde{\mathfrak{k}})$-submodule equivalent to the ${\rm ad}(\tilde{\mathfrak{k}})$-module $\tilde{\mathfrak{k}}_j$, for every $1\leq i\leq r$ and $1\leq j\leq s$. Moreover, for the same reason, for every $1 \leq j_1 < j_2 \leq s$, the $\mathrm{ad}(\tilde{\mathfrak{k}})$-modules $\tilde{\mathfrak{k}}_{j_1}$ and $\tilde{\mathfrak{k}}_{j_2}$ are not equivalent.

Therefore, the Hermitian structure preserves the subspaces
$$
\mathfrak{q}_1 + {\dots} + \mathfrak{q}_r , \, \tilde{\mathfrak{g}}_1 ,\,  {\dots} ,\, \tilde{\mathfrak{g}}_s \,\, ,
$$
from which we obtain the splitting of $(M,J,g)$ as the Hermitian product
$$
(M,J,g) = 
(M' , J', g') \times (\tilde{\mathsf{G}}_1,J_1,g_1) \times {\dots} \times (\tilde{\mathsf{G}}_s,J_s,g_s) \,\, .
$$
Here, $\mathsf{G}_i$ is the simply-connected Lie group with Lie algebra $\mathfrak{g}_i$, $\mathsf{H}_i$ is the connected subgroup of $\mathsf{G}_i$ with Lie algebra $\mathfrak{h}_i$, $\tilde{\mathsf{G}}_j$ is the simply-connected Lie group with Lie algebra $\tilde{\mathfrak{g}}_j$, $M' = \mathsf{G}_1/\mathsf{H}_1 \times {\dots} \times \mathsf{G}_r/\mathsf{H}_r$, $(J', g')$ is the restriction of $(J,g)$ to $\mathfrak{q}_1 + {\dots} + \mathfrak{q}_r$, and $(J_j,g_j)$ is the restriction of $(J,g)$ to $\tilde{\mathfrak{g}}_j$.
\smallskip

\noindent \emph{Step 2: For every $1 \leq j \leq s$, the pair $(J_j,g_j)$ is the canonical Hermitian structure on $\tilde{\mathsf{G}}_j$, up to scaling.}

We prove the claim for $j=1$. Recall that $g_1$ is ${\rm ad}(\tilde{\mathfrak{k}}_1)$-invariant and that $g(\tilde{\mathfrak{k}}_1,\mathtt{i}\tilde{\mathfrak{k}}_1) = 0$. Hence
$$
g_1 = \alpha (-\mathscr{B}|_{\mathfrak{k} \otimes \mathfrak{k}}) \oplus \beta (\mathscr{B}|_{\mathtt{i}\mathfrak{k} \otimes \mathtt{i}\mathfrak{k}}) \,\, ,
$$
for some $\alpha, \beta >0$. Since $J_1$ is ${\rm ad}(\tilde{\mathfrak{k}}_1)$-invariant and $\tilde{\mathfrak{k}}_1$ is an irreducible ${\rm ad}(\tilde{\mathfrak{k}}_1)$-module, it follows that $J_1\tilde{\mathfrak{k}}_1$ is an irreducible ${\rm ad}(\tilde{\mathfrak{k}}_1)$-module and so either $\tilde{\mathfrak{k}}_1 \cap J_1\tilde{\mathfrak{k}}_1 = \{0\}$ or $J_1\tilde{\mathfrak{k}}_1 = \tilde{\mathfrak{k}}_1$. Since $\tilde{\mathfrak{k}}_1$ is a compact simple Lie algebra, it follows that $\tilde{\mathfrak{k}}_1 \cap J_1\tilde{\mathfrak{k}}_1 = \{0\}$, and so $J\tilde{\mathfrak{k}}_1 = \mathtt{i}\tilde{\mathfrak{k}}_1$ by dimensional reasons and by the fact that $\tilde{\mathfrak{k}}_1, \mathtt{i}\tilde{\mathfrak{k}}_1$ are $g_1$-orthogonal. Since the adjoint representation of $\tilde{\mathfrak{k}}_1$ is of real type and $J_1^2=-{\rm Id}_{\tilde{\mathfrak{g}}_1}$, it follows that
$$
J_1X=\lambda\mathtt{i}X \,\, , \quad J_1(\mathtt{i}X)=-\tfrac1\lambda X \quad \text{for all $X \in \tilde{\mathfrak{k}}_1$} \,\, ,
$$
for some $\lambda \in \mathbb{R} \setminus \{0\}$. Moreover, the Nijenhuis tensor reads as
$$
N_J(X,Y) = (1-\lambda^2)[X,Y] \quad \text{for all $X, Y \in \tilde{\mathfrak{k}}_1$}
$$
and so $\lambda = \pm 1$, which in turn implies that $\alpha = \beta$.

\smallskip

\noindent \emph{Step 3: The fibres of the canonical reduction of $(M',J',g')$ are compact tori and the base is a non-compact Hermitian symmetric space.}

For the sake of notation, we set
$$\begin{gathered}
\mathfrak{g}' \coloneqq \mathfrak{g}_1 \oplus {\dots} \oplus \mathfrak{g}_r \,\, , \quad
\mathfrak{k}' \coloneqq \mathfrak{k}_1 \oplus {\dots} \oplus \mathfrak{k}_r \,\, , \quad
\mathfrak{t}_i \coloneqq \mathfrak{g}_i \cap \mathfrak{u} \,\, , \\
\mathfrak{t} \coloneqq \mathfrak{t}_1 \oplus {\dots} \oplus \mathfrak{t}_r \,\, , \quad
\mathfrak{p}' \coloneqq \mathfrak{p}_1 + {\dots} + \mathfrak{p}_r \,\, , \quad
\mathfrak{q}' = \mathfrak{q}_1 + {\dots} + \mathfrak{q}_r \,\, .
\end{gathered}$$
By \cite[Theorem 5.2]{MR0787113}, it follows that the infinitesimal automorphism algebra of $(M',J',g')$ verifies
$$
\mathfrak{aut} = \mathfrak{g}' \oplus \mathfrak{t} \,\, ,
$$
while its isotropy subalgebra is
$$
\mathfrak{aut}_0 = (\mathfrak{h} \oplus \{0\}) + \Delta\mathfrak{t} \,\, , \quad \text{with} \quad \Delta\mathfrak{t} \coloneqq \{(U, U) : U \in \mathfrak{t}\} \,\, .
$$
Since every ideal $\mathfrak{g}_i$ is absolutely simple, it follows that $\mathfrak{t_i}$ does not contain any irreducible ${\rm ad}(\mathfrak{k}_i)$-module that is ${\rm ad}(\mathfrak{k}_i)$-equivalent to $\mathfrak{p}_i$ (see, \emph{e.g.}, \cite[7.104]{MR2371700}). Since the Hermitian structure $(J,g)$ is ${\rm ad}(\mathfrak{k}')$-invariant, this implies that $J\mathfrak{t} = \mathfrak{t}$ and $J\mathfrak{p}_i = \mathfrak{p}_i$ for all $1 \leq i \leq s$. In particular, it follows that $\mathfrak{t}$ is abelian.

We consider now the canonical reductive decomposition
$$
\mathfrak{aut} = \mathfrak{aut}_0 + \mathfrak{m}
$$
as in Theorem \ref{thm:BASnatrad}. By the proof of \cite[Theorem 5.2]{MR0787113}, we have $[\mathfrak{m},\mathfrak{m}] + \mathfrak{m} = \mathfrak{aut}$. Thus, $\mathfrak{aut}$ coincides with the Lie algebra of the canonical presentation of $M$ defined in \eqref{eq:def-l}, and we may consider the bilinear form $Q$ on $\mathfrak{aut}$ as in Proposition \ref{prop:Kostant}. By biinvariance and the fact that $\mathfrak{g}'$ is semisimple, it follows that $Q(\mathfrak{g}' \oplus \{0\}, \{0\} \oplus \mathfrak{t}) = 0$. Moreover, the restriction $Q' \coloneqq Q|_{(\mathfrak{g}' \oplus \{0\}) \otimes (\mathfrak{g}' \oplus \{0\})}$ is still bi-invariant and nondegenerate. Therefore, there exist $\gamma_1, {\dots}, \gamma_r \in \mathbb{R} \setminus \{0\}$ such that
$$
Q' = \sum_{i=1}^r \gamma_i \, \mathscr{B}_{\mathfrak{g}_i} \,\, .
$$
This implies that $Q'(\mathfrak{k}', \mathfrak{p}') = 0$. It then follows that
$$
Q(\mathfrak{aut}_0, \mathfrak{p}' \oplus \{0\}) = 0 \,\, ,
$$
which in turn implies that $\mathfrak{p}' \oplus \{0\} \subset \mathfrak{m}$. Since the restriction $Q|_{\mathfrak{m} \otimes \mathfrak{m}}$ is positive-definite, it follows that $Q'|_{\mathfrak{p}' \otimes \mathfrak{p}'}$ is positive-definite, and so $\gamma_i>0$ for all $1 \leq i \leq s$. This implies also that the restriction of $Q$ on $\mathfrak{t} \oplus \mathfrak{t}$ is non-degenerate, and so we get by dimensional reason that
$$
\mathfrak{m} = (\Delta\mathfrak{t})^{\perp} + (\mathfrak{p}' \oplus \{0\}) \,\, ,
$$
where $(\Delta\mathfrak{t})^{\perp}$ denotes the $Q$-orthogonal complement of $\Delta\mathfrak{t}$ inside $\mathfrak{t} \oplus \mathfrak{t}$.

We finally claim that $\mathfrak{f} = (\Delta\mathfrak{t})^{\perp}$, where $\mathfrak{f}$ is the trivial submodule defined in \eqref{eq:def-f}. Indeed, we can write
\begin{equation} \label{eq:Giuseppe}
(\Delta\mathfrak{t})^{\perp} = \{(U,L(U)) : U \in \mathfrak{t}\} \quad \text{or} \quad
(\Delta\mathfrak{t})^{\perp} = \{(L(U),U) : U \in \mathfrak{t}\} \,\, ,
\end{equation}
for some linear map $L: \mathfrak{t} \to \mathfrak{t}$. Then, if $(U+X,L(U)) \in \mathfrak{f}$ for some $U \in \mathfrak{t}$ and $X \in \mathfrak{p}'$, we get $[X,\mathfrak{k}] = 0$, from which $X= 0$ since $\mathfrak{p}'$ does not contain any nontrivial ${\rm ad}(\mathfrak{k})$-trivial submodule. On the other hand, $[(H+V,V),(U,L(U))]=(0,0)$ for every $H \in \mathfrak{h}$ and $U,V \in \mathfrak{t}$. The other case in \eqref{eq:Giuseppe} is analogous.
\end{proof}

We observe that, in the special case when the Hermitian manifold is $\mathsf{G}$-naturally reductive with respect to a semisimple Lie group $\mathsf{G}$ of non-compact type, the following strengthened result holds.

\begin{proposition} \label{prop:noncompact-structure1}
Let $(M,J,g)$ be a simply-connected Hermitian manifold, and let $\mathsf{G}$ be a semisimple Lie group of non-compact type acting transitively on $M$ by holomorphic isometries. If $(M,g)$ is $\mathsf{G}$-naturally reductive, then $(M,J,g)$ is a Hermitian symmetric space of non-compact type.
\end{proposition}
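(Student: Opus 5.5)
We work directly in the setting of the proof of Theorem \ref{thm:noncompact-structure-3}, now using the stronger hypothesis that $(M,g)$ is naturally reductive with respect to $\mathsf{G}$ itself. Let $\mathsf{H}$ be the isotropy at $p$ and $\mathfrak{q}$ a reductive complement for which \eqref{eq:natrad} holds. By \cite[Theorem 5.2]{MR0787113} applied to $\mathsf{G}$, after choosing Cartan decompositions with $\mathfrak{h} \subset \mathfrak{k}$, we have:
\begin{itemize}
\item $\mathfrak{h}$ is an ideal of $\mathfrak{k}$;
\item $\mathfrak{q} = \mathfrak{u} + \mathfrak{p}$ with $g(\mathfrak{u},\mathfrak{p}) = 0$;
\item $g$ is the restriction of a bi-invariant form $Q$ on the (enlarged) transitive algebra.
\end{itemize}
The key extra input is that naturally reductive with respect to $\mathsf{G}$ itself forces $\mathfrak{q}$ to be $Q$-orthogonal to $\mathfrak{h}$ for an $\mathrm{ad}(\mathfrak{g})$-invariant $Q$ on $\mathfrak{g}$. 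Indeed, Kostant's theorem \cite[p.\ 4]{MR0519928} applied to $\mathfrak{g}_0 \coloneqq \mathfrak{q} + [\mathfrak{q},\mathfrak{q}]$ gives such a form. Since $\mathfrak{g}$ is semisimple and $\mathfrak{g}_0$ is an ideal acting transitively, after discarding ideals inside $\mathfrak{h}$ (these act trivially by almost-effectiveness) we may assume $\mathfrak{g}_0 = \mathfrak{g}$. Then $Q = \sum_i \gamma_i \mathscr{B}_{\mathfrak{g}_i}$ on the simple ideals $\mathfrak{g}_i$.

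Next we show that $\mathfrak{u} = 0$. On each simple factor, $\mathfrak{u}_i = \mathfrak{u} \cap \mathfrak{g}_i \subset \mathfrak{k}_i$. The condition $Q|_{\mathfrak{q}\otimes\mathfrak{q}} = g > 0$ requires $\gamma_i \mathscr{B}$ to be positive on $\mathfrak{p}_i$, so $\gamma_i > 0$. But then $\gamma_i \mathscr{B}$ is negative definite on $\mathfrak{k}_i \supset \mathfrak{u}_i$, which forces $\mathfrak{u}_i = 0$. In the realified complex factors $\tilde{\mathfrak{g}}_j$ the same sign argument applies: $\tilde{\mathfrak{h}}_j = 0$ was shown in Step 1 of Theorem \ref{thm:noncompact-structure-3}, so $\mathfrak{q} \cap \tilde{\mathfrak{g}}_j = \tilde{\mathfrak{g}}_j$ would contain $\tilde{\mathfrak{k}}_j$. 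On $\tilde{\mathfrak{k}}_j$ the form $Q$ is negative definite, while on $\mathtt{i}\tilde{\mathfrak{k}}_j$ it is positive definite, a contradiction. Hence no complex factors occur, $\mathfrak{h} = \mathfrak{k}$, and $\mathfrak{q} = \mathfrak{p}$.

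The conclusion then follows. With $\mathfrak{q} = \mathfrak{p}$ we have $[\mathfrak{q},\mathfrak{q}] \subset \mathfrak{h}$, so $[X,Y]_{\mathfrak{q}} = 0$. By \eqref{eq:TR} this gives $T = 0$, and the Bismut connection is the Levi-Civita connection of the symmetric space $\mathsf{G}/\mathsf{K}$. Since $\nabla J = 0$, the metric is Kähler and $M = \mathsf{G}/\mathsf{K}$ is a Hermitian symmetric space of non-compact type. Simple connectivity guarantees that $\mathsf{H}$ is connected and equals $\mathsf{K}$.

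The main obstacle is the first step. We must justify that the bi-invariant form can be taken on $\mathfrak{g}$ itself, i.e.\ that $\mathfrak{q} + [\mathfrak{q},\mathfrak{q}]$ is all of $\mathfrak{g}$ modulo ineffective ideals, so that no enlargement by $\mathfrak{t}$ as in Step 3 of Theorem \ref{thm:noncompact-structure-3} is needed. We also need that $Q$ is non-degenerate on each simple ideal; if $Q$ vanished on some ideal, that ideal would lie in $\mathfrak{h}$. We would handle both points using almost-effectiveness and the semisimplicity of $\mathfrak{g}$.
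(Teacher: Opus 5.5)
Your argument is the paper's argument. You use Kostant's form $Q$ on the transitive algebra, written as $\sum_i\gamma_i\mathscr{B}_{\mathfrak{g}_i}$. Positivity on $\mathfrak{p}\subset\mathfrak{q}$ forces $\gamma_i>0$, and negativity on $\mathfrak{k}$ then gives $\mathfrak{q}\cap\mathfrak{k}=\{0\}$, so $\mathfrak{h}=\mathfrak{k}$ and $\mathfrak{q}=\mathfrak{p}$. The paper first uses Gordon to get $\mathsf{G}=\mathsf{Aut}^0$ and works with the canonical complement $\mathfrak{m}_p$. Working with the given complement $\mathfrak{q}$ is equally fine, and your closing step ($T=0$, hence K\"ahler) spells out what the paper leaves implicit.

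The step you flag, $\mathfrak{q}+[\mathfrak{q},\mathfrak{q}]=\mathfrak{g}$, is exactly where the paper just cites the proof of \cite[Theorem 5.2]{MR0787113}. Your suggested remedy does not work as stated: the complementary ideal $\mathfrak{g}_1$ of $\mathfrak{g}_0\coloneqq\mathfrak{q}+[\mathfrak{q},\mathfrak{q}]$ need not lie in $\mathfrak{h}$, even for almost-effective actions. For example, take $\mathsf{SU}(2)$ with a bi-invariant metric and $\mathsf{G}=\mathsf{SU}(2)\times\mathsf{SU}(2)$ acting by $(a,b)\cdot x=axb^{-1}$. With $\mathfrak{h}=\Delta\mathfrak{su}(2)$ and $\mathfrak{q}=\mathfrak{su}(2)\oplus\{0\}$, one gets $\mathfrak{g}_1=\{0\}\oplus\mathfrak{su}(2)\not\subset\mathfrak{h}$.

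What closes the step is the non-compact type hypothesis. Since $\mathfrak{q}\subset\mathfrak{g}_0$ and $\mathfrak{g}=\mathfrak{h}+\mathfrak{q}$, the projection of $\mathfrak{h}$ onto $\mathfrak{g}_1$ along $\mathfrak{g}_0$ is a surjective Lie algebra homomorphism. After dividing out the ineffective kernel (the only ideal you may legitimately discard), $\mathfrak{h}$ is compact. Hence $\mathfrak{g}_1$ is a compact ideal of $\mathfrak{g}$, so $\mathfrak{g}_1=\{0\}$. Alternatively, replace $\mathsf{G}$ by the connected subgroup $\mathsf{G}_0$ with Lie algebra $\mathfrak{g}_0$. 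It is still semisimple of non-compact type, transitive, and naturally reductive with the same $\mathfrak{q}$, and the conclusion concerns only $M$. Your other worry is unnecessary: Kostant's form is non-degenerate, so all $\gamma_i\neq 0$ automatically.

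Two smaller points. First, Gordon's theorem does not say that an arbitrary naturally reductive complement equals $\mathfrak{u}+\mathfrak{p}$. Reductive complements are not unique, for instance when $\mathfrak{z}(\mathfrak{h})\neq\{0\}$ and $\mathfrak{u}$ is a trivial $\mathfrak{h}$-module. All you actually use is $\mathfrak{p}\subset\mathfrak{q}$, which follows as in the paper from $Q(\mathfrak{h},\mathfrak{p})=0$ and $\mathfrak{q}=\mathfrak{h}^{\perp_Q}$.

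Second, on a realified complex simple ideal $\tilde{\mathfrak{g}}_j=\mathfrak{s}_j^{\mathbb{R}}$, the invariant symmetric forms are $\mathrm{Re}(\lambda\mathscr{B}_{\mathfrak{s}_j})$ with $\lambda\in\mathbb{C}$, not only real multiples of the Killing form. The paper's formula $Q=\sum_i\beta_i\mathscr{B}_{\mathfrak{g}_i}$ glosses over this too. Your contradiction still holds. By Step 1 of Theorem \ref{thm:noncompact-structure-3}, together with $\mathfrak{h}$ being an ideal of $\mathfrak{k}$, such an ideal is $Q$-orthogonal to $\mathfrak{h}$, hence contained in $\mathfrak{q}$. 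It would then carry a positive-definite invariant form, which is impossible for a non-compact simple Lie algebra.

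With these repairs the proof is complete.
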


\begin{proof}
By \cite[Theorem 5.2]{MR0787113}, it follows that $\mathsf{G} = \mathsf{Aut}^0(M,J,g)$. Let $\mathsf{H}$ be the isotropy of $\mathsf{G}$ at a point $p \in M$ and let $\mathfrak{h}$ denotes its Lie algebra. Consider the canonical reductive decomposition
$$
\mathfrak{g} = \mathfrak{h} + \mathfrak{m}
$$
as in Theorem \ref{thm:BASnatrad}. By the proof of \cite[Theorem 5.2]{MR0787113}, we have $[\mathfrak{m},\mathfrak{m}] = \mathfrak{m}$ and so we may consider the bilinear form $Q$ on $\mathfrak{g}$ as in Proposition \ref{prop:Kostant}. Let also $\mathfrak{g} = \mathfrak{g}_1 + {\dots} +\mathfrak{g}_s$ be the decomposition of $\mathfrak{g}$ into simple ideals and let
$$
\mathfrak{g}_i = \mathfrak{k}_i + \mathfrak{p}_i
$$
be a Cartan decomposition of $\mathfrak{g}_i$, for all $1 \leq i \leq s$, such that $\mathfrak{h} \subset \mathfrak{k} \coloneqq \sum_i \mathfrak{k}_i$. We also set $\mathfrak{p} \coloneqq \sum_i \mathfrak{p}_i$. Since $Q$ is ${\rm Ad}(\mathsf{G})$-invariant, it follows that there exist $\beta_1, {\dots}, \beta_s \in \mathbb{R} \setminus \{0\}$ such that
$$
Q = \sum_{i=1}^s \beta_i \, \mathscr{B}_{\mathfrak{g}_i} \,\, .
$$
This implies that $Q(\mathfrak{k}, \mathfrak{p}) = 0$, and so $\mathfrak{p} \subset \mathfrak{m}$. Since $\mathfrak{p}_i \neq \{0\}$ for all $1 \leq i \leq s$, it follows that $\beta_i >0$ for all $1 \leq i \leq s$. Let now $\mathfrak{u}$ be the $Q$-orthogonal complement of $\mathfrak{h}$ in $\mathfrak{k}$. Then, $\mathfrak{u} \subset \mathfrak{m} \cap \mathfrak{k}$, but $Q$ is positive definite on $\mathfrak{m}$ and $Q$ is negative definite on $\mathfrak{k}$. Hence $\mathfrak{u} = \{0\}$, and so $\mathfrak{h}$ is maximal compact. This concludes the proof.
\end{proof}

\begin{remark}
Recall that if $\mathsf{G}$ is a connected real semisimple Lie group whose Lie algebra $\mathfrak{g}$ contains a simple ideal that is the realification of a simple complex Lie algebra, then there exists no compact subgroup $\mathsf{K} \subset \mathsf{G}$ such that the quotient $\mathsf{G}/\mathsf{K}$ is a Hermitian symmetric space. Therefore, under the assumptions of Proposition \ref{prop:noncompact-structure1}, it follows that the Lie algebra of $\mathsf{G}$ splits as a direct sum of absolutely simple ideals.
\end{remark}

Finally, in complete analogy with Proposition \ref{prop:cmpctBAS}, the following result holds.

\begin{proposition} \label{prop:noncmpctBAS}
Let $\pi: \mathsf{G}/\mathsf{H} \to \mathsf{G}/\mathsf{H}\mathsf{T}$ be a homogeneous torus bundle over a Hermitian symmetric space, where $\mathsf{G}$ is a semisimple Lie group of noncompact type, and let $\mathsf{T}'$ be a torus. Assume that $\mathsf{T}' \times \mathsf{G}/\mathsf{H}$ admits a left-$(\mathsf{T}' \times \mathsf{G})$ and right-$(\mathsf{T}' \times \mathsf{T})$-invariant Hermitian structure $(J,g)$ and that $g$ induces on $\mathsf{G}/\mathsf{H}\mathsf{T}$ the symmetric metric. Then, $(\mathsf{T}' \times \mathsf{G}/\mathsf{H},J,g)$ is BAS.
\end{proposition}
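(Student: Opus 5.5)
I will mimic the proof of Proposition \ref{prop:cmpctBAS} almost verbatim. The only change is in the choice of the bi-invariant form on $\mathfrak{g}$, which is no longer definite.

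Denote by $\mathfrak{g},\mathfrak{h},\mathfrak{t},\mathfrak{t}'$ the Lie algebras of $\mathsf{G},\mathsf{H},\mathsf{T},\mathsf{T}'$, and set $\tilde{\mathfrak{t}} \coloneqq \mathfrak{t}'\oplus\mathfrak{t}$ and $\tilde{\mathfrak{g}} \coloneqq \mathfrak{t}'\oplus\mathfrak{g}$. Since $\mathsf{G}/\mathsf{H}\mathsf{T}$ is a Hermitian symmetric space of non-compact type, $\mathfrak{k}\coloneqq\mathfrak{h}+\mathfrak{t}$ is a maximal compact subalgebra, with Cartan decomposition $\mathfrak{g}=\mathfrak{k}+\mathfrak{b}$, and the symmetric metric is $\check g=\sum_i\gamma_i\mathscr{B}_{\mathfrak{g}_i}|_{\mathfrak{b}_i}$ with $\gamma_i>0$ on each simple ideal. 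I define the $\mathrm{Ad}(\mathsf{T}'\times\mathsf{G})$-invariant form
\[
Q\coloneqq \sum_i\gamma_i\mathscr{B}_{\mathfrak{g}_i}\oplus Q_{\mathfrak{t}'},
\]
where $Q_{\mathfrak{t}'}$ is any positive definite scalar product on $\mathfrak{t}'$. Then $Q$ is positive definite on $\mathfrak{b}$ and on $\mathfrak{t}'$, negative definite on $\mathfrak{k}$ (in particular on $\mathfrak{h}$ and $\mathfrak{t}$), and $Q(\mathfrak{k},\mathfrak{b})=0$. By Schur's Lemma, exactly as before, $g(\tilde{\mathfrak{t}},\mathfrak{b})=0$ and $J\tilde{\mathfrak{t}}=\tilde{\mathfrak{t}}$. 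The reason is that $\mathfrak{b}$ has no trivial $\mathrm{ad}(\mathfrak{h}+\mathfrak t)$-submodule, since $\mathfrak{t}$ contains the centre of $\mathfrak k$ acting by the complex structure of the symmetric space.

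Next I define $S$ on $\tilde{\mathfrak{t}}$ by $g(U,V)=Q(SU,V)$. The form $Q|_{\tilde{\mathfrak{t}}}$ is nondegenerate but indefinite, so $S$ is only $Q$-self-adjoint, not positive. This is the main obstacle I anticipate. I need $S$ diagonalizable and $S-\mathrm{Id}$ invertible on a complement of $\ker(S-\mathrm{Id})$. To get this, I will rescale $Q_{\mathfrak{t}'}$ and work with $g$ as the reference inner product. Writing $Q(U,V)=g(PU,V)$ with $P$ $g$-symmetric and invertible, I have $S=P^{-1}$, which is $g$-self-adjoint, hence diagonalizable with real nonzero eigenvalues.

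With $\mathfrak{t}_1=\ker(S-\mathrm{Id})$ and $\mathfrak{t}_1^\perp$ its $g$-orthogonal complement (equivalently its $Q$-orthogonal complement, since eigenspaces of $S$ are both $g$- and $Q$-orthogonal), I copy the construction of Proposition \ref{prop:cmpctBAS}:
\[
\hat{\mathfrak{g}}=\tilde{\mathfrak{g}}\oplus\mathfrak{t}_1^\perp,\quad \hat{\mathfrak{h}}=\{(H+U,U)\},\quad \mathfrak{m}_1=\{(U,-(S-\mathrm{Id})U)\},\quad \mathfrak{m}_2=\{(X,0):X\in\mathfrak{t}_1+\mathfrak{b}\},
\]
together with
\[
\widehat Q\big((E_1,U_1),(E_2,U_2)\big)=Q(E_1,E_2)+Q\big((S-\mathrm{Id})^{-1}U_1,U_2\big).
\]
The bracket relations $[\hat{\mathfrak h},\mathfrak m_1]=0$ and $[\hat{\mathfrak h},\mathfrak m_2]\subset\mathfrak m_2$ are purely algebraic and unchanged, because $\tilde{\mathfrak t}$ is abelian and normalizes $\mathfrak h$. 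The orthogonality relations $\widehat Q(\hat{\mathfrak h},\mathfrak m_j)=\widehat Q(\mathfrak m_1,\mathfrak m_2)=0$ are the same computation as before. The identities $\widehat Q|_{\mathfrak m}=g$ hold by the definition of $S$, since $Q((S-\mathrm{Id})U_1,U_2)+Q(U_1,U_2)=g(U_1,U_2)$.

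For the nondegeneracy of $\widehat Q$ on $\hat{\mathfrak h}$, note that $\widehat Q|_{\hat{\mathfrak h}}=Q|_{\mathfrak h}\oplus Q\big(((S-\mathrm{Id})^{-1}+\mathrm{Id})\cdot,\cdot\big)|_{\mathfrak t_1^\perp}$. The first summand is negative definite. For the second, on an eigenvector of $S$ with eigenvalue $s\neq1$ the factor $(S-\mathrm{Id})^{-1}+\mathrm{Id}$ acts by $s/(s-1)\neq0$, because $s\neq0$. Hence $\widehat Q$ is nondegenerate on $\hat{\mathfrak h}$. I then also have to check that $\widehat Q$ is nondegenerate on $\mathfrak m_1$; this holds because it equals $g$ there.

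Finally, $\widehat{\mathsf G}$ acts transitively by holomorphic isometries, because the extra factor $\mathfrak t_1^\perp$ acts through the right $\tilde{\mathsf T}$-action, which preserves $(J,g)$. The naturally reductive condition \eqref{eq:natrad} for $\mathfrak m$ follows from the $\mathrm{ad}$-invariance of $\widehat Q$ together with $\widehat Q(\hat{\mathfrak h},\mathfrak m)=0$ and $\widehat Q|_{\mathfrak m}=g$. Theorem \ref{thm:BASnatrad} then gives that $(\mathsf{T}'\times\mathsf{G}/\mathsf{H},J,g)$ is BAS.
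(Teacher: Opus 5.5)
The step that fails is ``$\widehat Q|_{\mathfrak m}=g$ by the definition of $S$''. Since $\tilde{\mathfrak g}\oplus\{0\}$ acts by the original left action and $(V,V)\in\hat{\mathfrak h}$, the evaluation map of the $\widehat{\mathsf G}$-action at the base point sends $(E,V)$ to the class of $E-V$ modulo $\mathfrak h$. So $(U,-(S-\mathrm{Id})U)$ evaluates to the tangent vector $SU$, not $U$. Consequently $\widehat Q|_{\mathfrak m_1}$ must equal $g(SU_1,SU_2)=Q(S^3U_1,U_2)$, whereas your computation gives $Q(SU_1,U_2)$.

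This is not a harmless relabelling. Condition \eqref{eq:natrad} with $X,Z\in\mathfrak b$ and $Y=(U,-(S-\mathrm{Id})U)$ reduces to $Q\big((S^2-\mathrm{Id})U,[X,Z]\big)=0$, which fails in general. A concrete case is $\mathsf T'\times\mathsf{SL}(2,\mathbb R)$ with the following data:
\begin{itemize}
\item $\mathsf H=\{e\}$ and $\mathfrak t=\mathfrak{so}(2)=\mathbb R X_1$;
\item $Q(X_1,X_1)=-1$ and $g(X_1,X_1)=\mu$;
\item $g(\mathfrak t',\mathfrak t)=0$ and $g|_{\mathfrak t'}=Q_{\mathfrak t'}$.
\end{itemize}
Then $\mathfrak t_1^\perp=\mathfrak t$, $S=-\mu$ there, and $[\mathfrak b,\mathfrak b]=\mathfrak t$, so the condition forces $\mu=1$.

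You inherited this identification from your template. The displayed identities in the paper's proofs of Proposition \ref{prop:cmpctBAS} and of this statement make the same identification, and those constructions also only work after replacing $S$ by $S^{-1}$.

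The repair in your setting is to use $P=S^{-1}$, which you already introduced via $Q(U,V)=g(PU,V)$.
\begin{itemize}
\item Since $\ker(P-\mathrm{Id})=\ker(S-\mathrm{Id})$, the splitting $\tilde{\mathfrak t}=\mathfrak t_1+\mathfrak t_1^\perp$ is unchanged.
\item Set $\mathfrak m_1=\{(U,-(P-\mathrm{Id})U):U\in\mathfrak t_1^\perp\}$ and take $Q((P-\mathrm{Id})^{-1}U_1,U_2)$ as the second summand of $\widehat Q$.
\item Then $(U,-(P-\mathrm{Id})U)$ evaluates to $PU$, and $\widehat Q$ gives $Q(U_1,PU_2)=g(PU_1,PU_2)$, which is the correct pulled-back metric.
\item The orthogonality relations go through as before.
\item The $\mathfrak t_1^\perp$-block of $\widehat Q|_{\hat{\mathfrak h}}$ becomes $Q(P(P-\mathrm{Id})^{-1}\cdot,\cdot)$, which is nondegenerate because $P$ is invertible.
\end{itemize}
In the example above this produces the line through $(X_1,(1+\mu^{-1})X_1)$, which does satisfy \eqref{eq:natrad}.

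You also use $Q(\mathfrak h,\tilde{\mathfrak t})=0$ without justification. With unequal $\gamma_i$ this requires taking $\mathfrak t$ to be the $Q$-orthogonal complement of $\mathfrak h$ in $\mathfrak k$, rather than the Killing-orthogonal one. This choice is harmless.

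With these corrections your argument is a valid variant of the paper's, which differs in one choice. The paper makes $Q$ \emph{negative} definite on $\mathfrak t'$. Then $Q|_{\tilde{\mathfrak t}}$ is negative definite, the operator comparing $g$ with $-Q$ is positive, and $S+\mathrm{Id}$ is automatically invertible. So the paper doubles all of $\tilde{\mathfrak t}$, with $\mathfrak m_1=\{(U,(S+\mathrm{Id})U)\}$, and needs no eigenspace splitting. Your positive $Q_{\mathfrak t'}$ makes $Q|_{\tilde{\mathfrak t}}$ indefinite, which forces you back to the compact-case splitting off $\ker(S-\mathrm{Id})$ and the $g$-self-adjointness argument. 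In exchange, your version shows that only nondegeneracy of $Q|_{\tilde{\mathfrak t}}$ matters, not its sign.
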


\begin{proof}
The proof proceeds along the same lines as the proof of Proposition \ref{prop:cmpctBAS}. We denote by $\mathfrak{g}$, $\mathfrak{h}$, $\mathfrak{t}$, $\mathfrak{t}'$ the Lie algebras of $\mathsf{G}$, $\mathsf{H}$, $\mathsf{T}$ and $\mathsf{T}'$, respectively. For the sake of notation, we set $\tilde{\mathfrak{t}} \coloneqq \mathfrak{t}' \oplus \mathfrak{t}$ and $\tilde{\mathfrak{g}} \coloneqq \mathfrak{t}' \oplus \mathfrak{g}$. Denote by $\mathfrak{g} = \mathfrak{g}_1 \oplus {\dots} \oplus \mathfrak{g}_s$ the decomposition of $\mathfrak{g}$ into simple ideals. Let $\mathfrak{g}_i = \mathfrak{k}_i + \mathfrak{b}_i$ be a Cartan decomposition of $\mathfrak{g}_i$, for all $1 \leq i \leq s$, such that $\mathfrak{h} \subset \mathfrak{k} \coloneqq \sum_i \mathfrak{k}_i$. Let also $\mathfrak{b} = \mathfrak{b}_1 + {\dots} + \mathfrak{b}_s$. Denote by $c_i>0$ the coefficients so that
$$
g|_{\mathfrak{b}_i \otimes \mathfrak{b}_i} = c_i \mathscr{B}_{\mathfrak{g}_i}|_{\mathfrak{b}_i \otimes \mathfrak{b}_i} \quad \text{for all $1 \leq i \leq s$}
$$
and set
$$
Q \coloneqq (-Q_0) + c_1\mathscr{B}_1 + {\dots} + c_s \mathscr{B}_s \,\, ,
$$
for some $Q_0 \in \mathrm{Sym}_{+}(\mathfrak{t}')$. Notice that, by Schur's Lemma, $g(\tilde{\mathfrak{t}},\mathfrak{b}) = 0$ and $J\tilde{\mathfrak{t}} = \tilde{\mathfrak{t}}$. Let us consider the endomorphism $S \in \mathrm{Sym}_+(\tilde{\mathfrak{t}},Q)$ defined by
$$
g(U,V) = -Q(SU,V) \quad \text{for all $U,V \in \tilde{\mathfrak{t}}$} \,\, .
$$
To show that $(\mathsf{T}' \times \mathsf{G}/\mathsf{H},J,g)$ is BAS, we enlarge the group of holomorphic isometries and find a suitable reductive complement satisfying the naturally reductive condition. To this end, consider the real Lie algebras
$$
\hat{\mathfrak{g}} \coloneqq \tilde{\mathfrak{g}} \oplus \tilde{\mathfrak{t}} \,\, , \quad \hat{\mathfrak{h}} \coloneqq \big\{(H+U,U) : H \in \mathfrak{h} \,\, , U \in \tilde{\mathfrak{t}}\big\}
$$
and
$$
\mathfrak{m} \coloneqq \mathfrak{m}_1 + \mathfrak{m}_2 \,\, , \quad
\mathfrak{m}_1 \coloneqq \big\{(U,(S+\mathrm{Id})U) : U \in \tilde{\mathfrak{t}} \} \,\, , \quad
\mathfrak{m}_2 \coloneqq \{(X,0) : X \in \mathfrak{b}\} \,\, .
$$
Then, it is routine to check that
$$\begin{gathered}
\hat{\mathfrak{g}} = \hat{\mathfrak{h}} +\mathfrak{m} \,\, , \quad
\hat{\mathfrak{h}} \cap \mathfrak{m}_1 = \hat{\mathfrak{h}} \cap \mathfrak{m}_2 = \mathfrak{m}_1 \cap \mathfrak{m}_2 = \{0\} \,\, , \quad
[\hat{\mathfrak{h}},\mathfrak{m}_1] = \{0\} \,\, , \quad
[\hat{\mathfrak{h}},\mathfrak{m}_2] \subset \mathfrak{m}_2 \,\, .
\end{gathered}$$
We denote by $\widehat{\mathsf{G}}$ the simply-connected Lie group with Lie algebra $\hat{\mathfrak{g}}$, and we observe that, by construction, $\widehat{\mathsf{G}}$ acts transitively on $(\mathsf{T}' \times\mathsf{G}/\mathsf{H},J,g)$ by holomorphic isometries. We also introduce the $\mathrm{ad}(\hat{\mathfrak{g}})$-invariant bilinear form $\widehat{Q}$ on $\hat{\mathfrak{g}}$ defined by
$$
\widehat{Q}\big((E_1,U_1),(E_2,U_2)\big) \coloneqq Q(E_1,E_2) -Q((S+\mathrm{Id})^{-1}U_1,U_2) \,\, .
$$
Then, it is straightforward to check that
$$\begin{gathered}
\widehat{Q}(\hat{\mathfrak{h}},\mathfrak{m}_1) = \widehat{Q}(\hat{\mathfrak{h}},\mathfrak{m}_2) = \widehat{Q}(\mathfrak{m}_1,\mathfrak{m}_2) = 0 \,\, , \\
\widehat{Q}((X_1,0),(X_2,0)) = g(X_1,X_2) \,\, , \quad
\widehat{Q}((U_1,(S+\mathrm{Id})U_1),(U_2,(S+\mathrm{Id})U_2)) = g(U_1,U_2)
\end{gathered}$$
for all $X_1, X_2 \in \mathfrak{b}$ and $U_1,U_2 \in \tilde{\mathfrak{t}}$. Moreover
$$
\widehat{Q}((H_1+U_1,U_1),(H_2+U_2,U_2)) = Q(H_1,H_2) +Q\big((\mathrm{Id}-(S+\mathrm{Id})^{-1})U_1,U_2\big)
$$
for all $H_1, H_2 \in \mathfrak{h}$, and so $\widehat{Q}|_{\hat{\mathfrak{h}} \otimes \hat{\mathfrak{h}}}$ is non-degenerate. This proves that $(\mathsf{T}' \times\mathsf{G}/\mathsf{H},g)$ is $\widehat{\mathsf{G}}$-naturally reductive, and so $(\mathsf{T}' \times\mathsf{G}/\mathsf{H},J,g)$ is BAS by Theorem \ref{thm:BASnatrad}.
\end{proof}

\medskip
\section{Bismut--Ambrose--Singer nilmanifolds} \label{sect:nil} \setcounter{equation} 0

The aim of this section is to prove Theorem \ref{thm:MAIN-nil}, namely the classification of simply-connected nilpotent Lie groups admitting a left-invariant BAS structure. To this end, we introduce the following special class of real nilpotent Lie algebras.

\begin{definition} \label{def:k-nil}
Let $(\mathfrak{k},\langle\cdot,\cdot\rangle_{\mathfrak{k}})$ be a compact Lie algebra endowed with a bi-invariant Euclidean scalar product. Let also $\varphi: \mathfrak{k} \to \mathfrak{u}(V,I,\langle\cdot,\cdot\rangle_{V})$ be a finite-dimensional, unitary representation without trivial submodules. We define a \emph{$\mathfrak{k}$-nilpotent triple} to be the data $(\mathfrak{n},I,g)$, where:
\begin{itemize}
\item[$i)$] $\mathfrak{n}$ is the real vector space given by the direct sum $\mathfrak{n} \coloneqq \mathfrak{k} + V$ endowed with the Lie bracket $[\cdot,\cdot]_{\mathfrak{n}}$ whose only non-zero components are given by
\begin{equation} \label{eq:knilbrack}
\langle [v_1,v_2]_{\mathfrak{n}}, K \rangle_{\mathfrak{k}} \coloneqq \langle \varphi(K)v_1,v_2 \rangle_{V}
\quad \text{for all } v_1,v_2 \in V,\ K \in \mathfrak{k};
\end{equation}
\item[$ii)$] $I$ is the transverse complex structure of $\mathfrak{n}$ defined on $V$;
\item[$iii)$] $g$ is the scalar product defined by $g \coloneqq \langle\cdot,\cdot\rangle_{\mathfrak{k}} \oplus \langle\cdot,\cdot\rangle_{V}$.
\end{itemize}
A Lie algebra $\mathfrak{n}$ is \emph{$\mathfrak{k}$-nilpotent} if it is the underlying Lie algebra of a $\mathfrak{k}$-nilpotent triple $(\mathfrak{n},I,g)$.
\end{definition}

We remark that a $\mathfrak{k}$-nilpotent Lie algebra $\mathfrak{n}$ is $2$-step nilpotent, and its centre $\mathfrak{z}(\mathfrak{n})$ coincides with the underlying vector space of $\mathfrak{k}$. Moreover, a direct computation shows that, for every $\mathfrak{k}$-nilpotent triple $(\mathfrak{n},I,g)$, any central left-invariant vector field preserves both the metric $g$ and the transverse complex structure $I$. Finally, it follows by construction that
\begin{equation} \label{eq:abelianI}
[Iv_1,Iv_2]_{\mathfrak{n}} = [v_1,v_2]_{\mathfrak{n}} \quad \text{for all $v_1,v_2 \in V$} \,\, .
\end{equation}

As we shall see below, Heisenberg Lie algebras provide examples of $\mathfrak{k}$-nilpotent Lie algebras, both for abelian and semisimple $\mathfrak{k}$, as well as examples of $2$-step nilpotent Lie algebras which are not $\mathfrak{k}$-nilpotent.

\begin{example}
The \emph{real Heisenberg algebra} $\mathfrak{h}_{2n+1}(\mathbb{R})$ is $\mathfrak{k}$-nilpotent, with $\mathfrak{k} = \mathbb{R}$ and $\varphi: \mathbb{R} \to \mathfrak{u}(n)$ the direct sum of $n$ copies of the standard rotational representation of $\mathbb{R}$ on $\mathbb{C}$.

On the other hand, the realification of the \emph{complex Heisenberg algebra} $\mathfrak{h}_{2n+1}(\mathbb{C})$ is a $2$-step nilpotent Lie algebra which is not $\mathfrak{k}$-nilpotent. Indeed, we have
$$
\mathfrak{h}_{2n+1}(\mathbb{C})^{\mathbb{R}} = \mathrm{span}\big\{z_1,z_2\big\} + \mathrm{span}\big\{e^i_1,e^i_2,e^i_3,e^i_4\big\}_{1 \leq i \leq n} \,\, ,
$$
with only non-zero Lie brackets given by
\begin{equation} \label{eq:defhC}
\big[e^i_1,e^i_3\big] = -\big[e^i_2,e^i_4\big] = z_1 \,\, , \quad \big[e^i_1,e^i_4\big] = \big[e^i_2,e^i_3\big] = z_2 \quad \text{for $1 \leq i \leq n$} \,\, .
\end{equation}
Note that the Lie brackets \eqref{eq:defhC} do not fit into the classification result given in Proposition \ref{prop:BASnilalg}, and so $\mathfrak{h}_{2n+1}(\mathbb{C})^{\mathbb{R}}$ is not $\mathbb{R}^2$-nilpotent. Since every $2$-dimensional real Lie algebra $\mathfrak{k}$ is abelian, the claim follows.

Finally, we recall that the \emph{quaternionic Heisenberg algebra} $\mathfrak{h}_{4n+3}(\mathbb{H})$ is the real Lie algebra
$$
\mathfrak{h}_{4n+3}(\mathbb{H}) = \mathrm{span}\big\{z_1,z_2,z_3\big\} + \mathrm{span}\big\{e^i_1,e^i_2,e^i_3,e^i_4\big\}_{1 \leq i \leq n} \,\, ,
$$
with only non-zero Lie brackets given by
$$
\big[e^i_1,e^i_2\big] = -\big[e^i_3,e^i_4\big] = z_1 \,\, , \quad \big[e^i_1,e^i_3\big] = \big[e^i_2,e^i_4\big] = z_2 \,\, , \quad \big[e^i_1,e^i_4\big] = -\big[e^i_2,e^i_3\big] = z_3 \quad \text{for $1 \leq i \leq n$} \,\, .
$$
We notice that $\mathfrak{h}_{4n+3}(\mathbb{H})$ is $\mathfrak{k}$-nilpotent, with $\mathfrak{k}=\mathfrak{su}(2)$ and $\varphi: \mathfrak{su}(2) \to \mathfrak{u}(2n)$ the direct sum of $n$ copies of the standard representation of $\mathfrak{su}(2)$ on $\mathbb{C}^{2}$.
\end{example}

\begin{example} \label{ex:n11}
We define the Lie algebra
$$
\mathfrak{n}^8_{1,1} \coloneq \mathrm{span}\big\{z_1,z_2\big\} + \mathrm{span}\big\{e_1,e_2,e_3,e_4,e_5,e_6\big\} \,\, ,
$$
with only non-zero Lie brackets given by
$$
[e_1,e_2] = z_1 \,\, , \quad
[e_3,e_4] = z_2 \,\, , \quad
[e_5,e_6] = z_1 +z_2 \,\, .
$$
Then, we notice that $\mathfrak{n}^8_{1,1}$ is a $\mathfrak{k}$-nilpotent Lie algebra, with $\mathfrak{k}= \mathbb{R}^2$ and
$$
\varphi: \mathbb{R}^2 \to \mathfrak{u}(3) \,\, \quad
\varphi(t,s) \coloneqq 
\left( \begin{array}{ccc}
\mathtt{i}t & 0 & 0 \\
0 & \mathtt{i}s & 0 \\
0 & 0 & \mathtt{i}(t+s) \\
\end{array} \right) \,\, .
$$
\end{example}

The importance of Definition \ref{def:k-nil} is due to the following characterization of simply-connected, nilpotent Lie groups admitting left-invariant BAS structure.

\begin{theorem} \label{thm:nilBAS}
Let $(\mathsf{N}, J, g)$ be a simply-connected nilpotent Lie group, with Lie algebra $\mathfrak{n}$, endowed with a left-invariant Hermitian structure. Then, $(\mathsf{N}, J, g)$ is BAS if and only if $J$ preserves the centre $\mathfrak{z}(\mathfrak{n})$ of $\mathfrak{n}$ and $(\mathfrak{n},J|_{\mathfrak{z}(\mathfrak{n})^{\perp}},g)$ is a $\mathfrak{k}$-nilpotent triple, as in Definition \ref{def:k-nil}, with $\mathfrak{k}$ abelian.
\end{theorem}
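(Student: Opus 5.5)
We prove the two implications separately, relying on the characterization of Theorem \ref{thm:BASnatrad} (BAS $\iff$ naturally reductive with respect to a transitive connected group of holomorphic isometries) and on the canonical reduction of Theorem \ref{thm:generalstructure}.

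\emph{Sufficiency.} Assume $J\mathfrak{z}(\mathfrak{n})=\mathfrak{z}(\mathfrak{n})$ and that $(\mathfrak{n},J|_{\mathfrak{z}(\mathfrak{n})^\perp},g)$ is a $\mathfrak{k}$-nilpotent triple with $\mathfrak{k}=\mathfrak{z}(\mathfrak{n})$ abelian. The plan is to enlarge the group, in the spirit of Propositions \ref{prop:cmpctBAS} and \ref{prop:complexss}.
\begin{itemize}
\item Set $\hat{\mathfrak{g}}\coloneqq\mathfrak{k}\ltimes_{\varphi}\mathfrak{n}$, where $\mathfrak{k}$ acts on $V$ by $\varphi$ and trivially on the centre. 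This action is by derivations, by \eqref{eq:knilbrack} and the skew-symmetry of $\varphi(K)$.
\item Take isotropy $\hat{\mathfrak{h}}\coloneqq\{(K,K):K\in\mathfrak{k}\}$ and complement $\mathfrak{m}\coloneqq\{(0,K)\}+\{(0,v)\}$, identified with $\mathfrak{n}$.
\item Check directly that $[\hat{\mathfrak{h}},\mathfrak{m}]\subset\mathfrak{m}$, and that $\hat{\mathfrak{h}}$ acts on $\mathfrak{m}$ by $0\oplus\varphi$. This action preserves $g$ and $J$, using that $\varphi$ is unitary and $\mathfrak{k}$ is abelian, so that $J|_{\mathfrak{k}}$ commutes with the trivial action.
\item Verify \eqref{eq:natrad}. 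The only nonzero $\mathfrak{m}$-brackets are $[v_1,v_2]_{\mathfrak{m}}\in\mathfrak{k}$, and the expression $g([v_1,v_2],K)+g(v_2,[v_1,K])$ vanishes because $g([v_1,v_2],K)=\langle\varphi(K)v_1,v_2\rangle$. One must make sure the $\mathfrak{m}$-projection of $[(0,K),(0,v)]$ in the semidirect product is correct. If it is not, modify $\mathfrak{m}$ by taking $\mathfrak{m}_1=\{(-K/2,K/2)\}$-type combinations, and fix the coefficients by requiring \eqref{eq:natrad}.
\item Conclude by Theorem \ref{thm:BASnatrad}: $\widehat{\mathsf{G}}$ acts transitively by holomorphic isometries (left translations plus the automorphisms $\exp\varphi$), hence $(\mathsf{N},J,g)$ is BAS.
\end{itemize}

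\emph{Necessity.} Assume $(\mathsf{N},J,g)$ is BAS and consider its canonical presentation $\mathsf{L}/\mathsf{U}$. By \cite[Theorem 3.1]{MR0787113} (naturally reductive nilmanifolds), the naturally reductive nilpotent case is known: $\mathfrak{n}$ is $2$-step nilpotent, and $\mathfrak{l}=\mathfrak{u}\ltimes\mathfrak{n}$ with $\mathfrak{u}$ acting by skew derivations on $\mathfrak{z}^\perp$. Moreover \eqref{eq:natrad} means exactly that $\mathrm{ad}(z)|_{\mathfrak{z}^\perp}$ is skew for each central $z$. This yields a $\mathfrak{k}$-nilpotent structure with $\mathfrak{k}=\mathfrak{z}(\mathfrak{n})$, $\varphi(z)=j(z)$ the Kaplan map $g(j(z)v_1,v_2)=g(z,[v_1,v_2])$, and with $\varphi$ having no trivial submodules since $\mathfrak{z}^\perp$ meets the centre trivially. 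The centre is abelian automatically, and the remaining points are the holomorphic ones.
\begin{itemize}
\item Show that $J\mathfrak{z}(\mathfrak{n})=\mathfrak{z}(\mathfrak{n})$. Central left-invariant vector fields correspond to right-invariant fields. I would show that they lie in $\mathfrak{f}$, i.e. are $\mathsf{L}$-invariant, because $\mathfrak{u}$ acts trivially on the centre: $\mathfrak{u}$ consists of skew derivations commuting with $J$, and \cite{MR0787113} gives $\mathfrak{u}\subset$ derivations preserving each $j(z)$. Then Lemma \ref{lem:f}(iii) gives $J$-invariance.
\item Show that each $j(z)$ commutes with $J$ on $\mathfrak{z}^\perp$. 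Using Lemma \ref{lem:f}(ii), $[\mathrm{ad}(X)|_{\mathfrak{m}},J]=0$ for $X\in\mathfrak{f}$, and $\mathrm{ad}(z)|_{\mathfrak{m}}$ in $\mathfrak{l}$ is $-\tfrac12$-related to $j(z)$ via the torsion \eqref{eq:TR}. This gives $\varphi(\mathfrak{k})\subset\mathfrak{u}(V,I)$.
\end{itemize}

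The main obstacle is the first item of the necessity part, namely identifying the centre with the trivial module $\mathfrak{f}$. The torsion $T(X,Y)=-[X,Y]_{\mathfrak{m}}$ in $\mathfrak{l}$ differs from the bracket of $\mathfrak{n}$, so the identification of $\mathfrak{m}$ with $\mathfrak{n}$ must be handled carefully, through the explicit $\mathfrak{m}$ of Gordon's description.
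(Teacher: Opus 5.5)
\textbf{The gap: necessity, at ``the centre is abelian automatically''.}
In a $\mathfrak{k}$-nilpotent triple, $\mathfrak{k}$ is an abstract compact Lie algebra whose underlying vector space is $\mathfrak{z}(\mathfrak{n})$. Its bracket is not the zero bracket of $\mathfrak{n}$; it is encoded by $[\varphi(K_1),\varphi(K_2)]=\varphi([K_1,K_2]_{\mathfrak{k}})$. Declaring $\mathfrak{k}=\mathfrak{z}(\mathfrak{n})$ with zero bracket and $\varphi=j$ gives a representation only if the maps $j(z)$ pairwise commute, and that is exactly what must be proved. The quaternionic Heisenberg algebra $\mathfrak{h}_{4n+3}(\mathbb{H})$ is naturally reductive with $\mathfrak{k}=\mathfrak{su}(2)$, so this commutativity is not automatic.

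Likewise, \eqref{eq:natrad} is not the skewness of the $j(z)$, which holds for every $2$-step nilpotent metric Lie algebra. What \cite[Theorem 4.8]{MR0787113} gives is a possibly non-abelian $\mathfrak{k}$, with isotropy elements $\rho(K)$ acting on $\mathfrak{z}(\mathfrak{n})$ by $\mathrm{ad}_{\mathfrak{k}}(K)$ as in \eqref{eq:rhoGordon}.

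Your route to $J\mathfrak{z}(\mathfrak{n})=\mathfrak{z}(\mathfrak{n})$ is circular. It needs $\mathfrak{u}$ to act trivially on the centre, which is equivalent to $\mathfrak{k}$ being abelian. The justification you give does not help: a skew derivation $D$ satisfies $[D,j(z)]=j(Dz)$, so ``$D$ preserves each $j(z)$'' just restates $D|_{\mathfrak{z}(\mathfrak{n})}=0$. Lemma \ref{lem:f} applies only to $\mathfrak{f}$, and you cannot place the centre inside $\mathfrak{f}$ without that triviality.

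The paper breaks the circle by getting $J\mathfrak{z}(\mathfrak{n})=\mathfrak{z}(\mathfrak{n})$ from integrability. Parallel Bismut torsion forces $J$ to be nilpotent \cite{MR2533671}, hence abelian \cite{ZZpt}, i.e.\ $[JX,JY]=[X,Y]$. So $\mathfrak{z}(\mathfrak{n})$ and $\mathfrak{z}(\mathfrak{n})^{\perp}$ are $J$-invariant. Each $\rho(K)$ lies in the isotropy and therefore commutes with $J$. Hence $J|_{\mathfrak{z}(\mathfrak{n})}$ is an $\mathrm{ad}(\mathfrak{k})$-invariant complex structure on the compact Lie algebra $\mathfrak{k}$. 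This forces $[\mathfrak{k},\mathfrak{k}]=0$, because adjoint representations of compact simple Lie algebras are of real type. The same observation gives $\varphi(\mathfrak{k})=\rho(\mathfrak{k})|_{V}\subset\mathfrak{u}(V,J|_V)$ directly, with no need for $\mathfrak{f}$ or the torsion.

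\textbf{Sufficiency: right idea, wrong check.}
Enlarging by $\mathfrak{k}$ acting through $\varphi$ is Gordon's construction, which the paper simply quotes. But the verification as written fails. For the action by left translations and the automorphisms $\exp\varphi(K)$, the isotropy at $e$ is $\{(K,0)\}$. The diagonal $\{(K,K)\}$ is in fact the central part of the correct complement $\mathfrak{m}=\{(K,K)\}+\{(0,v)\}$, which is Gordon's $\{(X,\rho(X))\}$. You have swapped the two roles.

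With your $\mathfrak{m}=\{0\}\oplus\mathfrak{n}$ one has $[v_1,K]_{\mathfrak{m}}=0$. So \eqref{eq:natrad} for $(v_1,v_2,K)$ reduces to $\langle\varphi(K)v_1,v_2\rangle_V=0$, which is false. With $(K,K)$ instead, $[v_1,(K,K)]_{\mathfrak{m}}=-\varphi(K)v_1$ and the two terms cancel.

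Your guess $(-K/2,K/2)$ fails under either choice of isotropy; with diagonal isotropy the right direction is $(K,2K)$. Fixing the coefficient through \eqref{eq:natrad}, as you suggest, would find it, but it has to be carried out.

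Finally, $\varphi(K)$ extended by zero on the centre is a derivation only because $\varphi(\mathfrak{k})$ is commutative:
$\langle[\varphi(K)v_1,v_2]+[v_1,\varphi(K)v_2],K'\rangle_{\mathfrak{k}}=\langle[\varphi(K'),\varphi(K)]v_1,v_2\rangle_V$.
Skew-symmetry alone is not enough; this is where abelianness of $\mathfrak{k}$ enters.
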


\begin{proof}
Assume that $(\mathsf{N}, J, g)$ is BAS. By \cite[Theorem 4.3]{MR0787113}, $\mathfrak{n}$ is at most $2$-step nilpotent. By \cite[Proposition 3.3]{MR2533671}, the complex structure $J$ is \emph{nilpotent} in the sense of \cite{MR1665327}. By \cite[Proposition 1.5]{ZZpt}, $J$ is \emph{abelian}, that is, $[JX,JY]=[X,Y]$ for all $X, Y \in \mathfrak{n}$. In particular, the centre $\mathfrak{z}(\mathfrak{n})$ of $\mathfrak{n}$ and its $g$-orthogonal complement $\mathfrak{z}(\mathfrak{n})^{\perp}$ are $J$-invariant.

Let $\mathsf{G} \coloneqq \mathsf{Aut}^0(\mathsf{N}, J, g)$ be the identity component of the group of holomorphic isometries of $(\mathsf{N}, J, g)$, and let $\mathsf{H}$ be the stabilizer of the identity element $e \in \mathsf{N}$. Denote by $\mathfrak{g}$ and $\mathfrak{h}$ the Lie algebras of $\mathsf{G}$ and $\mathsf{H}$, respectively. By a straightforward application of \cite[Theorem 2]{MR0661539}, it follows that
\begin{equation} \label{eq:Gnilbas}
\mathsf{G} = \mathsf{N}_L \rtimes \mathsf{H} \,\, ,
\end{equation}
where $\mathsf{N}_L$ denotes the group of left translations of $\mathsf{N}$, and $\mathsf{H} = \mathsf{U}(\mathfrak{n},J,g) \cap \mathrm{Aut}(\mathfrak{n})$, acting on $\mathsf{N}$ through the identification given by the exponential map $\exp: \mathfrak{n} \to \mathsf{N}$. By Theorem \ref{thm:BASnatrad}, there exists a linear map $\rho: \mathfrak{n} \to \mathfrak{h}$ such that
\begin{equation} \label{eq:nilbascompl}
\mathfrak{m} \coloneqq \{(X,\rho(X)) : X \in \mathfrak{n}\}
\end{equation}
is a reductive complement at $e \in \mathsf{N}$ for the $\mathsf{G}$-action and $(\mathsf{N}, g)$ is $(\mathsf{G},e,\mathfrak{m})$-naturally reductive. By \cite[Theorem 4.8]{MR0787113}, there exists a compact Lie algebra $(\mathfrak{k},\langle\cdot,\cdot\rangle_{\mathfrak{k}})$ endowed with a bi-invariant Euclidean scalar product and a finite-dimensional, orthogonal representation $\varphi: \mathfrak{k} \to \mathfrak{so}(V,\langle\cdot,\cdot\rangle_{V})$ without trivial submodules such that: \begin{itemize}
\item[$\bcdot$] $\mathfrak{n}$ is the real vector space given by the direct sum $\mathfrak{n} = \mathfrak{k} + V$, endowed with the Lie bracket $[\cdot,\cdot]_{\mathfrak{n}}$ whose only non-zero components are given by
$$
\langle [v_1,v_2]_{\mathfrak{n}}, K \rangle_{\mathfrak{k}} \coloneqq \langle \varphi(K)v_1,v_2 \rangle_{V}
\quad \text{for all } v_1,v_2 \in V,\ K \in \mathfrak{k};
$$
\item[$\bcdot$] $g$ is the scalar product defined by $g \coloneqq \langle\cdot,\cdot\rangle_{\mathfrak{k}} \oplus \langle\cdot,\cdot\rangle_{V}$;
\item[$\bcdot$] for every $K,K' \in \mathfrak{k}$ and $v \in V$, the map $\rho$ is given by:
\begin{equation} \label{eq:rhoGordon}
\rho(K)v = \varphi(K)v \,\, , \quad
\rho(K)K' = [K,K']_{\mathfrak{k}} \,\, , \quad
\rho(v) = 0 \,\, .
\end{equation}
\end{itemize}
Notice that $\mathfrak{z}(\mathfrak{n})$ coincides with the underlying vector space of $\mathfrak{k}$. By \eqref{eq:rhoGordon} and the fact that $\rho(\mathfrak{n}) \subset \mathfrak{u}(\mathfrak{n},J,g)$, it follows that $\varphi(\mathfrak{k}) \subset \mathfrak{u}(V,J|_V,\langle\cdot,\cdot\rangle_{V})$ and that $J|_{\mathfrak{k}}$ is $\mathrm{ad}(\mathfrak{k})$-invariant. Being $\mathfrak{k}$ compact, the latter implies that it is abelian.

Conversely, assume now that $\mathfrak{z}(\mathfrak{n})$ is $J$-invariant and that $(\mathfrak{n},J|_{\mathfrak{z}(\mathfrak{n})^{\perp}},g)$ is a $\mathfrak{k}$-nilpotent Lie algebra, with $\mathfrak{k}$ abelian. We define then the linear map $\rho: \mathfrak{n} \to \mathfrak{gl}(\mathfrak{n})$ by \eqref{eq:rhoGordon} and we observe that, by hypothesis, it follows $\rho(\mathfrak{n}) \subset \mathfrak{u}(\mathfrak{n},J,g) \cap \mathrm{Der}(\mathfrak{n})$, where $\mathrm{Der}(\mathfrak{n})$ denotes the space of derivations of $\mathfrak{n}$. By \cite[Theorem 4.8]{MR0787113}, the Riemannian manifold $(\mathsf{N},g)$ is $(\mathsf{G},e,\mathfrak{m})$-naturally reductive, where $\mathsf{G}$ is as in \eqref{eq:Gnilbas} and $\mathfrak{m}$ is given by \eqref{eq:nilbascompl}.  Since $\mathsf{G}$ acts by holomorphic isometries on $(\mathsf{N}, J, g)$, it follows that $(\mathsf{N}, J, g)$ is BAS by Theorem \ref{thm:BASnatrad}.
\end{proof}

In view of Theorem \ref{thm:nilBAS}, in order to classify simply-connected nilpotent Lie groups that admit a left-invariant BAS structure, it is sufficient to classify $\mathfrak{k}$-nilpotent Lie algebras with $\mathfrak{k}$ abelian.

\begin{proposition} \label{prop:BASnilalg}
Let $\mathfrak{n}$ be a Lie algebra. Then $\mathfrak{n}$ is $\mathfrak{k}$-nilpotent, with $\mathfrak{k}$ abelian, if and only if it admits a basis
$$
\mathfrak{n}=\mathrm{span}(z_1,\dots,z_{r}) + \mathrm{span}(e_1,\dots,e_{2m})
$$
such that the only non-zero Lie brackets are
\begin{equation} \label{eq:BASnilalg}
[e_{2j-1},e_{2j}] = \sum_{i=1}^{r}\lambda_j^i z_i \,\, , \quad j=1,\dots,m,
\end{equation}
for some coefficients $\lambda_j^i \in \mathbb{R}$, with $(\lambda_j^1,\dots,\lambda_j^{r}) \neq 0$ for every $1 \leq j \leq m$.
\end{proposition}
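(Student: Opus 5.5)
The plan is to prove both directions directly from Definition \ref{def:k-nil}, using the structure theory of unitary representations of an abelian compact Lie algebra.

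For the forward direction, let $(\mathfrak{n},I,g)$ be a $\mathfrak{k}$-nilpotent triple with $\mathfrak{k}$ abelian and $\varphi:\mathfrak{k}\to\mathfrak{u}(V,I,\langle\cdot,\cdot\rangle_V)$ unitary without trivial submodules. The first step is to diagonalize: the commuting skew-Hermitian operators $\varphi(K)$, $K\in\mathfrak{k}$, are simultaneously diagonalizable over $(V,I)\simeq\mathbb{C}^m$ by a unitary basis $\{e_{2j-1},e_{2j}=Ie_{2j-1}\}_{j=1,\dots,m}$. Hence
$$
\varphi(K)e_{2j-1}=\lambda_j(K)\,e_{2j} \,\, , \quad \varphi(K)e_{2j}=-\lambda_j(K)\,e_{2j-1}
$$
for some real linear forms $\lambda_j\in\mathfrak{k}^*$. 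The absence of trivial submodules gives $\lambda_j\neq0$ for every $j$. The second step computes the brackets from \eqref{eq:knilbrack}. We get $\langle[e_a,e_b]_{\mathfrak{n}},K\rangle_{\mathfrak{k}}=\langle\varphi(K)e_a,e_b\rangle_V$, which vanishes unless $\{a,b\}=\{2j-1,2j\}$. In that case $\langle[e_{2j-1},e_{2j}]_{\mathfrak{n}},K\rangle_{\mathfrak{k}}=\lambda_j(K)$, so $[e_{2j-1},e_{2j}]_{\mathfrak{n}}=\lambda_j^\sharp$. Choosing any basis $z_1,\dots,z_r$ of $\mathfrak{k}$ and expanding $\lambda_j^\sharp=\sum_i\lambda^i_jz_i$ yields \eqref{eq:BASnilalg}, with nonzero coefficient vectors. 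All other brackets vanish because $\mathfrak{k}$ is central in $\mathfrak{n}$.

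For the converse, start from a Lie algebra with brackets \eqref{eq:BASnilalg}. Set $\mathfrak{k}\coloneqq\mathrm{span}(z_i)$ (abelian) and $V\coloneqq\mathrm{span}(e_a)$. Declare both bases orthonormal, which gives a bi-invariant product on the abelian $\mathfrak{k}$, and define $I$ by $Ie_{2j-1}=e_{2j}$. Define $\lambda_j\in\mathfrak{k}^*$ by $\lambda_j(K)\coloneqq\langle\sum_i\lambda^i_jz_i,K\rangle_{\mathfrak{k}}$, and let $\varphi(K)$ act on each plane $\mathrm{span}(e_{2j-1},e_{2j})$ as $\lambda_j(K)I$. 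These operators are skew, commute with $I$, and mutually commute, so $\varphi$ is a representation of the abelian $\mathfrak{k}$ into $\mathfrak{u}(V,I,\langle\cdot,\cdot\rangle_V)$. It has no trivial submodules: a vector $v$ killed by all $\varphi(K)$ has zero component in every plane $j$, since $\lambda_j\neq0$. Reversing the computation of the first direction shows that the bracket \eqref{eq:knilbrack} agrees with the given one.

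The only point requiring some care is the bookkeeping: the $\mathfrak{k}$-nilpotent bracket is defined by pairing against $\langle\cdot,\cdot\rangle_{\mathfrak{k}}$, so the $z_i$ must be orthonormal (or the identification must be made via the dual basis) in order to match the coefficients $\lambda_j^i$. Otherwise the argument is simultaneous diagonalization, and I do not expect any genuine obstacle.
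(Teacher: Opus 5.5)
Your proposal is correct and follows essentially the same route as the paper: simultaneously diagonalize the commuting skew-Hermitian operators $\varphi(K)$ in a unitary basis, use the absence of trivial submodules to get nonzero weights $(\lambda_j^1,\dots,\lambda_j^r)$, and read off the brackets from \eqref{eq:knilbrack}. You also write out the converse explicitly (orthonormal $z_i$, with $\varphi(K)$ acting by $\lambda_j(K)I$ on each plane), which the paper leaves implicit; your argument for it is sound.
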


\begin{proof}
Let $\varphi: \mathbb{R}^r \to \mathfrak{u}(m)$ be a unitary representation of $\mathbb{R}^r$ on $\mathbb{C}^m$ without trivial submodules and denote by $\mathfrak{n}$ the corresponding $\mathbb{R}^r$-nilpotent Lie algebra as in Definition \ref{def:k-nil}. Fix a orthonormal basis $(z_1,{\dots},z_r)$ for $\mathbb{R}^r$ and notice that, since $\mathbb{R}^r$ is abelian, the endomorphisms $\varphi(z_i)$ can be simultaneously diagonalized. Therefore, there exists a unitary basis $(u_1,{\dots},u_m)$ for $\mathbb{C}^m$ with respect to which
$$
\varphi(z_i) = \left(\begin{array}{ccc}
\mathtt{i}\lambda^i_1 & & \\
 & \ddots & \\
 &  &  \mathtt{i}\lambda^i_m
\end{array}\right) \,\, , \quad \text{with $\lambda^i_j \in \mathbb{R}$} \,\, ,
$$
for every $1 \leq i \leq r$. Note that, since $\varphi$ has no trivial submodule, each $r$-tuple $(\lambda_j^1,\dots,\lambda_j^{r})$ is non-zero. Consider then the unitary real basis $\{e_1,{\dots}e_{2m}\}$ for $\mathbb{R}^{2m} = (\mathbb{C}^m)^{\mathbb{R}}$ given by
$$
e_{2i-1} \coloneqq \tfrac{1}{\sqrt{2}}(u_i-\bar{u}_i) \,\, , \quad e_{2i} \coloneqq \tfrac{\mathtt{i}}{\sqrt{2}}(u_i +\bar{u}_i) \,\, , \quad 1 \leq i \leq m \,\, .
$$
From \eqref{eq:knilbrack}, it follows that the only non-zero Lie brackets on $\mathfrak{n}$ are as in \eqref{eq:BASnilalg}, and this concludes the proof.
\end{proof}

\begin{remark}
By means of Theorem \ref{thm:nilBAS} and Proposition \ref{prop:BASnilalg}, one can characterize compact nilmanifolds that admit an invariant BAS structure as follows. Let $\mathsf{N}$ be a simply-connected nilpotent Lie group, with Lie algebra $\mathfrak{n}$. Then, $\mathsf{N}$ admits a left-invariant BAS structure if and only if $\mathfrak{n}$ admits a basis with structure constants as in \eqref{eq:BASnilalg}, with $k$ even. If each coefficient $\lambda_j^i$ is rational, then the Lie group $\mathsf{N}$ admits a cocompact lattice $\Lambda \subset \mathsf{N}$ (see, \emph{e.g.}, \cite[Theorem 2.12]{MR507234}), and every compact BAS nilmanifold arises as a quotient in this way.
\end{remark}

As a simple consequence of Theorem \ref{thm:nilBAS} and Proposition \ref{cor:BASnil}, it is possible to list nilpotent Lie algebras that admit a BAS structure in low dimension.

\begin{corollary} \label{cor:BASnil}
Let $\mathsf{N}$ be a non-abelian, nilpotent, simply-connected, $n$-dimensional Lie group, with $n \in \{4,6,8\}$. Then $\mathsf{N}$ admits a left-invariant BAS structure if and only if its Lie algebra $\mathfrak{n}$ is one of the following list.
\begin{itemize}
    \item[$i)$] If $n =4$, then $\mathfrak{n} = \mathbb{R} \oplus \mathfrak{h}_3(\mathbb{R})$.
    \item[$ii)$] If $n =6$ and 
    \begin{itemize}
        \item[$\bcdot$] $\mathrm{dim}(\mathfrak{z}(\mathfrak{n})) = 4$, then $\mathfrak{n} = \mathbb{R}^3 \oplus \mathfrak{h}_3(\mathbb{R})$;
        \item[$\bcdot$] $\mathrm{dim}(\mathfrak{z}(\mathfrak{n})) = 2$, then $\mathfrak{n} = \mathbb{R} \oplus \mathfrak{h}_5(\mathbb{R})$ or $\mathfrak{n} = \mathfrak{h}_3(\mathbb{R}) \oplus \mathfrak{h}_3(\mathbb{R})$.
    \end{itemize}
    \item[$iii)$] If $n =8$ and 
    \begin{itemize}
        \item[$\bcdot$] $\mathrm{dim}(\mathfrak{z}(\mathfrak{n})) = 6$, then    $\mathfrak{n} = \mathbb{R}^5 \oplus \mathfrak{h}_3(\mathbb{R})$;
        \item[$\bcdot$] $\mathrm{dim}(\mathfrak{z}(\mathfrak{n})) = 4$, then $\mathfrak{n} = \mathbb{R}^3 \oplus \mathfrak{h}_5(\mathbb{R})$, or $\mathfrak{n} = \mathbb{R}^2 \oplus \mathfrak{h}_3(\mathbb{R}) \oplus \mathfrak{h}_3(\mathbb{R})$;
        \item[$\bcdot$] $\mathrm{dim}(\mathfrak{z}(\mathfrak{n})) = 2$, then $\mathfrak{n} = \mathbb{R} \oplus \mathfrak{h}_7(\mathbb{R})$, $\mathfrak{n} = \mathfrak{h}_3(\mathbb{R}) \oplus \mathfrak{h}_5(\mathbb{R})$ or $\mathfrak{n} = \mathfrak{n}^8_{1,1}$ as in Example \ref{ex:n11}
    \end{itemize}
\end{itemize}
\end{corollary}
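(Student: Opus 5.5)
The classification follows from Theorem \ref{thm:nilBAS} and Proposition \ref{prop:BASnilalg}. Together they say that a non-abelian nilpotent $\mathfrak{n}$ carries a left-invariant BAS structure if and only if it has a basis $\{z_1,\dots,z_r,e_1,\dots,e_{2m}\}$ whose only non-zero brackets are $[e_{2j-1},e_{2j}]=\sum_i\lambda^i_j z_i$, with each $\lambda_j\coloneqq(\lambda^1_j,\dots,\lambda^r_j)\neq0$, and with $r$ even so that $J$ can preserve $\mathfrak{z}(\mathfrak{n})$. Here $r=\dim\mathfrak{z}(\mathfrak{n})$, because the $\lambda_j$ are non-zero. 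Conversely, any such basis with $r$ even yields a BAS structure: declare it orthonormal, set $Je_{2j-1}=e_{2j}$, and choose any orthogonal $J$ on $\mathrm{span}(z_i)$. This is a $\mathbb{R}^r$-nilpotent triple, and Theorem \ref{thm:nilBAS} applies. So the task is to enumerate, for $n=r+2m\in\{4,6,8\}$ with $r$ even and $m\ge1$, the isomorphism classes of these Lie algebras.

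Only the span $W\subset\mathbb{R}^r$ of the vectors $\lambda_j$, together with the configuration of the $\lambda_j$ up to $\mathrm{GL}(r)$ and rescaling each $\lambda_j$, matters. Rescaling is absorbed by rescaling $e_{2j}$, and the complement of $W$ contributes an abelian direct factor. Write $d=\dim W$, so $1\le d\le\min(m,r)$.

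When $d=1$, all $\lambda_j$ are proportional to a single $z$. This gives $\mathbb{R}^{r-1}\oplus\mathfrak{h}_{2m+1}(\mathbb{R})$, which produces $\mathbb{R}\oplus\mathfrak{h}_3$ for $n=4$; $\mathbb{R}^3\oplus\mathfrak{h}_3$ and $\mathbb{R}\oplus\mathfrak{h}_5$ for $n=6$; and $\mathbb{R}^5\oplus\mathfrak{h}_3$, $\mathbb{R}^3\oplus\mathfrak{h}_5$ and $\mathbb{R}\oplus\mathfrak{h}_7$ for $n=8$.

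When $d\ge2$, we need $r\ge2$ and $m\ge2$.
\begin{itemize}
\item For $n=6$ we have $r=2=m$ and $d=2$, so the $\lambda_j$ form a basis and we get $\mathfrak{h}_3\oplus\mathfrak{h}_3$.
\item For $n=8$ with $r=4$, $m=2$, we again get two independent vectors, giving $\mathbb{R}^2\oplus\mathfrak{h}_3\oplus\mathfrak{h}_3$.
\item For $n=8$ with $r=2$, $m=3$, $d=2$, the three vectors in $\mathbb{R}^2$ either have two of them proportional, or are pairwise independent. In the first case normalize to $z_1,z_1,z_2$, giving $\mathfrak{h}_5\oplus\mathfrak{h}_3$ (the paper's list has $\mathfrak{h}_3\oplus\mathfrak{h}_5$ with centre of dimension $2$, consistent with this). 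In the second case normalize to $z_1,z_2,z_1+z_2$, giving $\mathfrak{n}^8_{1,1}$.
\end{itemize}

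The main obstacle is checking that the algebras listed in each dimension are pairwise non-isomorphic. The isomorphism type is encoded by the map $\Lambda^2(\mathfrak{n}/\mathfrak{z})\to\mathfrak{z}$. As invariants I would use the dimension of the centre, the dimension of $[\mathfrak{n},\mathfrak{n}]$ (which equals $d$), and, for fixed $d$, the ranks of the $2$-forms $\xi\circ[\cdot,\cdot]$ for $\xi\in[\mathfrak{n},\mathfrak{n}]^*$.

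For $\mathfrak{h}_3\oplus\mathfrak{h}_5$ the set of ranks attained is $\{2,4,6\}$ with rank $6$ generic. For $\mathfrak{n}^8_{1,1}$ the forms $\xi\circ[\cdot,\cdot]=a\,e^{12}+b\,e^{34}+(a+b)e^{56}$ have ranks in $\{4,6\}$ only, and rank $2$ never occurs. This separates the two algebras. The $n=6$ case, $\mathbb{R}\oplus\mathfrak{h}_5$ versus $\mathfrak{h}_3\oplus\mathfrak{h}_3$, is separated by $\dim[\mathfrak{n},\mathfrak{n}]$.
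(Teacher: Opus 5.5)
Your proposal is correct and follows essentially the same route as the paper. Both reduce via Theorem \ref{thm:nilBAS} and Proposition \ref{prop:BASnilalg} to normal forms of the weight vectors $\lambda_j\in\mathbb{R}^r$, with $r=\dim\mathfrak{z}(\mathfrak{n})$ even, and then enumerate case by case (the paper phrases this through normal forms of $\varphi:\mathbb{R}^{2k}\to\mathfrak{u}(m)$). Your additional non-isomorphism check, via $\dim[\mathfrak{n},\mathfrak{n}]$ and the ranks of $\xi\circ[\cdot,\cdot]$, is valid but goes beyond what the paper does, since the stated equivalence does not require the list to be irredundant. Your $(r,m)=(2,3)$, $d=1$ case correctly yields $\mathbb{R}\oplus\mathfrak{h}_7(\mathbb{R})$; the paper's written proof has a misprint at that point.
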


\begin{proof}
Thanks to Theorem \ref{thm:nilBAS} and Proposition \ref{cor:BASnil} it suffices to classify, up to Lie algebra isomorphism, the non-abelian $\mathfrak{k}$-nilpotent Lie algebras, with $\mathfrak{k}$ abelian and even-dimensional. Denote by $\varphi: \mathbb{R}^{2k} \to \mathfrak{u}(m)$ the coresponding unitary representation without trivial submodules.
\begin{itemize}
\item[$i)$] Assume that $n=4$. Then $k=1$ and $m=1$. Therefore, up to equivalence, $\varphi(t,s) = \mathtt{i}s$, and thus $\mathfrak{n} = \mathbb{R} \oplus \mathfrak{h}_3(\mathbb{R})$.
\item[$ii)$] Assume that $n=6$. Then $k \in \{1,2\}$. If $k=2$, then $m=1$ and so the previous argument yields $\mathfrak{n} = \mathbb{R}^3 \oplus \mathfrak{h}_3(\mathbb{R})$. If $k = 1$, then $m = 2$ and so, up to equivalence, either
$$
\varphi(t,s) = \left(\begin{array}{cc}
\mathtt{i} s & 0 \\
0 & \mathtt{i}\alpha s
\end{array}\right) \quad \text{for $\alpha \in \mathbb{R} \setminus \{0\}$} \,\, ,
$$
yielding $\mathfrak{n} = \mathbb{R} \oplus \mathfrak{h}_5(\mathbb{R})$ up to Lie algebra isomorphism, or
$$
\varphi(t,s) = \left(\begin{array}{cc}
\mathtt{i}t & 0 \\
0 & \mathtt{i}s
\end{array}\right) \,\, ,
$$
yielding $\mathfrak{n} = \mathfrak{h}_3(\mathbb{R}) \oplus \mathfrak{h}_3(\mathbb{R})$.
\item[$iii)$] Assume that $n=8$. Then $k \in \{1,2,3\}$. If $k = 3$, then $m=1$ and the previous argument yields $\mathfrak{n} = \mathbb{R}^5 \oplus \mathfrak{h}_3(\mathbb{R})$. If $k = 2$, then $m = 2$ and so the previous argument yields $\mathfrak{n} = \mathbb{R}^3 \oplus \mathfrak{h}_5(\mathbb{R})$ or $\mathfrak{n} = \mathbb{R}^2 \oplus \mathfrak{h}_3(\mathbb{R}) \oplus \mathfrak{h}_3(\mathbb{R})$. If $k = 1$, then $m=3$ and so, up to equivalence, either
$$
\varphi(t,s) = \left(\begin{array}{ccc}
\mathtt{i} s & 0 & 0 \\
0 & \mathtt{i}\alpha s & 0 \\
0 & 0 & \mathtt{i}\beta s
\end{array}\right) \quad \text{for $\alpha, \beta \in \mathbb{R} \setminus \{0\}$} \,\, ,
$$
yielding $\mathfrak{n} = \mathbb{R} \oplus \mathfrak{h}_5(\mathbb{R})$ up to Lie algebra isomorphism, or
$$
\varphi(t,s) \coloneqq 
\left( \begin{array}{ccc}
\mathtt{i}t & 0 & 0 \\
0 & \mathtt{i}s & 0 \\
0 & 0 & \mathtt{i} (\alpha t+\beta s) \\
\end{array} \right) \quad \text{for $\alpha, \beta \in \mathbb{R}$, $\alpha^2 + \beta^2 \neq 0$} \,\, ,
$$
yielding $\mathfrak{n} = \mathfrak{h}_3(\mathbb{R}) \oplus \mathfrak{h}_5(\mathbb{R})$ or $\mathfrak{n} = \mathfrak{n}^8_{1,1}$ up to Lie algebra isomorphism.
\end{itemize}
\end{proof}

We are finally ready to complete the proof of Theorem \ref{thm:MAIN-nil}.

\begin{proof}[Proof of Theorem \ref{thm:MAIN-nil}]
The first claim of Theorem \ref{thm:MAIN-nil} follows directly from Theorem \ref{thm:nilBAS} and Proposition \ref{prop:BASnilalg}. Assume now that $(\mathsf{N},J,g)$ is BAS and observe that, by the proof of Theorem \ref{thm:nilBAS}, it is $(\mathsf{G},e,\mathfrak{m})$-naturally reductive, where $\mathsf{G}$ is as in \eqref{eq:Gnilbas} and $\mathfrak{m}$ is given by \eqref{eq:nilbascompl}. Let $\rho: \mathfrak{n} \to \mathfrak{u}(\mathfrak{n},J,g) \cap \mathrm{Der}(\mathfrak{n})$ be as in \eqref{eq:rhoGordon}. Then, a direct computation based on \eqref{eq:def-l}, \eqref{eq:def-u} and \eqref{eq:rhoGordon} shows that the canonical presentation $\mathsf{N} = \mathsf{L}/\mathsf{U}$ of $(\mathsf{N},J,g)$ is given by
$$
\mathfrak{l} = \mathfrak{u} + \mathfrak{m} \,\, , \quad \text{with} \quad \mathfrak{u} = \rho(\mathfrak{n}) \,\, ,
$$
and $\mathfrak{m}$ splits into the sum of the two $\mathrm{ad}(\mathfrak{u})$-submodules
$$
\mathfrak{m} = \mathfrak{m}_0 +\mathfrak{m}_1 \,\, , \quad \text{with} \quad
\mathfrak{m}_0 = \{(K,\rho(K)) : K \in \mathfrak{z}(\mathfrak{n})\} \quad \text{and} \quad
\mathfrak{m}_1 = \{(v,0) : v \in \mathfrak{z}(\mathfrak{n})^{\perp}\} \,\, .
$$
Then, it follows that the trivial module $\mathfrak{f}$ defined in \eqref{eq:def-f} verifies $\mathfrak{f} = \mathfrak{m}_0 \simeq \mathfrak{z}(\mathfrak{n})$. Since $\mathfrak{n}$ is $2$-step nilpotent, the quotient $\mathfrak{n}/\mathfrak{z}(\mathfrak{n})$ is abelian and so this concludes the proof.
\end{proof}

\medskip
\section{Other examples of Bismut--Ambrose--Singer manifolds} \label{sect:final} \setcounter{equation} 0

\subsection{Canonical connections} \label{sect:BAS3types} \hfill \par

In this section, we use the results of Section \ref{sect:compact}, Section \ref{sect:noncompact}, and Section \ref{sect:nil} to provide canonical choices of Ambrose--Singer connections with totally skew-symmetric torsion on some classes of manifolds. These canonical connections are then used to construct new examples of BAS manifolds, generalizing all known examples in the literature.

\begin{remark} \label{rem:Gcompt}
Let $\pi: \mathsf{G}/\mathsf{H} \to \mathsf{G}/\mathsf{H}\mathsf{T}$ be a homogeneous torus bundle over a flag manifold, where $\mathsf{G}$ is a compact semisimple Lie group, and let $I$ be a $\mathsf{G}$-invariant complex structure on $\mathsf{G}/\mathsf{H}\mathsf{T}$. Assume that $g$ is a left-$\mathsf{G}$ and right-$\mathsf{T}$-invariant metric on $\mathsf{G}/\mathsf{H}$ inducing a normal metric on $\mathsf{G}/\mathsf{H}\mathsf{T}$. Then, there exists a unique $\mathrm{Ad}(\mathsf{G})$-invariant inner product $Q$ on the Lie algebra $\mathfrak{g}$ of $\mathsf{G}$ such that, for the corresponding $Q$-orthogonal Lie algebra decomposition
$$
\mathfrak{g} = \mathfrak{h} + \mathfrak{t} + \mathfrak{b} \,\, ,
$$
one has $Q|_{\mathfrak{b} \otimes \mathfrak{b}} = g|_{\mathfrak{b} \otimes \mathfrak{b}}$. Hence, by identifying $T_{e\mathsf{H}}\mathsf{G}/\mathsf{H} \simeq \mathfrak{t} + \mathfrak{b}$ and $T_{e\mathsf{H}\mathsf{T}}\mathsf{G}/\mathsf{H}\mathsf{T} \simeq \mathfrak{b}$ via the evaluation map, $I$ lifts to a $\mathsf{G}$-invariant tensor field on $\mathsf{G}/\mathsf{H}$ called \emph{transverse complex structure}. It is routine to check that the transverse complex structure verifies
\begin{equation} \label{eq:trancs1}
[IX,IY]_{\mathfrak{t}} = [X,Y]_{\mathfrak{t}} \quad \text{for all $X,Y \in \mathfrak{b}$} \,\, .
\end{equation}
\end{remark}

\begin{remark} \label{rem:Gnoncompt}
Let $\pi: \mathsf{G}/\mathsf{H} \to \mathsf{G}/\mathsf{H}\mathsf{T}$ be a homogeneous torus bundle over a Hermitian symmetric space, where $\mathsf{G}$ is a semisimple Lie group of non-compact type. Assume that $g$ is a left-$\mathsf{G}$ and right-$\mathsf{T}$-invariant metric on $\mathsf{G}/\mathsf{H}$ inducing the symmetric metric of $\mathsf{G}/\mathsf{H}\mathsf{T}$. We consider the reductive decomposition at Lie algebra level
$$
\mathfrak{g} = \mathfrak{h} +\mathfrak{t} +\mathfrak{b}
$$
orthogonal with respect to the Cartan-Killing form of $\mathfrak{g}$. Hence, by identifying $T_{e\mathsf{H}}\mathsf{G}/\mathsf{H} \simeq \mathfrak{t} + \mathfrak{b}$ and $T_{e\mathsf{H}\mathsf{T}}\mathsf{G}/\mathsf{H}\mathsf{T} \simeq \mathfrak{b}$ via the evaluation map, the complex structure $I$ of $\mathsf{G}/\mathsf{H}\mathsf{T}$ lifts to a $\mathsf{G}$-invariant tensor field on $\mathsf{G}/\mathsf{H}$ called \emph{transverse complex structure}. It is routine to check that the transverse complex structure verifies
\begin{equation} \label{eq:trancs2}
[IX,IY]_{\mathfrak{t}} = [X,Y]_{\mathfrak{t}} \quad \text{for all $X,Y \in \mathfrak{b}$} \,\, .
\end{equation}
\end{remark}

The manifolds described above admit canonical choices of Ambrose--Singer connections with totally skew-symmetric torsion.

\begin{proposition} \label{prop:canGH}
Let $(\mathsf{G}/\mathsf{H},I,g)$ be a homogeneous space as in Remark \ref{rem:Gcompt} or Remark \ref{rem:Gnoncompt}. Then, $(\mathsf{G}/\mathsf{H},g)$ admits an Ambrose--Singer connection $\nabla$ with totally skew-symmetric torsion $T$ that preserves the transverse complex structure $I$. Moreover, every fundamental vector field for the right-$\mathsf{T}$-action is $\nabla$-parallel and the only non-zero components of $T$ are given by
\begin{equation} \label{eq:torsionGH}
T(U,X,Y) = -g([X,Y]_{\mathfrak{t}},U) +2g([U,X],Y)
\end{equation}
for all $X, Y \in \mathfrak{b}$ and $U \in \mathfrak{t}$.
\end{proposition}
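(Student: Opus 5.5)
Our plan is to reuse the enlargement trick from the proofs of Proposition \ref{prop:cmpctBAS} and Proposition \ref{prop:noncmpctBAS}, now with no complex structure on $\mathsf{G}/\mathsf{H}$ imposed. We work with $\mathfrak{g} = \mathfrak{h} + \mathfrak{t} + \mathfrak{b}$ and $Q$ as in Remark \ref{rem:Gcompt}, or with $Q = \sum_i c_i\mathscr{B}_{\mathfrak{g}_i}$ in the setting of Remark \ref{rem:Gnoncompt}. Schur's Lemma gives $g(\mathfrak{t},\mathfrak{b})=0$. Let $S$ be the $Q$-symmetric endomorphism of $\mathfrak{t}$ with $g|_{\mathfrak{t}\otimes\mathfrak{t}} = \pm Q(S\cdot,\cdot)$. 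We put $\hat{\mathfrak{g}} \coloneqq \mathfrak{g} \oplus \mathfrak{t}$ (or $\mathfrak{g}\oplus\mathfrak{t}_1^{\perp}$ in the compact case), $\hat{\mathfrak{h}} \coloneqq \{(H+U,U)\}$, and $\mathfrak{m} = \mathfrak{m}_1 + \mathfrak{m}_2$ with $\mathfrak{m}_1$ the graph of $\mp(S\mp\mathrm{Id})$ over $\mathfrak{t}$ and $\mathfrak{m}_2 = \mathfrak{b}\oplus 0$. The simply-connected group $\widehat{\mathsf{G}}$ acts transitively and isometrically on $\mathsf{G}/\mathsf{H}$: $\mathsf{G}$ acts on the left and the extra $\mathfrak{t}$ factor acts on the right.

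The same computation as in those proofs shows that $\widehat{Q}$ is $\mathrm{ad}(\hat{\mathfrak{g}})$-invariant and non-degenerate on $\hat{\mathfrak{h}}$, that $\widehat{Q}(\hat{\mathfrak{h}},\mathfrak{m})=0$, and that $\widehat{Q}|_{\mathfrak{m}\otimes\mathfrak{m}} = g$. Therefore \eqref{eq:natrad} holds, and the canonical connection $\nabla$ of the reductive decomposition $\hat{\mathfrak{g}} = \hat{\mathfrak{h}}+\mathfrak{m}$ is Ambrose--Singer with totally skew-symmetric torsion, by \cite[Ch.\ X, Theorems 2.1 and 2.6]{MR1393941}. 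By \cite[Ch.\ X, Proposition 2.7]{MR1393941}, every $\widehat{\mathsf{G}}$-invariant tensor is $\nabla$-parallel.

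The invariance statements follow from this. The transverse structure $I$ (extended by $0$ on $\mathfrak{t}$) is $\mathrm{Ad}(\mathsf{H})$-invariant because $I$ is $\mathsf{G}$-invariant on the base. It is also invariant under the right torus, because the base is $\mathsf{G}/\mathsf{H}\mathsf{T}$ and $\mathrm{ad}(\mathfrak{t})$ commutes with $I$ on $\mathfrak{b}$. Hence it is $\widehat{\mathsf{G}}$-invariant and $\nabla I = 0$. Fundamental fields of the right $\mathsf{T}$-action commute with the left $\mathsf{G}$-action. Since $\mathsf{T}$ is abelian, they also commute with the right $\mathsf{T}$-action, so they are $\widehat{\mathsf{G}}$-invariant and therefore $\nabla$-parallel.

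It remains to compute $T$ via $T(X,Y) = -[X,Y]_{\mathfrak{m}}$ from \eqref{eq:TR}, splitting into cases. For $X,Y\in\mathfrak{b}$ we decompose $[X,Y] = [X,Y]_{\mathfrak{h}}+[X,Y]_{\mathfrak{t}}+[X,Y]_{\mathfrak{b}}$ and rewrite the $\mathfrak{t}$-part $([X,Y]_{\mathfrak{t}},0)$ in $\hat{\mathfrak{h}}+\mathfrak{m}_1$. Mixed brackets $[\mathfrak{m}_1,\mathfrak{m}_2]$ lie in $\mathfrak{m}_2$ and are given by $[U,X]$. Brackets inside $\mathfrak{m}_1$ vanish since $\mathfrak{t}$ is abelian. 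The $\mathfrak{b}$-component of $[X,Y]$ should drop out of the final formula: $[\mathfrak{b},\mathfrak{b}]_{\mathfrak{b}}$ vanishes in the symmetric case, and in the compact case we expect it to be absorbed by skew-symmetry combined with the normality of the metric. Pairing with $g$ and using $\widehat{Q}$-invariance then yields \eqref{eq:torsionGH}.

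The main obstacle is the bookkeeping in this last step: tracking the $S$-dependent projection of $[X,Y]_{\mathfrak{t}}$ onto $\mathfrak{m}_1$ and checking that the coefficients combine to exactly $-g([X,Y]_{\mathfrak{t}},U)+2g([U,X],Y)$. We would handle this by testing each component against $\mathfrak{m}_1$ using $\widehat{Q}$, rather than inverting the decomposition explicitly.
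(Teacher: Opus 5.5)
Your route differs from the paper's. The paper never writes down a naturally reductive decomposition for $(\mathsf{G}/\mathsf{H},g)$ itself. For $\dim\mathfrak{t}$ even, it extends $I$ to a $g$-orthogonal $J$ preserving $\mathfrak{t}$, which is integrable by \eqref{eq:trancs1}. It then gets a BAS structure from Proposition \ref{prop:cmpctBAS} or \ref{prop:noncmpctBAS}, takes $\nabla$ to be the Bismut connection, reads off $T$ from $\mathrm{d}\omega(J\cdot,J\cdot,J\cdot)$, and checks that $\nabla$ does not depend on $J$. For $\dim\mathfrak{t}$ odd, it passes to $\mathsf{U}(1)\times\mathsf{G}/\mathsf{H}$.

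Running the enlargement directly, without a complex structure, avoids integrability, the parity trick and the $J$-independence step. Your proofs of $\nabla I=0$ and of the parallelism of the right-$\mathsf{T}$ fields via $\widehat{\mathsf{G}}$-invariance are also cleaner.

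One correction when you reuse those computations. $\mathfrak{m}$ is identified with $T_{e\mathsf{H}}$ through the evaluation map. Since $\hat{\mathfrak{h}}=\{(H+U,U)\}$ is the isotropy algebra, $(0,V)$ acts by $-V$ at $e\mathsf{H}$. Hence $(U,\mp(S\mp\mathrm{Id})U)$ evaluates to $\pm SU$, not to $U$. So $\widehat{Q}$ on $\mathfrak{m}_1$ equals $g(U_1,U_2)$ rather than the required $g(SU_1,SU_2)$. Running the construction with $S^{-1}$ in place of $S$ fixes this; the eigenvalue-$1$ eigenspace is unchanged.

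The genuine gap is the last step: it cannot end at \eqref{eq:torsionGH}. No $S$-dependent bookkeeping is needed. Since $\mathrm{ev}(\hat{\mathfrak{h}})=0$, you have $T(\mathrm{ev}A,\mathrm{ev}B)=-\mathrm{ev}[A,B]$ for $A,B\in\mathfrak{m}$. Hence $T$ vanishes on $\mathfrak{t}\times\mathfrak{t}$, and $T(X,Y)=-[X,Y]_{\mathfrak{t}}-[X,Y]_{\mathfrak{b}}$ for $X,Y\in\mathfrak{b}$. That is, $T(U,X,Y)=-g([X,Y]_{\mathfrak{t}},U)$, and $T(X,Y,Z)=-Q([X,Y],Z)$ for $X,Y,Z\in\mathfrak{b}$.

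\textbf{The $\mathfrak{b}\mathfrak{b}\mathfrak{b}$ part does not drop out.} It is the Cartan $3$-form restricted to $\mathfrak{b}$, which is already totally skew, so skew-symmetry cannot absorb it. It vanishes only when $[\mathfrak{b},\mathfrak{b}]\subset\mathfrak{h}+\mathfrak{t}$, i.e.\ when the base is symmetric. It is nonzero, for instance, over the full flag $\mathsf{SU}(3)/\mathsf{T}^2$.

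\textbf{The mixed part never acquires $2g([U,X],Y)$.} In the compact case with $S=\mathrm{Id}$, no enlargement is needed. Then $\nabla$ is the canonical connection of $\mathfrak{g}=\mathfrak{h}+(\mathfrak{t}+\mathfrak{b})$ and $T(U,X,Y)=-Q([U,X],Y)$, whereas \eqref{eq:torsionGH} gives $+Q([U,X],Y)$. Concretely, take round $S^3=\mathsf{SU}(2)$ over $\mathbb{CP}^1$, with a $Q$-orthonormal basis satisfying $[e_1,e_2]=e_3$ cyclically and $\mathfrak{t}=\mathbb{R}e_3$. The true value is $T(e_3,e_1,e_2)=-1$, while the formula predicts $+1$. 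Moreover, the metric connection $D+\tfrac12T$ with the printed $T$ is the flat connection parallelizing right-invariant fields. That connection neither parallelizes $e_3$ nor preserves $I$.

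So your method proves the existence and invariance claims together with the corrected torsion, not the displayed formula. The paper offers no derivation to follow here; it only says ``a direct computation''. Evaluating $\mathrm{d}\omega(JU,JX,JY)$ directly, the two terms containing $[JU,\cdot\,]$ cancel, because $\mathrm{ad}(\mathfrak{t})$ commutes with $I$ and $Q$ is $\mathrm{ad}$-invariant. By \eqref{eq:trancs1} (resp.\ \eqref{eq:trancs2}), this leaves $-g([X,Y]_{\mathfrak{t}},U)$, in agreement with your method.
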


\begin{proof}
Assume first that $k \coloneqq \dim(\mathfrak{t})$ is even. Since $\mathfrak{t}$ is abelian and $[\mathfrak{t},\mathfrak{b}] \subset \mathfrak{b}$, it follows from \eqref{eq:trancs1} (resp.\ \eqref{eq:trancs2}) that every left-$\mathsf{G}$ and right-$\mathsf{T}$-invariant almost complex structure on $\mathsf{G}/\mathsf{H}$ preserving $\mathfrak{t}$ and extending $I$ is integrable. We then fix one such extension $J$ which is also $g$-orthogonal. By Proposition \ref{prop:cmpctBAS} (resp.\ Proposition \ref{prop:noncmpctBAS}), $(\mathsf{G}/\mathsf{H},J,g)$ is BAS. A direct computation shows that the torsion $T = \mathrm{d}\omega(J\cdot,J\cdot,J\cdot)$ of its Bismut connection $\nabla$ is given by \eqref{eq:torsionGH}. Moreover, by using \eqref{eq:Bismutconn}, \eqref{eq:torsionGH} and \cite[Proposition 7.28]{MR2371700}, it follows that every fundamental vector field of the right $\mathsf{T}$-action is $\nabla$-parallel. By \eqref{eq:Bismutconn} and \eqref{eq:torsionGH}, the connection $\nabla$ is independent of the extension $J$ of $I$. This proves the claim for $k$ even.

If $k$ is odd, we apply the same argument as above to the product $\mathsf{U}(1) \times \mathsf{G}/\mathsf{H}$, extending the metric $g$ in the obvious way. Let $U$ be a generator of the $\mathsf{U}(1)$-factor and observe that, since $D U = \nabla U = 0$, we have $U \lrcorner T = 2(D U - \nabla U)=0$. Hence, the Bismut connection on $\mathsf{U}(1) \times \mathsf{G}/\mathsf{H}$ induces an Ambrose--Singer connection $\nabla$ with totally skew-symmetric
torsion on $\mathsf{G}/\mathsf{H}$, whose torsion is given by \eqref{eq:torsionGH}. This concludes the proof. \end{proof}

With the same strategy, we can prove also the following analogous result for $\mathfrak{k}$-nilpotent triples.

\begin{proposition} \label{prop:canN}
Let $(\mathfrak{n},I,g)$ be a $\mathfrak{k}$-nilpotent triple as in Definition \ref{def:k-nil}, with $\mathfrak{k}$ abelian, and $\mathsf{N}$ the corresponding simply-connected Lie group. Then, $(\mathsf{N},g)$ admits an Ambrose--Singer connection $\nabla$ with totally skew-symmetric torsion $T$ that preserves the transverse complex structure $I$. Moreover, every central left-invariant vector field $U \in \mathfrak{z}(\mathfrak{n})$ is $\nabla$-parallel and $T$ is given by
\begin{equation} \label{eq:torsionN}
T(X,Y,Z) = -g([X,Y],Z) -g([Y,Z],X) -g([Z,X],Y)
\end{equation}
for all left-invariant vector fields $X, Y, Z \in \mathfrak{n}$.
\end{proposition}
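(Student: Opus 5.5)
We mimic the proof of Proposition \ref{prop:canGH}, replacing Proposition \ref{prop:cmpctBAS} with Theorem \ref{thm:nilBAS}. Write $\mathfrak{n} = \mathfrak{k} + V$ with $\mathfrak{k} = \mathfrak{z}(\mathfrak{n})$ abelian, and set $r \coloneqq \dim \mathfrak{k}$.

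\emph{Even case.} Suppose first that $r$ is even. Choose any $g$-orthogonal complex structure $J_0$ on $\mathfrak{k}$ and define $J \coloneqq J_0 \oplus I$ on $\mathfrak{n}$. By \eqref{eq:abelianI} and the fact that $\mathfrak{k}$ is central, we get $[JX,JY] = [X,Y]$ for all $X,Y \in \mathfrak{n}$. So $J$ is an abelian complex structure, and in particular it is integrable. Then $J$ preserves $\mathfrak{z}(\mathfrak{n})$, and $(\mathfrak{n},J|_{\mathfrak{z}(\mathfrak{n})^{\perp}},g) = (\mathfrak{n},I,g)$ is a $\mathfrak{k}$-nilpotent triple with $\mathfrak{k}$ abelian. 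By Theorem \ref{thm:nilBAS}, $(\mathsf{N},J,g)$ is BAS, and its Bismut connection $\nabla$ is an Ambrose--Singer connection with totally skew-symmetric torsion. It satisfies $\nabla I = 0$, since $\nabla J = 0$, $\nabla$ preserves the $\nabla$-parallel splitting into $\mathfrak{k}$ and $V$ (see below), and $I = J|_V$.

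Next we compute $T(X,Y,Z) = \mathrm{d}\omega(JX,JY,JZ)$ on left-invariant fields. We have
\[
\mathrm{d}\omega(A,B,C) = -\omega([A,B],C) - \omega([B,C],A) - \omega([C,A],B),
\]
so
\[
T(X,Y,Z) = -g(J[JX,JY],JZ) - \dots = -g([X,Y],Z) - g([Y,Z],X) - g([Z,X],Y),
\]
using abelianity of $J$ and $g(JA,JB) = g(A,B)$. This gives \eqref{eq:torsionN}. Since $\mathfrak{k}$ is central and the bracket takes values in $\mathfrak{k}$, the formula also shows that $T$ does not depend on the choice of $J_0$. Hence the connection $\nabla = D + \tfrac12 T$ is canonical.

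\emph{Parallel central fields.} Let $U \in \mathfrak{z}(\mathfrak{n})$. On left-invariant fields the Koszul formula gives
\[
g(D_XU,Y) = \tfrac12\big(g([X,U],Y) - g([U,Y],X) + g([Y,X],U)\big) = -\tfrac12 g([X,Y],U).
\]
From \eqref{eq:torsionN} we get $\tfrac12 g(T(X,U),Y) = -\tfrac12 g([U,Y],X) - \tfrac12 g([Y,X],U) - \tfrac12 g([X,U],Y) = \tfrac12 g([X,Y],U)$. Adding, $\nabla_X U = 0$. Alternatively, each such $U$ is an $\mathsf{L}$-invariant field, lying in $\mathfrak{f}$ by the proof of Theorem \ref{thm:MAIN-nil}, and so Lemma \ref{lem:f} applies.

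\emph{Odd case.} If $r$ is odd, replace $\mathsf{N}$ by $\mathbb{R} \times \mathsf{N}$ with the product metric. Its Lie algebra is the $(\mathbb{R} \oplus \mathfrak{k})$-nilpotent triple associated with $\varphi \oplus 0$ on the same $V$. This representation still has no trivial submodules, since $V$ is unchanged. Now the centre is even-dimensional, and the even case gives a connection $\widehat\nabla$ on $\mathbb{R} \times \mathsf{N}$. The generator $E$ of the $\mathbb{R}$-factor satisfies $DE = 0$, and $\widehat\nabla E = 0$ by the previous step, so $E \lrcorner T = 0$. Therefore $\widehat\nabla$ preserves the slices $\{t\} \times \mathsf{N}$ and restricts to an Ambrose--Singer connection on $\mathsf{N}$. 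Its torsion is still given by \eqref{eq:torsionN}, and it still preserves $I$ and parallelizes $\mathfrak{z}(\mathfrak{n})$.

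\emph{Main obstacle.} The delicate point is that $\nabla$ preserves $I$ and not only $J$. This amounts to showing that $\nabla$ preserves the splitting $\mathfrak{n} = \mathfrak{k} + V$, which follows from the previous step. Indeed, $\mathfrak{k}$ is spanned by $\nabla$-parallel fields, so it is $\nabla$-parallel, and therefore $V = \mathfrak{k}^{\perp}$ is $\nabla$-parallel as well since $\nabla$ is metric. Then $\nabla J = 0$ gives $\nabla(J|_V) = 0$, that is, $\nabla I = 0$.
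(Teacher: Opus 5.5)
Your proposal is correct and follows essentially the same route as the paper. In the even case you extend $I$ to an abelian, hence integrable, $J$, invoke Theorem \ref{thm:nilBAS}, compute $T=\mathrm{d}\omega(J\cdot,J\cdot,J\cdot)$ and note that it does not depend on the extension; in the odd case you pass to $\mathbb{R}\times\mathsf{N}$ and use $E\lrcorner T=0$. Your explicit Koszul check of $\nabla U=0$ for central $U$, and your argument that $\nabla$ preserves the splitting $\mathfrak{k}+V$ (so $\nabla I=0$ and not only $\nabla J=0$), fill in details that the paper leaves as ``a direct computation''.
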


\begin{proof}
Assume first that $k \coloneqq \mathrm{dim}(\mathfrak{z}(\mathfrak{n}))$ is even. By \eqref{eq:abelianI}, every almost complex structure on $\mathfrak{n}$ that preserves the centre $\mathfrak{z}(\mathfrak{n})$ and extends $I$ is integrable. We then fix one of such extensions $J$ which is also $g$-orthogonal. By Theorem \ref{thm:nilBAS}, $(\mathsf{N},J,g)$ is BAS. A direct computation shows that the torsion $T = \mathrm{d}\omega(J\cdot,J\cdot,J\cdot)$ of its Bismut connection $\nabla$ is given by \eqref{eq:torsionN}, and that $\nabla K = 0$ for every left-invariant vector field $K \in \mathfrak{z}(\mathfrak{n})$. By \eqref{eq:Bismutconn} and \eqref{eq:torsionGH}, the connection $\nabla$ is independent of the extension $J$ of $I$. This proves the claim for $k$ even.

If $k$ is odd, we apply the same argument as above to the $\mathfrak{k}$-nilpotent Lie algebra $\mathbb{R} \oplus \mathfrak{n}$, extending the metric $g$ in the obvious way. Let us denote by $U$ a generator for the $\mathbb{R}$-factor and observe that, since $D U = \nabla U = 0$, we have $U \lrcorner T = 2(D U - \nabla U)=0$. Hence, the Bismut connection of $\mathbb{R} \times \mathsf{N}$ induces an Ambrose--Singer connection $\nabla$ with totally skew-symmetric torsion on $\mathsf{N}$, whose torsion is given by \eqref{eq:torsionN}. This concludes the proof.
\end{proof}

In the following, we construct examples of BAS manifolds combining a nilpotent factor with compact and non-compact homogeneous factors, corresponding to the three geometries investigated in the previous sections.

\begin{proposition} \label{prop:BAS-NGH}
Let $(\mathsf{N},I_0,g_0)$ be as in Proposition \ref{prop:canN}, let $(\mathsf{G}_1/\mathsf{H}_1,I_1,g_1)$ be as in Remark \ref{rem:Gcompt} and let $(\mathsf{G}_2/\mathsf{H}_2,I_2,g_2)$ be as in Remark \ref{rem:Gnoncompt}. Assume that the sum $\mathrm{dim}(\mathfrak{z}(\mathfrak{n})) + \mathrm{dim}(\mathfrak{t}_1) +\mathrm{dim}(\mathfrak{t}_2)$ is even and consider the Riemannian product
$$
M \coloneqq \mathsf{N} \times \mathsf{G}_1/\mathsf{H}_1 \times \mathsf{G}_2/\mathsf{H}_2 \,\, , \quad g \coloneqq g_0 \oplus g_1 \oplus g_2 \,\, .
$$
Let $J$ be a $g$-orthogonal, left-$(\mathsf{N} \times \mathsf{G}_1 \times \mathsf{G}_2)$-invariant complex structure on $M$ that preserves the subspace $\mathfrak{z}(\mathfrak{n}) \oplus \mathfrak{t}_1 \oplus \mathfrak{t}_2$ and restricts to $I_0$ on $\mathfrak{z}(\mathfrak{n})^{\perp}$, to $I_1$ on $\mathfrak{b}_1$ and to $I_2$ on $\mathfrak{b}_2$. Then, $(M,J,g)$ is BAS.
\end{proposition}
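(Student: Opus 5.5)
We plan to take the product of the three canonical connections of Proposition \ref{prop:canN} and Proposition \ref{prop:canGH}, show that it is the Bismut connection of $(M,J,g)$, and then read off the BAS property. Let $\nabla^0,\nabla^1,\nabla^2$ be those connections on $\mathsf{N}$, $\mathsf{G}_1/\mathsf{H}_1$, $\mathsf{G}_2/\mathsf{H}_2$, with torsions $T_0,T_1,T_2$. Set $\nabla\coloneqq\nabla^0\oplus\nabla^1\oplus\nabla^2$ on the Riemannian product $(M,g)$. Then $\nabla$ is metric, its torsion $T=T_0\oplus T_1\oplus T_2$ is totally skew-symmetric, and $\nabla T=\nabla R=0$, because each factor is Ambrose--Singer and the product curvature and torsion split. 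So $\nabla$ is an Ambrose--Singer connection with skew torsion on $M$. By the uniqueness of the Bismut connection, it remains to prove that $\nabla J=0$ and that $J$ is integrable.

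\textbf{Step 1: $\nabla J=0$.} Write $\mathfrak{v}\coloneqq\mathfrak{z}(\mathfrak{n})\oplus\mathfrak{t}_1\oplus\mathfrak{t}_2$ and $\mathfrak{w}\coloneqq\mathfrak{z}(\mathfrak{n})^\perp\oplus\mathfrak{b}_1\oplus\mathfrak{b}_2$, so $TM=\mathfrak{v}\oplus\mathfrak{w}$ and $J$ preserves both summands. On $\mathfrak{w}$, $J$ equals $I_0\oplus I_1\oplus I_2$, and each $I_i$ is parallel for $\nabla^i$. On $\mathfrak{v}$, $J$ is a constant endomorphism with respect to the frame of fundamental vector fields: the central left-invariant fields on $\mathsf{N}$ and the generators of the right $\mathsf{T}_i$-actions. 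By the two propositions, all of these are $\nabla$-parallel, so $J|_{\mathfrak{v}}$, being a constant-coefficient combination of parallel fields, is parallel. Since $\nabla$ also preserves the splitting $\mathfrak{v}\oplus\mathfrak{w}$ (it preserves $g$ and the parallel subbundle $\mathfrak{v}$), we get $\nabla J=0$.

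\textbf{Step 2: integrability of $J$.} For a connection with parallel, totally skew torsion preserving $J$, integrability reduces to $T$ having no $(3,0)+(0,3)$ component; equivalently $N_J(X,Y)=$ an algebraic expression in $T$ and $J$. We check this on the invariant frame.
\begin{itemize}
\item Components with three entries in $\mathfrak{w}$: these come from $T_0$ on $\mathfrak{z}(\mathfrak{n})^\perp$ (which vanishes, since $[\mathfrak{z}^\perp,\mathfrak{z}^\perp]\subset\mathfrak{z}$) and from $[\mathfrak{b}_i,\mathfrak{b}_i]_{\mathfrak{b}_i}$. These are handled by the integrability of $I_i$ on the base, using \eqref{eq:torsionGH}.
\item Components of type $\mathfrak{v}\times\mathfrak{w}\times\mathfrak{w}$: by \eqref{eq:torsionN}, \eqref{eq:torsionGH} these have the form $-g([X,Y]_{\mathfrak{v}},U)+2g([U,X],Y)$. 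The first term is $I$-invariant by \eqref{eq:abelianI}, \eqref{eq:trancs1}, \eqref{eq:trancs2}. The second term commutes with $I$, since $\mathrm{ad}(U)$ is complex-linear: in the nilpotent case it vanishes, and in the other cases $\mathfrak{t}$ centralises the isotropy of the base and preserves $I$.
\item Components with two or more entries in $\mathfrak{v}$ vanish.
\end{itemize}
Hence $T(X,Y,Z)$ is of type $(1,2)+(2,1)$, so $N_J=0$. Alternatively, we can argue as in the proofs of Proposition \ref{prop:canGH} and Proposition \ref{prop:canN}. The bracket of the enlarged Lie algebra $\mathfrak{n}\oplus\mathfrak{g}_1\oplus\mathfrak{g}_2$ modulo isotropy satisfies $[\mathfrak{v},\mathfrak{v}]=0$, $[\mathfrak{v},\mathfrak{w}]\subset\mathfrak{w}$ complex-linearly, and $[JX,JY]_{\mathfrak{v}}=[X,Y]_{\mathfrak{v}}$. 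The Nijenhuis tensor then vanishes exactly as in those proofs, the $\mathfrak{w}$-part reducing to integrability of each $I_i$.

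\textbf{Conclusion and main obstacle.} With $\nabla J=\nabla g=0$, skew torsion, and $J$ integrable, uniqueness gives that $\nabla$ is the Bismut connection of $(M,J,g)$, and $\nabla T=\nabla R=0$ shows that $(M,J,g)$ is BAS. The main obstacle is Step 2, namely the careful bookkeeping of the mixed terms $[U,X]$ with $U\in\mathfrak{t}_i$ and the $\mathfrak{b}_i$-components of brackets. I would handle these by invoking the even-dimensional cases already settled in Proposition \ref{prop:canGH} and Proposition \ref{prop:canN}. The parity hypothesis is used only to make $\mathfrak{v}$ even-dimensional, so that $J$ can exist on it.
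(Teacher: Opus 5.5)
Your proposal is correct and follows the paper's own proof. You take $\nabla=\nabla^0\oplus\nabla^1\oplus\nabla^2$, an Ambrose--Singer connection with skew torsion preserving the $I_i$. You get $\nabla J=0$ because $\mathfrak{z}(\mathfrak{n})\oplus\mathfrak{t}_1\oplus\mathfrak{t}_2$ is spanned by $\nabla$-parallel fields, and you conclude by uniqueness of the Bismut connection.

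Your Step 2 on integrability is unnecessary. The statement already assumes $J$ is an (integrable) complex structure, so the paper never checks the type of $T$.
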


\begin{proof}
We consider the canonical connection $\nabla^0$ on $\mathsf{N}$ as in Proposition \ref{prop:canN}, the canonical connection $\nabla^i$ on $\mathsf{G}_i/\mathsf{H}_i$ as in Proposition \ref{prop:canGH}, for $i=1,2$, and we define $\nabla \coloneqq \nabla^0 \oplus \nabla^1 \oplus \nabla^2$ on $M$. By construction, $\nabla$ is an Ambrose--Singer connection on $(M,g)$ with totally skew-symmetric torsion such that $\nabla I_0 = \nabla I_1 = \nabla I_2 = 0$. By Proposition \ref{prop:canGH} and Proposition \ref{prop:canN}, the distribution of $TM$ corresponding to $\mathfrak{z}(\mathfrak{n}) \oplus \mathfrak{t}_1 \oplus \mathfrak{t}_2$ is globally spanned by $\nabla$-parallel vector fields. Therefore, it follows that $\nabla J = 0$, and so $\nabla$ coincides with the Bismut connection of $(M,J,g)$, which concludes the proof.
\end{proof}

\begin{example}
By Theorem \ref{thm:nilBAS} and Proposition \ref{prop:BASnilalg}, all Kodaira manifolds are BAS. Moreover, in \cite{MR2062609} the authors construct left-invariant complex structures on the product $\mathsf{H}_{2n+1}(\mathbb{R}) \times \mathsf{G}$, where $\mathsf{H}_{2n+1}(\mathbb{R})$ is the real Heisenberg group and $\mathsf{G}$ is a simple odd-dimensional compact Lie group. These complex structures, combined with the canonical product metric, fit into the framework of Proposition \ref{prop:BAS-NGH}, and hence are BAS.
\end{example}

\begin{example}
All Calabi--Eckmann manifolds are BAS by Proposition \ref{prop:BAS-NGH}.
\end{example}

\subsection{Pluriclosed Bismut--Ambrose--Singer manifolds} \hfill \par

We recall that a Hermitian manifold $(M,J,g)$ is called {\it pluriclosed} if the torsion $T$ of its Bismut connection is closed. By exploiting the results in \cite{BPT24}, we can classify Hermitian manifolds that are both BAS and pluriclosed. To this aim, we recall the following definition (see, \emph{e.g.}, \cite[Proposition 1.2]{MR2893680}).

\begin{definition}
A {\it Sasaki manifold} is a triple $(S,g,\xi)$ consisting of an odd dimensional Riemannian manifold $(S,g)$, together with a distinguished Killing vector field $\xi$ with unit norm, called {\it Reeb vector field}, such that
\begin{equation} \label{eq:Sasakidef}
\tfrac{1}{c^2}D^2_{X,Y}\xi = g(\xi,Y)X -g(X,Y)\xi \quad \text{for every $X,Y$} \,\, ,
\end{equation}
for some $c >0$. The distribution $\xi^{\perp} \subset TS$ is called the {\it transverse Sasaki distribution}, while the tensor field $\frac{1}{c}D\xi$ is called the {\it canonical transverse complex structure}.
\end{definition}

By \cite[Theorem 8.2]{MR1928632} (see also \cite[Theorem 2.2]{MR2322400}), every Sasaki manifold $(S,g,\xi)$ admits a metric connection with parallel totally skew-symmetric torsion that leaves both the Reeb vector field and the canonical transverse complex structure invariant. \smallskip

As a direct consequence of \cite[Theorem A]{BPT24}, we get the following.

\begin{proposition} \label{prop:BAS-SKT}
Let $(M, J, g)$ be a complete, simply-connected, BAS manifold. Then $(M, J, g)$ is pluriclosed if and only if it decomposes as a product of Hermitian irreducible factors, each of them is either an irreducible Hermitian symmetric space or a Riemannian product
\begin{equation} \label{eq:BKL}
\mathbb{R}^{\ell} \times \prod_{i=1}^{s} S_{i} \times \mathsf{K}
\end{equation}
endowed with a standard complex structure, where $\mathbb{R}^{\ell}$ is the $\ell$-dimensional flat Euclidean space, each $S_{i}$ is a homogeneous Sasaki $3$-dimensional manifold, $\mathsf{K}$ is a compact semisimple Lie group of rank $r$ with a bi-invariant metric and $\ell \leq s+r$.
\end{proposition}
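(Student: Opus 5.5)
The plan is to reduce the statement to \cite[Theorem A]{BPT24}, which classifies complete, simply-connected Hermitian manifolds whose Bismut connection has parallel torsion (equivalently, the relevant Bismut K\"ahler-like or pluriclosed class with $\nabla T=0$). For pluriclosed manifolds with $\nabla T = 0$, that theorem yields exactly a product of a K\"ahler factor and factors of the form \eqref{eq:BKL}. The remaining work is to check two things: the BAS hypothesis forces the K\"ahler factor to be locally symmetric, and conversely every product of the listed type is BAS.

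\emph{Necessity.} Let $(M,J,g)$ be complete, simply-connected, BAS and pluriclosed. Since $\nabla T=0$ and ${\rm d}T=0$, \cite[Theorem A]{BPT24} gives a holomorphic isometric splitting $M = M_0\times M_1$. Here $M_0$ is K\"ahler, and $M_1$ is a product of factors \eqref{eq:BKL} with a standard complex structure. On a Riemannian product the Bismut connection is the product connection, so $\nabla R=0$ restricts to $\nabla^0 R^0=0$ on $M_0$. On a K\"ahler manifold the Bismut connection is Levi-Civita, so $M_0$ is locally symmetric. Being complete and simply connected, $M_0$ is a Hermitian symmetric space and splits by de Rham into irreducible Hermitian symmetric spaces; a flat $\mathbb{C}^k$ part can be absorbed into $\mathbb{R}^\ell$ factors with $s=r=0$ only if $\ell \le s+r$ permits this. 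I will therefore check carefully how \cite{BPT24} bookkeeps the flat factor, and if necessary list $\mathbb{C}$ among the irreducible Hermitian symmetric spaces. The main obstacle is confirming that the hypotheses of \cite[Theorem A]{BPT24} are exactly pluriclosed plus $\nabla T=0$, together with completeness and simple connectivity, and that its conclusion already contains the constraint $\ell\le s+r$. This constraint encodes that the complex structure pairs the Reeb directions of the $S_i$, the Cartan torus of $\mathsf{K}$ and the flat directions.

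\emph{Sufficiency.} Irreducible Hermitian symmetric spaces are K\"ahler with $DR=0$, hence BAS and pluriclosed. For a factor \eqref{eq:BKL}, each piece carries an Ambrose--Singer connection with totally skew-symmetric torsion. On $\mathbb{R}^\ell$ it is the flat connection. On $S_i$ it is the characteristic connection of \cite[Theorem 8.2]{MR1928632}; because $S_i$ is homogeneous, its curvature is parallel as well. On $\mathsf{K}$ it is the flat $(-)$-connection, equivalently the one from Proposition \ref{prop:canGH} with $\mathsf{H}$ trivial. Each of these connections parallelizes the Reeb fields, respectively the fields spanning a Cartan subalgebra (left-invariant fields being parallel for the flat Cartan connection), and preserves the transverse complex structures. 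Arguing as in Proposition \ref{prop:BAS-NGH}, the product connection then preserves any standard $J$, which pairs these parallel directions and restricts to the transverse structures. By uniqueness it is the Bismut connection, so the factor is BAS. Pluriclosedness comes from the torsion: $T$ is the sum of the torsions of the pieces, namely $\eta_i\wedge{\rm d}\eta_i$ up to a constant on $S_i$, which is closed in dimension $3$, and the Cartan $3$-form on $\mathsf{K}$, which is closed. Hence ${\rm d}T=0$.
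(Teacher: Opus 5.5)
Your overall strategy matches the paper's: necessity is read off from \cite[Theorem A]{BPT24}, and the converse is checked factor by factor. Your converse is more self-contained than the paper's. You identify the Bismut connection with the product of the canonical connections via uniqueness, as in Proposition~\ref{prop:BAS-NGH}, rather than citing \cite[Proposition 3.4]{BPT24}, and you check $\mathrm{d}T=0$ directly. Your fallback of reading a flat $\mathbb{C}^k$ as $k$ Hermitian-irreducible copies of $\mathbb{C}$ is the right bookkeeping, since $s=r=0$ forces $\ell=0$. Two steps, however, are missing.

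\emph{Necessity: homogeneity of the $S_i$.} You say the only thing the BAS hypothesis must upgrade is the K\"ahler factor. But \cite[Theorem A]{BPT24} concerns pluriclosed manifolds with $\nabla T=0$, and that class contains non-homogeneous Sasaki factors. Take arbitrary complete Sasaki $3$-manifolds $S_1,S_2$, and on $S_1\times S_2$ let $J\xi_1=\xi_2$, with $J$ equal to the transverse structures on $\xi_i^{\perp}$. Its Bismut connection is the product of the characteristic connections, with torsion built from the $\eta_i\wedge\mathrm{d}\eta_i$. So it is pluriclosed with parallel torsion, whatever the transverse curvature of the $S_i$. Hence you must also use $\nabla R=0$ on the Sasaki factors. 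Since $\nabla$ splits as the product of the characteristic connections $\nabla^i$, you get $\nabla^iR_i=0$. Thus $\nabla^i$ is an Ambrose--Singer connection that parallelizes $\xi_i$ and the transverse complex structure, and completeness plus simple connectivity make $S_i$ homogeneous as a Sasaki manifold. The paper's one-line appeal to \cite[Theorem A]{BPT24} leaves this implicit as well.

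\emph{Sufficiency: parallel curvature on $S_i$.} The inference ``$S_i$ is homogeneous, hence the curvature of its characteristic connection is parallel'' is not valid. Homogeneity only makes $R_i$ invariant under a transitive group, not $\nabla^i$-parallel. In higher dimensions the conclusion is in fact false: a homogeneous Sasaki manifold has parallel characteristic curvature only when its transverse K\"ahler structure is locally symmetric. The missing input is dimension $3$. Since $\nabla^i$ preserves $\xi_i$ and the complex structure on the $2$-plane $\xi_i^{\perp}$, one has $R_i=s_i\,\mathrm{d}\eta_i\otimes\mathrm{d}\eta_i$ for a function $s_i$ \cite[Lemma 4.6]{BPT24}. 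Homogeneity makes $s_i$ constant, and $\nabla^i\mathrm{d}\eta_i=0$ then gives $\nabla^iR_i=0$; this is exactly the argument in the paper. Note that, with BAS as a standing hypothesis, the ``if'' direction only needs $\mathrm{d}T=0$. Your torsion computation gives that independently of this step; the curvature check is what shows that the listed spaces are in fact BAS.
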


Before proving this result we shall recall that by \emph{standard complex structure} on \eqref{eq:BKL}, we mean an invariant complex structure $J$ that is compatible with the transverse complex distribution on each Sasaki factor, projects onto a $\mathsf{K}$-invariant complex structure on the full flag manifold $\mathsf{K}/\mathsf{T}$ for a chosen maximal torus $\mathsf{T} \subset \mathsf{K}$ and preserves the distribution spanned by the Euclidean factor, the Reeb vector fields and the torus $\mathsf T$ (see \cite[Definition 3.2]{BPT24} for more details).

\begin{proof}[Proof of Proposition \ref{prop:BAS-SKT}]
Assume that $(M, J, g)$ is a complete, simply-connected, irreducible Hermitian non-K{\"a}hler manifold. If $(M, J, g)$ is BAS and pluriclosed, then the claim follows from \cite[Theorem A]{BPT24}. Conversely, assume that $(M, J, g)$ is of the form \eqref{eq:BKL}. By \cite[Proposition 3.4]{BPT24}, the Bismut connection $\nabla$ of $(M, J, g)$ has parallel torsion, decomposes according to the de Rham factors of $(M,g)$, and induces on each factor its canonical connection. On $\mathbb{R}^{\ell} \times \mathsf{K}$, the connection $\nabla$ is flat. Moreover, since each factor $S_i$ is a $3$-dimensional Sasaki manifold, the curvature $R_i$ of the induced connection $\nabla^i$ on $S_i$ is of the form
$$
R_i = s_i \,\mathrm{d}\xi_i^* \otimes \mathrm{d}\xi_i^*
$$
for some scalar function $s_i$, which is constant by homogeneity (see, \emph{e.g.}, \cite[Lemma 4.6]{BPT24}). Since $\nabla^i\mathrm{d}\xi_i^* = 0$, it follows that $\nabla^i$ has parallel curvature, and this concludes the proof.
\end{proof}

We remark that every simply-connected, homogeneous, Sasaki $3$-dimensional manifold is equivariantly diffeomorphic to one of the following: $\mathsf{H}_3(\mathbb{R})$, $\mathsf{SU}(2)$, or the universal cover of $\mathsf{SL}(2,\mathbb{R})$ (see \cite[Theorem~1]{MR1464186}). Therefore, in the setting of Proposition \ref{prop:BAS-NGH}, these three cases correspond respectively to the nilpotent case, the compact case, and the semisimple non-compact case, and the standard complex structure agrees with the one defined there.

\end{document}